\definecolor{slightblue}{rgb}{.8, .8, 1}
\definecolor{tif}{RGB}{10, 186, 181}
\definecolor{hair}{RGB}{100,225,190}
\definecolor{ruby}{RGB}{220,50,120}
\definecolor{grass}{RGB}{150,220,110}
\newtheorem*{rep@theorem}{\rep@title}
\newcommand{\newreptheorem}[2]{%
\newenvironment{rep#1}[1]{%
 \def\rep@title{#2 \ref{##1}}%
 \begin{rep@theorem}}%
 {\end{rep@theorem}}}
\newtheorem*{rep@cor}{\rep@title}
\newcommand{\newrepcor}[2]{%
\newenvironment{rep#1}[1]{%
 \def\rep@title{#2 \ref{##1}}%
 \begin{rep@cor}}%
 {\end{rep@cor}}}
\newtheorem*{rep@prop}{\rep@title}
\newcommand{\newrepprop}[2]{%
\newenvironment{rep#1}[1]{%
 \def\rep@title{#2 \ref{##1}}%
 \begin{rep@prop}}%
 {\end{rep@prop}}}
\newtheorem{corollary}{Corollary}[section]
\newtheorem{corx}{Corollary}
\newtheorem{theorem}[corollary]{Theorem}
\newtheorem{thmx}[corx]{Theorem}
\newtheorem{proposition}[corollary]{Proposition}
\newtheorem{propx}[corx]{Proposition}
\newtheorem*{theorem*}{Theorem}
\newtheorem{lemma}[corollary]{Lemma}
\newenvironment{manualtheorem}[1]{%
  \manualtheoreminner
}{\endmanualtheoreminner}
\newenvironment{manualprop}[1]{%
  \manualpropinner
}{\endmanualpropinner}
\theoremstyle{definition} 
\newtheorem{definition}[corollary]{Definition}
\newtheorem{example}[corollary]{Example}
\theoremstyle{remark} \newtheorem{remark}[corollary]{Remark} \numberwithin{equation}{section}
\newtheorem*{remark*}{Remark}
\numberwithin{figure}{section}
\newcommand{\R}{\mathbb{R}}
\newcommand{\Rpm}{\mathbb{R}\cup\{\pm\infty\}}
\newcommand{\Rp}{\mathbb{R}\cup\{+\infty\}}
\newcommand{\epi}[1]{\mathrm{epi}({#1})}
\newcommand{\gra}[1]{\mathrm{graph}({#1})}
\newcommand{\sepi}[1]{\mathrm{epi}^\circ({#1})}
\newcommand{\dom}[1]{\mathsf{dom}({#1})}
\newcommand{\env}[1]{\overline{#1}}
\newcommand{\interior}{\mathsf{int}\,}
\newcommand{\ri}{\mathsf{ri}\,}
\newcommand{\chull}[1]{\mathsf{Conv}({#1})}
\renewcommand{\phi}{\varphi}
\newcommand{\eps}{\varepsilon}
\newcommand{\C}{\mathsf{C}}
\newcommand{\CC}[1]{\C^0(\overline{#1})\cap\C^\infty({#1})}
\newcommand{\Leb}{\mathcal{L}}
\newcommand{\MA}[1]{\mathcal{M}{#1}}
\newcommand{\dist}{\mathsf{dist}}
\newcommand{\Ha}{\mathcal{H}}
\renewcommand{\S}{\mathsf{S}}
\newcommand{\T}{\mathsf{T}}
\newcommand{\dif}{\mathsf{d}}
\newcommand{\pair}[2]{\langle{#1}, {#2}\rangle}
\newcommand{\bii}[2]{({#1},{#2})}
\newcommand{\transp}[1]{\leftidx{^\mathsf{t}}{#1}}
\newcommand{\vol}{\mathit{vol}}
\newcommand{\id}{\mathrm{id}}
\newcommand{\pa}{\partial}
\newcommand{\dx}{{\dif x}}
\newcommand{\SL}{\mathrm{SL}}
\newcommand{\eg}{\textit{e.g. }}
\newcommand{\cf}{\textit{c.f. }}
\newcommand{\ie}{\textit{i.e. }}
\DeclareMathOperator{\SO}{SO}
\newcommand{\D}{\mathsf{D}}
\newcommand{\V}{\mathbb{V}}
\newcommand{\LC}{\mathsf{LC}}
\newcommand{\GLC}{\widetilde{\mathsf{LC}}}
\newcommand{\A}{\mathbb{A}}
\newlist{steps}{enumerate}{1}
\setlist[steps, 1]{itemsep=8pt,leftmargin=0cm,itemindent=.5cm,labelwidth=\itemindent,labelsep=0cm,align=left,label = \textbf{\emph{Step \arabic*}:\,}}
\subjclass[2010]{53A15 (primary); 35J96, 53C42 (secondary)}
\begin{document}

\title[Regular domains and surfaces of constant Gaussian curvature]{Regular domains and surfaces of constant Gaussian curvature in three-dimensional affine space} 
\author{Xin Nie}
\address{Xin Nie: School of Mathematics, KIAS, Hoegi-ro 86, 02455 Seoul, South Korea} \email{nie.hsin@gmail.com}
\author[Andrea Seppi]{Andrea Seppi}
\address{Andrea Seppi: CNRS and Universit\'e Grenoble Alpes, 100 Rue des Math\'ematiques, 38610 Gi\`eres, France.} \email{andrea.seppi@univ-grenoble-alpes.fr}
\maketitle

\begin{abstract}
Generalizing the notion of domains of dependence in the Minkowski space, we define and study regular domains in the affine space with respect to a proper convex cone. In dimension three, we show that every proper regular domain is uniquely foliated by a particular kind of surfaces with constant affine Gaussian curvature. The result is based on the analysis of a  Monge-Amp\`ere equation with extended-real-valued lower semicontinuous boundary condition.
\end{abstract}

\section{Introduction}
The results of this paper place in the context of \emph{Affine Differential Geometry} \cite{MR1311248, MR3382197}, and are more precisely concerned with surfaces of \emph{constant affine Gaussian curvature}, which can be considered at the same time as a generalization of \emph{affine spheres} and of surfaces of \emph{constant Gaussian curvature}, and whose study has been started in \cite{lisimoncrelle,MR1772198,MR2793474}. 

 A convex domain is said to be \emph{proper} if it does not contain any entire straight line.
By solving the Dirichlet problem of Monge-Amp\`ere equation
\begin{equation}\label{eqn_intro1}
\begin{cases}
\det\D^2w=(-w)^{-n-2}\ \text{ in $\Omega$}, \\
w|_{\pa\Omega}=0,
\end{cases}
\end{equation}
on any bounded convex domain $\Omega\subset\mathbb{R}^{n}$, Cheng and Yau \cite{chengyau1} showed that in every proper convex cone $C\subset\mathbb{R}^{n+1}$ there exists a unique complete hyperbolic affine sphere $\Sigma_C$ asymptotic to the boundary $\pa C$ with affine shape operator the identity. See also \cite{MR2743442}.

On the other hand, certain convex domains in the Minkowski space $\mathbb{R}^{n,1}$ known as \emph{domains of dependence} or \emph{regular domains} \cite{MR2170277,MR2110829,bbz} are crucial in the study of globally hyperbolic flat  spacetimes \cite{mess}.
Such a domain is by definition the intersection of the futures of null hyperplanes and is determined by a lower semicontinuous function $\phi:\pa\mathbb{D}\rightarrow\mathbb{R}\cup\{+\infty\}$, where $\mathbb{D}\subset\mathbb{R}^n$ is the unit ball. 
Bonsante, Smillie and the second author showed in \cite{bs, bon_smillie_seppi} that every $3$-dimensional proper regular domain $D\subset\mathbb{R}^{2,1}$ contains a unique complete surface with constant Gaussian curvature $1$ which generates $D$. Analytically, this amounts to the unique existence of a lower semicontinuous convex function $u:\overline{\mathbb{D}}\rightarrow\Rp$ satisfying
\begin{equation}\label{eqn_intro2}
\begin{cases}
\det\D^2u=(1-|z|)^{-2}\,\mbox{ in } U:=\interior\dom{u},\\ u|_{\pa\mathbb{D}}=\phi,\quad |\nabla u(x)|\rightarrow+\infty \text{ as $x\in U$ tends to $\pa U$}.
\end{cases}
\end{equation}
Here ``$\interior$'' stands for the interior and ``$\mathsf{dom}$'' for the subset in the domain of an extended-real-valued function where the values are real. It is also showed in \cite{bon_smillie_seppi} that $\dom{u}$ is exactly the convex hull of $\dom\phi$ in $\R^2$.

The hyperboloid in the future light cone $C_0\subset\mathbb{R}^{2,1}$ provides the simplest example of both results above: It corresponds to the function $u_0(z)=-(1-|z|^2)^\frac{1}{2}$ on $\mathbb{D}\subset\mathbb{R}^2$, which satisfies $\det\D^2u_0=(-u_0)^{-4}=(1-|z|)^{-2}$ with $u_0|_{\pa\mathbb{D}}=0$, hence solves both (\ref{eqn_intro1}) and  (\ref{eqn_intro2}). Geometrically, the two results can be viewed as providing different ways of deforming $C_0$, with a canonical convex surface retained in the deformed convex domain.

\subsection*{Statement of main results} 
The goal of this paper is to unify the two results via surfaces with Constant Affine Gaussian (or Gauss-Kronecker) Curvature (CAGC) $k>0$. The underlying analytic problem, which generalizes both (\ref{eqn_intro1}) and (\ref{eqn_intro2}), is
\begin{equation}\label{eqn_intro3}
\begin{cases}
\det \D^2u=c_k(-w_\Omega)^{-n-2}\mbox{ in } U:=\interior\dom{u}\subset\Omega,\\
u|_{\pa\Omega}=\phi,\quad |\nabla u(x)|\rightarrow+\infty \text{  as $x\in U$ tends to $\pa U$},
\end{cases}
\end{equation}
where $c_k>0$ is a constant determined by $k$ and $n$, and $w_\Omega$ is the solution to (\ref{eqn_intro1}).
The first equation in (\ref{eqn_intro3}) has been called a \emph{two-step Monge-Amp\`ere equation} \cite{lisimoncrelle}, since the Monge-Amp\`ere equation \eqref{eqn_intro1} is involved in $w_\Omega$. 
When $n=2$, we show:
\begin{thmx}\label{thm_intro1}
Let $\Omega\subset\R^2$ be a bounded convex domain satisfying the exterior circle condition and $\phi:\pa\Omega\rightarrow \mathbb{R}\cup\{+\infty\}$ be a lower semicontinuous function such that $\dom{\phi}$ has at least three points. Then there exists a unique lower semicontinuous convex function $u:\overline{\Omega}\to\Rp$  which is smooth in the interior of $\dom{u}$ and satisfies (\ref{eqn_intro3}). Moreover, $\dom{u}$ coincides with the convex hull of $\dom{\phi}$ in $\R^2$.
\end{thmx}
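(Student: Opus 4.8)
The plan is to solve the Dirichlet problem~\eqref{eqn_intro3} by the classical continuity/approximation method applied to the ``two-step'' right-hand side, treating the extended-real-valued boundary datum $\phi$ via exhaustion by smooth, strictly convex subdomains and truncation. First I would fix $\Omega$ satisfying the exterior circle condition and recall that Cheng--Yau's solution $w_\Omega$ to~\eqref{eqn_intro1} is smooth and negative in $\Omega$, with $w_\Omega\to 0$ at $\pa\Omega$; this makes the coefficient $f_k:=c_k(-w_\Omega)^{-n-2}$ a smooth, strictly positive function on $\Omega$ that blows up near $\pa\Omega$ at a controlled rate (comparable to $\dist(\cdot,\pa\Omega)^{-(n+2)/2}$ by the known boundary asymptotics of $w_\Omega$). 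The model solution $u_0$ to~\eqref{eqn_intro2} and, more generally, the hyperbolic affine sphere $\Sigma_C$ attached to the cone over $\Omega$ should serve simultaneously as a supersolution and as a barrier governing the gradient blow-up $|\nabla u|\to\infty$ at $\pa\dom{u}$.

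\medskip

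The key steps, in order, are as follows. \emph{Step 1 (finite boundary data).} For a continuous $\phi:\pa\Omega'\to\R$ on a smooth strictly convex $\Omega'\Subset\Omega$, solve $\det\D^2 u=f_k$ in $\Omega'$ with $u|_{\pa\Omega'}=\phi$ by the Caffarelli--Nirenberg--Spruck / Cheng--Yau interior estimates; here the weight $f_k$ is smooth and bounded on $\overline{\Omega'}$, so this is standard, giving a convex $u\in C^\infty(\Omega')\cap C^0(\overline{\Omega'})$. \emph{Step 2 (exhaustion and the gradient blow-up at $\pa\Omega$).} Let $\Omega'\nearrow\Omega$; using $w_\Omega$-barriers (the affine sphere $\Sigma_C$ has $|\nabla|\to\infty$ at $\pa\Omega$), show the solutions are locally uniformly bounded on $\Omega$, pass to a locally uniform limit $u$ on $\Omega$, and prove that $u$ extends lower-semicontinuously to $\overline{\Omega}$ with $u|_{\pa\Omega}\ge\phi$ and with $|\nabla u(x)|\to\infty$ as $x\to\pa\Omega$; simultaneously one must rule out that $u$ ``pulls away'' from $\pa\Omega$, i.e.\ show $\dom{u}$ contains an open neighborhood of $\Omega$ inside $\overline\Omega$, which is where the exterior circle condition and the three-point hypothesis on $\dom\phi$ enter. \emph{Step 3 (general lsc $\phi$).} Approximate a general lsc $\phi$ from below by an increasing sequence of continuous finite functions $\phi_j$, solve as above to get convex $u_j$, and take the (monotone) limit $u=\sup_j u_j$; lower semicontinuity of the limit and Legendre-duality arguments (as in \cite{bon_smillie_seppi}) should identify $u|_{\pa\Omega}=\phi$ and force $\interior\dom{u}$ to equal the interior of $\chull{\dom\phi}$. \emph{Step 4 (uniqueness).} Given two solutions $u,\tilde u$, the comparison principle for Monge--Amp\`ere with the fixed weight $f_k$, applied on $\interior\dom{u}\cap\interior\dom{\tilde u}$ together with the matching boundary behaviour (both the boundary values $\phi$ on $\pa\Omega$ and the gradient blow-up at the free boundary $\pa\dom{u}$), yields $u=\tilde u$; the free-boundary gradient condition is essential to make the comparison principle applicable across $\pa\dom{u}$.

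\medskip

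The main obstacle I expect is \emph{Step 2}, specifically the interplay between the blow-up of the weight $f_k$ near $\pa\Omega$ and the required gradient blow-up $|\nabla u|\to\infty$ there: one must produce sub/supersolutions that are simultaneously compatible with the singular coefficient $c_k(-w_\Omega)^{-n-2}$ and with the prescribed asymptotics, and one must show that $\dom{u}$ does not degenerate (e.g.\ collapse to a lower-dimensional set or fail to reach $\pa\Omega$) — this is precisely the point where $n=2$, the exterior circle condition, and $|\dom\phi|\ge 3$ are used, mirroring the role these hypotheses play in \cite{bon_smillie_seppi} for equation~\eqref{eqn_intro2}. A secondary difficulty is the identification $\dom u=\chull{\dom\phi}$ for genuinely extended-real-valued $\phi$, which requires care with the Legendre transform of a lsc convex function and with the behaviour of the dual solution at infinity, again following the template of \cite{bon_smillie_seppi} but now with the two-step right-hand side replacing the explicit weight $(1-|z|)^{-2}$.
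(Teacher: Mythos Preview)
Your overall strategy---exhaustion, barriers built from $w_\Omega$, and comparison---matches the paper's, but the organization differs in a way that creates a real gap.

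The paper does \emph{not} exhaust $\Omega$ and then approximate the lsc datum $\phi$ from below by continuous $\phi_j$. Instead it identifies from the outset the domain $V:=\interior\chull{\dom\phi}$, observes that the convex envelope $\env{\phi}$ is finite and continuous on $\overline{V}$, and solves the Dirichlet problem $\det\D^2 u=c\,w_\Omega^{-4}$ on an exhaustion $V_i\nearrow V$ with boundary data $\env{\phi}|_{\pa V_i}$. The two-sided barrier $\env{\phi}+\sqrt{c}\,w_V\leq u\leq\env{\phi}$ (using $\det\D^2\env{\phi}=0$ and $w_V\leq w_\Omega$) then gives both compactness and the identification $\dom u=\overline V=\chull{\dom\phi}$ for free---no Legendre-duality argument is needed here.

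The gap in your plan is the gradient blow-up at the \emph{free boundary} $\pa U\cap\Omega$, i.e.\ along the chords of $\Omega$ bounding $\chull{\dom\phi}$. Your Step~2 only discusses $|\nabla u|\to\infty$ at $\pa\Omega$ (where the exterior circle condition gives $w_\Omega^{-4}\geq c'|x-x_0|^{-2}$ and a barrier of the form $|x|^\beta$ works). But along an interior chord $I\subset\pa U\cap\Omega$ the weight $w_\Omega^{-4}$ is merely bounded, so the ``affine-sphere barrier'' you invoke is irrelevant there. The paper handles this separately: on such a chord $u=\env{\phi}$ is \emph{affine}, and since $\det\D^2u\geq c>0$ near $I$, a direct triangle-barrier argument (comparing with the solution of $\det\D^2=1$ on shrinking triangles) forces infinite inner derivatives along $I$. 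Your monotone-approximation scheme $\phi_j\nearrow\phi$ produces solutions $u_j$ with $\dom{u_j}=\overline\Omega$ for every $j$, so the free boundary appears only in the limit and you never get to run this affine-on-a-segment argument; without it, you cannot conclude gradient blow-up on $\pa U\cap\Omega$, and then both uniqueness and the statement $\dom u=\chull{\dom\phi}$ are in jeopardy.
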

Here, the \emph{exterior circle condition} means for every $x_0\in\pa\Omega$ there is a round disk $B\subset\mathbb{R}^2$ containing $\Omega$ such that $x_0\in\pa B$. Under this condition, the right-hand side of the first equation in (\ref{eqn_intro3}) goes to $+\infty$ fast enough near $\pa\Omega$, which  in turn ensures the gradient blowup property of $u$, namely the last condition in (\ref{eqn_intro3}).

CAGC hypersurfaces and the underlying Monge-Amp\`ere problem (\ref{eqn_intro3}) were first studied by Li, Simon and Chen in \cite{lisimoncrelle}, where they proved unique solvability in any dimension when $\pa\Omega$ and $\phi$ are both \emph{smooth}. Thus, one of the main novelties of this paper is the consideration of boundary value $\phi$ with much weaker regularity assumption and possibly with infinite values, although in this situation we have to restrict to dimension $n=2$ for \emph{regularity} of the solutions (\cf \cite{bonsante-fillastre} and Remark \ref{remark_smooth} below). 


We shall give a more precise geometric description of the CAGC surface resulting from the function $u$ given by Theorem \ref{thm_intro1}. It is known that a non-degenerate hypersurface $\Sigma\subset\mathbb{R}^{n+1}$ has CAGC if and only if its affine normal mapping $N:\Sigma\rightarrow\mathbb{R}^{n+1}$ has image in an affine sphere. Given $k>0$ and a proper convex cone $C\subset\mathbb{R}^{n+1}$, we call $\Sigma$ an \emph{affine $(C,k)$-hypersurface} if it is locally strongly convex, has CAGC $k$ and $N(\Sigma)$ lies in a scaling of the Cheng-Yau affine sphere $\Sigma_C\subset C$ mentioned earlier. We deduce from Theorem \ref{thm_intro1}:
\begin{thmx}\label{thm_intromain}
Let $C\subset\mathbb{R}^3$ be a proper convex cone such that the projectivized dual cone $\mathbb{P}(C^*)\subset \mathbb{RP}^{*2}$ satisfies the exterior circle condition. Let $D\subset\mathbb{R}^3$ be a proper $C$-regular domain. Then for every $k>0$ there exists a unique complete affine $(C,k)$-surface $\Sigma_k\subset D$ which generates $D$. Moreover, $\Sigma_k$ is asymptotic to the boundary of $D$.
\end{thmx}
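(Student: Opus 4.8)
The plan is to translate the purely analytic statement of Theorem~\ref{thm_intro1} into the affine-geometric setting via the standard dictionary between convex functions, convex hypersurfaces, and $C$-regular domains. First I would recall the construction associating to a proper convex cone $C\subset\mathbb{R}^3$ its projectivized dual cone $\mathbb{P}(C^*)\subset\mathbb{RP}^{*2}$: after a suitable affine chart, $\mathbb{P}(C^*)$ is identified with a bounded convex domain $\Omega\subset\mathbb{R}^2$, which by hypothesis satisfies the exterior circle condition. A proper $C$-regular domain $D$ is, by definition, an intersection of half-spaces bounded by hyperplanes dual to points of $\partial C$ (equivalently of $\partial\Omega$), and such domains are classified by lower semicontinuous functions $\phi:\partial\Omega\to\mathbb{R}\cup\{+\infty\}$ exactly as in the Minkowski case recalled in the introduction; the support function of $D$ restricted to the relevant directions is $\phi$, and $D$ being proper corresponds to $\mathsf{dom}(\phi)$ having at least three points (so that its convex hull is two-dimensional). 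This is the first substantive step: setting up the correspondence $D\leftrightarrow(\Omega,\phi)$ carefully, including the properness translation.

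Next I would identify, given $k>0$, the constant $c_k>0$ and check that the graph of the convex function $u$ produced by Theorem~\ref{thm_intro1} for the data $(\Omega,\phi)$ is a CAGC surface of the required type. The point is that the first equation in \eqref{eqn_intro3}, $\det\D^2 u = c_k(-w_\Omega)^{-n-2}$, is precisely the equation characterizing CAGC $k$ hypersurfaces \emph{relative to the cone $C$}: here $w_\Omega$ is the Cheng--Yau solution to \eqref{eqn_intro1} on $\Omega$, whose graph is (a Legendre-type transform of) the Cheng--Yau affine sphere $\Sigma_C$, and the two-step structure of the equation is exactly what forces the affine normal map $N$ of $\mathrm{graph}(u)$ to land in a scaling of $\Sigma_C$. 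I would invoke the computation of Li--Simon--Chen \cite{lisimoncrelle} identifying solutions of \eqref{eqn_intro3} with affine $(C,k)$-surfaces, now applied to the less regular $u$ of Theorem~\ref{thm_intro1}: since $u$ is smooth in $U=\mathrm{int}\,\mathsf{dom}(u)$, the surface $\Sigma_k:=\mathrm{graph}(u|_U)$ is a genuine smooth locally strongly convex surface satisfying the CAGC equation there.

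The third step is the geometric interpretation of the two boundary conditions. The gradient blowup $|\nabla u(x)|\to+\infty$ as $x\to\partial U$ says precisely that $\Sigma_k$ is \emph{complete} and that it closes up asymptotically onto the boundary structure of $D$ — the vertical directions over $\partial U$ correspond to the null/lightlike-type directions defining $D$. The boundary condition $u|_{\partial\Omega}=\phi$, together with $\mathsf{dom}(u)=\mathrm{Conv}(\mathsf{dom}(\phi))$, says that the support function of the convex region above $\Sigma_k$ agrees with that of $D$ on all relevant directions, i.e. $\Sigma_k$ \emph{generates} $D$ in the sense that $D$ is the regular domain determined by (the asymptotic data of) $\Sigma_k$; and it simultaneously gives that $\Sigma_k$ is asymptotic to $\partial D$. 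Uniqueness transfers directly: any complete affine $(C,k)$-surface generating $D$ must, under the dictionary, have defining function solving \eqref{eqn_intro3} with boundary data $\phi$, so uniqueness in Theorem~\ref{thm_intro1} gives uniqueness of $\Sigma_k$.

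I expect the main obstacle to be the last step — the precise matching between the analytic boundary behavior of $u$ and the geometric notions of ``generates $D$'' and ``asymptotic to $\partial D$''. One must verify that the convex hull identification $\mathsf{dom}(u)=\mathrm{Conv}(\mathsf{dom}(\phi))$ correctly encodes the full recession/lightlike boundary of $D$ (not just a proper subset of it), and that gradient blowup is not merely completeness of the induced or affine metric but exactly the asymptoticity to the cone-type directions; this requires a careful analysis of the behavior of $u$ and $Du$ near the portion of $\partial U$ lying in the interior of $\Omega$ versus the portion on $\partial\Omega$. The cone-duality setup (first step) also demands care, since one must check that the exterior circle condition on $\mathbb{P}(C^*)$ is exactly the hypothesis on $\Omega$ needed to apply Theorem~\ref{thm_intro1}, and that no regularity of $\partial D$ itself is silently assumed. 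The remaining steps are essentially bookkeeping on top of Theorem~\ref{thm_intro1} and the Li--Simon--Chen identification.
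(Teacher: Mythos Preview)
Your overall strategy matches the paper's: reduce to the analytic Theorem~\ref{thm_intro1} via the dictionary between $C$-regular domains and functions on $\partial\Omega$, then use the Li--Simon--Chen identification of CAGC surfaces with solutions of \eqref{eqn_intro3}, and read off completeness, ``generates $D$'', and asymptoticity from the boundary behavior of $u$. The paper carries this out via Theorem~\ref{thm_graph}, Corollary~\ref{coro_ckmongeampere}, and Theorem~\ref{thm_asymp}.

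There is, however, one genuine confusion that would break the argument as written. You set $\Sigma_k := \mathrm{graph}(u|_U)$, but this is not the surface in question: $u$ lives on the bounded domain $U\subset\Omega\subset\R^2$, so $\mathrm{graph}(u|_U)$ is a bounded piece of a surface and cannot be a complete $C$-convex surface sitting inside the unbounded domain $D$. The correct object is $\Sigma_k = \gra{u^*}$, the graph of the \emph{Legendre transform} of $u$, which is an entire graph over $\R^2$ (this is the content of Theorem~\ref{thm_graph}~(\ref{item_thmgraph2}) and Corollary~\ref{coro_ckmongeampere} in the paper). The function $u$ on $\Omega$ plays the role of a support-type function for $\Sigma_k$, not its graphing function. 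Once this is corrected, the rest of your outline goes through: gradient blowup of $u$ on $\partial U$ is exactly the condition $u\in\S_0(\Omega)$, which by Legendre duality means $u^*$ is a smooth, strictly convex function on all of $\R^2$ whose graph is complete; the condition $u|_{\partial\Omega}=\phi$ gives that $\Sigma_k$ generates $D$ (Theorem~\ref{thm_graph}~(\ref{item_thmgraph3})); and asymptoticity to $\partial D$ follows from the additional condition $\env{\phi}-u\to 0$ on $\partial U$ (Theorem~\ref{thm_asymp}), which the paper shows is \emph{equivalent} to the gradient blowup on $\partial U\cap\Omega$ (this equivalence, Part~(\ref{item_thmmain2}) of Theorem~\ref{thm_main}, is the nontrivial step you flag as the ``main obstacle'', and it does require separate arguments on $\partial U\cap\Omega$ versus $\partial U\cap\partial\Omega$).
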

Here, a \emph{$C$-regular domain} is defined in the same way as regular domains in $\R^{n,1}$ mentioned earlier, except that the role of the future light cone $C_0\subset\R^{n,1}$ is replaced by $C$ (see Section \ref{subsec_cregular} for details). The exterior circle condition on $\mathbb{P}(C^*)$ is equivalent to the \emph{interior} circle condition on $\mathbb{P}(C)$, but $\mathbb{P}(C^*)$ plays a more important role in our analysis because it is essentially the convex domain $\Omega$ in \eqref{eqn_intro3}, while the surface $\Sigma_k$ claimed in Theorem \ref{thm_intromain} is given by the graph of Legendre transform of the function $u$ from Theorem \ref{thm_intro1}. In this regard, the gradient blowup property of $u$ corresponds to the completeness of $\Sigma_k$, whereas the last statement of Theorem \ref{thm_intromain} will be proved using the last statement of Theorem \ref{thm_intro1}.


Theorem \ref{thm_intromain} and its proof imply a  classification of complete affine $(C,k)$-surfaces:
\begin{corx}\label{coro_intro}
Given a constant $k>0$ and a proper convex cone $C\subset\R^3$ such that $\mathbb{P}(C^*)$ satisfies the exterior circle condition, there are natural one-to-one correspondences among the following three types of objects:
\begin{enumerate}[label=(\alph*)]
	\item\label{item_corointro1} proper $C$-regular domains $D\subset\R^3$,
	\item\label{item_corointro2} complete affine $(C,k)$-surfaces $\Sigma\subset\R^3$, and
	\item\label{item_corointro3} lower semicontinuous functions $\phi:\pa\mathbb{P}(C^*)\rightarrow\R\cup\{+\infty\}$ such that $\dom{\phi}$ has at least three points,
\end{enumerate}
where the correspondence \ref{item_corointro1}$\leftrightarrow$\ref{item_corointro2} is given by Theorem \ref{thm_intromain}. Moreover, given $\Sigma$ from \ref{item_corointro2}, the image of the projectivized affine conormal mapping $\mathbb{P}\circ N^*:\Sigma\rightarrow\mathbb{P}(C^*)$ is the interior of the convex hull of $\dom{\phi}$ for the corresponding $\phi$ from \ref{item_corointro3}.
\end{corx}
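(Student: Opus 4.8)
The plan is to deduce the corollary from Theorem \ref{thm_intromain} once its two ``endpoints'' \ref{item_corointro1} and \ref{item_corointro3} have been matched, and then to identify the conormal image using the realization of $\Sigma_k$ as a Legendre graph.

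I would first recall from Section \ref{subsec_cregular} that a $C$-regular domain is canonically encoded, exactly as in the Minkowski setting, by a lower semicontinuous function $\phi\colon\partial\mathbb{P}(C^*)\to\R\cup\{+\infty\}$ --- it is the intersection of the supporting half-spaces of $C$ along its extreme rays, translated by the amounts prescribed by $\phi$ --- and that every such $\phi$ occurs, giving the correspondence \ref{item_corointro1}$\leftrightarrow$\ref{item_corointro3}. The only extra point is that, under the exterior circle condition, $D$ is proper precisely when $\dom{\phi}$ has at least three points: the exterior circle condition forbids segments in $\partial\mathbb{P}(C^*)$, so any three points of $\dom{\phi}$ are non-collinear and $\chull{\dom{\phi}}$ is two-dimensional --- which is exactly what prevents $D$ from containing a line --- whereas with at most two points in $\dom{\phi}$ the domain degenerates. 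Together with Theorem \ref{thm_intromain}, which for fixed $k>0$ sends each proper $C$-regular domain $D$ to the complete affine $(C,k)$-surface $\Sigma_k\subset D$ generating $D$, and whose uniqueness clause conversely identifies an arbitrary complete affine $(C,k)$-surface $\Sigma$ with $\Sigma_k(D(\Sigma))$ for the proper $C$-regular domain $D(\Sigma)\supset\Sigma$ it generates, this yields all three bijections and a commuting triangle.

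For the final assertion, let $\phi$ correspond to $\Sigma=\Sigma_k$ and let $u$ be the function produced by Theorem \ref{thm_intro1} for the datum $\phi$ on $\Omega\cong\mathbb{P}(C^*)$; by the construction behind Theorem \ref{thm_intromain}, $\Sigma_k$ is the graph of the Legendre transform $u^*$ of $u$. The projectivized affine conormal $\mathbb{P}\circ N^*$ sends a point of $\Sigma_k$ to its affine tangent plane, viewed as a point of $\mathbb{RP}^{*2}$; for the graph of $u^*$, and read in the affine chart identifying $\mathbb{P}(C^*)$ with $\Omega$, this tangent plane is the point $\nabla u^*(y)$ lying above the base point $y$. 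Hence the image of $\mathbb{P}\circ N^*$ equals the image of the gradient map $\nabla u^*$. Since $u$ is smooth and strictly convex on $U:=\interior\dom{u}$ with $|\nabla u|\to+\infty$ as one approaches $\partial U$, the gradient $\nabla u$ is a diffeomorphism of $U$ onto an open subset of $\R^2$ whose inverse is $\nabla u^*$, and convex duality together with the gradient blowup shows that the image of $\nabla u^*$ is exactly $U$. Since $\dom{u}=\chull{\dom{\phi}}$ by Theorem \ref{thm_intro1}, the conormal image is $\interior\chull{\dom{\phi}}$, as claimed.

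The step requiring the most care is this last identification: one must match the \emph{affine} (rather than merely Euclidean) conormal direction with the projective tangent hyperplane, keep track of the affine chart --- and of any orientation reversal --- identifying $\Omega$ with $\mathbb{P}(C^*)$, and verify that the gradient blowup genuinely confines the image of $\nabla u^*$ to $U$ and not to $\overline U$ (equivalently, that no boundary point of $U$ lies in the domain of the subdifferential of $u$). The remaining ingredients --- the parametrization of $C$-regular domains and the equivalence of properness with ``$\dom{\phi}$ has at least three points'' --- are furnished by Section \ref{subsec_cregular} and by routine convex geometry.
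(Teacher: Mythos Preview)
Your proposal is correct and follows essentially the paper's approach: parametrize proper $C$-regular domains by boundary data $\phi$ (this is Theorem \ref{thm_graph}, not Section \ref{subsec_cregular}), match them with complete affine $(C,k)$-surfaces via Theorem \ref{thm_intromain}/Theorem \ref{thm_main}, and read off the conormal image from the Legendre-graph description. One point to tighten: when you invert \ref{item_corointro1}$\to$\ref{item_corointro2} by taking an arbitrary complete affine $(C,k)$-surface $\Sigma$ and forming the domain $D(\Sigma)$ it generates, you assert without argument that $D(\Sigma)$ is \emph{proper}; the paper avoids this by instead invoking Corollary \ref{coro_ckmongeampere} to write $\Sigma=\gra{u^*}$ for some $u\in\S_0(\Omega)$ solving \eqref{eqn_main}, whereupon Theorem \ref{thm_main}~(\ref{item_thmmain2}) gives $\interior\dom{u}=\interior\chull{\dom{\phi}}\neq\emptyset$ and properness follows from Theorem \ref{thm_graph}~(\ref{item_thmgraph1}). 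For the conormal image, the paper simply cites Corollary \ref{coro_ckmongeampere}, which in turn uses the explicit formula $N^*(x)=w^{-1}\binom{x}{-1}$ from Proposition \ref{prop_computations}; your Legendre-duality argument reaches the same conclusion and is equally valid.
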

Here, the affine conormal mapping $N^*:\Sigma\to\R^{*3}$ has image in a scaling of the affine sphere $\Sigma_{C^*}\subset C^*$ dual to $\Sigma_C$, while the projectivization $\mathbb{P}:\R^{*3}\setminus\{0\}\to\mathbb{RP}^{*2}$ gives a bijection from $\Sigma_{C^*}$ to $\mathbb{P}(C^*)$.

Furthermore, we obtain foliations of $C$-regular domains by CAGC surfaces:
\begin{thmx}\label{thm_introfoliation}
The family of surfaces
	$(\Sigma_k)_{k>0}$ from Theorem \ref{thm_intromain} is a foliation of $D$. Moreover, the function $K:D\rightarrow\R$ defined by $K|_{\Sigma_k}=\log k$ is convex.
\end{thmx}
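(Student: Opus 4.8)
The plan is to study the one-parameter family $u_k$ ($k>0$) of solutions of \eqref{eqn_intro3} produced by Theorem~\ref{thm_intro1}, for $\Omega=\mathbb{P}(C^*)$ and the given $\phi$, so that each $\Sigma_k$ is the graph of the Legendre transform $u_k^\ast$. Two structural facts will be used throughout: by the last clause of Theorem~\ref{thm_intro1} all the $u_k$ have the same effective domain $\overline U:=\chull{\dom\phi}$, hence all live over the fixed convex set $U=\interior\dom{u_k}$; and, for $n=2$, the constant $c_k$ in \eqref{eqn_intro3} is a single negative power of $k$, say $c_k=a\,k^{b}$ with $a>0$ and $b<0$ (this follows from the definition of $c_k$ together with the scaling of affine Gauss--Kronecker curvature, and is consistent with the hyperboloid example, where $c_k$ being decreasing is exactly why the leaves run towards $\pa D$ as $k\to\infty$). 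I will also use that $k\mapsto u_k$ depends smoothly on $k$ --- this follows from uniqueness in Theorem~\ref{thm_intro1} via the implicit function theorem for $(u,k)\mapsto\det\D^2 u-c_k(-w_\Omega)^{-4}$ --- and, crucially, a comparison principle for \eqref{eqn_intro3}: if $v_0,v_1$ are convex on $\overline U$ with $v_0\le v_1$ on $\pa\Omega$ and $\det\D^2 v_0\ge\det\D^2 v_1$ on $U$, then $v_0\le v_1$ on $U$. The delicate point in this principle is the behaviour along the free boundary $\pa U\cap\Omega$, where the gradient blowup condition plays the role of a boundary condition; proving it there --- by barriers built from the solutions themselves, or by approximation on an exhaustion of $U$ --- is, I expect, the main obstacle, and it is of the same nature as the a priori estimates behind Theorem~\ref{thm_intro1}.

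Granting this, here is how I would obtain the foliation. First, the leaves are pairwise disjoint and linearly ordered: for $k<k'$ one has $c_k>c_{k'}$, so $\det\D^2 u_k\ge\det\D^2 u_{k'}$ on $U$ with the same boundary value $\phi$, whence $u_k\le u_{k'}$ by comparison; moreover $\psi:=u_{k'}-u_k\ge 0$ is a strict supersolution of the linear, locally uniformly elliptic operator obtained by linearising $\det\D^2$ along the segment from $u_k$ to $u_{k'}$ (its value on $\psi$ being $(c_{k'}-c_k)(-w_\Omega)^{-4}<0$), and a nonnegative strict supersolution cannot vanish at an interior point, so $u_k<u_{k'}$ on $U$; passing to Legendre transforms (the supremum defining $u_{k'}^\ast(p)$ is attained at an interior point of $U$) gives $u_k^\ast>u_{k'}^\ast$, so $\Sigma_k\cap\Sigma_{k'}=\emptyset$. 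Next, the leaves exhaust $D$: as $k\to 0$ one has $c_k\to\infty$, and comparing $u_k$ on small balls $B\Subset U$ with explicit radial solutions of $\det\D^2 v=\text{const}$ shows $u_k\to-\infty$ locally uniformly on $U$, so $u_k^\ast\to+\infty$ and $\Sigma_k$ eventually escapes every compact subset of $D$; as $k\to\infty$, $c_k\to 0$ and the increasing functions $u_k$ converge to a finite convex function $u_\infty$ (finite because each $u_k(x)$ is bounded above by convex combinations of the finite values of $\phi$, uniformly in $k$), with vanishing Monge--Amp\`ere measure, and using that each $\Sigma_k$ is asymptotic to $\pa D$ (Theorem~\ref{thm_intromain}) together with the description of $C$-regular domains from Section~\ref{subsec_cregular} one identifies $u_\infty$ with the convex envelope of $\phi$, whose Legendre transform is the convex function $g$ with $\gra g=\pa D$. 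Combined with the previous point and continuity in $k$, for each $p$ in the common base $B$ of the $\Sigma_k$ (Corollary~\ref{coro_intro}) the map $k\mapsto u_k^\ast(p)$ is a continuous strictly decreasing bijection onto $(g(p),+\infty)$, hence $D=\bigsqcup_{k>0}\Sigma_k$. Since each $\Sigma_k$ is smooth (Theorem~\ref{thm_intromain}), $k\mapsto u_k$ is smooth and the monotonicity is strict, the map $D\ni(p,s)\mapsto(p,\log k)$ with $(p,s)\in\Sigma_k$ --- which is precisely $(\id,K)$ --- is a $C^\infty$ diffeomorphism onto $B\times\R$ carrying $\Sigma_k$ to $B\times\{\log k\}$; thus $(\Sigma_k)_{k>0}$ is a foliation.

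It remains to show $K$ is convex, and I would reduce this twice. Writing $t=\log k$ and $v(t,p):=u_{e^t}^\ast(p)$, which by the above is strictly decreasing in $t$ with $v(K(p,s),p)=s$, convexity of $K$ follows once $v$ is jointly convex in $(t,p)$: indeed, for $(p_0,s_0),(p_1,s_1)\in D$ with $t_i=K(p_i,s_i)$, joint convexity gives $v\big(\lambda t_0+(1-\lambda)t_1,\lambda p_0+(1-\lambda)p_1\big)\le\lambda s_0+(1-\lambda)s_1$, and monotonicity of $v(\cdot,\bar p)$ then forces $K\big(\lambda p_0+(1-\lambda)p_1,\lambda s_0+(1-\lambda)s_1\big)\le\lambda t_0+(1-\lambda)t_1$. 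In turn, since $v(t,p)=\sup_x\big(\langle p,x\rangle-u_{e^t}(x)\big)$ is a supremum of functions affine in $p$, it is jointly convex as soon as each $t\mapsto u_{e^t}(x)$ is concave (then $\langle p,x\rangle-u_{e^t}(x)$ is jointly convex, with no mixed second derivatives). Finally I would prove this concavity straight from the equation: for $k_0,k_1>0$, $\lambda\in[0,1]$, and $w:=\lambda u_{k_0}+(1-\lambda)u_{k_1}$, concavity of $\log\det$ on positive definite matrices gives
\[
\det\D^2 w\ \ge\ (\det\D^2 u_{k_0})^{\lambda}(\det\D^2 u_{k_1})^{1-\lambda}\ =\ a\,\big(k_0^{\lambda}k_1^{1-\lambda}\big)^{b}(-w_\Omega)^{-4}\ =\ \det\D^2 u_{k_0^{\lambda}k_1^{1-\lambda}},
\]
using $c_k=ak^{b}$; as $w$ has domain $\overline U$ and boundary value $\phi$, the comparison principle yields $w\le u_{k_0^{\lambda}k_1^{1-\lambda}}$, i.e.\ $\lambda\,u_{e^{t_0}}(x)+(1-\lambda)\,u_{e^{t_1}}(x)\le u_{e^{\lambda t_0+(1-\lambda)t_1}}(x)$ for all $x\in U$ --- precisely the concavity of $t\mapsto u_{e^t}(x)$. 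This gives the second assertion of Theorem~\ref{thm_introfoliation}; as noted, the one genuinely hard ingredient is the comparison principle across the free boundary, everything else being convex analysis or a reuse of the estimates underlying Theorem~\ref{thm_intro1}.
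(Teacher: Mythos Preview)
Your proposal is correct and follows essentially the same route as the paper: the paper reparametrizes by $t$ with $c=e^{-t}$, proves in Theorem~\ref{thm_main}\,(\ref{item_thmmain4}) that $t\mapsto u_t(x)$ is strictly increasing and concave with the two endpoint limits you describe (via the same $\log\det$--concavity computation and comparison argument you give), and then invokes the abstract criterion Theorem~\ref{thm_foliation}, whose proof is exactly your reduction ``convexity of $K$ $\Leftarrow$ joint convexity of $(t,p)\mapsto u_t^\ast(p)$ $\Leftarrow$ concavity of $t\mapsto u_t(x)$''. The ``hard ingredient'' you flag---comparison across the free boundary $\pa U\cap\Omega$---is handled in the paper by the Generalized Comparison Principle (Lemma~\ref{lemma_comparison}) combined with the two-sided barrier $\env\phi+\sqrt{c}\,w_U\le u\le\env\phi$ of Lemma~\ref{lemma_uestimate}, which in particular gives the $k\to\infty$ limit $u_\infty=\env\phi$ directly (more cleanly than your route via asymptoticity). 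One small overreach: your implicit-function-theorem claim that the foliation is $C^\infty$ is stronger than what the theorem asserts or the paper proves---the paper only obtains that the leaves are the level sets of the \emph{convex} (hence continuous) function $K$, and explicitly leaves smoothness of $K$ as an open question.
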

This foliation has been studied in the Minkowski setting in \cite{bbz,bs,bon_smillie_seppi}, although the convexity of the ``time function'' $K$ seems to be new except when $D$ is the cone $C$ itself. When $D=C$, since every $\Sigma_k$ is a scaling of the Cheng-Yau affine sphere $\Sigma_C$,
the function $K$ is relatively easy to understand and is actually a solution to the following Monge-Amp\`ere equation on $C$:
\begin{equation}\label{eqn_intro4}
\begin{cases}
\det\D^2K=a\,e^{bK}\\
K|_{\pa C}=+\infty
\end{cases}
\end{equation}
($a,b>0$ are constants, see \cite[Appendix A]{sasaki}); whereas Cheng and Yau \cite{chengyau2} proved that \eqref{eqn_intro4} has a unique solution not only for proper convex cones $C\subset\R^3$ but for any proper convex domain in $\R^d$ ($d\geq 2$), and the Hessian of the solution gives a complete Riemannian metric on the domain. On a $C$-regular domain $D\subset\R^3$ which is not a translation of $C$, this solution does not coincide with the function $K$ from Theorem \ref{thm_introfoliation} in general, and it is worth further investigation whether $K$ is smooth and gives a complete Riemannian metric.

\subsection*{About the exterior circle condition} 
Let us give more discussions about the exterior circle condition in all the above results, which is responsible for the gradient blowup condition in \eqref{eqn_intro3} and the completeness of the surface $\Sigma_k$ from Theorem \ref{thm_intromain} as mentioned. In the last part of the paper, we will produce a class of examples where this condition is not satisfied and Theorems \ref{thm_intro1} and \ref{thm_intromain} do not hold:
\begin{propx}\label{prop_introimproper}
	Let $\Omega\subset \R^2$ be a bounded convex domain, $\Delta\subset\Omega$ be an open triangle with vertices on $\pa\Omega$ and $\phi$ be the function on $\pa\Omega$ vanishing at the vertices of $\Delta$ with $\phi=+\infty$ everywhere else.
	\begin{enumerate}
		\item\label{item_introimproper1} If $\Omega$ satisfies the exterior circle condition at every vertex of $\Delta$ (see Figure \ref{figure_improper} \subref{figure_improper1}), then there exists a unique $u$ satisfying \eqref{eqn_intro3} as in Theorem \ref{thm_intro1}.
		\item\label{item_introimproper2} If $\pa\Omega$ contains an open line segment meeting $\pa\Delta$ exactly at a vertex (see Figure \ref{figure_improper} \subref{figure_improper2}), then there does not exist $u$ satisfying \eqref{eqn_intro3}. 
		\end{enumerate}
\end{propx}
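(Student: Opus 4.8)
For part \ref{item_introimproper1} the plan is to reproduce the proof of Theorem \ref{thm_intro1} by approximation, the point being that when $\dom\phi=\{v_1,v_2,v_3\}$ the exterior circle condition is only needed at those three points. First note that this condition at a vertex $v_i$ forbids $\partial\Omega$ from containing a segment with an endpoint at $v_i$, so the relative interiors of the three edges of $\Delta$ lie in $\interior\Omega$; moreover, taking a round disk $B_i\supseteq\Omega$ with $v_i\in\partial B_i$ and comparing $w_\Omega$ with the explicit solution $w_{B_i}$ of \eqref{eqn_intro1} on $B_i$ gives $-w_\Omega(x)\le C\,\dist(x,\partial B_i)^{1/2}\le C\,|x-v_i|^{1/2}$, hence $(-w_\Omega)^{-4}\ge c\,|x-v_i|^{-2}$, for $x\in\Omega$ near $v_i$. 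Now choose bounded convex domains $\Omega_j\nearrow\Omega$ satisfying the exterior circle condition \emph{everywhere}, with $v_1,v_2,v_3\in\partial\Omega_j$ and $\overline\Delta\cap\partial\Omega_j=\{v_1,v_2,v_3\}$ (possible exactly by the previous remark), and apply Theorem \ref{thm_intro1} to $(\Omega_j,\phi_j)$, where $\phi_j\colon\partial\Omega_j\to\R\cup\{+\infty\}$ vanishes at the $v_i$ and is $+\infty$ elsewhere, obtaining $u_j$ that solves \eqref{eqn_intro3} with $\dom u_j=\overline\Delta$. One then passes to the limit: $w_{\Omega_j}\uparrow w_\Omega$ locally uniformly on $\Omega$ (monotone stability and uniqueness for \eqref{eqn_intro1}), the $u_j$ are locally uniformly bounded on $\Delta$ (convexity together with $u_j(v_i)=0$), and the gradient-blowup estimates of Theorem \ref{thm_intro1} are uniform in $j$ — near the $v_i$ because $\Omega_j\subseteq B_i$ yields the uniform lower bound above on $(-w_{\Omega_j})^{-4}$, and near the open edges of $\Delta$ because the relevant barrier construction in the cited proof carries over with uniform constants. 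Stability of the Monge--Amp\`ere operator then produces a solution $u$ of \eqref{eqn_intro3} for $(\Omega,\phi)$ with $\dom u=\overline\Delta$; uniqueness is the comparison argument from the proof of Theorem \ref{thm_intro1}, which does not use the exterior circle condition. (Equivalently, one may simply check that in that proof the exterior circle condition is invoked only at points of $\partial\Omega\cap\dom\phi$.)

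For part \ref{item_introimproper2}, suppose for contradiction that $u$ satisfies \eqref{eqn_intro3}, and write $U=\interior\dom u$. Since $\phi$ vanishes on $\dom\phi=\{v_1,v_2,v_3\}$, convexity gives $\overline\Delta\subseteq\dom u$, and $u|_{\partial\Omega}=\phi$ gives $\dom u\cap\partial\Omega=\{v_1,v_2,v_3\}$. Because $\det\D^2u=c_k(-w_\Omega)^{-4}>0$ we have $\D^2u>0$ on $U$, so $\nabla u\colon U\to\R^2$ is a local diffeomorphism; the gradient-blowup condition then forces $\nabla u(U)$ to be open \emph{and} closed in $\R^2$, hence $\nabla u$ is a diffeomorphism of $U$ onto all of $\R^2$, the Legendre transform $u^*$ is smooth and strictly convex on $\R^2$ with $\nabla u^*=(\nabla u)^{-1}$, and the Monge--Amp\`ere measure satisfies $\int_E c_k(-w_\Omega)^{-4}\,\dx=|\nabla u(E)|$ for Borel $E\subseteq U$. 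Let $v_1$ be the vertex of $\Delta$ lying in the open segment $S\subseteq\partial\Omega$ and $L\supseteq S$ the corresponding line, so $\partial\Omega\cap B(v_1,\rho_0)=L\cap B(v_1,\rho_0)$ for $\rho_0$ small. I claim $\dom u$ has a genuine corner at $v_1$ whose two bounding segments lie in $\interior\Omega$ away from $v_1$. Indeed, $\overline{\dom u}\cap L=\{v_1\}$: otherwise $\overline{\dom u}$ would contain a nondegenerate segment $I\subseteq L$ through $v_1$, and for the outward conormal $e_0$ of $I$ the supremum defining $u^*(te_0)$ would be attained on $I$, where $u$ equals $0$ at $v_1$ and $+\infty$ elsewhere, making $u^*$ affine along the ray $\{te_0\}$ and contradicting strict convexity of $u^*$. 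Hence $\partial(\dom u)\setminus\{v_1\}$ lies in $\interior\Omega$ near $v_1$, where $(-w_\Omega)^{-4}$ is locally bounded; if $\partial(\dom u)$ had positive curvature or a corner at some $q\in\partial(\dom u)\cap B(v_1,\rho_0)\setminus\{v_1\}$, the outward normals of $\dom u$ over a neighborhood of $q$ would sweep an arc of directions, so $\nabla u$ of a small neighborhood of $q$ in $U$ would contain an infinite angular sector $\{|p|\ge R,\ p/|p|\in K\}$, whose measure is infinite while $\int c_k(-w_\Omega)^{-4}$ over that neighborhood is finite — impossible. Thus $\partial(\dom u)$ near $v_1$ consists of two straight segments meeting at $v_1$; they cannot lie on $\partial\Omega$ (by $\overline{\dom u}\cap L=\{v_1\}$), nor be collinear (that would make $\dom u$ locally a half-plane, necessarily bounded by $L$ since $\dom u\subseteq\overline\Omega$, again contradicting $\overline{\dom u}\cap L=\{v_1\}$), and since near $v_1$ these segments lie in the open half-plane bounded by $L$ they make a positive angle with $L$; so $\dom u$ has a genuine corner at $v_1$ and $\dom u\cap B(v_1,\rho)\subseteq\{x:\dist(x,L)\ge\sigma|x-v_1|\}$ for some $\sigma>0$ and small $\rho$.

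Now compare two estimates of $|\nabla u(U\cap B(v_1,\rho))|$. On one hand, the corner of $\dom u$ at $v_1$ has angle $<\pi$, so the normal cone $N_{v_1}(\dom u)$ has nonempty interior; for a closed arc $K$ inside it one has $\nabla u^*(p)\to v_1$ as $|p|\to\infty$ with $p/|p|\in K$ (the supremum defining $u^*$ being attained near $v_1$), so $\nabla u^*$ sends $\{|p|\ge R,\ p/|p|\in K\}$ into $B(v_1,\rho)\cap U$ and $|\nabla u(U\cap B(v_1,\rho))|=+\infty$. On the other hand, since $\partial\Omega$ is flat near $v_1$, comparing $w_\Omega$ from below with the Cheng--Yau solution $w_T$ of a triangle $T\subseteq\Omega$ having an edge on $S$ through $v_1$ — for which $-w_T$ is a constant multiple of the cube root of the product of the three normalized affine barycentric coordinates — yields $-w_\Omega(x)\ge c\,\dist(x,L)^{1/3}$ for $x$ near $v_1$, so
\begin{equation*}
\begin{aligned}
|\nabla u(U\cap B(v_1,\rho))|
&=\int_{U\cap B(v_1,\rho)}c_k(-w_\Omega)^{-4}\,\dx
\ \le\ C\int_{U\cap B(v_1,\rho)}\dist(x,L)^{-4/3}\,\dx\\
&\le\ C'\int_{\{\dist(\cdot,L)\ge\sigma|x-v_1|\}\cap B(v_1,\rho)}|x-v_1|^{-4/3}\,\dx\ <\ \infty ,
\end{aligned}
\end{equation*}
the last integral being finite in dimension two (in polar coordinates at $v_1$ it reduces to $\int_0^\rho r^{-1/3}\,dr$). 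This contradiction shows that no $u$ satisfies \eqref{eqn_intro3}.

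The main obstacle in part \ref{item_introimproper1} is making the gradient-blowup estimates of Theorem \ref{thm_intro1} uniform along the approximating sequence (equivalently, isolating precisely where the exterior circle condition enters that proof). In part \ref{item_introimproper2} the crux is the structural claim that $\dom u$ has a genuine corner at $v_1$ that stays inside $\Omega$; once that is granted, everything reduces to the fact that $-w_\Omega$ decays like $\dist(\cdot,\partial\Omega)^{1/3}$ along a flat boundary portion rather than like $\dist(\cdot,\partial\Omega)^{1/2}$ as the exterior circle condition would force, so that the weight $(-w_\Omega)^{-4}\asymp|x-v_1|^{-4/3}$ becomes integrable over an angular sector at $v_1$ — which is exactly what the gradient blowup of $u$ rules out.
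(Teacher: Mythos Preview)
For Part~\ref{item_introimproper1}, your approximation scheme is unnecessary overhead. The parenthetical remark at the end is the correct route and is exactly what the paper does: in the proof of Theorem~\ref{thm_main}, the exterior circle condition enters only through Part~(\ref{item_thmmain1}), which is applied pointwise at $x_0\in\partial U\cap\partial\Omega$. Here $\partial U\cap\partial\Omega=\{v_1,v_2,v_3\}$, so the hypothesis at those three points suffices; Parts~(\ref{item_thmmain2}) and~(\ref{item_thmmain3}) give existence, uniqueness, and $\dom u=\overline\Delta$ with no further assumption on $\Omega$.

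For Part~\ref{item_introimproper2} your strategy diverges from the paper's and has a real gap. You attempt to deduce directly that $\dom u$ has a genuine corner at $v_1$ with sides in $\interior\Omega$, but the argument is incomplete: the claim $\overline{\dom u}\cap L=\{v_1\}$ via ``$u^*$ would be affine along $\{te_0\}$'' does not work, since $u=+\infty$ on $I\setminus\{v_1\}$ and the supremum defining $u^*(te_0)$ is never attained there; and your exclusion of curvature on $\partial(\dom u)$ near $v_1$ does not rule out a $\C^1$ boundary with vanishing curvature that is not straight, nor a boundary tangent to $L$ at $v_1$, either of which would destroy the sector inclusion $\dom u\cap B(v_1,\rho)\subset\{\dist(\cdot,L)\ge\sigma|x-v_1|\}$ that you need for the integrability estimate.

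The paper bypasses all of this. Since any $u$ solving \eqref{eqn_intro3} lies in $\S_0(\Omega)$, it satisfies condition~\ref{item_thmmain21} of Theorem~\ref{thm_main}(\ref{item_thmmain2}); the equivalence \ref{item_thmmain21}$\Leftrightarrow$\ref{item_thmmain22} (proved via Lemma~\ref{lemma_uestimate}, which uses the Generalized Comparison Principle and needs no exterior circle condition) then forces $\dom u=\chull{\dom\phi}=\overline\Delta$ and $u|_{\partial\Delta}=0$. Once this is known, the paper finishes with a one--line barrier: Proposition~\ref{prop_womega}(\ref{item_womega2}) gives $(-w_\Omega)^{-4}\le c'\,|x-v_1|^{-4/3}$ on $\Delta$ near $v_1$, and Lemma~\ref{lemma_angle}(\ref{item_lemmaangle1}) with $\alpha=-4/3>-2$ (comparison with $|x-v_1|^{4/3}-l(x)$) yields finite inner derivative at $v_1$, contradicting $u\in\S_0(\Omega)$. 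Your Monge--Amp\`ere mass argument would also conclude correctly once $U=\Delta$ is in hand, but establishing that fact is the step you are missing.
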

\vspace{-20pt}
	\begin{figure}[h]
		\centering
		\subfloat[\label{figure_improper1}]{\includegraphics[width=.34\linewidth]{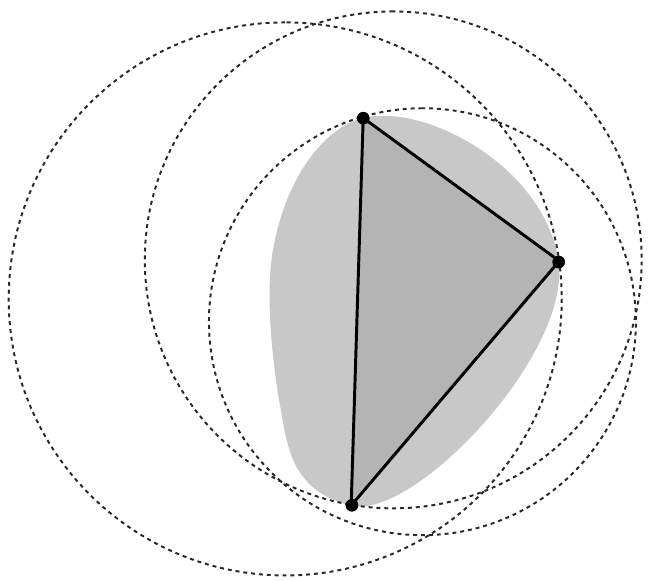}}
		\qquad
		\subfloat[\label{figure_improper2}]{\includegraphics[width=.24\linewidth]{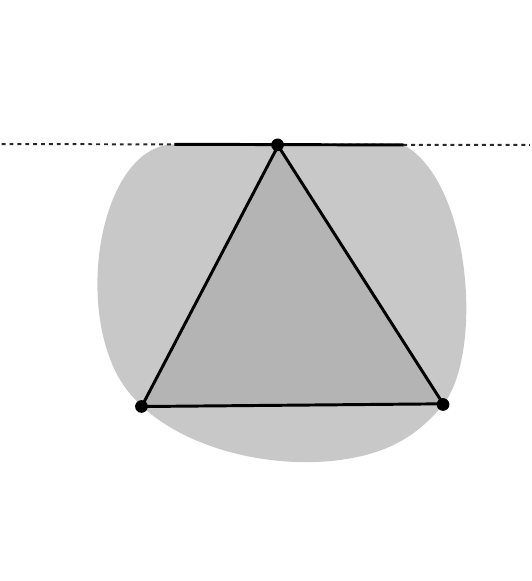}}
	\caption{$\Omega$ and $\Delta$ under the assumptions of Parts (\ref{item_introimproper1}) and (\ref{item_introimproper2}) of Proposition \ref{prop_introimproper}, respectively.}
		\label{figure_improper}
	\end{figure}

The reason for Part \eqref{item_introimproper2} is that any $u$ satisfying \eqref{eqn_intro3} can be shown to solve the first equation in \eqref{eqn_intro3} on the triangle $\Delta$ with vanishing boundary value on $\pa\Delta$, and then one can show that the gradient of $u$ does not blowup at the vertex of $\Delta$ on the line segment because the right-hand side of the equation does not go to $+\infty$ fast enough near the segment.

Geometrically, given a proper convex cone $C$, any circumscribed triangular cone $T$ as shown in Figure \ref{figure_tc} \subref{figure_tc3} is a $C$-regular domain, and we deduce from Proposition \ref{prop_introimproper} that if the radial projections of $C$ and $T$ on $\mathbb{RP}^2$ look like in  Figure \ref{figure_tc} \subref{figure_tc1}, which is dual to Figure \ref{figure_improper} \subref{figure_improper1}, then $T$ is foliated by affine $(C,k)$-surfaces as in Theorem \ref{thm_introfoliation}; whereas in the case of Figure \ref{figure_tc} \subref{figure_tc2}, dual to Figure \ref{figure_improper} \subref{figure_improper2}, $T$ is not generated by any complete affine $(C,k)$-surface. See Corollary \ref{coro_trianglecone} for details.
\begin{figure}[h]
	\centering
	\subfloat[\label{figure_tc1}]{\includegraphics[width=.25\linewidth]{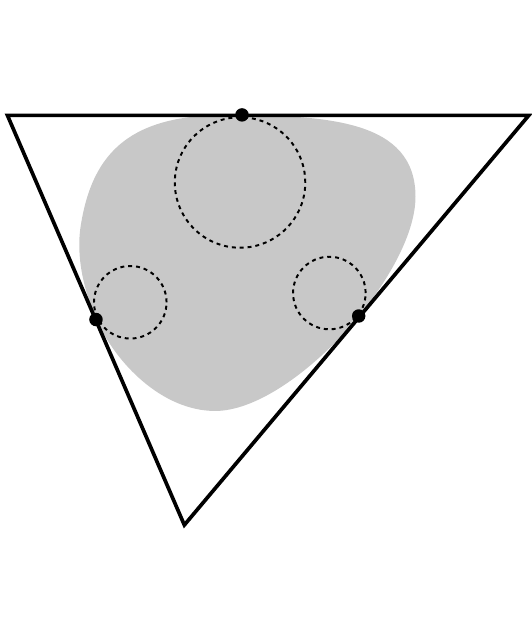}}
		\qquad
	\subfloat[\label{figure_tc2}]{\includegraphics[width=.25\linewidth]{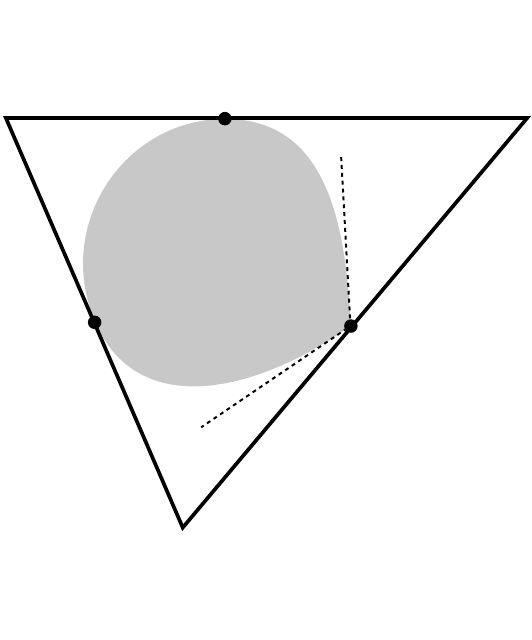}}
		\qquad
		\subfloat[\label{figure_tc3}]{\includegraphics[width=.26\linewidth]{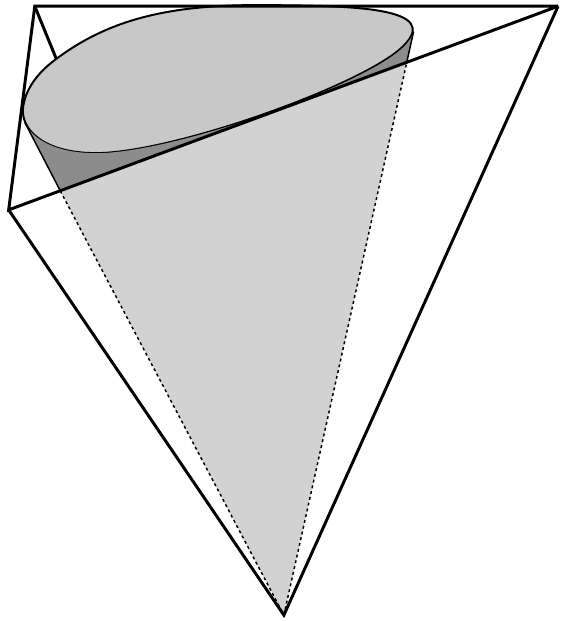}}
	\caption{The dual pictures of Figure \ref{figure_improper1} and \ref{figure_improper2}, and a convex cone with a circumscribed triangular cone.}
    \label{figure_tc}
\end{figure}

Finally, we point out that if the domain $D$ in Theorem \ref{thm_intromain} is preserved by an affine action of the fundamental group $\pi_1(S)$ of a closed topological surface $S$ and the linear part of the action is a \emph{Hitchin representation} $\pi_1(S)\to\SL(3,\R)$ preserving the cone $C$, then the unique existence of a CAGC 1 surface preserved by the action is already proved by Labourie \cite[Section 8]{labourie} from a different point of view. This in turn implies the unique solvability of \eqref{eqn_intro3} for the corresponding $\Omega$ and $\phi$.
In this case, $\Omega$ is a \emph{convex divisible set} \cite{benoist, benoist_survey} and does not satisfy the exterior circle condition. Nevertheless, $\pa\Omega$ and $\phi$ still have certain regularity properties, at least $\phi$ is $\R$-valued and continuous (see also \cite{benoist, guichard} for regularity results on $\pa\Omega$). Finding a simple necessary condition for unique solvability of \eqref{eqn_intro3} covering this case is a problem to be further investigated. 

Despite not being covered by our main results, the case of Hitchin representations has particular geometric significance  and is therefore one of the motivations behind our work. This is because a $C$-regular domain naturally arises in this case as \emph{domain of discontinuity}. In fact, in the Minkowski setting, Mess \cite{mess} showed  that any isometric action of $\pi_1(S)$ on $\R^{2,1}$ with linear part a \emph{Fuchsian representation} $\pi_1(S)\to\SO(2,1)$ is properly discontinuous on some regular domain $D$ and the quotient $D/\pi_1(S)\cong S\times\R$ is a prototype of \emph{maximal globally hyperbolic flat spacetimes}. We anticipate a similar result for affine actions with Hitchin linear parts. However, we will not pursue surface group actions further in this paper.
\vspace{-3pt}
\subsection*{Organization and methods of the paper} 
After reviewing backgrounds on affine differential geometry in Section \ref{sec_affinedg}, we introduce in Section \ref{sec_cregular} the main objects of this paper:  We first define $C$-regular domains and \emph{$C$-convex hypersurfaces}, a natural class of convex hypersurface generalizing the so-called \emph{future-convex} spacelike hypersurfaces in the Minkowski space, then we discuss hypersurfaces with CAGC and define affine $(C,k)$-hypersurfaces. 

Section \ref{sec_convex_prel} introduces some tools from convex analysis, such as convex envelopes, subgradient and Legendre transformation. Then in Section \ref{sec_legendre} we relate $C$-regular domains and {$C$-convex hypersurfaces} to the setting of convex analysis, by establishing some fundamental correspondences that enable to translate our geometric problems into an analytic framework.

Moving forward to the analytic setup, Section \ref{subsec_preliminaries} briefly reviews some ingredients in the theory of Monge-Amp\`ere equations, then the last two sections provide the proofs of our main results. Section \ref{sec_cagc} first provides the concrete formulation of the problem as in \eqref{eqn_intro3}, and then obtains some fundamental local estimates, close to the boundary, on the Cheng-Yau solutions of \eqref{eqn_intro1}.

We finally solve the Monge-Amp\`ere problem \eqref{eqn_intro3} in dimension $2$ in Section \ref{sec_analysis}. 
While the existence and uniqueness of the Monge-Amp\`ere equation with Dirichlet boundary condition follows from more or less standard arguments, proving the gradient blowup involved more subtle estimates, which rely on the estimates on the Cheng-Yau solutions  given in Section \ref{sec_cagc}. Applying techniques of a similar type, we also produce the counterexamples of Proposition \ref{prop_introimproper}.

\subsection*{Acknowledgments} 
We are grateful to Francesco Bonsante for helpful discussions and to Connor Mooney for helps with the proofs in Section \ref{subsec_infiniteness}. The work leading to this paper was done during the first author's visit to University of Pavia and the second author's visit to KIAS. We would like to thank the respective institutes for their warm hospitality. The second author is member of the Italian national research group GNSAGA.

\setcounter{tocdepth}{1}
\tableofcontents

\section{Preliminaries on affine differential geometry}\label{sec_affinedg}
The section gives a concise review of background materials on affine differential geometry used later on.
\subsection{Intrinsic data of affine immersions relative to a transversal vector field}\label{subsec_intrinsic}
Fix $d=n+1\geq 3$. For conceptual clearness, we use different notations for $\mathbb{R}^d$ when different structures are under consideration: 
\begin{itemize}
	\item Let $\V$ denote the $d$-dimensional real vector space endowed with a volume form. We consider the volume form as the determinant $\det:\bigwedge^d\V\rightarrow\mathbb{R}$. 
	\item Let $\V^*$ denote the vector space dual to $\V$, consisting of linear forms on $\V$.  
	\item Let $\A$ denote the $d$-dimensional affine space modeled on $\V$, endowed with the volume form given by that on $\V$. In other words, $\A$ is obtained from $\V$ by ``forgetting'' the origin.
\end{itemize}
In this section, by a \emph{hypersurface} in $\A$, we always mean a smooth embedded one, which has dimension $n$. Given a hypersurfaces $\Sigma\subset\A$,  Affine Differential Geometry studies properties of $\Sigma$ invariant under volume-preserving affine transformations of $\A$. To achieve this, one considers the intrinsic invariants of $\Sigma$ defined relative to a transversal vector field $N:\Sigma\rightarrow\V$. These invariants include a volume form $\nu$, a torsion free affine connection $\nabla$, a symmetric $2$-tensor $h$, a $(1,1)$-tensor $S$ and a $1$-form $\tau$ on $\Sigma$, determined as follows:
\begin{itemize}
	\item
	$\nu$ is the \emph{induced volume form}, defined by
	$$
	\nu(X_1,\cdots,X_n)=\det(X_1,\cdots, X_n,N),
	$$
	Here and below, $X$, $Y$ and $X_1,\cdots, X_n$ are any tangent vector fields on $\Sigma$. 
	\item
	$\nabla$ and $h$ are the \emph{induced affine connection} and \emph{affine metric}, respectively, determined by
	$$
	D_X Y=\nabla_X Y+h(X,Y)N.
	$$
	\item
	$S$ and $\tau$ are the \emph{shape operator} and \emph{transversal connection form}, determined by
	$$
	D_XN=S(X)+\tau(X)N.
	$$
\end{itemize}
\begin{remark}
	Following the original convention of Blaschke, in the literature a minus sign is often added to the definition of $S$, which is inconvenient for our purpose.
\end{remark}

Given an open set $U\subset\R^n$, an immersion $f: U\rightarrow\A$ and a map $N:U\to\V$ transversal to $f$, one can define the above invariants on $U$ by pullback. The \emph{fundamental theorem} of affine differential geometry, similar to the one in classical surface theory, states that these invariants satisfy certain structural equations, and conversely when $U$ is simply connected, any prescribed invariants satisfying the equations are realized by some $f$ and $N$, unique up to volume-preserving affine transformations. Therefore, we call the quintuple of invariants $(\nu, \nabla, h, S, \tau)$ the \emph{intrinsic data} of the pair $(\Sigma,N)$ or $(f,N)$.

We give below some basic facts about these invariants to keep in mind. Here, a smooth function is said to be \emph{locally strongly convex} if its hessian is positive definite everywhere, while a hypersurface is said to be locally strongly convex if it can be presented locally as graphs of such functions.
\begin{itemize}
	\item[-] 
	The rank and signature of the affine metric $h$ only depends on $\Sigma$, not on $N$, and $\Sigma$ is said to be \emph{non-degenerate} if $h$ is. Thus, $h$ is a genuine pseudo-Riemannian metric in this case.
	\item[-] 
	A locally convex hypersurface is non-degenerate if and only if it is locally strongly convex. Moreover, $h$ is a Riemannian metric if and only if $\Sigma$ is locally strongly convex with $N$ pointing towards the convex side.
	\item[-] 
	We have $\dif\tau=0$ if and only if $h(X,S(Y))=h(S(X),Y)$. As a consequence, if $\Sigma$ is non-degenerate and $\dif\tau=0$ then $S$ has $n$ real eigenvalues everywhere.
\end{itemize}


A transversal vector field $N$ is said to be \emph{equi-affine}  if the transversal connection form $\tau$ vanishes. 
We thus omit the $\tau$ component when talking about intrinsic data in this case. From the definitions of $\tau$ and $S$, we deduce the following basic fact:
\begin{lemma}\label{lemma_equiaffine}
	Given $p\in\Sigma$, we have $\tau=0$ at $p$ if and only if the image of the tangent map $\dif N_p:\T_p\Sigma\to\V$ is contained in the tangent space $\T_p\Sigma\subset\T_p\A\cong\V$. In this case, the image of $\dif N_p$ equals $\T_p\Sigma$ if and only if the shape operator $S$ is non-degenerate at $p$.
\end{lemma}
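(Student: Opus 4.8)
The plan is to simply unwind the two defining equations $D_X Y=\nabla_X Y+h(X,Y)N$ and $D_X N=S(X)+\tau(X)N$, where $D$ is the canonical flat connection on $\A$. The key observation is that, under the canonical identification $\T_q\A\cong\V$ for every point $q$, the covariant derivative $D_X N$ of the $\V$-valued map $N\colon\Sigma\to\V$ along $\Sigma$ in a direction $X\in\T_p\Sigma$ is nothing but the tangent map $\dif N_p(X)\in\V$. So the first step is to record this identification carefully and rewrite the structural equation for $N$ as
$$
\dif N_p(X)=S(X)+\tau(X)\,N,\qquad X\in\T_p\Sigma,
$$
which exhibits the right-hand side as the decomposition of the vector $\dif N_p(X)\in\V$ into a component $S(X)$ lying in $\T_p\Sigma$ and a component $\tau(X)N$ along the transversal line $\R N$, recalling that $N(p)\notin\T_p\Sigma$ since $N$ is transversal, so that $\V=\T_p\Sigma\oplus\R N$.

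From this the first equivalence is immediate. If $\tau=0$ at $p$, then $\dif N_p(X)=S(X)\in\T_p\Sigma$ for every $X$, so $\im(\dif N_p)\subseteq\T_p\Sigma$. Conversely, if $\im(\dif N_p)\subseteq\T_p\Sigma$, then for each $X$ the vector $\dif N_p(X)-S(X)=\tau(X)N$ lies in $\T_p\Sigma\cap\R N=\{0\}$, forcing $\tau(X)=0$; hence $\tau=0$ at $p$.

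For the second statement, assume $\tau|_p=0$. Then the displayed equation says precisely that $\dif N_p$ and $S$ coincide as endomorphisms of the $n$-dimensional space $\T_p\Sigma$. Hence $\im(\dif N_p)=\T_p\Sigma$ if and only if $S\colon\T_p\Sigma\to\T_p\Sigma$ is surjective, which by finite-dimensionality is equivalent to $S$ being injective, i.e.\ to $S$ being non-degenerate at $p$. I do not expect any genuine obstacle here: the whole argument is a definition chase, and the only point requiring mild care is setting up the canonical identification $\T\A\cong\V$ that turns $D_XN$ into $\dif N_p(X)$, together with the transversal splitting $\V=\T_p\Sigma\oplus\R N$.
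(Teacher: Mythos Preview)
Your proof is correct and is exactly the definition-chase the paper has in mind; the paper does not spell out a proof but simply states the lemma as a direct consequence of the defining equations for $S$ and $\tau$, which is precisely what you carry out.
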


\subsection{Affine normals and affine spheres}\label{subsec_affinenormal}
If $\Sigma$ is non-degenerate, there exists an equi-affine vector field $N$ such that the induced volume form $\nu$ coincides with the volume form $\dif\vol_h$ of the affine metric $h$ (note that $\dif\vol_h$ is defined using the orientation induced by $\nu$). The vector field $N$ is unique up to sign and is called an \emph{affine normal field} of $\Sigma$, or an \emph{affine normal mapping} when $N$ is viewed as a map from $\Sigma$ to $\V$. As before, we extend the definition to define affine normal mapping $N:U\to\V$ of an immersion $f:U\to\A$.

Given an equi-affine transversal vector field $N$, if $\nu$ is a constant multiple of $\dif\vol_h$, one can scale $N$ to get an affine normal field:
\begin{lemma}\label{lemma_scale}
	Let $\Sigma\subset\A$ be a hypersurface with equi-affine transversal vector field $N:\Sigma\to\V$ and intrinsic data $(\nu,\nabla,h,S)$. Suppose $\dif\vol_h=a\nu$ for a constant $a\neq 0$. Then $\pm|a|^{\frac{2}{n+2}}N$ are the affine normal fields of $\Sigma$ (recall that $\dim(\A)=n+1$).
\end{lemma}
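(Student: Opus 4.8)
The plan is to compute how the invariants attached to a transversal vector field change when that field is multiplied by a positive constant, and then pick the constant so that the defining property of an affine normal field (equi-affinity together with $\nu=\dif\vol_h$), recalled at the beginning of this subsection, is met. Concretely, set $\widetilde N=\lambda N$ for a constant $\lambda>0$ and decorate with a tilde the invariants computed relative to $\widetilde N$. Substituting $\widetilde N$ into the defining identities $\nu(X_1,\dots,X_n)=\det(X_1,\dots,X_n,N)$, $D_XY=\nabla_XY+h(X,Y)N$ and $D_XN=S(X)+\tau(X)N$, and comparing tangential and transversal components, one reads off $\widetilde\nu=\lambda\,\nu$, $\widetilde h=\lambda^{-1}h$, $\widetilde\nabla=\nabla$, $\widetilde S=\lambda S$ and $\widetilde\tau=\tau$. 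In particular $\widetilde\tau=0$ by hypothesis, so $\widetilde N$ is again equi-affine and remains a legitimate candidate for an affine normal field.

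Next I would track the volume form of the affine metric. Since $\widetilde h=\lambda^{-1}h$ with $\lambda>0$, and since $\widetilde N$ lies on the same side of $\Sigma$ as $N$ so that the orientation induced by $\widetilde\nu=\lambda\nu$ is the same as the one induced by $\nu$, the standard scaling of the (pseudo-)Riemannian volume element in dimension $n=\dim\Sigma$ gives $\dif\vol_{\widetilde h}=\lambda^{-n/2}\,\dif\vol_h$; note that only $|\det h|$ enters here, so the signature of $h$ plays no role. Combining this with the hypothesis $\dif\vol_h=a\,\nu$ — in which $a>0$ under the orientation convention fixed above, the absolute value in the statement being merely a safeguard — and using $\nu=\lambda^{-1}\widetilde\nu$, one obtains $\dif\vol_{\widetilde h}=\lambda^{-n/2}a\,\nu=a\,\lambda^{-(n+2)/2}\,\widetilde\nu$.

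It then remains to solve $a\,\lambda^{-(n+2)/2}=1$, i.e.\ $\lambda=a^{2/(n+2)}=|a|^{2/(n+2)}$. For this value, $\widetilde N=|a|^{2/(n+2)}N$ is an equi-affine transversal field satisfying $\widetilde\nu=\dif\vol_{\widetilde h}$, hence is an affine normal field of $\Sigma$; since the affine normal field is unique up to sign, $\pm|a|^{2/(n+2)}N$ are precisely the affine normal fields. There is no real obstacle in this argument: the only point that requires a bit of care is the orientation bookkeeping in the middle step — one must use $\lambda>0$ to ensure that rescaling changes neither the side of $\Sigma$ toward which the transversal field points nor the orientation used to normalize $\dif\vol_h$ — together with the observation that, because only $|\det h|$ appears, the signature of $h$ does not affect the scaling factor $\lambda^{-n/2}$.
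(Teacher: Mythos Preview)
Your argument is correct and follows exactly the approach sketched in the paper's proof: compute that scaling $N$ by a constant $\lambda$ scales $\nu$ by $\lambda$ and $h$ by $\lambda^{-1}$, then solve for the $\lambda$ that makes $\widetilde\nu=\dif\vol_{\widetilde h}$. You have simply filled in the details (the scaling of $\dif\vol_h$, the orientation bookkeeping, and the explicit equation for $\lambda$) that the paper leaves implicit.
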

\begin{proof}
This follows from the definition and the fact that if we scale $N$ by a constant $\lambda\neq 0$, then $\nu$ and $h$ get scaled by $\lambda$ and $\frac{1}{\lambda}$, respectively.
\end{proof}

While in general there is no privileged choice between the two affine normal fields opposite to each other, we do have one when $\Sigma$ is locally strongly convex: the one pointing towards the convex side of $\Sigma$. In this case, we call the shape operator $S$ of $\Sigma$ with respect to this choice of $N$ the \emph{affine shape operator} and call the determinant $\det(S):\Sigma\to\R$ the \emph{affine Gauss-Kronecker curvature}, or simply \emph{affine Gaussian curvature}.
\begin{definition}[\textbf{Proper and hyperbolic/elliptic affine spheres}]\label{define_affinesphere}
	A non-degenerate hypersurface $\Sigma\subset\A$ is called a \emph{proper affine sphere} if its shape operator with respect to an affine normal field is $S=\lambda\,\id$ for a constant $\lambda\neq0$ (where $\id$ denotes the identity endomorphism of $\T\Sigma$). This condition is equivalent to the existence of a point $o\in\A$, called the \emph{center} of $\Sigma$, such that $N(p)=\lambda\overrightarrow{op}$ (for all $p\in\Sigma$) is an affine normal field. Furthermore, $\Sigma$ is called a \emph{hyperbolic} (resp. \emph{elliptic}) affine sphere if $\Sigma$ is locally convex and the condition is satisfied with $\lambda>0$ (resp. $\lambda<0$) for the affine normal field pointing towards the convex side of $\Sigma$. 
\end{definition}
\begin{remark}
	The terminology ``proper'' is used here as opposed to the case $S=0$, in which $\Sigma$ is known as an \emph{improper} affine sphere.  
\end{remark}
Thus, the center of a hyperbolic (resp. elliptic) affine sphere $\Sigma$ lies on the concave (resp. convex) side of $\Sigma$. In the sequel, when talking about proper affine spheres in the vector space $\V$, we always mean those centered at the origin $0\in\V$.

\subsection{Intrinsic data of equi-affine vector field as centro-affine immersion}\label{subsec_centroaffine}
A hypersurface $\Sigma\subset\V$ is said to be \emph{centro-affine} if the position vector $\overrightarrow{0p}$ of every point $p\in\Sigma$ is transversal to $\Sigma$. Thus, proper affine spheres in $\V$ are centro-affine.


Given a hypersurface $\Sigma\subset\A$ with equi-affine transversal vector field $N:\Sigma\to\V$, if the shape operator of $(\Sigma, N)$ is non-degenerate, then  Lemma \ref{lemma_equiaffine} implies that $N$ is a centro-affine immersion of $N$ into $\V$. Its intrinsic data with respect to the position vector field are given by the intrinsic data of $(\Sigma, N)$ as follows:
\begin{lemma}\label{lemma_data}
	Let $\Sigma\subset\A$ be a hypersurface, $
	N:\Sigma\rightarrow\V$ be an equi-affine transversal vector field and $(\nu,\nabla,h,S)$ be the intrinsic data of $(\Sigma, N)$. Suppose $\det(S)\neq0$ on $\Sigma$ and consider $N$ as a centro-affine hypersurface immersion. Then the intrinsic data of $N$ with respect to its position vector field (given by $N$ itself) is 
	$$
	(\det(S)\nu,\,S^{-1}\nabla S,\, h(\cdot,S(\cdot)),\,\id).
	$$
\end{lemma}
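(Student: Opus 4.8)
The plan is a direct computation with the flat connection $D$ of $\V$ and the structural equations of $(\Sigma,N)$. First I would check that $N$ genuinely defines a centro-affine immersion: since $\det(S)\neq 0$, Lemma~\ref{lemma_equiaffine} shows that $\dif N_p\colon\T_p\Sigma\to\V$ is injective with image exactly the subspace $\T_p\Sigma\subset\V$; as $N(p)$ is transversal to $\Sigma$ it does not lie in $\T_p\Sigma$, so the position vector of the point $N(p)$ is transversal to $N(\Sigma)$ there. Hence $N$, together with its own position vector field, is an immersion equipped with a transversal vector field, and I may speak of its intrinsic data, which I write as $(\bar\nu,\bar\nabla,\bar h,\bar S,\bar\tau)$; the task is to identify this quintuple with $(\det(S)\,\nu,\ S^{-1}\nabla S,\ h(\cdot,S(\cdot)),\ \id,\ 0)$.

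The volume form is immediate: $\bar\nu(X_1,\dots,X_n)=\det(\dif N(X_1),\dots,\dif N(X_n),N)=\det(S(X_1),\dots,S(X_n),N)$, and extending $S$ to an endomorphism of $\V\cong\T_p\A$ fixing $N$ turns this into $\det(S)\det(X_1,\dots,X_n,N)=\det(S)\,\nu(X_1,\dots,X_n)$.

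For $\bar\nabla$ and $\bar h$, the key observation is that the equi-affine condition gives $\dif N(Y)=D_YN=S(Y)$, which is a tangent vector field on $\Sigma$. Differentiating along $X$ with $D$ and invoking the Gauss formula of $(\Sigma,N)$ gives
\[
D_X\bigl(\dif N(Y)\bigr)=D_X\bigl(S(Y)\bigr)=\nabla_X\bigl(S(Y)\bigr)+h\bigl(X,S(Y)\bigr)\,N .
\]
On the other hand the Gauss formula of the centro-affine immersion reads $D_X(\dif N(Y))=\dif N(\bar\nabla_XY)+\bar h(X,Y)\,N=S(\bar\nabla_XY)+\bar h(X,Y)\,N$. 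Splitting both expressions into their $\T_p\Sigma$-component and their $N$-component --- legitimate precisely because of the first paragraph --- yields $S(\bar\nabla_XY)=\nabla_X(S(Y))$, that is $\bar\nabla_XY=S^{-1}\bigl(\nabla_X(S(Y))\bigr)$ (which is the meaning of $S^{-1}\nabla S$, and expands via the Leibniz rule to $\nabla_XY+S^{-1}\bigl((\nabla_XS)(Y)\bigr)$), and $\bar h(X,Y)=h(X,S(Y))$; the symmetry of $\bar h$ is then exactly the identity $h(X,S(Y))=h(S(X),Y)$, valid since $\dif\tau=0$ for the equi-affine $N$.

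Finally, for $\bar S$ and $\bar\tau$ I would note that the position vector field along $N(\Sigma)$ is literally the map $N$, so its $D$-derivative is $D_XN=\dif N(X)$, which has zero $N$-component; comparing with the Weingarten formula $D_XN=\dif N(\bar S(X))+\bar\tau(X)N$ and using injectivity of $\dif N$ forces $\bar\tau=0$ and $\bar S=\id$. None of the steps is deep; the only thing to be careful about is the identification $\T_p\Sigma\subset\V$ underlying every tangential/transversal splitting above, and the appeal to Lemma~\ref{lemma_equiaffine} to ensure that this splitting --- equivalently, that $N$ is a centro-affine immersion with non-degenerate shape operator --- is meaningful.
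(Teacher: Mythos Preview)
Your proof is correct. The approach differs from the paper's: you work coordinate-free with the Gauss and Weingarten formulas, while the paper carries out the same computation in a local frame using the matrix equation~(\ref{eqn_matrix}). Concretely, the paper writes $(\pa_1N,\dots,\pa_nN,N)=(\pa_1f,\dots,\pa_nf,N)\left(\begin{smallmatrix}S&\\&1\end{smallmatrix}\right)$, differentiates, and reads off $A'=S^{-1}AS+S^{-1}\dif S$ and $h'=hS$ from a matrix identity. Your route is shorter and more conceptual --- the single line $D_X(S(Y))=\nabla_X(S(Y))+h(X,S(Y))N$ does all the work once one observes $\dif N(Y)=S(Y)$ --- whereas the paper's matrix framework has the advantage of being reused verbatim in Proposition~\ref{prop_computations}, where explicit coordinate formulas are actually the goal. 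Both arguments are standard; yours makes the identity $\bar S=\id$, $\bar\tau=0$ (the centro-affine character) more transparent.
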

Here, the affine connection $\nabla'=S^{-1}\nabla S$ is the gauge transform of $\nabla$ by $S^{-1}$, defined by $\nabla'_XY:=S^{-1}\nabla_X(S(Y))$ for any tangent vector fields $X$ and $Y$ on $\Sigma$.

To prove Lemma \ref{lemma_data}, we use the following framework to compute intrinsic data in coordinates, which is also needed in Section \ref{subsec_charamonge} below. Let $U\subset\R^n$ be an open set, $f:U\rightarrow\A$ be an immersion and $N:U\to\V$ be a transversal vector field to $f$. Then the induced volume form of $(f,N)$ is
\begin{equation}\label{eqn_inducedvolumeform}
\nu=\det(\pa_1f,\cdots, \pa_n f, N)\,\dx^1\wedge\cdots\wedge \dx^n,
\end{equation}
where $x=(x^1,\cdots, x^n)$ is the coordinate on $U$ and $\pa_if(x),N(x)\in\V\cong\R^{n+1}$ are written in coordinates as column vectors. The other intrinsic invariants are determined by the following equality of $(n+1)\times(n+1)$ matrices of $1$-forms:
\begin{equation}\label{eqn_matrix}
\dif(\pa_1f,\cdots,\pa_nf, N)=(\pa_1f,\cdots,\pa_nf, N)
\begin{pmatrix}
A&S\,\dx\\
\transp\dx\,h&\tau
\end{pmatrix}.
\end{equation}
Here, $\dif x$ represents the column vector of $1$-forms $\transp(\dx^1,\cdots \dx^n)$. The shape operator $S=(S^i_j)$ and affine metric $h=(h_{ij})$ are written as $n\times n$ matrices of functions on $U$, and $A$ is the matrix of $1$-forms such that the induced affine connection $\nabla$ is expressed under the frame $(\pa_1,\cdots,\pa_n)$ as $\nabla=\dif+A$. Note that the gauge transform of $\nabla$ by $S^{-1}$ is $S^{-1}\nabla S=\dif+S^{-1}AS+S^{-1}\dif S$.

\begin{proof}[Proof of Lemma \ref{lemma_data}]
	Let $U\subset\R^n$ be an open set. We can replace $\Sigma$ and $N$ in the statement of the lemma by an immersion $f:U\to\A$ and a map $N:U\to\V$ transversal to $f$, respectively.
	
	The intrinsic data $(\nu,\nabla,h,S)$ of $(f,N)$ are determined by the equalities (\ref{eqn_inducedvolumeform}) and (\ref{eqn_matrix}) with $\tau=0$. The last column of (\ref{eqn_matrix}) gives $\dif N=(\pa_1f,\cdots,\pa_nf)S\,\dx$, hence
	\begin{equation}\label{eqn_proofdata}
	(\pa_1N,\cdots,\pa_nN,N)=(\pa_1f,\cdots,\pa_nf, N)
	\begin{pmatrix}
	S&\\
	&1
	\end{pmatrix}.
	\end{equation}
	Therefore, the induced volume form of $N$ is
	$$
	\nu'=\det(\pa_1N,\cdots,\pa_nN,N)\,\dx^1\wedge\cdots\wedge\dx^n=\det(S)\nu,
	$$
	as required, while the induced affine connection $\nabla'=\dif+A'$ and affine metric $h'$ are characterized by
	$$
	\dif(\pa_1N,\cdots,\pa_nN, N)=(\pa_1N,\cdots,\pa_nN, N)
	\begin{pmatrix}
	A'&\dx\\[3pt]
	\transp\dx\,h'&0
	\end{pmatrix}.
	$$
	By (\ref{eqn_proofdata}) and (\ref{eqn_matrix}), the left-hand side is
	\begin{align*}
	\dif(\pa_1N,\cdots,\pa_nN, N)&=\dif(\pa_1f,\cdots,\pa_nf,N)
	\begin{pmatrix}
	S&\\
	&1
	\end{pmatrix}
	+(\pa_1f,\cdots,\pa_nf,N)
	\begin{pmatrix}
	\dif S&\\
	&0
	\end{pmatrix}\\
	&=(\pa_1f,\cdots,\pa_nf,N)\left[
	\begin{pmatrix}
	A&S\,\dx\\
	\transp{\dx}\,h&0
	\end{pmatrix}
	\begin{pmatrix}
	S&\\
	&1
	\end{pmatrix}
	+
	\begin{pmatrix}
	\dif S&\\
	&0
	\end{pmatrix}
	\right]\\
	&=(\pa_1N,\cdots,\pa_nN, N)
	\begin{pmatrix}
	S^{-1}&\\
	&1
	\end{pmatrix}
	\begin{pmatrix}
	AS+\dif S&S\,\dx\\
	\transp{\dx}\,h S&0
	\end{pmatrix}.
	\end{align*}
	Comparing with the right-hand side, we get the required expressions
	$$
	A'=S^{-1}AS+S^{-1}\dif S,\quad h'=hS.
	$$
\end{proof}

\subsection{Complete hyperbolic affine spheres}\label{subsec_completehyperbolic}
The following theorem classifies hyperbolic affine spheres that are \emph{complete} in the sense that the Riemannian metric on it induced by an ambient Euclidean metric is complete
(see also Remark \ref{remark_completeness} below): 
\begin{theorem}[Cheng-Yau \cite{chengyau1}]\label{thm_chengyau1}
	For any proper convex cone $C\subset\V$, there exits a unique complete hyperbolic affine sphere $\Sigma_C\subset C$ asymptotic to $\pa C$ with affine shape operator the identity.
\end{theorem}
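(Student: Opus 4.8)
The plan is to reduce the statement to the Dirichlet problem \eqref{eqn_intro1} on a bounded convex domain, by means of a radial-graph correspondence. Fix a linear form $\ell\in\V^*$ that is positive on $C\setminus\{0\}$ and let $\Omega$ be the bounded convex domain obtained by intersecting $C$ with the affine hyperplane $\{\ell=1\}\cong\R^n$. Given a convex solution $w:\Omega\to\R$ of \eqref{eqn_intro1} with $w<0$, consider the map $f:\Omega\to C$, $f(x)=(-w(x))^{-1}\,x$, where $x$ is regarded as a point of $\{\ell=1\}\subset\V$. Writing $f$ in the coordinates of $\Omega$ and using the frame computation \eqref{eqn_matrix}, one checks that $f$ is a locally strongly convex centro-affine immersion: its position vector is an equi-affine transversal field, with respect to which the shape operator is $\id$; the precise normalization of the right-hand side of \eqref{eqn_intro1} (exponent $-n-2$ and unit constant) is exactly what forces the induced volume form to equal $\dif\vol_h$, so by Lemma \ref{lemma_scale} the position vector is itself the affine normal field and the affine shape operator of $\Sigma_C:=f(\Omega)$ is $\id$, with $N$ pointing to the convex side. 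Thus $\Sigma_C$ is a hyperbolic affine sphere in the sense of Definition \ref{define_affinesphere}. Since $-w\to0$ at $\pa\Omega$, the factor $(-w)^{-1}$ blows up there and $\Sigma_C$ escapes to infinity along every ray of $\pa C$, i.e.\ it is asymptotic to $\pa C$.

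There remain three points: (i) unique solvability of \eqref{eqn_intro1}; (ii) completeness of $\Sigma_C$; (iii) that every complete hyperbolic affine sphere in $C$ asymptotic to $\pa C$ with affine shape operator $\id$ equals $\Sigma_C$. For (i), uniqueness is the easy part: on $\{w<0\}$ the right-hand side $(-w)^{-n-2}$ is strictly increasing in $w$, so the comparison principle for equations of the form $\det\D^2w=F(x,w)$ with $F$ nondecreasing in $w$ applies, and comparing two convex solutions on the region where one exceeds the other forces them to coincide. Existence is the analytic core, obtained by approximation: exhaust $\Omega$ by smooth uniformly convex domains $\Omega_j$, solve the non-degenerate problems $\det\D^2w=(\eps-w)^{-n-2}$, $w|_{\pa\Omega_j}=0$ by the continuity method (openness from the implicit function theorem, closedness from a priori estimates together with the comparison principle), then let $j\to\infty$ and $\eps\to0$. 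The limit requires estimates uniform in $j$ and $\eps$: a uniform $C^0$ bound and a local positive lower bound for $-w$ from explicit barriers; interior second-order Pogorelov-type estimates and, from them, higher interior regularity via Calabi's computation or Evans--Krylov and Schauder theory; and, most delicately, control of the boundary behaviour of $w$ by constructing sub- and supersolutions adapted to the blow-up of $(-w)^{-n-2}$ near $\pa\Omega$. This boundary analysis for the degenerate equation --- which at the same time shows $\dom{w}=\overline{\Omega}$ and that $|\nabla w|\to+\infty$ at $\pa\Omega$ --- is the main obstacle.

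For (ii), the frame computation \eqref{eqn_matrix} expresses the affine metric $h$ of $\Sigma_C$ explicitly in terms of $w$ and $\D^2w$ on $\Omega$; combined with the boundary estimates from (i), in particular the gradient blow-up $|\nabla w|\to+\infty$ at $\pa\Omega$, this shows that every path in $\Omega$ tending to $\pa\Omega$ has infinite $h$-length, so $h$, and hence $\Sigma_C$, is complete. For (iii), an argument using properness of the radial projection $C\setminus\{0\}\to\Omega$ restricted to $\Sigma$ (a proper local diffeomorphism by local convexity, asymptoticity and completeness, hence a diffeomorphism) shows that a complete hyperbolic affine sphere $\Sigma\subset C$ asymptotic to $\pa C$ with shape operator $\id$ is a radial graph over all of $\Omega$; writing $\Sigma=\{\lambda(x)\,x:x\in\Omega\}$ with $\lambda>0$, the asymptotic condition forces $\lambda\to+\infty$ at $\pa\Omega$, and reversing the computation of the first paragraph shows that $w:=-\lambda^{-1}$ is a convex solution of \eqref{eqn_intro1}. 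The uniqueness from (i) then gives $w$, whence $\Sigma=\Sigma_C$. In particular $\Sigma_C$ does not depend on the auxiliary choice of $\ell$, which completes the proof.
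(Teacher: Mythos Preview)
The paper does not prove Theorem \ref{thm_chengyau1}: it is stated as a result of Cheng--Yau with a citation and no argument. Later, in Section \ref{subsec_equations}, the paper does exhibit the link between the geometric statement and the Monge--Amp\`ere problem \eqref{eqn_chengyau}, via Corollary \ref{coro_affinesphereequation} and the analytic version Theorem \ref{thm_chengyau2}, but the latter is again quoted without proof. So there is no proof in the paper to compare yours against.

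That said, it is worth noting that the paper's bridge between geometry and analysis is \emph{dual} to yours. You take $\Omega$ to be an affine section of $C$ itself and represent $\Sigma_C$ as the radial graph $x\mapsto(-w(x))^{-1}x$ over $\Omega$. The paper instead takes $\Omega$ to be an affine section of the opposite dual cone $-C^*$ (Section \ref{subsec_analytic}) and represents $\Sigma_C$ as the graph of the Legendre transform $w_\Omega^*$ (Corollary \ref{coro_affinesphereequation} and Remark \ref{remark_dualscaling}); in the paper's coordinates your radial-graph construction would produce the dual affine sphere $\Sigma_{C^*}$ rather than $\Sigma_C$. Both viewpoints are standard and lead to the same PDE \eqref{eqn_chengyau}; your choice has the advantage that asymptoticity to $\pa C$ is read off directly from $w|_{\pa\Omega}=0$, while the paper's choice is better suited to the affine-$(C,k)$ problem treated later.

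Your outline of the existence/uniqueness argument for \eqref{eqn_chengyau} is a faithful summary of the Cheng--Yau approach. One point to tighten: in (ii) you argue completeness via the affine metric $h$ and the gradient blow-up, whereas the paper's notion of completeness (see the paragraph before Theorem \ref{thm_chengyau1} and Remark \ref{remark_completeness}) is with respect to an ambient Euclidean metric. For the radial graph this is actually simpler and does not require $|\nabla w|\to+\infty$: since $(-w)^{-1}\to+\infty$ at $\pa\Omega$, any Euclidean-Cauchy sequence on $\Sigma_C$ is bounded in $\V$, so its preimage stays in a compact subset of $\Omega$ and converges there.
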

Here, $\Sigma_C$ being \emph{asymptotic} to $\pa C$ means the distance from $x\in\Sigma$ to $\pa C$ with respect to an ambient Euclidean metric tends to $0$ as $x$ goes to infinity in $\Sigma$.

The theorem essentially gives all complete hyperbolic affine spheres because for any such affine sphere $\Sigma\subset\V$, since $\Sigma$ is centro-affine, the projectivization map $\mathbb{P}:\V\setminus\{0\}\to\mathbb{RP}^n$ is a diffeomorphism from $\Sigma$ to a convex domain in $\mathbb{RP}^n$ and if we let $C$ be the component of the pre-image $\mathbb{P}^{-1}(\mathbb{P}(\Sigma))$ containing $\Sigma$, then $C$ is a proper convex cone and $\Sigma$ is a scaling of the affine sphere $\Sigma_C$ from the theorem. For later use, we determine the precise relation between the scale factor and the affine shape operator of $\Sigma$ as follows:
\begin{proposition}\label{prop_scaling}
	Given $\lambda>0$ and a proper convex cone $C\subset\V$, the scaling $\lambda\Sigma_C$ of the affine sphere $\Sigma_C$ from Theorem \ref{thm_chengyau1} is the unique complete hyperbolic affine sphere asymptotic to $\pa C$ with affine shape operator $\lambda^{-\frac{2(n+1)}{n+2}}\id$ (where $\dim(\V)=n+1$).
\end{proposition}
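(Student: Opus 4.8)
The plan is to reduce everything to Theorem \ref{thm_chengyau1} by tracking how the affine normal field and the affine shape operator transform under a homothety of $\V$, and then reading off the correct power of $\lambda$ from Lemma \ref{lemma_scale}.

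The key computation I would carry out first is the following transformation rule. Let $\Sigma\subset\V$ be a non-degenerate, locally strongly convex hypersurface with affine normal field $N$ (pointing to the convex side) and intrinsic data $(\nu,\nabla,h,S)$, and fix $\mu>0$. Applying the linear map $v\mapsto\mu v$, the hypersurface $\mu\Sigma$ acquires the transversal vector field $\mu N$; since the flat connection $D$ commutes with linear maps, a direct inspection of the relations $D_X Y=\nabla_X Y+h(X,Y)N$ and $D_X N=S(X)+\tau(X)N$ shows that, under the obvious identification $\mu\Sigma\cong\Sigma$, the intrinsic data of $(\mu\Sigma,\mu N)$ is $(\mu^{n+1}\nu,\nabla,h,S)$, with transversal connection form still $0$. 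Thus $\mu N$ is equi-affine, and $\dif\vol_{h}=\nu=\mu^{-(n+1)}\nu'$ where $\nu'=\mu^{n+1}\nu$ is the induced volume form of $(\mu\Sigma,\mu N)$; hence Lemma \ref{lemma_scale}, applied with $a=\mu^{-(n+1)}$, gives that the affine normal field of $\mu\Sigma$ pointing to the convex side is $\mu^{-\frac{2(n+1)}{n+2}}(\mu N)$. Rescaling a transversal field by a positive constant $c$ rescales the shape operator by $c$ (immediate from $D_X(cN)=cS(X)+\tau(X)(cN)$), so the affine shape operator of $\mu\Sigma$ equals $\mu^{-\frac{2(n+1)}{n+2}}S$. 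Along the way I would record the elementary facts that a homothety of ratio $\mu>0$ multiplies the metric induced by an ambient Euclidean metric by $\mu^2$ (hence preserves completeness), preserves the property of being a hyperbolic affine sphere (local convexity and the sign of the shape operator are unchanged, and $\mu N$ still points to the convex side), and sends a hypersurface asymptotic to $\pa C$ to one asymptotic to $\mu\,\pa C=\pa(\mu C)=\pa C$ because $C$ is a cone.

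Granting this rule, existence is immediate: taking $\Sigma=\Sigma_C$ (so $S=\id$ by Theorem \ref{thm_chengyau1}) and $\mu=\lambda$, the hypersurface $\lambda\Sigma_C$ is a complete hyperbolic affine sphere asymptotic to $\pa C$ with affine shape operator $\lambda^{-\frac{2(n+1)}{n+2}}\id$. For uniqueness, suppose $\Sigma'$ is another complete hyperbolic affine sphere asymptotic to $\pa C$ with affine shape operator $\lambda^{-\frac{2(n+1)}{n+2}}\id$. Applying the transformation rule with $\mu=\lambda^{-1}$, the hypersurface $\lambda^{-1}\Sigma'$ is a complete hyperbolic affine sphere asymptotic to $\pa C$ whose affine shape operator is $(\lambda^{-1})^{-\frac{2(n+1)}{n+2}}\cdot\lambda^{-\frac{2(n+1)}{n+2}}\id=\id$. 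By the uniqueness statement in Theorem \ref{thm_chengyau1}, $\lambda^{-1}\Sigma'=\Sigma_C$, that is $\Sigma'=\lambda\Sigma_C$.

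The only real work lies in the first paragraph, and it is essentially bookkeeping. The step needing the most care is keeping straight the correct choice of affine normal (the one pointing to the convex side) and chasing the various powers of $\mu$ produced by Lemma \ref{lemma_scale}, so that the exponent comes out exactly $-\frac{2(n+1)}{n+2}$; no new estimate or idea beyond Theorem \ref{thm_chengyau1} and Lemma \ref{lemma_scale} is required.
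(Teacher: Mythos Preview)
Your proof is correct and follows essentially the same approach as the paper: both compute how the induced volume form and affine metric transform under the homothety $v\mapsto\lambda v$ (yielding $\tilde\nu=\lambda^{n+1}\nu$ and $\tilde h=h$ under the natural identification), then apply Lemma \ref{lemma_scale} with $a=\lambda^{-(n+1)}$ to read off the affine normal of $\lambda\Sigma_C$ and hence its affine shape operator. The paper carries this out directly for $\Sigma_C$ with its position vector field and leaves uniqueness to the discussion preceding the proposition; you phrase the scaling rule for a general hypersurface and supply an explicit uniqueness argument by inverting the homothety, which is a harmless elaboration rather than a different method.
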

\begin{proof}
Let $h$ and $\nu=\dif\vol_h$ be the affine metric and induced volume form of $\Sigma_C$, which are defined with respect to the affine normal field of $\Sigma_C$, namely the position vector field. Letting $f:\Sigma_C\to \lambda\Sigma_C$ denote the scaling map, one can check from the definitions that the induced volume form $\tilde{\nu}$ and affine metric $\tilde{h}$ of $\lambda\Sigma_C$ with respect to its own position vector field are related to the push-forwards of $\nu$ and $h$ by
$$
\tilde{\nu}=\lambda^{n+1}f_*\nu,\quad \tilde{h}=f_*h.
$$
So we have $\dif\vol_{\tilde{h}}=f_*\dif\vol_h=f_*\nu=\lambda^{-n-1}\tilde{\nu}$, and Lemma \ref{lemma_scale} implies that the affine normals of $\lambda\Sigma_C$ are $\lambda^{-\frac{2(n+1)}{n+2}}$ times its position vectors. The required expression of affine shape operator follows.
\end{proof}

The following fundamental examples are among the rare cases where $\Sigma_C$ admits an explicit expression. See \eg \cite{MR2743442} for details.
\begin{example}[\textbf{Hyperboloid}]\label{example_hyperboloid}
The \emph{Minkowski space} $\R^{n,1}$ is $\V$ endowed with a bilinear form of signature $(n,1)$ whose underlying volume form is the prescribed one. By convention, we pick a component $C_0$ of the quadratic cone
$\{v\in\V\mid \bii{v}{v}<0\}$ and call $C_0$ the \emph{future light cone} in $\R^{n,1}$.
	The affine sphere $\Sigma_{C_0}$ claimed by Theorem \ref{thm_chengyau1} turns out to be the component of the two-sheeted hyperboloid $\{\bii{v}{v}=-1\}$ in $C_0$, namely
	$\Sigma_{C_0}=\mathbb{H}:=\{v\in C_0\mid \bii{v}{v}=-1\}$.
\end{example}
	
\begin{example}[\textbf{\c{T}i\c{t}eica affine sphere}]\label{example_titeica}
	Let $(v_0,\cdots, v_n)$ be a unimodular basis of $\V$.  For the simplicial cone $C_1:=\{t_0v_0+\cdots+t_nv_n\mid t_i>0\}$, there is a constant $\Lambda_n >0$ only depending on $n$ such that the affine sphere claimed by Theorem \ref{thm_chengyau1} is
	$$
	\Sigma_{C_1}=\{t_0v_0+\cdots+t_nv_n\mid t_1,\cdots, t_n>0,\, t_0\cdots t_n=\Lambda_n \}.
	$$
\end{example}

\subsection{Affine conormals and dual affine sphere}\label{subsec_conormal}
Given a hypersurface $\Sigma\subset\A$ with affine normal field $N:\Sigma\to\V$, the \emph{affine conormal} of $\Sigma$ at $p\in\Sigma$ dual to $N$ is the linear form $N^*(p)\in \V^*$ defined by
$$
\pair{N^*(p)}{N(p)}=1,\quad \pair{N^*(p)}{\T_p\Sigma}=0,
$$
where ``$\pair{\cdot}{\cdot}$'' is the pairing between $\V$ and $\V^*$ and the last equality means $\pair{N^*(p)}{v}=0$ for every $v\in\T_p\Sigma\subset\T_p\V\cong\V$.
We call $N^*:\Sigma\to\V^*$ the affine conormal mapping dual to $N$.

If $\Sigma$ has non-degenerate shape operator with respect to $N$, so that $N$ is an immersion of $\Sigma$ into $\V$ as a centro-affine hypersurface (see Lemma \ref{lemma_equiaffine} and Section \ref{subsec_centroaffine}), then $N^*:\Sigma\to\V^*$ is a centro-affine immersion as well and its image $N^*(\Sigma)$ is the centro-affine hypersurface $M^*$ \emph{dual} to $M=N(\Sigma)$, defined by 
$$
M^*=\{\alpha\in\V^*\mid \text{ there is $v\in M$ such that $\pair{\alpha}{v}=1$, $\pair{\alpha}{\T_vM}=0$}\}.
$$

When $M$ is a proper affine sphere with affine shape operator $\lambda\,\id$, it can be shown that the dual centro-affine hypersurface $M^*$ defined above is a proper affine sphere in $\V^*$ with affine shape operator $\lambda^{-1}\,\id$, so $M^*$ is called the \emph{dual affine sphere} of $M$. It can also be shown that for the hyperbolic affine sphere $\Sigma_C$ from Theorem \ref{thm_chengyau1}, the dual affine sphere is exactly $\Sigma_{C^*}$, where $C^*\subset\V^*$ is the dual cone of $C$, consisting of linear forms on $\V$ which take positive values on $\overline{C}\setminus\{0\}$. It follows that the dual affine sphere of $\lambda\Sigma_C$ is $\lambda^{-1}\Sigma_{C^*}$.

\section{$C$-regular domains and hypersurfaces with CAGC}\label{sec_cregular}
In this section, we define the main objects of study of this paper: $C$-regular domains, $C$-convex hypersurfaces and affine $(C,k)$-hypersurfaces. When $C$ is the future light cone $C_0$ in the Minkowski space $\R^{n,1}$, the first two objects are known in the literature as regular domains and future-convex spacelike hypersurfaces, respectively, while we show in Section \ref{subsec_minkowski} that affine $(C_0,k)$-hypersurfaces are exactly $C_0$-convex hypersurface with constant Gaussian curvature in the classical sense.

\subsection{$C$-regular domains and $C$-convex hypersurfaces}\label{subsec_cregular}
As in Section \ref{sec_affinedg}, we fix $n\geq 2$, let $\V$ denote an $(n+1)$-dimensional vector space equipped with a volume form and $\A$ denote the affine space modeled on $\V$.
By a \emph{convex domain}, we mean a convex open set, while a \emph{convex cone} in $\V$ is a convex domain invariant under positive scalings. A convex cone/domain is \emph{proper} if it is nonempty and does not contain any entire straight line.  We further introduce the following definitions and notations:
\begin{itemize}
	\item  Let $\Ha_\A$ denote the space of all \emph{open half-spaces} of $\A$, \ie open subsets whose boundaries are affine hyperplanes.
	\item $\Ha_\V$ denote the space of all open half-spaces of $\V$ with boundaries passing through the origin $0\in\V$. Thus, there is a natural projection $\Ha_\A\to\Ha_\V$ such that the pre-image of $H\in\Ha_\V$ consists of the translations of $H$.
	\item A \emph{supporting half-space} of a convex domain $D$ at a boundary point $p\in\pa D$ is an open half-space $H$ such that $D\subset H$ and $p\in \pa H$.
	\item  Given a convex cone $C\subset\V$, we let $\Ha_\V(C)\subset\Ha_\V$ denote the space of all supporting half-spaces of $C$ and let $\Ha_\V^0(C)\subset\Ha_\V(C)$ denote the set of supporting half-spaces at boundary points other than the origin. Also put $\Ha_\V^1(C):=\Ha_\V(C)\setminus\Ha_\V^0(C)$. See Figure \ref{figure_planes}.
	\begin{figure}[h]
		\includegraphics[width=1.6in]{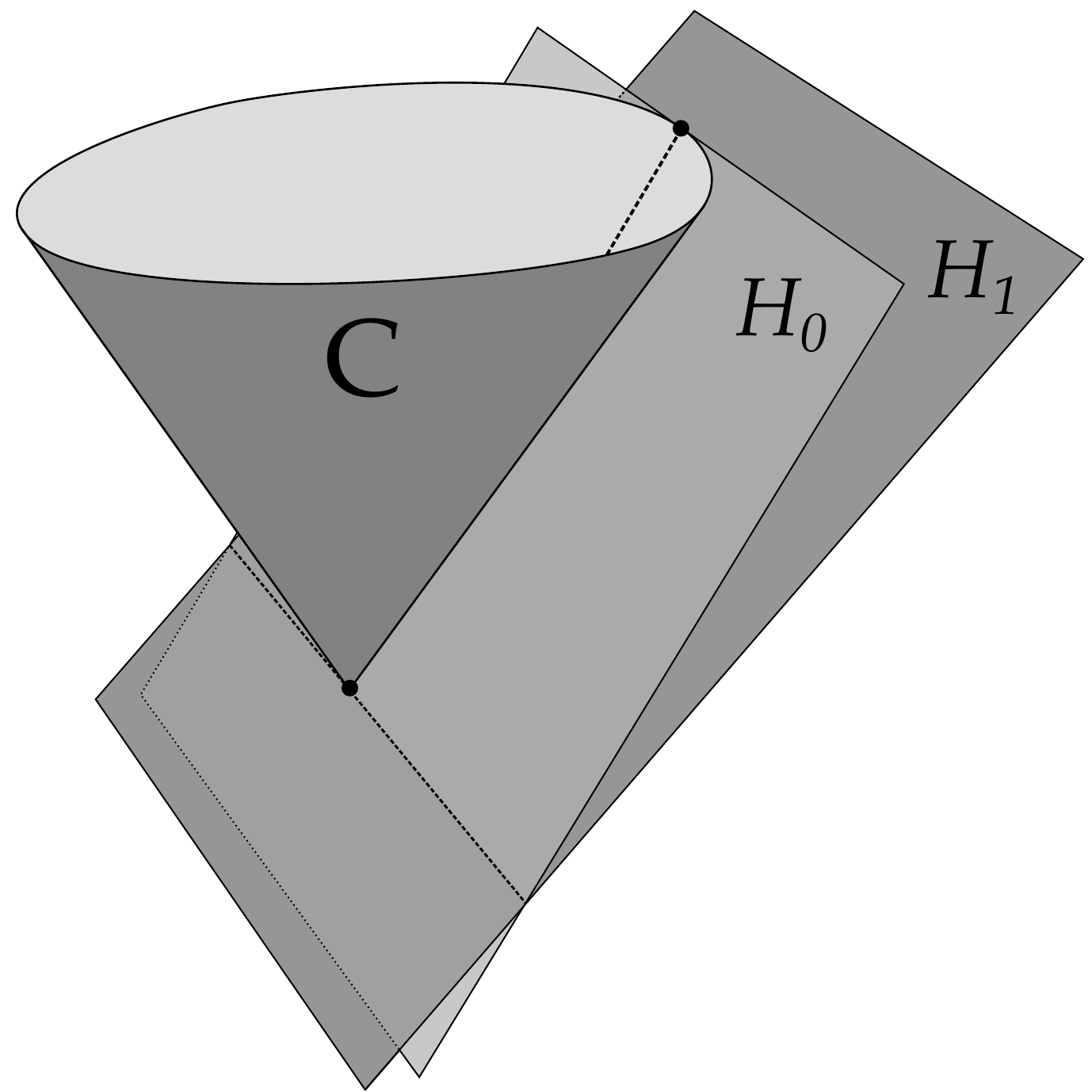}
		\caption{
			The cone $C$ and the boundary hyperplanes of some $H_0\in\Ha_\V^0$ and $H_1\in\Ha_\V^1$. The half-spaces $H_0$ and $H_1$ are the parts of $\V$ above the respective hyperplanes.}
		\label{figure_planes}
	\end{figure}
	\item Let $\Ha_\A^0(C)$ and $\Ha_\A^1(C)$ denote the pre-images of $\Ha_\V^0(C)$ and $\Ha_\V^1(C)$ in $\Ha_\A$, respectively.
\end{itemize}
Adapting terminologies from Minkowski geometry, we call elements of $\Ha_\A^0(C)$ and $\Ha_\A^1(C)$ \emph{$C$-null} and \emph{$C$-spacelike} half-spaces, respectively, and call an affine hyperplane $C$-null/$C$-spacelike if it is the boundary of a $C$-null/$C$-spacelike half-space.
When $C$ is the future light cone $C_0$ in the Minkowski space $\R^{n,1}$ (see Example \ref{example_hyperboloid}), these are just null/spacelike hyperplanes in the classical sense.  

The following notion also arises from the Minkowski setting: 
\begin{definition}[\textbf{$C$-regular domains}]\label{def_cregular}
	Given a proper convex cone $C\subset\V$, a \emph{$C$-regular domain} is by definition a subset  $D\subset\A$ of the form
	$$
	D=\interior\left(\bigcap_{H\in \mathcal{H}}\overline{H}\right),\ \mathcal{H}\subset \Ha_\A^0(C).
	$$
Namely, $D$ is the interior of the intersection of a collection $\mathcal{H}$ of \emph{closed} $C$-null half-spaces. When $\mathcal{H}$ is the set of all $C$-null half-spaces containing some subset $S$ of $\A$,  $D$ is called the $C$-regular domain \emph{generated by $S$}. 
\end{definition}
\begin{remark}
For the future light cone $C_0\subset\R^{n,1}$, such domains first appeared in mathematical relativity as \emph{domains of dependence}, then the name ``regular domain'' is introduced in \cite{MR2170277}. Our definition is slightly wider even for $C=C_0$, in that regular domains in \cite{MR2170277} correspond to \emph{proper} $C_0$-regular domains under our definition. We refer to the papers cited in the introduction for the role of such domains in the study of globally hyperbolic flat spacetimes.
\end{remark}

Note that $C$-regular domains are convex domains, and the simplest examples include the empty set $\emptyset$, the whole $\A$ (corresponding to $\mathcal{H}=\emptyset$), single $C$-null half-spaces, and translations of the cone $C$ itself.

A convex hypersurface $\Sigma\subset\A$ is by definition a nonempty open subset of the boundary of some  convex domain, and a supporting half-space of the domain at a point $p\in\Sigma$ is referred to as a supporting half-space of $\Sigma$ at $p$.  We will study the following particular type of convex hypersurfaces,  known as \emph{future-convex spacelike} hypersurfaces when $C=C_0\subset\R^{n,1}$:
\begin{definition}[\textbf{$C$-convex hypersurfaces}]\label{def_cconvex}
	Given a proper convex cone $C\subset\V$, a \emph{$C$-convex} hypersurface is a convex hypersurface $\Sigma\subset\A$ whose supporting half-spaces are all $C$-spacelike. $\Sigma$ is said to be \emph{complete} if it is the entire boundary of some convex domain. 
\end{definition}
\begin{remark}\label{remark_completeness}
	For a  \emph{locally} convex immersed hypersurface, there is a nontrivial relation between the notion of completeness defined above and the completeness of the geodesic metric on the hypersurface induced by an ambient Euclidean metric, see \cite{heij}. However, since we only consider embedded globally convex hypersurfaces, these notions are the same.
\end{remark}

As an example, for the affine sphere $\Sigma_C\subset C$ given by Theorem \ref{thm_chengyau1}, using the fact that $\Sigma_C$ is equi-affine and asymptotic to $\pa C$, it can be shown that any scaling $\lambda\Sigma_C$ is a complete $C$-convex hypersurface generating the convex cone $C$. 

In Section \ref{sec_legendre}, we will identify all complete $C$-convex hypersurfaces in $\A$ as the entire graphs of a specific class of convex functions on $\R^n$, and identify $C$-regular domains as strict epigraphs of specific convex functions as well. We will also see that if a complete $C$-convex hypersurface $\Sigma$ is asymptotic to the boundary of a $C$-regular domain $D$ (in the sense defined in Section \ref{subsec_completehyperbolic}), then $\Sigma$ generates $D$, whereas the converse is not true.


\subsection{Convex hypersurfaces with Constant Affine Gaussian Curvature}\label{subsec_cagc}
With the definitions from Section \ref{sec_affinedg} in mind, by a hypersurface in $\A$ with \emph{constant affine Gaussian curvature} (CAGC), we mean a non-degenerate smooth embedded hypersurface $\Sigma\subset\A$ such that the shape operator of $\Sigma$ with respect to an affine normal field has constant determinant. Since there are two affine normal fields opposite to each other, the precise value of affine Gaussian curvature (\ie the determinant) has sign ambiguity when $n$ is odd. When $\Sigma$ is locally convex, the ambiguity is eliminated by picking the affine normals pointing towards the convex side as mentioned in Section \ref{subsec_affinenormal}. 

The following result characterizes CAGC hypersurfaces by affine normal mappings and singles out a subclass of these hypersurfaces which we study later on:
\begin{proposition}\label{prop_cagc}
	Let $\Sigma\subset\A$ be a non-degenerate smooth  hypersurface with affine normal mapping $N:\Sigma\rightarrow\V$. Let $S$ be the shape operator of $\Sigma$ with respect to $N$ and suppose $\det(S)\neq0$ on $\Sigma$, so that $N$ is an immersion (see Lemma \ref{lemma_equiaffine}). Then 
	\begin{enumerate}
		\item\label{item_cagc1}
		$\det(S)$ is constant if and only if $N(\Sigma)$ is a proper affine sphere in $\V$. In this case, an affine normal field of $N(\Sigma)$ is given by $|\det(S)|^{-\frac{1}{n+2}}$ times its position vectors.
		\item\label{item_cagc2} 
		Further assume that $\Sigma$ is locally strongly convex and $N$ points towards the convex side of $\Sigma$. Then $N(\Sigma)$ is a hyperbolic affine sphere if and only if $\det(S)$ is a constant and the eigenvalues of $S$ are all positive at every point of $\Sigma$. 
	\end{enumerate}
\end{proposition}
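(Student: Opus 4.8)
The plan is to prove Proposition \ref{prop_cagc} by reducing everything to the centro-affine picture already set up in Lemma \ref{lemma_data}. Indeed, by Lemma \ref{lemma_equiaffine} and the hypothesis $\det(S)\neq 0$, the affine normal map $N:\Sigma\to\V$ is a centro-affine immersion, and Lemma \ref{lemma_data} tells us that its intrinsic data with respect to the position vector field are
$$
(\nu',\nabla',h',S')=\bigl(\det(S)\,\nu,\ S^{-1}\nabla S,\ h(\cdot,S(\cdot)),\ \id\bigr).
$$
Thus $N(\Sigma)$ is automatically a centro-affine hypersurface whose shape operator relative to its position vectors is $\id$; the content of the proposition is about when the position vector field, suitably rescaled, is the \emph{affine normal} field of $N(\Sigma)$, i.e.\ when $N(\Sigma)$ is a \emph{proper affine sphere}.

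For part \eqref{item_cagc1}, the idea is to invoke Lemma \ref{lemma_scale}: a centro-affine hypersurface with shape operator $\id$ (relative to position vectors, which are equi-affine since $\tau=0$ for position vectors on a centro-affine hypersurface) is a proper affine sphere if and only if the induced volume form $\nu'$ is a constant multiple of $\dif\vol_{h'}$, in which case rescaling the position vectors by the appropriate power of that constant yields the affine normal field. So I would compute $\dif\vol_{h'}$ in terms of $\dif\vol_h=\nu$. Since $h'=h(\cdot,S(\cdot))$ and $\det(S)\nu=\nu'$, one gets $\dif\vol_{h'}=|\det(S)|^{1/2}\dif\vol_h=|\det(S)|^{1/2}\nu=|\det(S)|^{-1/2}\nu'$. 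Hence $\dif\vol_{h'}$ is a constant multiple of $\nu'$ precisely when $\det(S)$ is constant, and in that case Lemma \ref{lemma_scale} (with $n+1=\dim\V$, so the relevant exponent is $\frac{2}{(n-1)+2}=\frac{2}{n+1}$ applied to $a=|\det(S)|^{-1/2}$) gives that the affine normal of $N(\Sigma)$ is $|\det(S)|^{-1/2\cdot 2/(n+1)}\cdot$\ nothing — I need to be a little careful with the exponent bookkeeping here, but the claimed factor $|\det(S)|^{-1/(n+2)}$ should come out after tracking that $N(\Sigma)$ has dimension $n$ and lives in the $(n+1)$-dimensional space $\V$, so Lemma \ref{lemma_scale} applies with its ``$n$'' equal to our $n$. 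This exponent check is the one genuinely fiddly point and I expect it to be the main obstacle: one must make sure the dimension conventions in Lemma \ref{lemma_scale} are applied to $N(\Sigma)$ and not to $\Sigma$, and that the scaling relations ``$\nu$ scales by $\lambda$, $h$ by $1/\lambda$'' are combined correctly.

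For part \eqref{item_cagc2}, assume additionally that $\Sigma$ is locally strongly convex with $N$ pointing to the convex side. A hyperbolic affine sphere is, by Definition \ref{define_affinesphere}, a locally convex proper affine sphere whose shape operator with respect to the convex-side affine normal is $\lambda\,\id$ with $\lambda>0$. By part \eqref{item_cagc1}, $N(\Sigma)$ is a proper affine sphere iff $\det(S)$ is constant, and then its affine normal is a \emph{positive} multiple (namely $|\det(S)|^{-1/(n+2)}>0$) of the position vectors, so the corresponding shape operator is $\lambda\,\id$ with $\lambda>0$; thus the sign condition in Definition \ref{define_affinesphere} is automatic once properness holds. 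What remains is to characterize when $N(\Sigma)$ is \emph{locally convex with the position-vector normal pointing to the convex side}, i.e.\ when $h'=h(\cdot,S(\cdot))$ is a \emph{positive definite} Riemannian metric (using the third bullet point in Section \ref{subsec_intrinsic}: a locally convex hypersurface is locally strongly convex iff $h$ is definite, and $h$ is Riemannian iff $N$ points to the convex side). Since $\Sigma$ is locally strongly convex with $N$ on the convex side, $h$ is positive definite; then $h(\cdot,S(\cdot))$ is positive definite iff $S$ is positive definite with respect to $h$, and since $S$ has $n$ real eigenvalues here (by the third bullet in Section \ref{subsec_intrinsic}, as $\dif\tau=0$ and $\Sigma$ is non-degenerate, so $S$ is $h$-self-adjoint), this is equivalent to all eigenvalues of $S$ being positive. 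Combining with the properness characterization from part \eqref{item_cagc1} gives exactly: $N(\Sigma)$ is a hyperbolic affine sphere iff $\det(S)$ is constant and all eigenvalues of $S$ are positive everywhere. I would also remark that when all eigenvalues are positive, constancy of $\det(S)$ can equivalently be stated with $\det(S)>0$, matching the convex-case sign normalization of affine Gaussian curvature from Section \ref{subsec_cagc}.

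One small subtlety to flag along the way: in part \eqref{item_cagc2} the ``only if'' direction needs that a hyperbolic affine sphere $N(\Sigma)$, being locally strongly convex, forces $h'$ positive definite, hence $S$ positive definite, hence all eigenvalues of $S$ positive — and separately forces $\det(S)$ constant by part \eqref{item_cagc1}; the ``if'' direction runs the implications backward. Throughout, the self-adjointness of $S$ with respect to $h$ (so that ``$n$ real eigenvalues'' and ``positive definite'' are the right notions) is available because affine normal fields are equi-affine, so $\dif\tau=0$, so $h(\cdot,S(\cdot))$ is symmetric.
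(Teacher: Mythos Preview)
Your proposal is correct and follows essentially the same route as the paper: apply Lemma \ref{lemma_data} to get $\nu'=\det(S)\nu$ and $h'=h(\cdot,S(\cdot))$, compute $\dif\vol_{h'}=|\det(S)|^{1/2}\nu=\pm|\det(S)|^{-1/2}\nu'$, and invoke Lemma \ref{lemma_scale} for part \eqref{item_cagc1}; then for part \eqref{item_cagc2} observe that positive definiteness of $h'$ is equivalent both to positivity of the eigenvalues of $S$ (since $h$ is positive definite) and to $N(\Sigma)$ being locally convex with the position vector on the convex side. Your exponent wobble is harmless once you note, as you eventually do, that $N(\Sigma)$ is an $n$-dimensional hypersurface in the $(n+1)$-dimensional $\V$, so Lemma \ref{lemma_scale}'s ``$n$'' is our $n$ and $|a|^{2/(n+2)}=|\det(S)|^{-1/(n+2)}$; your remark about applying the lemma ``to $N(\Sigma)$ and not to $\Sigma$'' is a bit off since both have the same dimension, but this does not affect the argument.
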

\begin{proof}
	(\ref{item_cagc1})
	Let $(\nu, \nabla,h,S)$ be the intrinsic data of $(\Sigma,N)$, where $\nu=\dif\vol_h$ is the volume form of $h$ since $N$ is an affine normal field. 
	By Lemma \ref{lemma_data}, the induced volume form $\nu'$ and  affine metric $h'$ of the centro-affine immersion $N$ with respect to its position vector field are given by $\nu'=\det(S)\nu$ and $h'(\cdot,\cdot)=h(\cdot,S(\cdot))$.  The volume form of $h'$ is 
	$$
	\dif\vol_{h'}=|\det(S)|^\frac{1}{2}\dif\vol_h=|\det(S)|^\frac{1}{2}\nu=\pm |\det(S)|^{-\frac{1}{2}}\nu'.
	$$ 
  In particular, $\det(S)$ is a constant if and only if $\dif\vol_{h'}$ is a constant times $\nu'$. But Lemma \ref{lemma_scale} implies that $\dif\vol_{h'}$ is a constant $a\neq0$ times $\nu'$ if and only $|a|^{\frac{2}{n+2}}$ times the position vector field of $N(\Sigma)$ is an affine normal field, hence the required statements follows.
	
	(\ref{item_cagc2})
The additional assumption is equivalent to the condition that $h$ is positive definite (see Section \ref{subsec_intrinsic}). In this case, the eigenvalues of $S$ are all positive if and only if $h'$ is positive definite as well. But the positive definiteness of $h'$ is equivalent to the condition that $N(\Sigma)$ is a locally convex centro-affine hypersurface with the position vector of each point pointing towards the convex side (or equivalently, $0\in\V$ lies on the concave side). When $N(\Sigma)$ is a proper affine sphere, this condition means exactly that $N(\Sigma)$ is actually a hyperbolic affine sphere. The first statement then follows from Part (\ref{item_cagc1}). 
\end{proof}
With the same proof, one can show a similar statement as Part (\ref{item_cagc2}) with ``hyperbolic'' and ``positive'' replaced by ``elliptic'' and ``negative'', respectively. But  we are mainly interested in the situation where $N(\Sigma)$ is not merely a hyperbolic affine sphere, but also part of a \emph{complete} one discussed in Section \ref{subsec_completehyperbolic}: 
\begin{definition}[\textbf{Affine $(C,k)$-hypersurfaces}]\label{def_ck}
	Let $C\subset\V$ be a proper convex cone and let $k>0$. An \emph{affine $(C,k)$-hypersurface} in $\A$ is a locally strongly convex smooth hypersurface $\Sigma$ with CAGC $k$ such that the affine normal mapping $N:\Sigma\rightarrow\V$ has image in a complete hyperbolic affine sphere generating $C$.
\end{definition}
Given an affine $(C,k)$-hypersurface $\Sigma\subset\A$, by Propositions \ref{prop_scaling} and \ref{prop_cagc}, the affine normal mapping $N:\Sigma\to\V$ and conormal mapping $N^*:\Sigma\to\V^*$ have images in the following specific scaling of the Cheng-Yau affine spheres $\Sigma_C$ and $\Sigma_{C^*}$ (see Sections \ref{subsec_completehyperbolic} and \ref{subsec_conormal}), respectively:
$$
N(\Sigma)\subset k^\frac{1}{2(n+1)}\Sigma_C,\quad N^*(\Sigma)\subset k^{-\frac{1}{2(n+1)}}\Sigma_{C^*}.
$$
The former is the complete hyperbolic affine sphere generating $C$ with affine shape operator $k^{-\frac{1}{n+2}}\id$ by Proposition \ref{prop_scaling}, and the latter is dual to the former.
Also note that $\Sigma$ is $C$-convex because by Lemma \ref{lemma_equiaffine} and the fact that $N$ points towards the convex side of $\Sigma$, the supporting half-space of $\Sigma$ at a point $p$ coincides with that of $N(\Sigma)$ at $N(p)$ up to translation, while $k^\frac{1}{2(n+1)}\Sigma_C$ and hence $N(\Sigma)$ are $C$-convex.

\subsection{The Minkowski case}\label{subsec_minkowski}
We view the Minkowski space $\R^{n,1}$ (see Example \ref{example_hyperboloid} for the basic definitions) both as a vector space endowed with a bilinear form $\bii{\cdot}{\cdot}$ of signature $(n,1)$ (hence also endowed with the resulting volume form) and as the affine space modeled on this vector space, which is a flat Lorentzian manifold.

A hypersurface $\Sigma\subset\R^{n,1}$ is said to be \emph{spacelike} if its tangent hyperplanes are spacelike (\cf Section \ref{subsec_cregular}). When $\Sigma$ is $\C^1$, it is the case if and only if the ambient Lorentzian metric restricts to a Riemannian metric $g$ on $\Sigma$, which is called the \emph{first fundamental form} of $\Sigma$. Note that $C_0$-convex hypersurfaces (see Definition \ref{def_cconvex}) are in particular spacelike.

From an affine-differential-geometric point of view, there is a natural equi-affine transversal vector field to be considered for a spacelike hypersurfaces $\Sigma$: the unit normal field, taking values in the hyperboloid $\mathbb{H}$ (see  Example \ref{example_hyperboloid}).  The resulting intrinsic data $(\nu, \nabla, h, S)$ have the following relations with the first fundamental form $g$:
\begin{itemize}
	\item The induced volume form $\nu$ and induced affine connection $\nabla$ are the volume form and Levi-Civita connection of $g$, respectively.
	\item The affine metric $h$, known as the \emph{second fundamental form} of $\Sigma$, is related to the shape operator $S$ through the relation $h(\cdot,\cdot)=g(\cdot,S(\cdot))$.
\end{itemize}
We call the determinant $\det(S):\Sigma\to\R$ the \emph{classical} Gaussian curvature of $\Sigma$ in order to distinguish with the affine Gaussian curvature. Note that when $n=2$, the \emph{Gauss equation} for $\Sigma$, similar to the one for surfaces in the Euclidean $3$-space, states that the classical Gaussian curvature is opposite to the intrinsic curvature of the first fundamental form. 

As the main result of this section, we characterize affine $(C_0,k)$-hypersurface by classical Gaussian curvature:
\begin{proposition}\label{prop_classicalcurvature}
	Given $k>0$, affine $(C_0,k)$-hypersurfaces in the Minkowski space $\R^{n,1}$ are exactly $C_0$-convex hypersurfaces with constant classical Gaussian curvature $k^\frac{n+2}{2n+2}$.
\end{proposition}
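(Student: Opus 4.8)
The plan is to relate the two transversal vector fields naturally attached to a $C_0$-convex hypersurface $\Sigma\subset\R^{n,1}$: the Lorentzian unit normal field, taking values in the hyperboloid $\mathbb{H}=\Sigma_{C_0}$, and the affine normal field. Both are equi-affine (the unit normal field because it is parallel in the Lorentzian sense, so its derivatives are tangent to $\Sigma$; the affine normal by definition), so by Lemma~\ref{lemma_scale} they differ by a scalar function which is constant precisely when a suitable ratio of volume forms is constant. First I would record that since the unit normal field $n:\Sigma\to\mathbb{H}$ has image in the centro-affine hypersurface $\mathbb{H}$, and $\mathbb{H}=\Sigma_{C_0}$ is a hyperbolic affine sphere with affine shape operator the identity, the hypothesis ``affine $(C_0,k)$'' is, by Definition~\ref{def_ck} together with Propositions~\ref{prop_scaling} and~\ref{prop_cagc}, equivalent to saying that the affine normal mapping $N$ of $\Sigma$ has image in the scaling $k^{\frac{1}{2(n+1)}}\mathbb{H}$ of $\mathbb{H}$.

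Next I would carry out the computation translating this into a statement about the classical Gaussian curvature $\det(S)$, where $S$ is the shape operator relative to the unit normal $n$. Using the framework of Section~\ref{subsec_intrinsic}: with respect to $n$ the intrinsic data $(\nu,\nabla,h,S)$ satisfy $\nu=\dif\vol_g$, $\nabla$ is the Levi-Civita connection of $g$, and $h(\cdot,\cdot)=g(\cdot,S(\cdot))$, as recalled before Proposition~\ref{prop_classicalcurvature}. Hence $\dif\vol_h=|\det(S)|^{\frac12}\dif\vol_g=|\det(S)|^{\frac12}\nu$, so by Lemma~\ref{lemma_scale} the affine normal field of $\Sigma$ is $\pm|\det(S)|^{\frac{1}{n+2}}n$. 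Therefore $\Sigma$ has constant affine Gaussian curvature if and only if $\det(S)$ is constant, and in that case the affine normal mapping is $N=|\det(S)|^{\frac{1}{n+2}}n$, whose image lies in the scaling $|\det(S)|^{\frac{1}{n+2}}\mathbb{H}$ of $\mathbb{H}$. Comparing this with the scaling $k^{\frac{1}{2(n+1)}}\mathbb{H}$ identified in the previous paragraph gives the numerical relation $|\det(S)|^{\frac{1}{n+2}}=k^{\frac{1}{2(n+1)}}$, i.e. $\det(S)=k^{\frac{n+2}{2(n+1)}}=k^{\frac{n+2}{2n+2}}$; and since $n$ points to the convex side, $S$ is positive definite so $\det(S)>0$ and the sign ambiguity disappears.

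Finally I would assemble the equivalence in both directions. If $\Sigma$ is $C_0$-convex with constant classical Gaussian curvature $k^{\frac{n+2}{2n+2}}$, the computation above shows its affine normal mapping has image in $k^{\frac{1}{2(n+1)}}\mathbb{H}$, which by Proposition~\ref{prop_scaling} is the complete hyperbolic affine sphere generating $C_0$ with affine shape operator $k^{-\frac{1}{n+2}}\id$; by Proposition~\ref{prop_cagc}\eqref{item_cagc1} this forces the affine shape operator of $\Sigma$ to equal $k\id$-times... more precisely, it forces $\det$ of the affine shape operator to be $k$, so $\Sigma$ has CAGC $k$, hence $\Sigma$ is an affine $(C_0,k)$-hypersurface. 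Conversely, an affine $(C_0,k)$-hypersurface is locally strongly convex with CAGC $k$, so the computation runs backwards to give constant classical Gaussian curvature $k^{\frac{n+2}{2n+2}}$, and it is $C_0$-convex by the remark following Definition~\ref{def_ck}. The one point requiring a little care—the main (though modest) obstacle—is bookkeeping the two independent scaling exponents ($k\mapsto k^{\frac{1}{2(n+1)}}$ for the affine sphere, and $\det(S)\mapsto|\det(S)|^{\frac{1}{n+2}}$ for the affine normal) and checking that they combine to the stated exponent $\frac{n+2}{2n+2}$; everything else is a direct application of Lemma~\ref{lemma_scale} and Propositions~\ref{prop_scaling}–\ref{prop_cagc}.
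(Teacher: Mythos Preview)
Your approach is essentially the same as the paper's: relate the Lorentzian unit normal $n$ (with values in $\mathbb{H}$) to the affine normal $N$ via Lemma~\ref{lemma_scale}, and match the scaling exponents. The direction ``constant classical curvature $\Rightarrow$ affine $(C_0,k)$'' is fine as you wrote it.

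There is one small gap in the converse direction. Your phrase ``the computation runs backwards'' hides a nontrivial step: knowing that the affine normal $N$ has image in $k^{\frac{1}{2(n+1)}}\mathbb{H}$ does not by itself say that $k^{-\frac{1}{2(n+1)}}N$ \emph{equals} the Lorentzian unit normal $n$. You need to argue that $N':=k^{-\frac{1}{2(n+1)}}N$ is orthogonal to $\T_p\Sigma$ at each point. The paper does this explicitly: since $N$ is equi-affine, Lemma~\ref{lemma_equiaffine} gives $\dif N'_p(\T_p\Sigma)\subset\T_p\Sigma$, hence $\T_p\Sigma=\T_{N'(p)}\mathbb{H}$; but $\T_v\mathbb{H}$ is the Lorentzian orthogonal complement of $v$, so $N'(p)\perp\T_p\Sigma$ and thus $N'=n$. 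Once this is established, the shape operator relation $S'=k^{-\frac{1}{2(n+1)}}S$ gives $\det(S')=k^{-\frac{n}{2(n+1)}}\cdot k=k^{\frac{n+2}{2n+2}}$, as you want. Relatedly, your invocation of Lemma~\ref{lemma_scale} in the second paragraph to write ``the affine normal field of $\Sigma$ is $\pm|\det(S)|^{\frac{1}{n+2}}n$'' is only valid once $\det(S)$ is known to be constant (otherwise that field is not even equi-affine), so the ``if and only if'' in the next sentence is not yet justified at that point; the converse direction really does need the separate argument above.
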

\begin{proof}
	Let $\Sigma\subset\R^{n,1}$ be an affine $(C_0,k)$-hypersurface and $N:\Sigma\to\R^{n,1}$ be its affine normal mapping. Then $\Sigma$ is $C_0$-convex and $N$ has image in the $k^{\frac{1}{2(n+1)}}$-scaling of  $\Sigma_{C_0}=\mathbb{H}$ (see Section \ref{subsec_cagc}). Thus, $N':=k^{-\frac{1}{2(n+1)}}N$ has images in $\mathbb{H}$. By Lemma \ref{lemma_equiaffine}, $N'$ is a local diffeomorphism from $\Sigma$ to $\mathbb{H}$ and for every $p\in\Sigma$, the tangent space $\T_p\Sigma$ coincides with $\dif N'_p(\T_p\Sigma)=\T_{N'(p)}\mathbb{H}$ after translation. But the orthogonal complement of the vector $N'(p)$ is exactly the subspace $\T_{N'(p)}\mathbb{H}\subset\T_p\R^{n,1}\cong\R^{n,1}$. Therefore, $N'$ is the unit orthogonal normal field. The shape operator $S'$ of $\Sigma$ with respect to $N'$ is $S'=k^{-\frac{1}{2(n+1)}}S$, hence the classical Gaussian curvature is
	$\det(S')=k^{-\frac{n}{2(n+1)}}\det(S)=k^\frac{n+2}{2n+2}$, as required.
	
	Conversely, let $\Sigma\subset\R^{n,1}$ be a $C_0$-convex hypersurface with first fundamental form $g$ and unit orthogonal normal field $N':\Sigma\to\mathbb{H}$, and let $(\nu',\nabla',h',S')$ be the intrinsic data of $(\Sigma,N')$. Assume the classical Gaussian curvature $\det(S')$ is a constant $k'>0$. Since $\nu'=\dif\vol_g$ and $h'(\cdot,\cdot)=g(\cdot,S'(\cdot))$, the volume form of $h'$ is 
	$$
	\dif\vol_{h'}=k'^{\frac{1}{2}}\dif\vol_g=k'^{\frac{1}{2}}\nu'.
	$$
	By Lemma \ref{lemma_scale}, $N=k'^{\frac{1}{n+2}}N'$ is the affine normal field of $\Sigma$, hence the affine Gaussian curvature is $\det(S)=\det(k'^{\frac{1}{n+2}}S')=k'^{\frac{2n+2}{n+2}}=:k$. Thus, $\Sigma$ is an affine $(C_0,k)$-hypersurface, as required.
\end{proof}
\begin{remark}
	One can consider affine-differential-geometric properties of general (not necessarily locally convex) spacelike hypersurfaces $\Sigma\subset\R^{n,1}$, or hypersurfaces in the Euclidean space $\mathbb{E}^{n+1}$.
	In particular, refining the argument in the second part of the above proof, it can be shown that for such a $\Sigma$ in $\R^{n,1}$ or $\mathbb{E}^{n+1}$, a unit normal vector field $N$ (with values in $\mathbb{H}$ and the unit sphere $\mathbb{S}$ in the Minkowski and Euclidean cases, respectively)	
	scaled by some function $a:\Sigma\to\R_+$ gives an affine normal field if and only if the classical Gaussian curvature of $\Sigma$ is a nonzero constant, and in this case $a$ must be a constant. Therefore, if $\Sigma$ has constant classical Gaussian curvature then it also has CAGC.	
\end{remark}

\section{Preliminaries on convex analysis}\label{sec_convex_prel}
In this section, we introduce definitions, notations and results from convex analysis that we will use in the following sections. 
\subsection{Convex functions}\label{subsec_convexfunction}
We consider lower semicontinuous convex functions on $\R^n$ with values in $\Rp$ and denote the space of them, excluding the constant function $+\infty$, by
$$
\LC(\R^n):=\{u:\R^n\rightarrow\Rp\mid \mbox{$u$ is lower semicontinuous and convex, }u\not\equiv+\infty\}.
$$
Also, it is sometimes convenient to consider the space
$$
\GLC(\R^n):=\LC(\R^n)\cup\{\pm\infty\},
$$
where $+\infty$ and $-\infty$ are understood as constant functions. It has the property that the pointwise supremum of any family of functions $F\subset\GLC(\R^n)$ is still in $\GLC(\R^n)$ (the supremum of the empty set is $-\infty$ by convention). Members in $\GLC(\R^n)$ and $\LC(\R^n)$ are called ``closed convex functions'' and ``closed proper convex functions'', respectively, in literatures on convex analysis such as \cite{rockafellar}.  Here ``closed'' refers to the closedness of the epigraph
$$
\epi{u}:=\{(x,\xi)\in\R^n\times\R\mid \xi\geq u(x)\}.
$$
In fact, $u$ is convex/lower semicontinuous if and only if $\epi{u}$ is convex/closed.

Given $u\in\LC(\R^n)$, the \emph{effective domain}
$$
\dom{u}:=\{x\in\R^{n}\mid u(x)<+\infty\}
$$ 
is a nonempty convex subset of $\R^n$. It is a basic fact that any $\R$-valued convex function on an open subset of $\R^n$ is continuous (actually Lipschitz, see \cite[Lemma 1.1.6]{gutierrez}), so $u$ is continuous in the interior $U:=\interior\dom{u}$ of its effective domain. 
At a boundary point $x_0\in\pa U$, $u$ is continuous at least along line segments in $U$ in the sense that
$$
\lim_{t\to0^+}u((1-t)x_0+tx_1)=u(x_0)\,\text{ for any }x_0\in\pa U,x_1\in U.
$$
Indeed, by lower semicontinuity we have $\liminf_{t\to0^+}u((1-t)x_0+tx_1)\geq u(x_0)$, while by convexity we have $u((1-t)x_0+tx_1)\leq (1-t)u(x_0)+tu(x_1)$, hence $\limsup_{t\to0^+}u((1-t)x_0+tx_1)\leq u(x_0)$. Note that $u(x_0)=+\infty$ is allowed here. More generally, a similar argument shows that the restriction of $u$ to every simplex in $\dom{u}$ is continuous (see \cite[Theorem 10.2]{rockafellar}).


By virtue of these continuity properties, if $U$ is nonempty, then $u$ is determined by its restriction to $U$:
\begin{proposition}\label{prop_extension}
	Given a  convex domain $U\subset\R^n$ and a convex function $u:U\rightarrow\R$, the function $\env{u}:\R^n\to\Rp$ defined by
	$$
	\env{u}(x):=
	\begin{cases}
	u(x)&\text{ if }x\in U\\
	\liminf_{y\to x}u(y)&\text{ if }x\in\pa U\\
	+\infty&\text{ if }x\in\R^n\setminus\overline{U}
	\end{cases}
	$$
   is the unique element in $\LC(\R^n)$ extending $u$ with effective domain contained in $\overline{U}$.
\end{proposition}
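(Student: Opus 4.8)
The plan is to verify three things about the candidate function $\env{u}$: that it is well-defined (the $\liminf$ is finite or $+\infty$ but never $-\infty$, so $\env{u}$ takes values in $\Rp$), that it belongs to $\LC(\R^n)$, i.e.\ that it is convex and lower semicontinuous and not identically $+\infty$, and finally that it is the unique such extension of $u$ with effective domain inside $\overline{U}$.

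First I would establish that $\env{u}$ is convex. The values on $\R^n\setminus\overline U$ are $+\infty$, so the only content is convexity across $\overline U$. Given $x,y\in\overline U$ and $t\in(0,1)$, pick sequences $x_k,y_k\in U$ (possible since $U$ is the interior of a convex set and the convex combination point $(1-t)x+ty$ lies in $\overline U$; when either $x$ or $y$ is already in $U$ this is immediate, and in general one approaches along segments from an interior point). Apply the convexity inequality $u((1-t)x_k+ty_k)\le (1-t)u(x_k)+tu(y_k)$ on $U$, take $\liminf$ as $k\to\infty$, and use the definition of $\env{u}$ on the boundary together with the fact that $(1-t)x_k+ty_k\to(1-t)x+ty$; some care is needed because $\liminf$ is only subadditive the wrong way, so I would instead run the argument along a single segment: fix an interior point $z\in U$, replace $x$ by $x_s:=(1-s)x+sz\in U$ and similarly $y_s$, use convexity on $U$ for the combination, and then let $s\to0^+$ invoking the one-sided continuity along segments noted in the text (the displayed limit formula $\lim_{t\to0^+}u((1-t)x_0+tx_1)=u(x_0)$). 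This reduces the general case to the already-observed continuity of $u$ along segments ending on $\pa U$, which also shows $\env{u}(x_0)=\lim_{t\to0^+}u((1-t)x_0+tx_1)$ is independent of the approach and finite-or-$+\infty$.

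Next, lower semicontinuity: by the equivalence stated in the text, it suffices to show $\epi{\env{u}}$ is closed, or directly that $\liminf_{y\to x}\env{u}(y)\ge\env{u}(x)$ at every $x$. For $x\in U$ this is continuity of $u$; for $x\notin\overline U$ it is vacuous since $\env{u}\equiv+\infty$ on a neighborhood; the only real case is $x\in\pa U$, where one must check that approaching $x$ from \emph{all} directions (not just from within $U$) cannot drop the value below $\env{u}(x)=\liminf_{y\to x}u(y)$. Points of the approaching sequence lying outside $\overline U$ contribute $+\infty$, and points in $\overline U$ are handled by a diagonal argument: each boundary point near $x$ can be perturbed slightly into $U$ without changing its $\env{u}$-value by more than $\eps$, reducing to the definition of $\liminf_{y\to x}u(y)$ over $y\in U$. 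Finally, $\env{u}\not\equiv+\infty$ since $U\neq\emptyset$, and $\dom{\env{u}}\subset\overline U$ by construction; for uniqueness, any $v\in\LC(\R^n)$ with $v|_U=u$ and $\dom v\subset\overline U$ agrees with $\env{u}$ on $U$ and on $\R^n\setminus\overline U$ (both $+\infty$ there), and on $\pa U$ lower semicontinuity of $v$ forces $v(x_0)\le\liminf_{y\to x_0}v(y)=\liminf_{y\to x_0,\,y\in U}u(y)=\env{u}(x_0)$ while convexity of $v$ together with one-sided segment continuity forces the reverse inequality, so $v=\env{u}$.

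The main obstacle I anticipate is the convexity verification across the boundary: $\liminf$ does not interact well with convex combinations, so one cannot simply pass to the limit in the convexity inequality termwise. The clean fix is the reduction to one-variable segment continuity described above, relying on the displayed limit already proved in the text; once that reduction is in place everything else is routine. A secondary subtlety is making sure, in the lower-semicontinuity step, that sequences approaching a boundary point partly from outside $\overline U$ are correctly accounted for — but since $\env u=+\infty$ there, those terms only help.
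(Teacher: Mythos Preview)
Your approach is sound in spirit but takes a more laborious route than the paper's. The paper dispatches convexity and lower semicontinuity in a single stroke by observing that $\epi{\env{u}}$ is precisely the closure of $\epi{u}\subset U\times\R$ in $\R^{n+1}$: since $\epi{u}$ is convex, its closure is closed and convex, hence $\env{u}\in\LC(\R^n)$ immediately. Your direct verification of the two properties separately works, but requires more bookkeeping. What the epigraph approach buys is that the interaction of $\liminf$ with convex combinations never arises; what your approach buys is a more hands-on argument that avoids the epigraph formalism. The uniqueness arguments are essentially the same.

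There is, however, one genuine circularity in your convexity step. You invoke the displayed segment-continuity formula $\lim_{t\to0^+}u((1-t)x_0+tx_1)=u(x_0)$ from the text, but that formula is stated and proved for functions already in $\LC(\R^n)$, where $u(x_0)$ is defined and lower semicontinuity is available. Your $u$ lives only on $U$, so what you actually need is that the radial limit $L_z(x_0):=\lim_{s\to0^+}u((1-s)x_0+sz)$ coincides with $\env{u}(x_0):=\liminf_{y\to x_0,\,y\in U}u(y)$. The inequality $L_z(x_0)\geq\env{u}(x_0)$ is immediate; for the reverse, pick $y_k\to x_0$ in $U$ with $u(y_k)\to\env{u}(x_0)$ and use continuity of $u$ at the interior point $(1-s)x_0+sz$ together with convexity to get
\[
u((1-s)x_0+sz)=\lim_{k}u((1-s)y_k+sz)\leq (1-s)\,\env{u}(x_0)+s\,u(z),
\]
then let $s\to0^+$. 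With this patched, your scheme goes through. (A small correction in the uniqueness paragraph: the equality $\liminf_{y\to x_0}v(y)=\liminf_{y\to x_0,\,y\in U}u(y)$ should only be the inequality $\le$, since boundary values of $v$ could a priori lower the unrestricted $\liminf$; but $\le$ is all you need there.)
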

The extension $\env{u}$ is a particular instance of \emph{convex envelopes}, introduced in the next section for more general $u$.
\begin{proof}
It is elementary to check that the epigraph $\epi{\env{u}}$ is exactly the closure of $\epi{u}\subset U\times\R$ in $\R^n\times\R$. Since  $\epi{u}$ is convex,  $\epi{\env{u}}$ is a closed convex set, hence $\env{u}\in\LC(\R^n)$. To prove the uniqueness, it is sufficient to show that given $u\in\LC(\R^n)$ with $U:=\interior\dom{u}$ nonempty, the value of $u$ at any $x_0\in\pa U$ coincides with the liminf of $u(x)$ as $x\in U$ tends to $x_0$. This is given by
$$
\liminf_{x\in U,\,x\to x_0} u(x)\geq u(x_0)=\lim_{t\to0^+}u((1-t)x_0+tx_1)\geq\liminf_{x\in U,\,x\to x_0}u(x),
$$
where the first inequality is because $u$ is lower semicontinuous.
\end{proof}
For general $u\in\LC(\R^n)$ with $\interior\dom{u}$ possibly empty, there is a unique affine subspace $A\subset\R^n$ containing $\dom{u}$ such that $\dom{u}$ has nonempty interior as a subset of $A$, and one can study the restriction $u|_A\in\LC(A)$ instead. The interior of $\dom{u}$ as a subset of $A$ is called the \emph{relative interior} and denoted by $\ri \dom{u}$ (see \cite[Section 6]{rockafellar}). 

Proposition \ref{prop_extension} assigns to every convex function $u:U\to\R$ its \emph{boundary values}, namely, the restriction $\env{u}|_{\pa U}$. This is fundamental in the study of Monge-Amp\`ere equations on $U$ because the Dirichlet problem of such equations asks for a convex function with prescribed boundary values. A basic fact is that $u$ extends continuously to $\overline{U}$ if and only if its boundary values are continuous. This follows from:
\begin{proposition}\label{prop_boundarycontinuous}
Let $u\in\LC(\R^n)$ with $U:=\interior\dom{u}$ nonempty and let $x_0\in\pa U$ be such that $u(x_0)<+\infty$ and the restriction of $u$ to $\pa U$ is continuous at $x_0$. Then the restriction of $u$ to $\overline{U}$ is continuous at $x_0$ as well.
\end{proposition}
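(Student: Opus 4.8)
The plan is to establish the upper bound complementing lower semicontinuity. Since $u$ is lower semicontinuous with $u(x_0)<+\infty$, one automatically has $\liminf_{x\to x_0,\,x\in\overline U}u(x)\geq u(x_0)$, so it remains to prove $\limsup_{x\to x_0,\,x\in\overline U}u(x)\leq u(x_0)$. I would argue sequentially: given $x_k\to x_0$ with $x_k\in\overline U$, it suffices to show that every subsequential limit of $\bigl(u(x_k)\bigr)$ in $[-\infty,+\infty]$ is $\leq u(x_0)$, so after one extraction I may assume $u(x_k)$ converges and, after a further extraction, that either all $x_k$ lie on $\pa U$ or all $x_k$ lie in $U$. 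In the first case the desired inequality (in fact equality) is precisely the hypothesis that $u|_{\pa U}$ is continuous at $x_0$. The whole content is therefore the case $x_k\in U$.

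For that case, fix a point $y_0\in U$ (nonempty by assumption; note that $U$ is genuinely $n$-dimensional and open, so $y_0$ is a true interior point of $\overline U$). The idea is to write each $x_k$ as a convex combination of $y_0$ and a boundary point tending to $x_0$, and then use convexity of $u$ together with the continuity hypothesis. Before that, I record the elementary fact that $y_0+t(x_0-y_0)\notin\overline U$ for every $t>1$: otherwise $x_0$ would be a convex combination, with weights in $(0,1)$, of the interior point $y_0$ and a point of $\overline U$, and hence would lie in $U$, contradicting $x_0\in\pa U$. Now consider the ray $R_k=\{y_0+s(x_k-y_0):s\geq 1\}$. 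If $R_k\subset\overline U$, then $y_0+t(x_k-y_0)\in\overline U$ for all $t\geq 0$, and passing to the limit $k\to\infty$ (using $x_k\to x_0$ and closedness of $\overline U$) gives $y_0+2(x_0-y_0)\in\overline U$, which is impossible by the fact just recorded. So for $k$ large there is a largest $s_k$ with $w_k:=y_0+s_k(x_k-y_0)\in\overline U$, and then $s_k>1$, $w_k\in\pa U$, and $x_k=(1-t_k)y_0+t_k w_k$ with $t_k:=1/s_k\in(0,1)$. Convexity of $u$ yields
\[
u(x_k)\leq (1-t_k)\,u(y_0)+t_k\,u(w_k).
\]

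The only delicate point is to show $w_k\to x_0$ and $t_k\to 1$. If $s_k\to\infty$ along a subsequence, then for each fixed $t\geq 0$ and $k$ large the point $y_0+t(x_k-y_0)$ lies on the segment $[y_0,w_k]\subset\overline U$, so in the limit $y_0+t(x_0-y_0)\in\overline U$ for all $t\geq 0$, contradicting the recorded fact at $t=2$. Hence $(s_k)$ is bounded; any subsequential limit $s_\infty$ satisfies $s_\infty\geq 1$ and, by closedness of $\overline U$, $w_k\to y_0+s_\infty(x_0-y_0)\in\overline U$, which forces $s_\infty\leq 1$. Thus $s_\infty=1$, so $s_k\to 1$, $t_k\to 1$ and $w_k\to x_0$. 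Finally, since $w_k\in\pa U$ tends to $x_0$ and $u|_{\pa U}$ is continuous at $x_0$ with finite value, $u(w_k)\to u(x_0)$; as $u(y_0)\in\R$, letting $k\to\infty$ in the displayed inequality gives $\limsup_k u(x_k)\leq u(x_0)$, completing the proof. The main obstacle, such as it is, lies entirely in this last paragraph, namely controlling how the exit point $w_k$ approaches $x_0$, i.e.\ ruling out both $R_k\subset\overline U$ and $s_k\to\infty$; everything else reduces to two-point convexity and the hypothesis.
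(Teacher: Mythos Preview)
Your proof is correct and follows essentially the same approach as the paper: fix an interior point $y_0$, push each $x_k$ radially outward from $y_0$ to a boundary point, and use convexity along that segment together with the continuity of $u|_{\pa U}$ at $x_0$. The paper argues by contradiction (assuming $u(y_i)\geq\eps$ and deducing $u(x_i)\geq\eps$ at the projected boundary points), while you argue directly via the convexity inequality $u(x_k)\leq(1-t_k)u(y_0)+t_k u(w_k)$; these are the same idea. The one substantive difference is that you carefully justify that the ray exits $\overline U$ and that $w_k\to x_0$, whereas the paper simply asserts ``$x_i$ converges to $x_0$ from $\pa U$'' without proof---so in that respect your argument is more complete.
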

\begin{proof}
Suppose $u|_{\overline{U}}$ is not continuous at $x_0$ and pick a point $y_0\in U$.
Adding an affine function $\R^n\to\R$ to $u$ does not affect the statement, so we may suppose without loss of generality that $u(x_0)=u(y_0)=0$. Since $u$ is lower semicontinuous, $u|_{\overline{U}}$ being discontinuous at $x_0$ means there is $\eps>0$ and a sequence $(y_i)_{i\geq1}$ in $\overline{U}$ converging to $x_0$ such that $u(y_i)\geq\eps$ for every $i$. Let $(x_i)$ be the sequence on $\pa U$ such that $y_i$ lies on the line segment joining $y_0$ and $x_i$. Since $u$ is convex on that segment with $u(y_0)=0$ and $u(y_i)\geq0$, we have $u(x_i)\geq u(y_i)\geq\eps$. Since $x_i$ converges to $x_0$ from $\pa U$, this shows that $u|_{\pa U}$ is discontinuous at $x_0$.
\end{proof}

\subsection{Convex envelope}\label{subsec_envelope}
An \emph{affine function} on $\R^n$ is a function of the form 
$$
a:\R^n\to\R,\ a(x)=x\cdot y+\eta,
$$ 
where $\eta\in\R$ and $x\cdot y:=x^1y^1+\cdots+ x^ny^n$ is the standard inner product ($x^i$ denotes the $i^\mathrm{th}$ coordinate of $x$). We call $y$ the \emph{linear part} of $a$. 

Clearly, affine functions belong to $\LC(\R^n)$, hence a function given by pointwise supremum of a set of affine functions is in $\GLC(\R^n)$. We can thus introduce:
\begin{definition}[\textbf{Convex envelop}]\label{def_env}
	Given  a subset $E$ of $\R^n\times\R$,
	the \emph{convex envelope} of $E$ is the function in $\GLC(\R^n)$ given by the pointwise supremum of all affine functions with epigraphs containing $E$. Given a set $S\subset\R^n$ and a function $\phi:S\to\Rpm$, the convex envelope of $\phi$ is defined as the convex envelope of the epigraph of $\phi$ and is denoted by $\env{\phi}$. Equivalently,
	$$
	\env{\phi}(x):=\sup\{a(x)\mid \text{ $a:\R^n\to\R$ is an affine function with $a|_S\leq \phi$}\}
	$$
\end{definition}
This is related to the well known notion of \emph{convex hull} of a set $E\subset\R^d$, namely the intersection of all convex subsets of $\R^d$ containing $E$, which we denote by $\chull{E}$. We have the following important characterizations of $\chull{E}$ and its closure:
\begin{itemize}
	\item (See \cite[Theorem 17.1]{rockafellar}) A point $x\in\R^d$ is in $\chull{E}$ if and only if $x$ is a convex combinations of $d+1$ points in $E$, \ie 
	$x=t_0x_0+\cdots+t_dx_d$ for some $x_i\in E$ and $t_i\in[0,1]$ with $t_0+\cdots+t_d=1$. 
	\item (See \cite[Theorem 11.5]{rockafellar}) The closure $\overline{\chull{E}}$ equals the intersection of all closed half-spaces of $\R^d$ containing $E$.
\end{itemize}
Using these facts, we can show:
\begin{proposition}\label{prop_envelop}
Let $S$ be a subset of $\R^n$ and $\phi:S\to\Rp$ be a function. 
\begin{enumerate}
	\item\label{item_envelop1} If the convex envelope $\env{\phi}$ is not constantly $-\infty$, then 
	its epigraph $\epi{\env{\phi}}$ is the closure of the convex hull of $\epi{\phi}\subset S\times\R$ in $\R^n\times\R$.
	\item\label{item_envelop2} If $S$ is compact and $\phi$ is lower semicontinuous, then the convex hull of $\epi{\phi}$ is closed, hence equals $\epi{\env{\phi}}$.
\end{enumerate}
\end{proposition}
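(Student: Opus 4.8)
The plan is to prove both parts by combining the description of $\epi{\env{\phi}}$ as an intersection of affine-function epigraphs with the characterization of $\overline{\chull{E}}$ as an intersection of closed half-spaces. First I would record the elementary observation that an affine function $a(x)=x\cdot y+\eta$ satisfies $a|_S\leq\phi$ if and only if its epigraph $\epi{a}=\{(x,\xi):\xi\geq x\cdot y+\eta\}$ contains $\epi{\phi}$; and $\epi{a}$ is a particular closed half-space of $\R^n\times\R$, namely one that is not ``vertical'' (its boundary hyperplane is a graph over $\R^n$) and is ``upward-pointing''. So $\epi{\env\phi}=\bigcap\{\epi{a}: a \text{ affine},\ a|_S\leq\phi\}$ is the intersection of all such non-vertical upward half-spaces containing $\epi\phi$, whereas $\overline{\chull{\epi\phi}}$ is the intersection of \emph{all} closed half-spaces containing $\epi\phi$. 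The main point of Part \eqref{item_envelop1} is thus to show that, under the hypothesis $\env\phi\not\equiv-\infty$, throwing in the extra half-spaces (vertical ones, and downward-pointing ones) does not change the intersection.

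For Part \eqref{item_envelop1}, I would argue as follows. The inclusion $\epi{\env\phi}\supseteq\overline{\chull{\epi\phi}}$ is immediate since $\epi{\env\phi}$ is a closed convex set containing $\epi\phi$. For the reverse inclusion, suppose $(x_0,\xi_0)\notin\overline{\chull{\epi\phi}}$; by the Hahn–Banach / supporting-hyperplane fact \cite[Theorem 11.5]{rockafellar} there is a closed half-space $H\subset\R^n\times\R$ containing $\epi\phi$ but not $(x_0,\xi_0)$, say $H=\{(x,\xi): x\cdot y_0+\beta\xi\geq\gamma\}$. Because $\epi\phi$ is stable under adding positive amounts to the $\xi$-coordinate, one gets $\beta\geq0$. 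If $\beta>0$ we can normalize $\beta=1$ and $H=\epi{a}$ for the affine function $a(x)=\gamma-x\cdot y_0$, which satisfies $a|_S\leq\phi$, so $(x_0,\xi_0)\notin\epi{a}\supseteq\epi{\env\phi}$ and we are done. The delicate case is $\beta=0$, i.e. $H$ is a vertical half-space $\{x\cdot y_0\geq\gamma\}\times\R$ separating $x_0$ from the ``shadow'' of $\epi\phi$; here I would use the hypothesis $\env\phi\not\equiv-\infty$ to fix one affine function $a_1$ with $a_1|_S\leq\phi$, and then for large $t>0$ consider the affine functions $a_t(x):=a_1(x)+t(x\cdot y_0-\gamma)$; each still satisfies $a_t|_S\leq\phi$ since on $S$ we have $x\cdot y_0\geq\gamma$, and as $t\to+\infty$ we have $a_t(x_0)\to-\infty$ is false — rather $a_t(x_0)\to+\infty$ because $x_0\cdot y_0<\gamma$ would give the wrong sign, so I must instead tilt the other way: use $a_t(x)=a_1(x)-t(x\cdot y_0-\gamma)$ (still $\leq\phi$ on $S$ since $x\cdot y_0-\gamma\geq0$ there), which forces $a_t(x_0)\to+\infty$ since $x_0\cdot y_0-\gamma<0$; hence $(x_0,\xi_0)\notin\epi{a_t}$ for $t$ large, as needed. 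This vertical case is the step I expect to be the main obstacle, precisely because it is where the hypothesis $\env\phi\not\equiv-\infty$ must be used.

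For Part \eqref{item_envelop2}, assuming $S$ compact and $\phi$ lower semicontinuous, I would first note that $\phi$ is bounded below on $S$ (a lower semicontinuous function on a compact set attains its infimum), so $\env\phi\not\equiv-\infty$ and Part \eqref{item_envelop1} applies. It then suffices to show $\chull{\epi\phi}$ is already closed. Take a sequence $(x_i,\xi_i)$ in $\chull{\epi\phi}$ converging to $(x_\infty,\xi_\infty)$; by Carathéodory's theorem \cite[Theorem 17.1]{rockafellar} write $(x_i,\xi_i)=\sum_{j=0}^{n+1}t_{ij}(p_{ij},\eta_{ij})$ with $p_{ij}\in S$, $\eta_{ij}\geq\phi(p_{ij})$, $t_{ij}\geq0$, $\sum_j t_{ij}=1$. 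After passing to a subsequence, $t_{ij}\to t_{\infty j}$ and $p_{ij}\to p_{\infty j}\in S$ by compactness of $S$; one has to control the $\eta_{ij}$, but since $\phi$ is bounded below the $\eta_{ij}$ are bounded below, and the convergence of $\xi_i$ keeps the relevant combinations bounded above — for indices with $t_{\infty j}>0$ the $\eta_{ij}$ stay bounded and converge (subsequence) to some $\eta_{\infty j}\geq\liminf\phi(p_{ij})\geq\phi(p_{\infty j})$ by lower semicontinuity, while indices with $t_{\infty j}=0$ contribute nothing in the limit (their bounded-below $\eta_{ij}$ times $t_{ij}\to0$ may be dropped after replacing by $\phi(p_{ij})$ if necessary). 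Passing to the limit yields $(x_\infty,\xi_\infty)=\sum_j t_{\infty j}(p_{\infty j},\eta_{\infty j})\in\chull{\epi\phi}$, so $\chull{\epi\phi}$ is closed, completing the proof.
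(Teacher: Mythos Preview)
Your approach is correct and essentially the same as the paper's in both parts. For Part (\ref{item_envelop1}), both you and the paper reduce the question to showing that vertical half-spaces containing $\epi{\phi}$ are redundant once at least one non-vertical upper half-space (guaranteed by $\env\phi\not\equiv-\infty$) is available; you do this via explicit tilting $a_t=a_1-t(x\cdot y_0-\gamma)$, while the paper states the equivalent one-liner that for any vertical half-space $H$, the set $H\cap H_0$ equals the intersection of all upper half-spaces containing it.

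For Part (\ref{item_envelop2}), your Carath\'eodory-and-compactness argument follows the paper's, but there is a small gap at the very end. After passing to the limit you only obtain the \emph{inequality}
\[
\xi_\infty\ \geq\ \sum_{j:\,t_{\infty j}>0}t_{\infty j}\,\eta_{\infty j},
\]
not equality: the indices with $t_{\infty j}=0$ contribute nonnegatively in the limit (since the $\eta_{ij}$ are bounded below), but the excess may be strictly positive, and your parenthetical fix (``replacing by $\phi(p_{ij})$'') changes $\xi_i$ and so is not valid as stated. The paper closes this gap with the extra observation that $\chull{\epi\phi}+(0,t)\subset\chull{\epi\phi}$ for all $t\geq0$ (inherited from the same property of $\epi\phi$), so once $\big(x_\infty,\,\sum_j t_{\infty j}\eta_{\infty j}\big)\in\chull{\epi\phi}$ one also gets $(x_\infty,\xi_\infty)\in\chull{\epi\phi}$. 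Adding this one line completes your proof.
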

The assumption $\env{\phi}\not\equiv-\infty$ in Part (\ref{item_envelop1}) means there exists an affine function majorized by $\phi$ on $S$, which is clearly true under the assumption of Part (\ref{item_envelop2}). Also note that the assumption of Part (\ref{item_envelop2}) implies $\epi{\phi}$ is a closed subset of $\R^{n+1}$, but the convex hull of a general unbounded closed set is not necessarily closed.

%
\begin{proof}
(\ref{item_envelop1}) Given a closed half-space $H\subset\R^n\times\R\cong\R^{n+1}$, let us call  $H$ \emph{vertical} if it contains a vertical line $\{x\}\times\R$, and call $H$ an \emph{upper half-space} if $H$ is not vertical and contains a vertical upper half-line $\{x\}\times[\xi,+\infty)$. Thus, upper half-spaces are exactly epigraphs of affine functions. 

If $\phi\equiv+\infty$, then $\epi{\phi}$ and $\epi{\env{\phi}}$ are empty and the required conclusion holds. Otherwise, $\epi{\phi}$ contains a vertical upper half-line, hence every half-space containing $\epi{\phi}$ is either vertical or an upper half-space, and there is at least one upper half-space $H_0$ containing $\epi{\phi}$. For any vertical half-space $H$, the intersection $H\cap H_0$ equals the intersection of all upper half-spaces containing $H\cap H_0$. Therefore, the intersection of all closed half-spaces containing $\epi{\phi}$ coincides with the intersection of the upper half-spaces among them. This proves the required statement because the former intersection is the closure of $\chull{\epi{\phi}}$ and the latter is $\epi{\env{\phi}}$ by definition of $\env{\phi}$.

(\ref{item_envelop2})  Let $(x_i,\xi_i)$ be a sequence in $\chull{\epi{\phi}}$ converging to $(x_\infty,\xi_\infty)\in\R^n\times\R$. We need to show that $(x_\infty,\xi_\infty)$ belongs to $\chull{\epi{\phi}}$. By the first bullet point above, there are $t_i^{(k)}\in[0,1]$ and $(x_i^{(k)},\xi_i^{(k)})\in \epi{\phi}$ ($k=1,\cdots,n+2$) such that 
$$
\sum_{k=1}^{n+2}t_i^{(k)}=1,\quad (x_i,\xi_i)=\sum_{k=1}^{n+2}t_i^{(k)}\cdot (x_i^{(k)},\xi_i^{(k)})
$$ 
for every $i$. By restricting to a subsequence, we may assume $x_i^{(k)}\to x_\infty^{(k)}\in S$ and $t_i^{(k)}\to t_\infty^{(k)}\in[0,1]$ (with $\sum_kt_\infty^{(k)}=1$), so that $x_\infty=\sum_kt_\infty^{(k)}x_\infty^{(k)}$ and 
\begin{equation}\label{eqn_proofenvelope}
\xi_\infty=\lim_{i\to\infty}\sum_kt_i^{(k)}\xi_i^{(k)}\geq \liminf_{i\to\infty}\sum_kt_i^{(k)}\phi(x_i^{(k)})\geq\sum_k t_\infty^{(k)}\phi(x_\infty^{(k)}),
\end{equation}
where the second inequality follows from super-additivity of liminf and the inequality $\liminf\phi(x_i^{(k)})\geq\phi(x_\infty^{(k)})$ is implied by the lower semicontinuity of $\phi$. 

Since $\epi{\phi}+(0,t)\subset\epi{\phi}$ for all $t\geq 0$, we have 
$$
\chull{\epi{\phi}}+(0,t)=\chull{\epi{\phi}+(0,t)}\subset\chull{\epi{\phi}}.
$$
This means if $(x,\xi)$ belongs to $\chull{\epi{\phi}}$ then so does $(x,\xi+t)$. Therefore, in view of  (\ref{eqn_proofenvelope}) and the fact that $\left(x_\infty,\sum_k t_\infty^{(k)}\phi(x_\infty^{(k)})\right)\in\chull{\epi{\phi}}$, we have $(x_\infty,\xi_\infty)\in\chull{\epi{\phi}}$, as required.
\end{proof}

As a consequence, $\env{\phi}$ is the maximum element in $\GLC(\R^n)$ majorized by $\phi$:
\begin{corollary}\label{coro_supremum}
For any subset $S\subset\R^n$ and function $\phi: S\to\Rpm$, the convex envelope $\env{\phi}$ is the pointwise supremum of all $u\in\GLC(\R^n)$ with $u|_S\leq \phi$. In particular, the convex envelope of $u\in\GLC(\R^n)$ is $u$ itself.
\end{corollary}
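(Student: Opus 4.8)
The plan is to prove the two inequalities between $\env{\phi}$ and $g:=\sup\{u\in\GLC(\R^n)\mid u|_S\leq\phi\}$ separately; note first that $g$ is itself in $\GLC(\R^n)$, being a pointwise supremum of members of $\GLC(\R^n)$ (with the convention that the empty supremum is $-\infty$). The inequality $\env{\phi}\leq g$ is immediate: every affine function $a$ with $a|_S\leq\phi$ lies in $\LC(\R^n)\subset\GLC(\R^n)$, so it is one of the competitors in the supremum defining $g$; taking the supremum over all such $a$ gives $\env{\phi}\leq g$.

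For the reverse inequality $g\leq\env{\phi}$, I would pass to epigraphs, using that $u\leq v$ is equivalent to $\epi{v}\subseteq\epi{u}$, so that $\epi{g}=\bigcap\{\epi{u}\mid u\in\GLC(\R^n),\ u|_S\leq\phi\}$. When $\env{\phi}\not\equiv-\infty$, each such $\epi{u}$ is a closed convex subset of $\R^n\times\R$ containing $\epi{\phi}$, hence containing $\overline{\chull{\epi{\phi}}}$; by Proposition \ref{prop_envelop}(\ref{item_envelop1}) this closed convex hull is exactly $\epi{\env{\phi}}$, so intersecting over all $u$ yields $\epi{g}\supseteq\epi{\env{\phi}}$, \ie $g\leq\env{\phi}$. (An alternative for this step would be to invoke the classical fact that a closed convex function is the pointwise supremum of the affine functions it majorizes \cite[Theorem 12.1]{rockafellar}, applied to each competitor $u$: every affine minorant $a$ of $u$ satisfies $a|_S\leq u|_S\leq\phi$, hence $a\leq\env{\phi}$.) The degenerate case $\env{\phi}\equiv-\infty$ — where $\phi$ has no affine minorant on $S$ — must be handled separately: since a closed proper convex function always admits an affine minorant, no $u\in\LC(\R^n)$ can satisfy $u|_S\leq\phi$, while $u\equiv+\infty$ would force $\phi\equiv+\infty$ on $S$ and hence $\env{\phi}\equiv+\infty$; thus the only competitor is $-\infty$ and $g\equiv-\infty=\env{\phi}$.

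Finally, the ``in particular'' assertion falls out of the general statement by taking $S=\R^n$ and $\phi=u\in\GLC(\R^n)$: then $u$ is simultaneously an admissible competitor and an upper bound for every competitor, so $g=u$, and the general statement gives $\env{u}=u$. I expect the only real friction to be the bookkeeping around the improper elements $\pm\infty$ of $\GLC(\R^n)$; the substantive convex-analytic content, namely the identity $\epi{\env{\phi}}=\overline{\chull{\epi{\phi}}}$, is already packaged in Proposition \ref{prop_envelop}, so once that is available the argument is essentially formal.
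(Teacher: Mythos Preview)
Your proof is correct and uses the same key ingredient (Proposition~\ref{prop_envelop}) as the paper. The paper, however, reverses your logical order: it first proves the ``in particular'' claim $\env{u}=u$ for $u\in\GLC(\R^n)$ directly from Proposition~\ref{prop_envelop}(\ref{item_envelop1}), and then uses that identity (each competitor equals the supremum of its affine minorants, which is precisely your parenthetical alternative) to deduce the main statement. That ordering has the minor advantage that the degenerate cases $\pm\infty$ are absorbed automatically and no separate treatment of $\env{\phi}\equiv-\infty$ is needed. Your primary route via $\epi{g}\supseteq\overline{\chull{\epi{\phi}}}=\epi{\env{\phi}}$ is a perfectly good variant.
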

\begin{proof}
We prove the second statement first. The statement is trivial when $u\equiv\pm\infty$, so we may suppose $u\in\LC(\R^n)$. By convexity and lower semicontinuity, $\epi{u}$ is a closed convex subset of $\R^{n+1}$ contained in some upper half-space (see the proof of Proposition \ref{prop_envelop}), so Proposition \ref{prop_envelop} (\ref{item_envelop1}) implies $\epi{\env{u}}=\epi{u}$, hence $\env{u}=u$. To prove the first statement, we let $\Phi$ be the set of all $u\in\GLC(\R^n)$ with $u|_S\leq\phi$ and $\mathcal{A}\subset\LC(\R^n)$ be the set of all affine functions. Then for all $x\in\R^n$ we have $\sup_{u\in\Phi}u(x)\geq \sup_{a\in\mathcal{A}\cap\Phi}a(x)=:\env{\phi}(x)$, and conversely
$$
\sup_{u\in\Phi}u(x)=\sup_{u\in\Phi}\sup_{a\in\mathcal{A},a\leq u}a(x)\leq \sup_{a\in\mathcal{A}\cap\Phi}a(x)=\env{\phi}(x),
$$
where the first equality is given by the statement just proved, and the inequality is because $\sup_{a\in\mathcal{A},a\leq u}a(x)\leq\sup_{a\in\mathcal{A}\cap\Phi}a(x)$ for every $u\in\Phi$. This proves the first statement, namely $\env{\phi}(x)=\sup_{u\in\Phi}u(x)$.

\end{proof}

\subsection{Restrictions to the boundary of a convex domain}\label{subsec_adm}
Given $S\subset\R^n$, let $\LC(S)$ denote the space of functions $S\rightarrow\Rp$ restricted from functions in $\LC(\R^n)$:
$$
\LC(S):=\{u|_S\  \mid u\in\LC(\R^n)\}.
$$ 
For later use, we shall determine $\LC(\pa\Omega)$ for the boundary $\pa\Omega$ of a bounded convex domain $\Omega\subset\R^n$. This will be a consequence of:
\begin{lemma}\label{lemma_adm}
Let $\Omega\subset\R^n$ be a bounded convex domain. If $\phi:\pa\Omega\rightarrow\Rp$ is lower semicontinuous and restricts to a convex function on every line segment in $\pa\Omega$, then $\phi$ coincides with its convex envelope $\env{\phi}$ (see Definition \ref{def_env}) on $\pa\Omega$. 
\end{lemma}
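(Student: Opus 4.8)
The plan is to show that for every boundary point $x_0 \in \pa\Omega$, the convex envelope $\env{\phi}$ satisfies $\env{\phi}(x_0) \geq \phi(x_0)$; the reverse inequality is automatic from Corollary \ref{coro_supremum} (since $\phi$ itself, suitably extended by $+\infty$ off $\pa\Omega$, is majorized by... more precisely, $\env{\phi} \leq \phi$ on $S = \pa\Omega$ always holds by definition). So the whole content is the pointwise lower bound $\env{\phi}(x_0) \geq \phi(x_0)$ at each $x_0 \in \pa\Omega$. First I would reduce to constructing, for each fixed $x_0$ and each real value $c < \phi(x_0)$, an affine function $a : \R^n \to \R$ with $a \leq \phi$ on $\pa\Omega$ and $a(x_0) > c$: indeed, taking the supremum over such $a$ then gives $\env{\phi}(x_0) \geq c$ for all $c < \phi(x_0)$, hence $\env{\phi}(x_0) \geq \phi(x_0)$.

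The key idea for producing the affine function is a supporting-hyperplane argument applied in $\R^n \times \R$ to the convex hull of $\epi\phi$. Since $\Omega$ is bounded, $\pa\Omega$ is compact, and by hypothesis $\phi$ is lower semicontinuous on this compact set; if $\phi \equiv +\infty$ the statement is trivial, so assume $\phi$ is finite somewhere. By Proposition \ref{prop_envelop}\eqref{item_envelop2}, the convex hull $\chull{\epi\phi}$ is already closed and equals $\epi{\env\phi}$. So it suffices to show that $(x_0, \phi(x_0)) \in \overline{\chull{\epi\phi}} = \chull{\epi\phi}$ when $\phi(x_0) < +\infty$, and that $\env{\phi}(x_0) = +\infty$ when $\phi(x_0) = +\infty$. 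For the first case: by Carathéodory (the first bullet in Section \ref{subsec_envelope}, applied in dimension $n+1$), every point of $\chull{\epi\phi}$ is a convex combination of at most $n+2$ points of $\epi\phi$, hence of at most $n+2$ points of $\gra\phi = \{(x,\phi(x)) : x \in \pa\Omega\}$ shifted upward. Thus I want to write $x_0$ as a convex combination $x_0 = \sum_{k} t_k x_k$ with $x_k \in \pa\Omega$, $t_k \geq 0$, $\sum t_k = 1$, in such a way that $\sum_k t_k \phi(x_k) \leq \phi(x_0)$; this exhibits $(x_0, \phi(x_0))$ in $\chull{\epi\phi}$.

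The construction of such a combination is where the convexity-along-segments hypothesis enters, and I expect it to be the main obstacle — the difficulty being that $\pa\Omega$ is curved, so a point $x_0 \in \pa\Omega$ need not lie on any nondegenerate segment inside $\pa\Omega$ at all (e.g. if $\pa\Omega$ is strictly convex at $x_0$, then $\{x_0\}$ is an extreme point of $\overline{\Omega}$ and the only way to write it as a convex combination of points of $\pa\Omega$ is the trivial one $x_0 = x_0$, for which the inequality $\phi(x_0) \leq \phi(x_0)$ is vacuous). So the argument should split according to the face of $\overline{\Omega}$ containing $x_0$ in its relative interior: let $F$ be the smallest face of the compact convex set $\overline{\Omega}$ with $x_0 \in \ri F$. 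If $F = \{x_0\}$, then $x_0$ is an exposed/extreme point and there is nothing to prove (the only affine functions $\leq \phi$ on $\pa\Omega$ that we need are those exposing $x_0$, and lower semicontinuity of $\phi|_{\pa\Omega}$ together with compactness lets us find an affine function supporting $\overline{\Omega}$ at $x_0$ and dipping steeply enough — here one uses that $\phi$ is bounded below on the compact set $\pa\Omega$ by lower semicontinuity). If $\dim F \geq 1$, then $F \subset \pa\Omega$ is a closed convex set contained in the boundary, $x_0$ lies in its relative interior, and $\phi|_F$ is convex: this is exactly what "convex on every line segment in $\pa\Omega$" gives, since $F$ is a simplex-exhaustible convex set lying in $\pa\Omega$ (any line segment through $x_0$ in $F$ lies in $\pa\Omega$). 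On the finite-dimensional convex set $F$ one may then apply Proposition \ref{prop_extension} / the continuity-along-segments facts from Section \ref{subsec_convexfunction} to $\phi|_F$: being convex, lower semicontinuous, and finite on $\ri F$ (is it finite at $x_0 \in \ri F$? — one needs $\phi(x_0) < +\infty$, which is the case being treated), $\phi|_F$ agrees with its own convex envelope relative to the affine hull of $F$, and a Carathéodory combination within $F$ does the job: write $x_0 = \sum_{k=1}^{\dim F + 1} t_k x_k$ with $x_k \in F \subset \pa\Omega$ and, by convexity of $\phi|_F$, $\sum t_k \phi(x_k) \geq \phi(x_0)$ — wait, that is the wrong direction.

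Let me restate the last step correctly, since that sign is the crux. What I actually want is to realize $\phi(x_0)$ from *below* by an affine function, equivalently to show $(x_0,\phi(x_0)) \in \chull{\epi\phi}$; and since $\phi|_F$ is convex, Jensen gives $\phi(x_0) = \phi(\sum t_k x_k) \leq \sum t_k \phi(x_k)$, so $(x_0, \sum t_k \phi(x_k)) = \sum t_k (x_k, \phi(x_k)) \in \chull{\epi\phi}$ and, since $\epi\phi$ is closed under upward shift (Proposition \ref{prop_envelop} proof), the point $(x_0, \phi(x_0))$ with smaller second coordinate need *not* follow from this — rather, one should support $\chull{\epi\phi}$ from below at $x_0$. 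Concretely: pick a hyperplane in $\R^n\times\R$ supporting the closed convex set $\chull{\epi\phi} = \epi{\env\phi}$ at the point $(x_0, \env\phi(x_0))$ that is a non-vertical (upper-half-space) support, giving an affine $a$ with $a \leq \phi$ on $\pa\Omega$ and $a(x_0) = \env\phi(x_0)$. The content is then precisely that $\env\phi(x_0) \geq \phi(x_0)$, and by the face decomposition above this is reduced either to the trivial extreme-point case or to: $\phi|_F$ convex lsc on the lower-dimensional convex set $F$ with $x_0 \in \ri F$ implies $\phi|_F(x_0) = \env{(\phi|_F)}(x_0)$, which is Proposition \ref{prop_extension} applied inside $\mathrm{aff}(F)$ (or the continuity-along-segments fact of Section \ref{subsec_convexfunction}), combined with the elementary observation that an affine function on $\mathrm{aff}(F)$ below $\phi|_F$ extends, using boundedness of $\phi$ below on the compact $\pa\Omega$ and a suitable steepening in the directions transverse to $\mathrm{aff}(F)$, to an affine function on all of $\R^n$ that is $\leq \phi$ on all of $\pa\Omega$ while keeping its value at $x_0$. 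This transverse-steepening step — making a good affine minorant on the face $F$ into a global affine minorant on $\pa\Omega$ without spoiling its value at $x_0$ — is the technical heart, and it works because $F = \overline\Omega \cap \{$supporting hyperplane$\}$, so on $\pa\Omega \setminus F$ one has strict positive distance control to that hyperplane on compact sets and $\phi$ is bounded below there.

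$\square$
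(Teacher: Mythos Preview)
Your final approach---find an affine minorant of $\phi$ on a face of $\overline\Omega$ through $x_0$, then steepen in the transverse direction using a supporting hyperplane---is exactly the paper's proof. The paper picks coordinates so that $x_0=0$ and $\Omega\subset\{x^n>0\}$, applies Corollary~\ref{coro_supremum} to the convex lsc function $\phi|_{\R^{n-1}}$ to obtain $\hat a$ on $\R^{n-1}$ with $\hat a\leq\phi|_{\R^{n-1}}$ and $\hat a(0)\geq\phi(0)-\eps/2$, and then sets $a(x)=\hat a(\hat x)-\tfrac{\Lambda}{\delta}x^n-\tfrac{\eps}{2}$ with $\delta,\Lambda$ chosen from lower semicontinuity and the uniform lower bound on $\phi$.

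One genuine wrinkle in your write-up: you split on the \emph{minimal} face $F$ with $x_0\in\ri F$ and treat the extreme-point case $F=\{x_0\}$ as trivial (``dip steeply enough''). That only works when $x_0$ is an \emph{exposed} extreme point; if $x_0$ is extreme but not exposed, every supporting hyperplane at $x_0$ meets $\pa\Omega$ in a larger set, and you must control $\phi$ there---which is precisely your higher-dimensional case applied to the exposed face rather than the minimal one. The paper avoids the issue by never invoking the minimal face: it works from the start with the exposed face $\overline\Omega\cap\{x^n=0\}$ determined by an arbitrary supporting hyperplane, and your ``higher-dim'' argument then covers all cases uniformly. The detour through Carath\'eodory and $\chull{\epi\phi}$ (where you caught the sign issue) is unnecessary once you frame it this way.
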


\begin{proof}
	We extend $\phi$ to a lower semicontinuous function on the whole $\R^n$ by setting $\phi=+\infty$ on $\R^n\setminus\pa\Omega$. By definition of $\env{\phi}$, it is sufficient to show, for every $x_0\in\pa\Omega$:
	\begin{itemize}
		\item
		if $\phi(x_0)\in\R$, then for any $\eps>0$ there is an affine function $a:\R^n\rightarrow \R$ such that $a\leq\phi$ and $a(x_0)>\phi(x_0)-\eps$;
		\item
		if $\phi(x_0)=+\infty$, then for any $M>0$ there is an affine function $a:\R^n\rightarrow \R$  such that  $a\leq\phi$ and $a(x_0)>M$.
	\end{itemize}
We only give below a proof of the first statement since the second one is similar.

Suppose without loss of generality that $x_0$ is the origin and $\Omega$ is contained in the half-space $\R^n_+:=\{x\in\R^n\mid x^n>0\}$, where $x^i$ denotes the $i^\mathrm{th}$ coordinate of $x$. 
The assumptions imply that the restriction $\phi|_{\R^{n-1}}$ of $\phi$ to the subspace $\R^{n-1}:=\pa\R^n_+$ is a lower semicontinuous convex function. By Corollary \ref{coro_supremum},  there is an affine function $\hat a$ on $\R^{n-1}$ majorized by $\phi|_{\R^{n-1}}$ such that 
\begin{equation}\label{eqn_propadm1}
\hat a(0)\geq\phi(0)-\eps/2.
\end{equation}

Denote $\hat{x}:=(x^1,\cdots x^{n-1})$ for every $x\in\R^n$. The lower semicontinuous function $x\mapsto\phi(x)-\hat{a}(\hat{x})$ takes nonnegative values on the compact set $\R^{n-1}\cap\pa\Omega$, so there is $\delta>0$ such that 
\begin{equation}\label{eqn_propadm2}
\phi(x)-\hat{a}(\hat{x})>-\eps/2\ \mbox{ if } x^n<\delta.
\end{equation}
On the other hand, the function is constantly $+\infty$ outside of the compact set $\pa\Omega$, so there is $\Lambda>0$ such that
\begin{equation}\label{eqn_propadm3}
\phi(x)-\hat{a}(\hat{x})>-\Lambda\ \mbox{ for all } x\in\R^n.
\end{equation}
We can then check that the affine function 
$$
a(x):=\hat a(\hat x)-\frac{\Lambda}{\delta} x^n-\frac{\eps}{2}
$$ 
fulfills the requirements. In fact, (\ref{eqn_propadm1}) implies $a(0)>\phi(0)-\eps$,  (\ref{eqn_propadm2}) and (\ref{eqn_propadm3}) imply $a(x)\leq\phi(x)$ when $0\leq x^n<\delta$ and $x^n\geq\delta$, respectively, while $a(x)<\phi(x)=+\infty$ when $x^n<0$.
\end{proof}

\begin{corollary}\label{coro_adm}
For a bounded convex domain $\Omega\subset\R^n$, $\LC(\pa\Omega)$ consists of all lower semicontinuous functions $\phi:\pa\Omega\rightarrow\Rp$ convex on every line segment in $\pa\Omega$.
\end{corollary}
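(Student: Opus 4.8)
The plan is to deduce Corollary \ref{coro_adm} from Lemma \ref{lemma_adm} together with Proposition \ref{prop_extension} (or rather its companion results on boundary values). There are two inclusions to check. For the easy inclusion, suppose $\phi \in \LC(\pa\Omega)$, say $\phi = u|_{\pa\Omega}$ for some $u \in \LC(\R^n)$. Then $\phi$ is lower semicontinuous as the restriction of a lower semicontinuous function to a closed set, and convex on every line segment in $\pa\Omega$ because $u$ is convex on $\R^n$ and hence on every such segment. So every element of $\LC(\pa\Omega)$ has the stated properties; this direction requires essentially nothing.

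The substantial inclusion is the converse: given a lower semicontinuous $\phi:\pa\Omega \to \Rp$ that is convex on every line segment contained in $\pa\Omega$, I must produce $u \in \LC(\R^n)$ with $u|_{\pa\Omega} = \phi$. The natural candidate is the convex envelope $\env\phi \in \GLC(\R^n)$ from Definition \ref{def_env}. First I would observe that $\env\phi \not\equiv -\infty$: since $\Omega$ is bounded and $\phi$ is lower semicontinuous on the compact set $\pa\Omega$, $\phi$ is bounded below there, so the constant function equal to $\inf_{\pa\Omega}\phi$ is an affine function majorized by $\phi$, whence $\env\phi$ dominates it. Also $\env\phi \not\equiv +\infty$ for the same reason (such a constant lies in $\LC(\R^n)$). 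Hence $\env\phi \in \LC(\R^n)$. By Proposition \ref{prop_envelop}\eqref{item_envelop1} (applicable since $\env\phi \not\equiv -\infty$, and in fact Part \eqref{item_envelop2} applies directly because $\pa\Omega$ is compact and $\phi$ is lower semicontinuous), $\epi{\env\phi}$ is the convex hull of $\epi\phi$, which is contained in $\overline\Omega \times \R$; in particular $\dom{\env\phi} \subset \overline\Omega$, so $\env\phi|_{\R^n \setminus \overline\Omega} = +\infty = \phi$ there trivially. The heart of the matter is then precisely Lemma \ref{lemma_adm}, which asserts $\env\phi = \phi$ on $\pa\Omega$ under exactly the present hypotheses. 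Combining these, $\env\phi|_{\pa\Omega} = \phi$, so $\phi \in \LC(\pa\Omega)$.

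The one genuine obstacle — and it is already discharged — is Lemma \ref{lemma_adm} itself: one must rule out that the convex envelope dips strictly below $\phi$ at some boundary point, which is where the hypothesis of convexity along boundary segments is used (to reduce, near a given $x_0 \in \pa\Omega$, to a convex function on a supporting hyperplane and then extend it to an affine function on $\R^n$ that stays below $\phi$ everywhere, using boundedness of $\Omega$ to control the function on the rest of $\pa\Omega$). Given the lemma, the corollary is a short bookkeeping argument. Thus the proof consists of: (i) the trivial inclusion $\LC(\pa\Omega) \subset \{\text{l.s.c., segment-convex}\}$; (ii) checking $\env\phi \in \LC(\R^n)$ and that it vanishes to $+\infty$ off $\overline\Omega$, using compactness of $\pa\Omega$ and Proposition \ref{prop_envelop}; and (iii) invoking Lemma \ref{lemma_adm} to conclude $\env\phi|_{\pa\Omega} = \phi$, which gives the reverse inclusion.

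\begin{proof}[Proof of Corollary \ref{coro_adm}]
If $\phi \in \LC(\pa\Omega)$, write $\phi = u|_{\pa\Omega}$ with $u \in \LC(\R^n)$. Since $\pa\Omega$ is closed, $\phi$ is lower semicontinuous, and since $u$ is convex it restricts to a convex function on every line segment in $\pa\Omega$. This proves one inclusion.

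Conversely, let $\phi:\pa\Omega \to \Rp$ be lower semicontinuous and convex on every line segment contained in $\pa\Omega$. As $\pa\Omega$ is compact and $\phi$ is lower semicontinuous, $\phi$ is bounded below on $\pa\Omega$, so there is a constant function (an affine function) majorized by $\phi$ on $\pa\Omega$; hence $\env\phi \not\equiv -\infty$. The same constant, viewed as an element of $\LC(\R^n)$ with $u|_{\pa\Omega} \leq \phi$, shows via Corollary \ref{coro_supremum} that $\env\phi$ is not identically $+\infty$ either, so $\env\phi \in \LC(\R^n)$. By Proposition \ref{prop_envelop}\eqref{item_envelop2}, $\epi{\env\phi}$ equals the convex hull of $\epi\phi$, which lies in $\overline\Omega \times \R$; therefore $\dom{\env\phi} \subset \overline\Omega$, so $\env\phi = +\infty = \phi$ on $\R^n \setminus \overline\Omega$. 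Finally, Lemma \ref{lemma_adm} gives $\env\phi = \phi$ on $\pa\Omega$. Hence $\env\phi|_{\pa\Omega} = \phi$, which shows $\phi \in \LC(\pa\Omega)$ and completes the proof.
\end{proof}
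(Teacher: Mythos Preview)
Your proof is correct and follows the same approach as the paper: both directions are handled identically, with the converse reduced to Lemma \ref{lemma_adm} by taking $\env\phi$ as the extending function. One small slip worth fixing: your justification that $\env\phi \not\equiv +\infty$ ``via Corollary \ref{coro_supremum}'' is backwards---having a constant $u$ with $u|_{\pa\Omega}\leq\phi$ gives a \emph{lower} bound $\env\phi\geq u$, not an upper bound. The correct reason is simply that $\env\phi\leq\phi$ on $\pa\Omega$ by definition, so $\env\phi$ is finite wherever $\phi$ is; the residual edge case $\phi\equiv+\infty$ (where $\env\phi\equiv+\infty\notin\LC(\R^n)$, but $\phi$ is still the restriction of, say, the indicator of a single interior point) needs a one-line separate treatment, which the paper's proof also glosses over.
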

\begin{proof}
For any $u\in\LC(\R^n)$, the restriction $u|_{\pa\Omega}$ clearly belongs to class of functions described in the statement. Conversely, every function $\phi$ in the class is the restriction of $\env{\phi}\in\LC(\R^n)$ by Lemma \ref{lemma_adm}.
\end{proof}

In the sequel, we often need to consider the effective domain of the convex envelope $\env{\phi}$ for $\phi\in\LC(\pa\Omega)$. It actually coincides with the convex hull of $\dom{\phi}:=\{x\in\pa\Omega\mid \phi(x)<+\infty\}$ in $\R^n$: 
\begin{proposition}\label{prop_domain}
Let $\Omega\subset\R^n$ be a bounded convex domain. Then for any $\phi\in\LC(\pa\Omega)$, we have $\dom{\env{\phi}}=\chull{\dom{\phi}}$.
\end{proposition}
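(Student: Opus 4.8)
The plan is to prove the two inclusions $\dom{\env{\phi}}\subset\chull{\dom{\phi}}$ and $\chull{\dom{\phi}}\subset\dom{\env{\phi}}$ separately, using Proposition~\ref{prop_envelop} together with the two characterizations of convex hulls recalled just before it. For the first inclusion, I would argue that $\env{\phi}\not\equiv-\infty$ (which holds since $\phi$, being in $\LC(\pa\Omega)$, is the restriction of some $u\in\LC(\R^n)$, hence is majorized by an affine function on $\pa\Omega$), so Proposition~\ref{prop_envelop}\eqref{item_envelop1} applies and $\epi{\env{\phi}}=\overline{\chull{\epi{\phi}}}$. If $x\notin\chull{\dom{\phi}}$, then the vertical line $\{x\}\times\R$ is disjoint from $\chull{\epi{\phi}}$ (any point of $\epi{\phi}$ projects into $\dom{\phi}$, so its convex combinations project into $\chull{\dom\phi}$); since $\dom{\phi}$ is a bounded set, $\chull{\dom{\phi}}$ is bounded and the distance from $x$ to the closed set $\overline{\chull{\dom\phi}}$ is positive, so actually the line $\{x\}\times\R$ stays at positive distance from $\overline{\chull{\epi{\phi}}}=\epi{\env\phi}$, whence $\env{\phi}(x)=+\infty$, i.e.\ $x\notin\dom{\env\phi}$.

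For the reverse inclusion, the key point is that $\dom\phi\subset\dom{\env\phi}$, because $\env{\phi}\leq\phi$ pointwise on $\pa\Omega$ — indeed by Lemma~\ref{lemma_adm} (whose hypotheses hold precisely because $\phi\in\LC(\pa\Omega)$, using Corollary~\ref{coro_adm}) we have $\env{\phi}=\phi$ on $\pa\Omega$, so $\dom\phi=\dom{(\env\phi)}\cap\pa\Omega$ and in particular $\dom\phi\subset\dom{\env\phi}$. Since $\env\phi\in\LC(\R^n)$, its effective domain $\dom{\env\phi}$ is a convex set containing $\dom\phi$, hence contains $\chull{\dom\phi}$ by the very definition of convex hull. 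Combining the two inclusions gives $\dom{\env\phi}=\chull{\dom\phi}$.

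I expect the only delicate point to be the strict-positivity-of-distance argument in the first inclusion: one must rule out that $\env{\phi}(x)$ is finite for some $x$ outside $\chull{\dom\phi}$, and this uses boundedness of $\Omega$ (hence of $\dom\phi$) in an essential way, since for unbounded $\phi$-domains the convex hull need not be closed and the epigraph-closure could in principle acquire new points over the boundary. With $\Omega$ bounded, $\overline{\chull{\dom\phi}}$ is compact, so a point $x\notin\chull{\dom\phi}$ — which here equals $\overline{\chull{\dom\phi}}$ up to boundary, and one should note $\chull{\dom\phi}$ may fail to be closed, so one should more carefully take $x\notin\overline{\chull{\dom\phi}}$ first, get $\env\phi(x)=+\infty$ there, and then separately handle relative-boundary points of $\chull{\dom\phi}$ — can be separated from $\chull{\epi\phi}$ by a vertical hyperplane with a definite gap, forcing $\env\phi(x)=+\infty$. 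Alternatively, and perhaps more cleanly, one can invoke directly that $\epi{\env\phi}=\overline{\chull{\epi\phi}}$ projects onto a set whose closure is $\overline{\chull{\dom\phi}}$, and that a closed convex function is finite on the relative interior of its domain, so that the two convex sets $\dom{\env\phi}$ and $\chull{\dom\phi}$ have the same closure and the same relative interior, hence coincide. I would present whichever of these is shortest.
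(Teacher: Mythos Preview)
Your reverse inclusion $\chull{\dom\phi}\subset\dom{\env\phi}$ is fine, and indeed overkill: you only need $\env\phi\leq\phi$ on $\pa\Omega$, which is immediate from the definition of convex envelope (no need to invoke Lemma~\ref{lemma_adm}).

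For the inclusion $\dom{\env\phi}\subset\chull{\dom\phi}$, however, there is a genuine gap. Using Proposition~\ref{prop_envelop}\eqref{item_envelop1} you only obtain $\epi{\env\phi}=\overline{\chull{\epi\phi}}$, and your separation argument then yields only $\dom{\env\phi}\subset\overline{\chull{\dom\phi}}$. You correctly flag that relative boundary points of $\chull{\dom\phi}$ require separate treatment, but you do not actually treat them. Your proposed alternative---``the two convex sets have the same closure and the same relative interior, hence coincide''---is false as a general principle (the open and closed unit disks share closure and relative interior but are distinct), so this does not close the gap either.

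The fix is already in front of you: use part~\eqref{item_envelop2} of Proposition~\ref{prop_envelop} rather than part~\eqref{item_envelop1}. Since $\pa\Omega$ is compact and $\phi$ is lower semicontinuous, $\chull{\epi\phi}$ is itself closed, so $\epi{\env\phi}=\chull{\epi\phi}$ with no closure. This is precisely what the paper does, and it makes the argument a one-liner: the vertical projection $\R^n\times\R\to\R^n$ sends $\epi{\env\phi}$ onto $\dom{\env\phi}$ and $\epi\phi$ onto $\dom\phi$, and projection commutes with taking convex hulls (a linear image of a convex combination is the convex combination of the images, with the same coefficients---this is the content of the ``first bullet point'' Carath\'eodory characterization). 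Hence $\dom{\env\phi}=\chull{\dom\phi}$ directly, with no boundary case to handle and no need to prove the two inclusions separately.
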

\begin{proof}
By Proposition \ref{prop_envelop} (\ref{item_envelop2}), the epigraph $\epi{\env{\phi}}$ is the convex hull of $\epi{\phi}$, while $\dom{\env{\phi}}$ and $\dom{\phi}$ are the projections of the two epigraphs, respectively, from $\R^n\times\R$ to $\R^n$. It follows from the first bullet point in Section \ref{subsec_envelope} that the operation of taking convex hulls commutes with the projection, hence the required statement.
\end{proof}
We will also need the following result on the piecewise linear structure of $\env{\phi}$:
\begin{lemma}\label{lemma_pleated}
Let $\Omega\subset\R^n$ be a bounded convex domain. Given $\phi\in\LC(\pa\Omega)$ and an affine function $a:\R^n\to\R$ with $a|_{\pa\Omega}\leq\phi$, the set $\{x\in\R^n\mid \env{\phi}(x)=a(x)\}$ is the convex hull of $\{x\in\pa\Omega\mid \phi(x)=a(x)\}$ in $\R^n$. 
\end{lemma}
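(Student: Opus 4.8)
The plan is to prove the two inclusions separately, using the characterization of $\env{\phi}$ via its epigraph from Proposition \ref{prop_envelop}. Let me write $E:=\{x\in\R^n\mid \env{\phi}(x)=a(x)\}$ and $F:=\{x\in\pa\Omega\mid \phi(x)=a(x)\}$; after subtracting the affine function $a$ from $\phi$ (which does not affect the statement, since $\env{\phi-a}=\env{\phi}-a$ and the two contact sets are unchanged), I may assume $a\equiv 0$, $\phi\geq 0$ on $\pa\Omega$, so that $\env{\phi}\geq 0$ as well, $F=\{x\in\pa\Omega\mid\phi(x)=0\}$ and $E=\{x\in\R^n\mid\env{\phi}(x)=0\}$. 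Note $F$ is nonempty (else $a$ could be raised slightly and would still be $\leq\phi$ by compactness of $\pa\Omega$ and lower semicontinuity, contradicting $E\neq\emptyset$... actually $E\neq\emptyset$ needs $F\neq\emptyset$, which I will get for free from the first inclusion being vacuous otherwise; but cleanly, if $F=\emptyset$ then $\env{\phi}>0$ everywhere on the compact $\pa\Omega$ hence $\env\phi\geq\delta>0$ there, so $\env\phi\geq\delta$ everywhere, giving $E=\emptyset=\chull{F}$).

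\textbf{First inclusion, $\chull{F}\subseteq E$.} Since $\env{\phi}\geq 0$ is convex and vanishes on $F$, it vanishes on every convex combination of points of $F$: indeed $0\leq\env{\phi}(\sum t_ix_i)\leq\sum t_i\env{\phi}(x_i)=0$. By the first bullet point in Section \ref{subsec_envelope} (Carath\'eodory), every point of $\chull{F}$ is such a finite convex combination, so $\chull{F}\subseteq E$. (Here I use that $\env{\phi}\not\equiv-\infty$, which holds since $a=0\leq\phi$, so $\env\phi\in\LC(\R^n)$ and the values on $\dom{\env\phi}$ are real; points of $\chull F\subseteq\chull{\dom\phi}=\dom{\env\phi}$ by Proposition \ref{prop_domain} are in the effective domain, so the convexity inequality is legitimate.)

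\textbf{Second inclusion, $E\subseteq\chull{F}$.} This is the substantive direction. Take $x_0\in E$, i.e. $\env{\phi}(x_0)=0$. By Proposition \ref{prop_envelop}(\ref{item_envelop2}) (applicable because $\pa\Omega$ is compact and $\phi$ is lower semicontinuous), $\epi{\env\phi}=\chull{\epi\phi}$, so the point $(x_0,0)\in\epi{\env\phi}$ is a convex combination $(x_0,0)=\sum_{k=1}^{n+2}t_k(x_k,\xi_k)$ with $x_k\in\pa\Omega$, $\xi_k\geq\phi(x_k)\geq 0$, $t_k\geq 0$, $\sum t_k=1$. Comparing last coordinates, $0=\sum t_k\xi_k$ with all terms $\geq 0$, hence $t_k\xi_k=0$ for every $k$; and since $\xi_k\geq\phi(x_k)\geq 0$ we get, for every $k$ with $t_k>0$, both $\xi_k=0$ and $\phi(x_k)=0$, i.e. $x_k\in F$. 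Therefore $x_0=\sum_{k:t_k>0}t_kx_k$ is a convex combination of points of $F$, so $x_0\in\chull F$. This completes the proof.

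\textbf{Remark on the main obstacle.} The only real subtlety is making sure the closed-convex-hull description of $\epi{\env\phi}$ is available without an extra closure, which is exactly what Proposition \ref{prop_envelop}(\ref{item_envelop2}) provides under the compactness of $\pa\Omega$ — without it one would only get $\epi{\env\phi}=\overline{\chull{\epi\phi}}$ and a limiting point of contact might escape $\chull F$. Everything else is the elementary interplay between convexity of $\env\phi$ (for $\supseteq$) and Carath\'eodory's theorem in $\R^{n+1}$ (for $\subseteq$), together with the normalization $a\equiv 0$ that turns the contact condition into a sign condition on the last coordinate.
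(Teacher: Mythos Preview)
Your proof is correct. The first inclusion $\chull{F}\subseteq E$ matches the paper's (both just use convexity of $E$), but for the harder inclusion $E\subseteq\chull{F}$ you take a genuinely different route. The paper argues by contradiction via approximating sets $F_\eps=\{\phi-a\leq\eps\}$: if some $x_0\in E$ escaped $\chull{F_\eps}$, a separating hyperplane lets one tilt $a$ upward near $x_0$ to an affine $\tilde a$ still satisfying $\tilde a|_{\pa\Omega}\leq\phi$ but with $\tilde a(x_0)>\env\phi(x_0)$, contradicting the definition of $\env\phi$. You instead go straight through Proposition~\ref{prop_envelop}(\ref{item_envelop2}): since $\pa\Omega$ is compact and $\phi$ lower semicontinuous, $\epi{\env\phi}=\chull{\epi\phi}$ without closure, so $(x_0,0)$ admits a finite Carath\'eodory representation in $\epi\phi$, and the nonnegativity of the last coordinate forces every active vertex into $F$.

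Your argument is shorter and cleaner, and it makes explicit why compactness of $\pa\Omega$ is the crux (exactly the point you flag in your remark). The paper's approach, on the other hand, is more self-contained---it does not rely on Proposition~\ref{prop_envelop}(\ref{item_envelop2})---and its ``tilt the affine function'' mechanism is reusable elsewhere (a similar separation idea appears in the proof of Lemma~\ref{lemma_adm}).
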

\begin{proof}
Since $\phi$ is lower continuous, for every $\eps\geq0$,
$$
F_\eps:=\{x\in\pa\Omega\mid \phi(x)-a(x)\leq\eps\}
$$
is a closed subset of $\pa\Omega$. Denoting $E:=\{x\in\R^n\mid \env{\phi}(x)=a(x)\}$, we have $\bigcap_{\eps>0}F_\eps=F_0=E\cap\pa\Omega$ (the last equality follows from Lemma \ref{lemma_adm}). $E$ contains $\chull{F_0}$ since it is convex. So we only need to prove the inclusion $E\subset\chull{F_0}$.

Using the properties of convex hulls in Section \ref{subsec_envelope}, one can show $\chull{F_0}=\bigcap_{\eps>0}\chull{F_\eps}$. Therefore, if the required inclusion does not hold, there exists  $x_0\in E\setminus\chull{F_\eps}$ for some $\eps>0$. We can then find a closed half-space $H\subset\R^n$ containing $F_\eps$ but not $x_0$, and an affine function $b$ on $\R^n$ such that $b\leq 0$ on $H$ and $0<b\leq\eps$ on $\overline{\Omega}\setminus H$. The affine function $\tilde{a}=a+b$ satisfies 
$\tilde{a}|_{\pa\Omega}\leq\phi$, because $\tilde{a}\leq a\leq\phi$ on $\pa\Omega\cap H$ and $\tilde{a}\leq a+\eps<\phi$ on $\pa\Omega\setminus H\subset\pa\Omega\setminus F_\eps$. But on the other hand we have  $\tilde{a}(x_0)=\env{\phi}(x_0)+b(x_0)>\env{\phi}(x_0)$, a contradiction.
\end{proof}
When $n=2$, the lemma implies that the graph of $\env{\phi}$ has the structure of ``pleated surface''. In \cite{bs}, a link between $\LC(\pa\Omega)$ and measured geodesic laminations on $\Omega$ is built based on this structure (for $\Omega$ the unit disk).	
	
\subsection{Subgradients}\label{subsec_subdiff}
We review in this section some useful properties of \emph{subgradients} of convex functions, defined as follows:
\begin{definition}\label{def_subdiff}
A \emph{supporting affine function} of $u\in\LC(\R^n)$ at $x_0\in\R^n$ is an affine function $a:\R^n\to\R$ such that $a\leq u$ on $\R^n$ and $a(x_0)=u(x_0)$. We let $\D u(x_0)\subset\R^n$ denote the set of linear parts (see Section \ref{subsec_envelope}) of all supporting affine functions of $u$ at $x_0$ and call its elements the \emph{subgradients} of $u$ at $x_0$. Denote the set of points where $u$ admits subgradients (or equivalently, supporting affine functions) by
$$
\dom{\D u}:=\{x\in\R^n\mid \D u(x)\neq\emptyset\}.
$$
\end{definition}
Note that the definition can be written in a concise way as
$$
y\in \D u(x_0)\,\overset{\text{def.}}\Longleftrightarrow\,u(x)\geq u(x_0)+(x-x_0)\cdot y\ \text{ for all }x\in\R^n.
$$
We have the following basic existence results:
\begin{lemma}\label{lemma_subdiff}
Suppose $u\in\LC(\R^n)$.
\begin{enumerate}
	\item\label{item_lemmasubdiff1} We have $\ri\dom{u}\subset\dom{\D u}$, \ie $u$ admits a subgradient at every relative interior point (see Section \ref{subsec_convexfunction}) of $\dom{u}$.
	\item\label{item_lemmasubdiff2} $\D u(x)$ has exactly one point if and only if $x\in\interior\dom{u}$ and $u$ is differentiable at $x$. In this case, the subgradient is exactly the gradient of $u$ at $x$.
\end{enumerate}
\end{lemma}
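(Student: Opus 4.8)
The plan is to prove the two parts separately, with Part (2) relying on Part (1).

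\textbf{Part (1)} is the supporting-hyperplane theorem applied to the epigraph. I would first reduce to the full-dimensional case: let $A\subset\R^n$ be the affine hull of $\dom{u}$, so that $u|_A\in\LC(A)$ has effective domain with nonempty interior relative to $A$, and $\ri\dom{u}$ is exactly this relative interior. Fix $x_0\in\ri\dom{u}$. Then $(x_0,u(x_0))$ is a boundary point of the closed convex set $\epi{u|_A}\subset A\times\R$, so some hyperplane of $A\times\R$ supports $\epi{u|_A}$ there. The point to verify is that this supporting hyperplane is not ``vertical'', i.e.\ does not contain $\{x_0\}\times\R$: otherwise it would confine $\dom{u}$ to one side of a proper affine subspace of $A$ through $x_0$, contradicting that $x_0$ is an interior point of $\dom u$ relative to $A$. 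Hence the hyperplane is the graph of an affine function $\hat a$ on $A$ with $\hat a\le u|_A$ and $\hat a(x_0)=u(x_0)$; any affine extension $a$ of $\hat a$ to $\R^n$ still satisfies $a\le u$ (it equals $\hat a$ on $A\supseteq\dom{u}$ and $u\equiv+\infty$ elsewhere), so $a$ is a supporting affine function of $u$ at $x_0$ and $x_0\in\dom{\D u}$.

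\textbf{Part (2), ``$\Leftarrow$''.} If $x\in\interior\dom{u}$ and $u$ is differentiable at $x$ with $p:=\nabla u(x)$, then for every $z$ the function $t\mapsto u(x+t(z-x))$ is convex on an interval containing $[0,1]$ (the statement is trivial if $z\notin\dom u$) with right derivative $(z-x)\cdot p$ at $0$, so $u(z)\ge u(x)+(z-x)\cdot p$ and $p\in\D u(x)$. Conversely, if $y\in\D u(x)$, then dividing $u(x+tv)-u(x)\ge t\,v\cdot y$ by $t>0$ and letting $t\to0^+$ gives $v\cdot p\ge v\cdot y$ for all $v$, hence $y=p$. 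Thus $\D u(x)=\{p\}=\{\nabla u(x)\}$, which also yields the last assertion of the lemma.

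\textbf{Part (2), ``$\Rightarrow$''.} Suppose $\D u(x)=\{y_0\}$, with corresponding supporting affine function $a$ (so $x\in\dom{u}$). If the affine hull $A$ of $\dom u$ were a proper subspace, then $a+\langle v,\cdot-x\rangle$ would also support $u$ at $x$ for every $v$ orthogonal to $A$, contradicting uniqueness; so $\dom u$ is full-dimensional. If moreover $x\in\pa\dom u$, pick a supporting hyperplane $\{\,\ell(\cdot-x)\le0\,\}$ of $\dom u$ at $x$ with $\ell\ne0$; then $a+\lambda\,\ell(\cdot-x)$ supports $u$ at $x$ for every $\lambda\ge0$, again contradicting uniqueness. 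Hence $x\in\interior\dom u$. It remains to prove differentiability at $x$. There $u$ is finite and locally Lipschitz, say with constant $L$ on a ball $B\ni x$; by Part (1), $\D u(z)\neq\emptyset$ and $\D u(z)\subset\overline B(0,L)$ for every $z\in B$, and the graph of $z\mapsto\D u(z)$ is closed (pass to the limit in $u(w)\ge u(z)+(w-z)\cdot y$, using continuity of $u$ at $x$). Together with $\D u(x)=\{y_0\}$ and the uniform bound, this forces $\sup_{y\in\D u(x+h)}|y-y_0|\to0$ as $h\to0$. Finally $y_0\in\D u(x)$ gives $u(x+h)-u(x)-h\cdot y_0\ge0$, while any $y\in\D u(x+h)$ gives $u(x+h)-u(x)\le h\cdot y$, so $0\le u(x+h)-u(x)-h\cdot y_0\le h\cdot(y-y_0)\le|h|\sup_{y\in\D u(x+h)}|y-y_0|=o(|h|)$, i.e.\ $u$ is differentiable at $x$ with $\nabla u(x)=y_0$. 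The whole lemma is classical (cf.\ \cite{rockafellar}); the only step I expect to require genuine care is this last one, for which the closed-graph and local-boundedness properties of the subgradient multifunction seem the cleanest route, rather than a direct difference-quotient estimate.
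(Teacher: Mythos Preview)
Your argument is correct and self-contained. Each step checks out: the supporting-hyperplane reduction in Part~(1), the one-dimensional convexity argument for ``$\Leftarrow$'' in Part~(2), the two uniqueness contradictions forcing $x\in\interior\dom u$, and the closed-graph/local-boundedness argument yielding differentiability.

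The paper itself does not prove this lemma: it simply refers the reader to Theorems~23.4 and~25.1 of \cite{rockafellar}. So there is nothing to compare at the level of argument---your write-up supplies what the paper outsources. Your route is essentially the standard one (and indeed close to what Rockafellar does), so there is no tension; if anything, your final differentiability step via the outer-semicontinuity of $z\mapsto\D u(z)$ is a clean choice, matching the content of Rockafellar's Theorem~25.6 which the paper invokes elsewhere.
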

See Theorems 23.4 and 25.1 in \cite{rockafellar} for proofs of Statements (\ref{item_lemmasubdiff1}) and (\ref{item_lemmasubdiff2}), respectively. Note that $u$ is differentiable almost everywhere in the interior of $\dom{u}$ because it is Lipschitz (see \cite[Lemma 1.1.7]{gutierrez}). For a point $x$ where $u$ is differentiable, we also understand $\D u(x)$ as the gradient itself, not distinguishing it with the one-point subset of $\R^n$ formed by the gradient.


Subgradients of a convex function $u$ at a point $x_0$ is closely related to the directional derivatives of $u$ at $x_0$, see \cite[Section 23]{rockafellar} for general discussions. We only need the following notion on finiteness of directional derivatives at a boundary point of essential domain:
\begin{definition}\label{def_inner}
Given $u\in\LC(\R^n)$ with $U:=\interior\dom{u}$ nonempty, $u$ is said to have \emph{finite inner derivatives} at $x_0\in\pa U$ if $u(x_0)<+\infty$ and
\begin{equation}
\label{eqn_innerderivative}
\lim_{t\rightarrow0^+}\frac{u(x_0+t(x_1-x_0))-u(x_0)}{t}>-\infty
\end{equation}
for some $x_1\in U$. Otherwise, $u$ is said to have \emph{infinite inner derivatives} at $x_0$.
\end{definition}
Note that since $t\mapsto u(x_1+t(x_1-x_0))$ is a convex function on $[0,1]$, the difference quotient in (\ref{eqn_innerderivative}) is increasing in $t$, hence the limit exists in $[-\infty,+\infty)$. 
Condition (\ref{eqn_innerderivative}) is actually independent of the choice of $x_1$ and is related to the existence of subgradient at $x_0$ and the boundedness of gradient in $U$:
\begin{proposition}\label{prop_derivative}
Given $u\in\LC(\R^n)$ with $U:=\interior\dom{u}\neq\emptyset$ and a point $x_0\in\pa U$ with $u(x_0)<+\infty$, the following conditions are equivalent to each other:
\begin{enumerate}[label=(\roman*)]
	\item\label{item_innerderiv1} $u$ has finite inner derivatives at $x_0$;
	\item\label{item_innerderiv2} Condition (\ref{eqn_innerderivative}) is satisfied by every $x_1\in U$;
	\item\label{item_innerderiv3} $u$ admits a subgradient at $x_0$;
	\item\label{item_innerderiv4} there exists a sequence $(x_i)$ of points in $U$ tending to $x_0$ such that $u$ is differentiable at $x_i$ and $|\D u(x_i)|$ does not tend to $+\infty$.  
\end{enumerate}
\end{proposition}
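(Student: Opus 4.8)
The plan is to prove the cyclic chain of implications \ref{item_innerderiv1} $\Rightarrow$ \ref{item_innerderiv3} $\Rightarrow$ \ref{item_innerderiv2} $\Rightarrow$ \ref{item_innerderiv1}, and then handle the equivalence with \ref{item_innerderiv4} separately, say via \ref{item_innerderiv3} $\Rightarrow$ \ref{item_innerderiv4} $\Rightarrow$ \ref{item_innerderiv1}. The implication \ref{item_innerderiv2} $\Rightarrow$ \ref{item_innerderiv1} is trivial since $U\neq\emptyset$. The core of the argument is to show that a lower bound on one inner difference quotient propagates to the existence of a genuine supporting affine function at $x_0$.

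First I would set up the convex-geometric picture. After subtracting an affine function we may assume $u(x_0)=0$ and, by translation, $x_0=0$. Finite inner derivatives at $0$ along the direction $x_1-0=x_1$ means that the one-variable convex function $t\mapsto u(tx_1)$ has finite right derivative at $0$; equivalently, the epigraph $\epi{u}\subset\R^n\times\R$ has a supporting line at $(0,0)$ within the $2$-plane spanned by $(x_1,0)$ and $(0,1)$ that is \emph{not} vertical. The goal is to upgrade this to a non-vertical supporting hyperplane of the closed convex set $\epi{u}$ at $(0,0)$, which is exactly the statement that $0\in\D u(0)$ up to the affine adjustment, i.e. \ref{item_innerderiv3}. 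To get this I would invoke the supporting hyperplane theorem: since $(0,0)\in\pa\epi{u}$, there is a supporting hyperplane $P$ of $\epi{u}$ at $(0,0)$. The only thing to rule out is that every such $P$ is vertical, i.e. of the form $\{y\cdot x=0\}\times\R$ with $y\neq0$; but a vertical supporting hyperplane would force $\epi{u}$, and hence $\dom u$, to lie in the half-space $\{y\cdot x\leq 0\}$ with $0$ on its boundary, contradicting $0=x_0\in\pa U$ together with the finiteness of the inner derivative along $x_1\in U$ (which puts points of $\dom u$ strictly on the interior side, since for small $t>0$ the point $tx_1$ lies in $U$ and $u(tx_1)<\infty$, so one checks $y\cdot x_1$ must be $>0$, while a nonvertical combination of $P$ with the finite directional line gives what we want). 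Concretely, I would take a finite right derivative $c:=\lim_{t\to0^+}u(tx_1)/t$, consider the affine function on the line $\R x_1$ given by $t x_1\mapsto c\,t$, note it lies below $u$ on that line by convexity, and then extend: the function $x\mapsto$ (its value on $\R x_1$ when restricted appropriately) need not directly be below $u$, so instead I would argue that the convex set $\epi{u}$ and the vertical ray $\{0\}\times(-\infty,0)$ are disjoint convex sets (the ray meets $\epi u$ only at $(0,0)$), separate them by a hyperplane, and check the separating hyperplane cannot be vertical using the direction $x_1$.

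Next, \ref{item_innerderiv3} $\Rightarrow$ \ref{item_innerderiv2}: if $a$ is a supporting affine function of $u$ at $x_0$ with linear part $y$, then for every $x_1\in U$ the difference quotient in \eqref{eqn_innerderivative} is bounded below by $(x_1-x_0)\cdot y>-\infty$, since $u(x_0+t(x_1-x_0))\geq a(x_0+t(x_1-x_0))=u(x_0)+t(x_1-x_0)\cdot y$. This is immediate. Then \ref{item_innerderiv2} $\Rightarrow$ \ref{item_innerderiv1} needs only that $U\neq\emptyset$. For the last condition: assuming \ref{item_innerderiv3}, pick a subgradient $y_0\in\D u(x_0)$; by Lemma \ref{lemma_subdiff}\ref{item_lemmasubdiff1}, $u$ admits subgradients on all of $U=\interior\dom u$, and since $u$ is Lipschitz on compact subsets of $U$, its gradient is locally bounded; choosing any sequence $x_i\to x_0$ in $U$ along which $u$ is differentiable (possible since $u$ is differentiable a.e.\ in $U$), one shows $|\D u(x_i)|$ stays bounded — here the key monotonicity is that subgradients ``see'' the supporting function $a$ at $x_0$, so $\D u(x_i)$ cannot run off to infinity in the direction pointing away from $x_0$; I would make this precise by the cyclic monotonicity inequality $(\D u(x_i)-y_0)\cdot(x_i-x_0)\geq 0$ plus a transversality estimate near a boundary point where a supporting affine function exists (bounding the component of $\D u(x_i)$ normal to $\pa U$ using the bound along segments into $U$). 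Conversely \ref{item_innerderiv4} $\Rightarrow$ \ref{item_innerderiv1}: extract a subsequence with $\D u(x_i)\to y_\infty$; passing to the limit in $u(x)\geq u(x_i)+(x-x_i)\cdot\D u(x_i)$ and using the continuity of $u$ along segments into $U$ (Section \ref{subsec_convexfunction}) shows $y_\infty$ is a subgradient at $x_0$, whence \ref{item_innerderiv3}, whence \ref{item_innerderiv1}.

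The main obstacle I anticipate is the transversality issue lurking in the implication toward \ref{item_innerderiv3} (and symmetrically in \ref{item_innerderiv3}$\Rightarrow$\ref{item_innerderiv4}): one must rule out \emph{vertical} supporting hyperplanes of $\epi u$ at $x_0$, i.e.\ separate the genuinely ``sloped'' behaviour along one interior direction from the purely constraint-geometry of $x_0$ being a boundary point of $U$. The clean way is to work with the two disjoint convex sets $\epi u$ and $\{x_0\}\times(-\infty,u(x_0))$ (disjointness uses lower semicontinuity so $(x_0,u(x_0))\in\epi u$), separate them, and then test the separating functional against the vector $(x_1-x_0,\,c)$ where $c$ is the finite right derivative — that pairing being controlled is exactly what forces the separating hyperplane to have a nonzero ``vertical'' component, i.e.\ to be non-vertical, giving the supporting affine function. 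Everything else is routine convex analysis citing \cite{rockafellar}.
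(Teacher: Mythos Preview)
Your chain is plausible but two links are broken.

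\textbf{The step \ref{item_innerderiv1}$\Rightarrow$\ref{item_innerderiv3}.} Separating $\epi{u}$ from the ray $\{x_0\}\times(-\infty,u(x_0))$ does \emph{not} by itself rule out a vertical separating hyperplane. Since $x_0\in\pa U$, any hyperplane $H_0\subset\R^n$ supporting $\overline{\dom u}$ at $x_0$ gives a vertical hyperplane $H_0\times\R$ that supports $\epi{u}$ at $(x_0,u(x_0))$ and contains the entire ray; this is a perfectly valid (weak, even proper) separation. Testing the functional $(\eta,0)$ against $(x_1-x_0,c)$ only yields $\eta\cdot(x_1-x_0)\geq 0$, which is no contradiction. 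The paper's route avoids this: it first proves \ref{item_innerderiv1}$\Leftrightarrow$\ref{item_innerderiv2} using that $v\mapsto\pa_v u(x_0)$ is a convex homogeneous function on the tangent cone $C_{x_0}U$ (so finite at one interior direction forces finiteness everywhere), and then gets \ref{item_innerderiv2}$\Rightarrow$\ref{item_innerderiv3} by restricting this function to an affine hyperplane slice of $C_{x_0}U$ and taking an affine minorant, which encodes a subgradient via the characterization $y\in\D u(x_0)\Leftrightarrow\pa_v u(x_0)\geq y\cdot v$. Your geometric picture can be salvaged, but only by separating $\epi{u}$ from the \emph{two}-dimensional cone spanned by $(0,-1)$ and $(x_1-x_0,c-\delta)$; then a vertical separator would have to contain $x_1-x_0$, forcing the interior point $x_1$ onto a supporting hyperplane of $\dom u$, which is impossible.

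\textbf{The step \ref{item_innerderiv3}$\Rightarrow$\ref{item_innerderiv4}.} Your claim that along \emph{any} sequence of differentiability points $x_i\to x_0$ the gradients stay bounded is false, and the monotonicity inequality $(\D u(x_i)-y_0)\cdot(x_i-x_0)\geq 0$ controls only one scalar component. Counterexample: on $\R^2$ let $u(x,y)=x^2/y$ for $y>0$, $u(0,0)=0$, and $u=+\infty$ elsewhere; then $0\in\D u(0,0)$, yet along $x_i=(1/i,1/i^2)$ one has $\D u(x_i)=(2i,-i^2)\to\infty$. Condition \ref{item_innerderiv4} only asks for the \emph{existence} of a good sequence, and the paper obtains \ref{item_innerderiv3}$\Leftrightarrow$\ref{item_innerderiv4} in one stroke by citing \cite[Theorem~25.6]{rockafellar}, which identifies $\D u(x_0)$ in terms of the set $\mathcal S_u(x_0)$ of gradient limit points (so $\D u(x_0)\neq\emptyset\Leftrightarrow\mathcal S_u(x_0)\neq\emptyset$). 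Your argument for \ref{item_innerderiv4}$\Rightarrow$\ref{item_innerderiv3}, by contrast, is correct: boundedness of $\D u(x_i)$ together with the subgradient inequality at $x_i$ evaluated at $x_0$ forces $u(x_i)\to u(x_0)$, and then passing to the limit gives $y_\infty\in\D u(x_0)$.
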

\begin{proof}
``\ref{item_innerderiv1}$\Leftrightarrow$\ref{item_innerderiv2}''.  For every vector $v$ in the \emph{tangent cone}  
	 $$
	 C_{x_0}U:=\{v\in \R^n\mid \text{ there exits $t>0$ such that }x_0+tv\in U\},
	 $$ 
	we define the \emph{directional derivative} of $u$ along $v$ as
	$$
	\pa_v u(x_0):=\lim_{t\rightarrow0^+}\frac{u(x_0+tv)-u(x_0)}{t}\in[-\infty,+\infty).
	$$
Then Condition \ref{item_innerderiv1} (resp. \ref{item_innerderiv2}) is equivalent to $\pa_v u(x_0)>-\infty$ for some  (resp. all) $v\in C_{x_0}U$. One can check (see \cite[Theorem 23.1]{rockafellar}) that $v\mapsto\pa_v u(x_0)$ is a homogeneous convex function on $C_{x_0}U$.Therefore, the equivalence  ``\ref{item_innerderiv1}$\Leftrightarrow$\ref{item_innerderiv2}'' follows from the elementary fact that if a $[-\infty,+\infty)$-valued convex function $f$ on a convex domain attends the value $-\infty$ at some point then $f\equiv-\infty$.  
	
	``\ref{item_innerderiv2}$\Leftrightarrow$\ref{item_innerderiv3}''.
 One can check (see \cite[Theorem 23.2]{rockafellar}) that $y\in\R^n$ is a subgradient of $u$ at $x_0$ if and only if 
	\begin{equation}\label{eqn_proofinner}
	\pa_vu(x_0)\geq y\cdot v\ \text{ for all }v\in C_{x_0}U.
	\end{equation}
This implies \ref{item_innerderiv2} by the above construction. Conversely, to prove ``\ref{item_innerderiv2}$\Rightarrow$\ref{item_innerderiv3}'', we take a hyperplane $P\subset \R^n$ disjoint from the origin such that $P$ meets every ray $\R_{\geq 0}\cdot v$ with $v\in C_{x_0}U$. If \ref{item_innerderiv2} holds, then $v\mapsto\pa_vu(x_0)$ restricts to a $\R$-valued convex function on $P\cap C_{x_0}U$. Let $a$ be an affine function on $P$ majorized by the restriction and  $y\in\R^n$ be the vector such that $x\cdot y=a(x)$ for all $x\in P$. We have $\pa_vu(x_0)\geq y\cdot v$ for $v\in P\cap C_{x_0}U$, hence (\ref{eqn_proofinner}) follows by homogeneity and implies \ref{item_innerderiv3}.

``\ref{item_innerderiv3}$\Leftrightarrow$\ref{item_innerderiv4}'' let $\mathcal{S}_u(x_0)\subset\R^n$ denote the set of all limit points of all sequences of the form $(\D u(x_i))$, such that $(x_i)_{i=1,2,\cdots}$ is a sequence in $\dom{u}$ converging to $x_0$ and $u$ is differentiable at $x_i$. Then Condition \ref{item_innerderiv4} just says  $\mathcal{S}_u(x_0)\neq\emptyset$, which is equivalent to \ref{item_innerderiv3} by \cite[Theorem 25.6]{rockafellar}.
\end{proof}

\subsection{Legendre transformation}\label{subsec_legendre1}
A fundamental tool of this paper is the Legendre transform of a convex function, also known as the \emph{conjugate} convex function.
We first recall its definition, which makes sense for non-convex functions:
\begin{definition}\label{def_legendre}
The \emph{Legendre transform} of a function $u:\mathbb{R}^n\rightarrow\mathbb{R}\cup\{\pm\infty\}$ is the function $u^*:\mathbb{R}^n\rightarrow\mathbb{R}\cup\{\pm\infty\}$ defined by
$$
u^*(y):=\sup_{x\in\mathbb{R}^n}(x\cdot y-u(x)).
$$
\end{definition}
Note that $u^*$ belongs to $\GLC(\R^n)$ (see Section \ref{subsec_convexfunction}) because it is a supremum of affine functions. The most important property of Legendre transforms is:
\begin{theorem}\label{thm_involution}
For any function $u:\R^n\to\Rpm$, the repeated Legendre transform $u^{**}$ coincides with the convex envelope $\env{u}$.  As a consequence (see Corollary \ref{coro_supremum}), the Legendre transformation $u\mapsto u^*$ is an involution on $\GLC(\R^n)$.
\end{theorem}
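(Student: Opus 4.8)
The plan is to show that both $u^{**}$ and $\env{u}$ equal the pointwise supremum of all affine functions that minorize $u$ on $\R^n$. For $\env{u}$ this is just the content of Definition~\ref{def_env} applied with $S=\R^n$: one has $\env{u}(x)=\sup\{a(x)\mid a\text{ affine},\ a\leq u\}$. So the whole substance of the theorem is the identification of $u^{**}$ with this same supremum, after which the ``consequence'' follows formally.

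The first step is to record the dictionary between affine minorants of $u$ and the epigraph of $u^*$. An affine function $a(x)=x\cdot y+\eta$ satisfies $a\leq u$ on $\R^n$ if and only if $x\cdot y-u(x)\leq-\eta$ for all $x$, i.e.\ $u^*(y)\leq-\eta$, i.e.\ $(y,-\eta)\in\epi{u^*}$. Hence, for a prescribed linear part $y$, an affine minorant of $u$ with that linear part exists exactly when $y\in\dom{u^*}$, and among those the pointwise-largest is $x\mapsto x\cdot y-u^*(y)$.

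Taking suprema over all affine minorants then gives
\begin{align*}
\env{u}(x) &= \sup\{a(x)\mid a\text{ affine},\ a\leq u\text{ on }\R^n\} \\
&= \sup_{y\in\dom{u^*}}\bigl(x\cdot y-u^*(y)\bigr) = \sup_{y\in\R^n}\bigl(x\cdot y-u^*(y)\bigr) = u^{**}(x),
\end{align*}
the third equality because the expression is $-\infty$ whenever $u^*(y)=+\infty$. The only thing requiring care is the degenerate bookkeeping: if $u\equiv+\infty$ then $u^*\equiv-\infty$, so $u^{**}\equiv+\infty$, while also $\env{u}\equiv+\infty$ (the epigraph of every affine function contains $\emptyset$), so the identity persists; and if $u\not\equiv+\infty$, choosing $x_0$ with $u(x_0)<+\infty$ gives $u^*(y)\geq x_0\cdot y-u(x_0)>-\infty$ for every $y$, so $u^*$ never attains $-\infty$ and the chain above is unambiguous.

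For the consequence: $u^*$ is by definition a supremum of affine functions, hence $u^*\in\GLC(\R^n)$ for every $u:\R^n\to\Rpm$; and if $u\in\GLC(\R^n)$ then Corollary~\ref{coro_supremum} gives $\env{u}=u$, so the identity just proved reads $u^{**}=u$. Therefore $u\mapsto u^*$ maps $\GLC(\R^n)$ into itself and squares to the identity, i.e.\ is an involution. I do not expect a genuine obstacle anywhere in this argument; the one delicate point — and the reason it is not a one-line proof — is keeping the conventions for the values $\pm\infty$ and for suprema of empty families coherent, which is precisely what dictates the separate handling of $u\equiv+\infty$ and the observation that $u^*$ otherwise avoids the value $-\infty$.
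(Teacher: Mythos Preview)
Your proof is correct and follows essentially the same route as the paper: both identify $u^{**}$ with the supremum of affine minorants of $u$ via the dictionary between such minorants and the epigraph of $u^*$. Your version is a bit more explicit about the degenerate cases $u\equiv+\infty$ and the non-attainment of $-\infty$ by $u^*$, which the paper leaves implicit.
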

Since the constant functions $+\infty,-\infty\in\GLC(\R^n)$ are Legendre transforms of each other, $u\mapsto u^*$ is also an involution on $\LC(\R^n)$. We give here a proof of the theorem because its ingredients will be used later on.
\begin{proof}
On one hand, the definition of Legendre transform can be rewritten as
\begin{equation}\label{eqn_defleg}
u^*(y):=\sup_{(x,\xi)\in \gra{u}}(x\cdot y-\xi)=\sup_{(x,\xi)\in \epi{u}}(x\cdot y-\xi).
\end{equation}
On the other hand, the definition also implies that $\epi{u^*}$ is the set of all $(y,\eta)\in\R^n\times\R$ such that the affine function $x\mapsto x\cdot y-\eta$ is majorized by $u$: In fact,
$$
(y,\eta)\in \epi{u^*}\Longleftrightarrow \eta\geq x\cdot y-u(x),\forall x\in\R^n\Longleftrightarrow u(x)\geq x\cdot y-\eta,\forall x\in\R^n.
$$

Replacing $u$ with $u^*$ in (\ref{eqn_defleg}), we get $u^{**}(x)=\sup_{(y,\eta)\in\epi{u^*}}(x\cdot y-\eta)$. Using the above interpretation of $\epi{u^*}$, we conclude that $u^{**}$ is the supremum of affine functions majorized by $u$.
\end{proof}

The following lemma describes the Legendre transform of a convex envelope (see Definition \ref{def_env}):
\begin{lemma}\label{lemma_legendre2}
	Given $E\subset\mathbb{R}^n\times\R$, let $u\in\GLC(\R^n)$ be the convex envelope of $E$. Then
	$$
	u^*(y)= \sup_{(x,\xi)\in E}(x\cdot y-\xi).
	$$ 
\end{lemma}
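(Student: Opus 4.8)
The plan is to reduce everything to the definition of the convex envelope and the computation of Legendre transforms in terms of epigraphs that was already extracted in the proof of Theorem~\ref{thm_involution}. Recall from that proof the rewriting \eqref{eqn_defleg}, which says $u^*(y)=\sup_{(x,\xi)\in\epi{u}}(x\cdot y-\xi)$ for any $u$, and also the dual characterization of $\epi{u^*}$ as the set of $(y,\eta)$ for which the affine function $x\mapsto x\cdot y-\eta$ is majorized by $u$. The right-hand side of the claimed identity is, by definition, $\sup_{(x,\xi)\in E}(x\cdot y-\xi)$, so I must show this equals $\sup_{(x,\xi)\in\epi{u}}(x\cdot y-\xi)$ where $u=\env{E}$ is the convex envelope of $E$ in the sense of Definition~\ref{def_env}.

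First I would dispose of the degenerate case: if $\env{E}\equiv-\infty$ (equivalently there is no affine function whose epigraph contains $E$), then $\epi{\env{E}}=\emptyset$ after the appropriate interpretation, but more to the point one checks directly that in this case $\sup_{(x,\xi)\in E}(x\cdot y-\xi)=+\infty$ for every $y$ — indeed the failure of $\env{E}$ to be bounded below by any affine function is exactly the statement that the supremum over $E$ of $x\cdot y-\xi$ is $+\infty$ for all $y$ — and $(\env{E})^*\equiv+\infty$ as well, so both sides agree. The main case is $\env{E}\not\equiv-\infty$, so $\env{E}\in\GLC(\R^n)$.

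For the main case, the clean route is to use Proposition~\ref{prop_envelop}\eqref{item_envelop1}, but that is stated for epigraphs of functions; the quickest self-contained argument is instead through the interpretation of $\epi{u^*}$. Fix $y\in\R^n$ and write $s_E(y):=\sup_{(x,\xi)\in E}(x\cdot y-\xi)$ and $s(y):=(\env{E})^*(y)$. The inequality $s(y)\ge s_E(y)$ is immediate since $E\subset\epi{\env{E}}$ (every point of $E$ lies on or below the graph of any affine function containing $E$ in its epigraph, hence on or above... more simply: $\env{E}$ is a supremum of affine functions each of whose epigraphs contains $E$, so $\env{E}(x)\ge \xi$ whenever there is such an affine function passing below $(x,\xi)$; in particular $\epi{\env{E}}\supseteq E$ up to the sign convention — I will state this carefully), and the supremum over the larger set $\epi{\env{E}}$ dominates that over $E$, using \eqref{eqn_defleg}. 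For the reverse inequality $s(y)\le s_E(y)$: for any $(x,\xi)\in\epi{\env{E}}$ we have $\xi\ge\env{E}(x)=\sup\{a(x): a\text{ affine},\ \epi a\supseteq E\}$; given $\eta<s(y)$ there is $(x,\xi)\in\epi{\env{E}}$ with $x\cdot y-\xi>\eta$, and I would argue that the affine function $a(x'):=x'\cdot y - s_E(y)$ has $\epi a\supseteq E$ (by definition of $s_E$), hence $\env{E}(x)\ge a(x)=x\cdot y-s_E(y)$, so $\xi\ge x\cdot y-s_E(y)$, i.e. $x\cdot y-\xi\le s_E(y)$, whence $\eta<s_E(y)$; letting $\eta\uparrow s(y)$ gives $s(y)\le s_E(y)$.

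I expect no serious obstacle here — this is essentially a bookkeeping lemma — but the one point requiring care is the correct handling of the infinite-valued and degenerate cases (the sign conventions for $\epi{(\pm\infty)}$ and the case $\env{E}\equiv-\infty$), together with making sure the "epigraph contains $E$" formulation of Definition~\ref{def_env} is used consistently with the duality $\epi{u^*}=\{(y,\eta): x\mapsto x\cdot y-\eta\text{ is majorized by }u\}$ from the proof of Theorem~\ref{thm_involution}. An even shorter presentation is possible: since $\env{E}$ and $E$ have, by Definition~\ref{def_env}, exactly the same set of affine minorants (an affine $a$ satisfies $a\le\env{E}$ on $\R^n$ iff $\epi a\supseteq E$), the dual description of $\epi{(\env{E})^*}$ as the set of $(y,\eta)$ with $x\mapsto x\cdot y-\eta$ majorized by $\env{E}$ coincides termwise with the set of $(y,\eta)$ such that $\epi(x\mapsto x\cdot y-\eta)\supseteq E$, which is precisely $\{(y,\eta): \eta\ge x\cdot y-\xi\ \forall (x,\xi)\in E\}=\epi{\big(y\mapsto\sup_{(x,\xi)\in E}(x\cdot y-\xi)\big)}$; equality of epigraphs gives equality of functions. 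I would write the final proof in this second, more economical form, falling back on the first argument only if the reader needs the inequalities spelled out.
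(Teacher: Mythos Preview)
Your proposal is correct, and the ``more economical form'' you settle on at the end---comparing $\epi{(\env{E})^*}$ with $\epi{\tilde u}$ via the observation that an affine function is majorized by $\env{E}$ if and only if its epigraph contains $E$---is exactly the paper's proof. The only cosmetic difference is that the paper does not separate out the degenerate case $\env{E}\equiv-\infty$, since the epigraph comparison handles it uniformly.
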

\begin{proof}
Put $\tilde{u}(y):=\sup_{(x,\xi)\in E}(x\cdot y-\xi)$. We showed in the proof of Theorem \ref{thm_involution} that the epigraph of $u^*$ is the set of points $(y,\eta)\in\R^n\times \R$ such that the affine function $x\mapsto x\cdot y-\eta$ is majorized by $u$. On the other hand, essentially the same argument shows that the epigraph of $\tilde{u}$ is the set of $(y,\eta)\in\R^n\times \R$ such that the epigraph of $x\mapsto x\cdot y-\eta$ contains $E$. But by definition of convex envelopes, an affine functions is majorized by $u$ if and only if its epigraph contains $E$. Therefore, $\tilde{u}$ and $u^*$ have the same epigraphs, hence are equal.
\end{proof}
Legendre transformation is related to subgradients through the following proposition, which is basically a reformulation of the definitions:
\begin{proposition}\label{prop_young}
	For  any $u\in\LC(\R^n)$ and $x,y\in\mathbb{R}^n$, we have
	\begin{equation}\label{eqn_young}
	u(x)+u^*(y)\geq x\cdot y.
	\end{equation}
	Moreover, the following conditions are equivalent to each other:
	\begin{enumerate}[label=(\roman*)]
		\item\label{item_young1}
		The equality in (\ref{eqn_young}) is achieved;
		\item\label{item_young2} 
	    $y$ is a subgradient of $u$ at $x$;
		\item\label{item_young3}
		$x$ is a subgradient of $u^*$ at $y$.
	\end{enumerate}
\end{proposition}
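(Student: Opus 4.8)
The plan is to derive everything directly from the definition of the Legendre transform, without invoking any heavy machinery. The inequality \eqref{eqn_young} is immediate: by Definition \ref{def_legendre}, $u^*(y) = \sup_{x'\in\R^n}(x'\cdot y - u(x')) \geq x\cdot y - u(x)$ for the particular choice $x'=x$, and rearranging gives $u(x)+u^*(y)\geq x\cdot y$. (One should note that the expression is well-defined, i.e.\ we are not adding $+\infty$ and $-\infty$: since $u\in\LC(\R^n)$ admits a supporting affine function at points of $\ri\dom u$ by Lemma \ref{lemma_subdiff}, $u^*$ is not identically $+\infty$, and it is never $-\infty$ being a supremum of affine functions; similarly $u\not\equiv+\infty$.)

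For the equivalences, I would prove \ref{item_young1}$\Leftrightarrow$\ref{item_young2} and then observe that \ref{item_young1}$\Leftrightarrow$\ref{item_young3} follows by symmetry. For \ref{item_young1}$\Rightarrow$\ref{item_young2}: if $u(x)+u^*(y)=x\cdot y$, then for every $x'\in\R^n$ we have $x'\cdot y - u(x') \leq u^*(y) = x\cdot y - u(x)$, which rearranges to $u(x')\geq u(x) + (x'-x)\cdot y$; by the concise reformulation of Definition \ref{def_subdiff} this says precisely $y\in\D u(x)$. Conversely, for \ref{item_young2}$\Rightarrow$\ref{item_young1}: if $y\in\D u(x)$, the same inequality $u(x')\geq u(x)+(x'-x)\cdot y$ holds for all $x'$, hence $x'\cdot y - u(x')\leq x\cdot y - u(x)$ for all $x'$, so taking the supremum gives $u^*(y)\leq x\cdot y - u(x)$; combined with \eqref{eqn_young} this yields equality.

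Finally, for \ref{item_young1}$\Leftrightarrow$\ref{item_young3}, I would exploit the symmetry of the identity \eqref{eqn_young} together with the involution property $u^{**}=u$ on $\LC(\R^n)$ from Theorem \ref{thm_involution}. Applying the already-proven equivalence \ref{item_young1}$\Leftrightarrow$\ref{item_young2} to the function $u^*$ in place of $u$: the equality $u^*(y)+u^{**}(x)=y\cdot x$ holds if and only if $x\in\D u^*(y)$. Since $u^{**}=u$, the left-hand equality is exactly \ref{item_young1}, so \ref{item_young1}$\Leftrightarrow$\ref{item_young3}.

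There is no real obstacle here; the proposition is, as the statement says, essentially a reformulation of the definitions. The only point requiring a little care is the bookkeeping around extended-real values — ensuring that none of the sums or rearrangements involve the indeterminate form $\infty-\infty$ — which is handled by recalling that for $u\in\LC(\R^n)$ both $u$ and $u^*$ are proper (neither identically $+\infty$ nor ever $-\infty$), as already discussed above.
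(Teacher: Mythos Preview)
Your proof is correct and complete. The paper does not actually supply a proof of this proposition; it merely remarks that the statement ``is basically a reformulation of the definitions'' and moves on. Your argument is precisely the standard unpacking of those definitions (with the involution $u^{**}=u$ from Theorem~\ref{thm_involution} used for the symmetry between \ref{item_young2} and \ref{item_young3}), so it matches what the paper intends.
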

An immediate consequence of the equivalence between Conditions \ref{item_young2} and \ref{item_young3} is that the subgradient map of $u$ and that of $u^*$ are inverse to each other:
\begin{corollary}\label{coro_subdiff}
Given $u\in\LC(\R^n)$, the set-valued maps $x\mapsto\D u(x)$ and $y\mapsto\D u^*(y)$ are inverse to each other in the sense that $y\in\D u(x)\Leftrightarrow x\in \D u^*(y)$.
In particular, we have $\D u(\R^n):=\bigcup_{x\in\R^n}\D u(x)=\dom{\D u^*}$.
\end{corollary}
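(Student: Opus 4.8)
The plan is to obtain Corollary \ref{coro_subdiff} as a formal consequence of Proposition \ref{prop_young}, so the first task is to have that proposition in hand. For the inequality (\ref{eqn_young}) itself, I would simply unwind the definition of the Legendre transform: $u^*(y)=\sup_{z}(z\cdot y-u(z))\ge x\cdot y-u(x)$ for every $x$. For the equivalence \ref{item_young1}$\Leftrightarrow$\ref{item_young2}, I would observe that equality in (\ref{eqn_young}) means $u^*(y)=x\cdot y-u(x)$; since $u^*(y)$ is the supremum over $z$ of $z\cdot y-u(z)$, this is the same as the inequalities $z\cdot y-u(z)\le x\cdot y-u(x)$ for all $z$, i.e.\ $u(z)\ge u(x)+(z-x)\cdot y$ for all $z$, which is exactly the condition $y\in\D u(x)$ (note that both $u(x)$ and $u^*(y)$ are then finite, a supporting affine function being real-valued). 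Finally, for \ref{item_young1}$\Leftrightarrow$\ref{item_young3} I would invoke Theorem \ref{thm_involution} together with Corollary \ref{coro_supremum}, which give $u^{**}=u$ for $u\in\LC(\R^n)$; then (\ref{eqn_young}) is symmetric under swapping the roles of $(u,x)$ and $(u^*,y)$, and applying the previous step to $u^*$ in place of $u$ converts equality into the statement $x\in\D u^*(y)$.

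Granting Proposition \ref{prop_young}, Corollary \ref{coro_subdiff} is immediate. The asserted equivalence $y\in\D u(x)\Leftrightarrow x\in\D u^*(y)$ is precisely the equivalence \ref{item_young2}$\Leftrightarrow$\ref{item_young3} (both being equivalent to equality in (\ref{eqn_young})), so the set-valued maps $x\mapsto\D u(x)$ and $y\mapsto\D u^*(y)$ are inverse to one another. For the last sentence I would just quantify over $x$: a point $y$ lies in $\D u(\R^n)=\bigcup_{x}\D u(x)$ if and only if there is some $x$ with $y\in\D u(x)$, if and only if there is some $x$ with $x\in\D u^*(y)$, if and only if $\D u^*(y)\ne\emptyset$, i.e.\ $y\in\dom{\D u^*}$.

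The only genuine content sits inside Proposition \ref{prop_young}, and within that the single step that is not a pure rewriting of definitions is the appeal to involutivity $u^{**}=u$ needed to place condition \ref{item_young3} on an equal footing with \ref{item_young2}. Since Theorem \ref{thm_involution} is already available, I do not expect any real obstacle here; the corollary drops out formally, and the proof is essentially bookkeeping.
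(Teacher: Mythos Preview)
Your proposal is correct and follows exactly the paper's approach: the paper states Proposition \ref{prop_young} as ``basically a reformulation of the definitions'' without proof, and then presents Corollary \ref{coro_subdiff} as an immediate consequence of the equivalence \ref{item_young2}$\Leftrightarrow$\ref{item_young3}. You have simply supplied the details the paper omits, including the use of involutivity for \ref{item_young1}$\Leftrightarrow$\ref{item_young3}, which is indeed the natural way to see it.
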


Proposition \ref{prop_young} also gives the values of $u^*$ on $\D u(\R^n)=\dom{\D u^*}$ as
$$
u^*(y)=x\cdot y-u(x),\ \forall x\in \R^n,\, y\in \D u(x).
$$
In particular, if $u$ is differentiable on a set $U\subset\R^n$, then the graph of $u^*$ over $\D u(U)$ is parametrized by $U$ itself through the following map, which is useful in affine differential geometry (see Proposition \ref{prop_computations} below):
\begin{definition}\label{def_legendremap}
	Given a differentiable function $u$ on an open set $U\subset\mathbb{R}^{n}$, the map
	$$
	U\rightarrow\mathbb{R}^n\times\R,\ x\mapsto\big(\D u(x),\,x\cdot \D u(x)-u(x)\big)
	$$
	 is called the \emph{Legendre map} of $u$.
\end{definition}

\section{Legendre transforms of $C$-regular domains and $C$-convex hypersurfaces}\label{sec_legendre}
In this section we prove three fundamental results about $C$-regular domains and $C$-convex hypersurfaces (see Section \ref{subsec_cregular}): the first, Theorem \ref{thm_graph}, identifies them as  strict epigraphs and graphs, respectively, of certain classes of convex functions $\R^n\to\R$, characterized through Legendre transformation; the second result, Theorem \ref{thm_asymp}, gives a necessary and sufficient condition for a complete $C$-convex hypersurface to be asymptotic to the boundary of a $C$-regular domain; finally we give in Theorem \ref{thm_foliation} a necessary and sufficient condition for a family of $C$-convex hypersurfaces generating the same $C$-regular domain to be the level surfaces of a convex function on the domain.

\subsection{The affine section $\Omega$ and the spaces $\S(\Omega)$ and $\S_0(\Omega)$}\label{subsec_analytic}
Given a proper convex cone $C\subset\R^{n+1}$, we write a point in $\mathbb{R}^{n+1}$ as $\tilde{x}=(x,\xi)$ with $x=(x^1,\cdots, x^n)\in\mathbb{R}^n$ and $\xi\in\mathbb{R}$, considering $\xi$ as the ``vertical coordinate'' and always assume the coordinates are so chosen that $C$ has the form
$$
C=\{(t x,t)\mid x\in C_0,\, t>0\}
$$ 
for a bounded convex domain $C_0\subset\mathbb{R}^n$ containing the origin. We identify $\mathbb{R}^{n+1}$ with its dual vector space $\mathbb{R}^{*(n+1)}$ through the standard inner product
$(x,\xi)\cdot(y,\eta):=x\cdot y+\xi\eta$,
so that the \emph{dual cone} $C^*\subset\R^{*(n+1)}$, formed by linear forms on $\mathbb{R}^{n+1}$ taking positive values on $\overline{C}\setminus\{0\}$, can be written as $C^*=\{(tx,t)\mid x\in C^*_0,\,t>0\}$ with
$$
C^*_0:=\{x\in\mathbb{R}^n\mid x\cdot y+1>0,\,\forall y\in\overline{C}_0\}.
$$ 
Note that $C_0^*$ is also a bounded convex domain containing the origin. It is actually the dual of $C_0$ in the sense of Sasaki \cite{sasaki}. The opposite domain
$$
\Omega:=-C_0^*
$$
is important for us due to the following Lemma, which parametrizes the spaces of $C$-null and $C$-spacelike half-spaces in $\R^{n+1}$ by $\pa\Omega\times\R$ and $\Omega\times\R$, respectively:

\begin{lemma}\label{lemma_halfspace}
 Let $C$ and $\Omega$ be as above. Then $C$-null (resp. $C$-spacelike) half-spaces in $\R^{n+1}$ are exactly strict epigraphs of affine functions on $\R^n$ of the form $y\mapsto x\cdot y-\xi$ with $(x,\xi)\in\pa\Omega\times\R$ (resp. $(x,\xi)\in\Omega\times\R$).
\end{lemma}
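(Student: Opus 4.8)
The plan is to unwind the definitions and reduce the statement to a computation with supporting half-spaces of the cone $C$ in the chosen coordinates. First I would recall that a half-space $H\in\Ha_\V$ has boundary hyperplane passing through the origin, so it is cut out by a linear form $\alpha\in\V^*$; writing $\alpha=(y_0,\eta_0)$ we can take $H=\{(y,\eta)\mid \eta_0\,\eta > -y_0\cdot y\}$ (after possibly flipping the sign of $\alpha$ so that $\overline{H}\supset\overline{C}$). Since $C$ projects onto the section $C_0\times\{1\}$ and $C_0$ is bounded with the origin in its interior, the form $\alpha$ cannot vanish on the ray $\{(0,t)\mid t>0\}$ — otherwise $\overline{H}$ would contain a line and could not contain the proper cone $C$ as a supporting half-space — so $\eta_0\neq 0$ and we may normalize $\eta_0=-1$. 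Thus every half-space $H$ with $C\subset H$ is the \emph{strict epigraph} of the affine function $y\mapsto x\cdot y-\xi$, where I set $x:=y_0$ and $\xi$ records the vertical translation (which is $0$ for $H\in\Ha_\V$ and arbitrary real for its translates in $\Ha_\A$). Conversely, any such strict epigraph contains a vertical upper half-line, hence is an upper (non-vertical) half-space, and it contains $C$ precisely when $x\cdot y-\xi\le$ (the vertical coordinate) on $\overline C$.

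The second and central step is to identify, among these half-spaces, which are supporting half-spaces of $C$ and to split them into the $C$-null ($\Ha^0$) and $C$-spacelike ($\Ha^1$) classes via the position $x\in\pa\Omega$ versus $x\in\Omega$. Because $C$ is a cone and $H$ is the strict epigraph of $y\mapsto x\cdot y-\xi$, the inclusion $C\subset H$ at the level of the section says $x\cdot y - \xi < 1$ is \emph{not} required to be strict on the closure; the precise condition "$C\subset H$ and $\pa H$ touches $\pa C$" translates, after dividing by the vertical coordinate $t>0$, to: $x\cdot y + 1 \ge 0$ for all $y\in \overline{C}_0$, with equality for some $y\in\partial C_0$ (for a supporting half-space through a nonzero boundary point) — and $\xi=0$ or, for translates, the vertical offset is unconstrained. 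Here I would invoke the definition of $C_0^*$ and of $\Omega=-C_0^*$ directly: $x\cdot y+1\ge 0$ for all $y\in\overline C_0$ means $-x\in \overline{C_0^*}$, i.e. $x\in\overline\Omega$; and the hyperplane $\{x\cdot y + 1 = 0\}$ meeting $\partial C_0$ (rather than being disjoint from $\overline C_0$) is exactly the condition that $x\in\partial\Omega$ rather than $x\in\Omega$. This is the place where I expect to have to be careful about the open/closed distinction between $C_0^*$ and its closure and about the origin lying in the interior of everything, so that $\overline{C_0^*}$ and $C_0^*$ differ only on the boundary and the dichotomy ``supporting at a nonzero point'' $\leftrightarrow$ ``$x\in\partial\Omega$'' is clean.

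Putting the two steps together: a half-space in $\Ha_\A^1(C)$ is the strict epigraph of $y\mapsto x\cdot y-\xi$ with $x\in\Omega$ and $\xi\in\R$ arbitrary (its boundary hyperplane is transversal to the ray directions of $\partial C$, hence ``$C$-spacelike''), while a half-space in $\Ha_\A^0(C)$ is the strict epigraph of such a function with $x\in\partial\Omega$ (its boundary contains a ray of $\partial C$, hence ``$C$-null''); the case $\xi$ arbitrary in both comes from passing from $\Ha_\V$ to $\Ha_\A$ via the projection $\Ha_\A\to\Ha_\V$ whose fibers are the vertical translates. I would also remark that the half-spaces with $x\notin\overline\Omega$ do not contain $C$ at all, and those with $x\in\overline\Omega$ but whose boundary does not meet $\overline C$ are not supporting, which accounts for the ``exactly'' in the statement.

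\textbf{Main obstacle.} The only real subtlety is the bookkeeping of strictness: making sure that ``strict epigraph $\supset C$'' corresponds to the \emph{non-strict} inequality $x\cdot y+1\ge 0$ on $\overline C_0$ (because $C$ is open but we want a supporting half-space, whose boundary must touch $\partial C$), and that the $C$-null versus $C$-spacelike split is exactly detected by $x\in\partial\Omega$ versus $x\in\Omega$ — equivalently by whether the supporting hyperplane $\{x\cdot y+1=0\}$ of $C_0$ passes through $\partial C_0$ or is ``vertical at infinity''. Once this is pinned down, the proof is a direct translation through the definitions of $C_0^*$ and $\Omega$.
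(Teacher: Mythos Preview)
Your approach is correct and is precisely the elementary unwinding the paper has in mind; in fact the paper omits the proof entirely, writing ``The proof of the lemma is elementary and we omit it.'' So there is nothing to compare against beyond the fact that your sketch is the natural (and only reasonable) argument.

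Two small slips to clean up. First, your justification for $\eta_0\neq 0$ (``otherwise $\overline H$ would contain a line'') is not the right reason, since every closed half-space contains lines. The correct observation is that $0\in C_0$, so the vertical ray $\{(0,t)\mid t>0\}$ lies in $C$; if $\eta_0=0$ the boundary hyperplane contains this ray and the open half-space $H$ cannot contain $C$. Second, with your convention $H=\{(y,\eta)\mid y_0\cdot y+\eta_0\eta>0\}$ and $C\subset H$, plugging in $(0,1)\in C$ gives $\eta_0>0$, so the normalization should be $\eta_0=+1$, not $-1$; then $H=\{\eta> -y_0\cdot y\}$ and setting $x:=-y_0$ gives $H=\sepi{y\mapsto x\cdot y}$. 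The rest of your computation (that $C\subset H$ iff $x\cdot y\le 1$ on $\overline{C_0}$ iff $x\in\overline\Omega$, and that $\pa H$ meets $\pa C\setminus\{0\}$ iff equality is attained iff $x\in\pa\Omega$) is exactly right.
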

Here, the \emph{strict} epigraph of a function $u:\R^n\rightarrow\Rpm$ is the set
$$
\sepi{u}:=\{(x,\xi)\in\R^{n+1}\mid \xi>u(x)\}=\epi{u}\setminus\gra{u}.
$$
The proof of the lemma is elementary and we omit it. We remark that $\Omega$ is the section of the opposite dual cone $-C^*$ by the affine hyperplane $\{(x,-1)\mid x\in\R^n\}$. In particular, $\Omega$ can be identified projectively with the convex domain $\mathbb{P}(C^*)$ in $\mathbb{RP}^n$.

As mentioned, a main result of this section, Theorem \ref{thm_graph} below, represents $C$-regular domains and complete $C$-convex hypersurfaces as strict epigraphs and graphs of the Legendre transforms of certain classes of convex functions. Roughly speaking, the function tells us which $C$-null/spacelike half-spaces are used to cut out a given domain/hypersurface. 

The functions corresponding to $C$-convex hypersurfaces are as follows:
\begin{itemize}
    \item Let $\S(\Omega)$ denote the space of all $u\in\LC(\R^n)$ such that $u$ does not admit subgradients at any point outside of $\Omega$ (see Definition \ref{def_subdiff}). Namely,
    $$
    \S(\Omega):=\{u\in\LC(\R^n)\mid \dom{\D u}\subset\Omega\}.
    $$ 
	\item Let $\S_0(\Omega)$ denote the space of all $u\in\LC(\R^n)$ satisfying:
	\begin{itemize}
	\item $U:=\interior\dom{u}$ is nonempty and contained in $\Omega$;
	\item $u$ is smooth and locally strongly convex (see Section \ref{subsec_intrinsic}) in $U$;
	\item $u$ has infinite inner derivatives at every boundary point of $U$.
\end{itemize} 
\end{itemize}
By Proposition \ref{prop_derivative}, the last condition is equivalent to the gradient blowup condition $\lim_{x\to\pa U}|\D u(x)|\to+\infty$, or the condition that $u$ does not admit subgradients at any point of $\pa U$. The latter implies that $\S_0(\Omega)$ is a subset of $\S(\Omega)$. Also note that the effective domain of any $u\in\S(\Omega)$ is contained in $\overline{\Omega}$ because we have $\dom{u}\subset\overline{\dom{\D u}}$ by  Lemma \ref{lemma_subdiff}, so $\S(\Omega)$ can be viewed as a space of $\Rp$-valued functions on $\overline{\Omega}$. 

\subsection{Characterization of $C$-regular domains and $C$-convex hypersurfaces}
Recall from Section \ref{sec_convex_prel} that $\LC(\pa\Omega)$ is the space of lower semicontinuous functions $\phi:\pa\Omega\rightarrow\Rp$ convex on line segments, $\env{\phi}$ denotes the convex envelope of $\phi$ and the asterisk stands for Legendre transformation.
\begin{theorem}\label{thm_graph}
Given $C$ and $\Omega$ as above, the following statements hold.
\begin{enumerate}
	\item\label{item_thmgraph1} The assignment 
	$\phi\mapsto D=\sepi{\env{\phi}^*}$
	gives a bijection from $\LC(\pa\Omega)$ to the space of nonempty $C$-regular domains in $\R^{n+1}$. Moreover, $D$ is proper if and only if $\chull{\dom{\phi}}$ has nonempty interior. 
	\item\label{item_thmgraph2} 
The assignment $u\mapsto\Sigma=\gra{u^*}$ gives a bijection from $\S(\Omega)$ to the space of complete $C$-convex hypersurfaces in $\R^{n+1}$. Moreover, $\Sigma$ is smooth and locally strongly convex if and only if $u\in \S_0(\Omega)$.	
\item\label{item_thmgraph3} 
Let $u\in\S(\Omega)$ and put $\phi:=u|_{\pa\Omega}$. Then $\sepi{\env{\phi}^*}$ is the $C$-regular domain generated by the $C$-convex hypersurface $\gra{u^*}$. 
\end{enumerate}
\end{theorem}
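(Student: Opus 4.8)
The plan is to translate every object into the language of convex functions on $\R^n$ and then argue entirely with epigraphs, subgradients and Legendre transforms. The dictionary is supplied by Lemma~\ref{lemma_halfspace}: a $C$-null (resp.\ $C$-spacelike) half-space is precisely the strict epigraph $\sepi{a}$ of an affine function $a(y)=x\cdot y-\xi$ whose linear part $x$ lies in $\pa\Omega$ (resp.\ in $\Omega$), with $\overline{\sepi{a}}=\epi{a}$. Hence intersecting closed half-spaces corresponds to taking pointwise suprema of such affine functions, i.e.\ to computing Legendre transforms using only slopes in $\overline\Omega$, and Lemma~\ref{lemma_legendre2} will be used repeatedly in the form $\env{\phi}^*(y)=\sup_{x\in\dom{\phi}}(x\cdot y-\phi(x))$.

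For Part~\eqref{item_thmgraph1}: given $\phi\in\LC(\pa\Omega)$, the formula above exhibits $\epi{\env{\phi}^*}$ as an intersection of closed $C$-null half-spaces, so $\sepi{\env{\phi}^*}$ is a $C$-regular domain; when $\phi\not\equiv+\infty$, $\dom{\phi}$ is a nonempty bounded set and $\phi$ is bounded below on the compact set $\pa\Omega$, so $\env{\phi}^*$ is finite and continuous and $\sepi{\env{\phi}^*}=\interior\epi{\env{\phi}^*}$ (the case $\phi\equiv+\infty$ gives the whole space). Conversely, a nonempty $C$-regular domain $D$ is ``self-generated'': since $\overline D$ has nonempty interior it equals the intersection of \emph{all} closed $C$-null half-spaces containing it, and for each slope $x\in\pa\Omega$ the smallest of these is $\epi{(y\mapsto x\cdot y-\phi(x))}$ with $\phi(x):=\sup_{(y,\eta)\in\overline D}(x\cdot y-\eta)$; one checks $\phi\in\LC(\pa\Omega)$ via Corollary~\ref{coro_adm} (it is a supremum of affine functions) and that $\epi{\env{\phi}^*}=\overline D$. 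Injectivity follows because $\sepi{\env{\phi_1}^*}=\sepi{\env{\phi_2}^*}$ forces $\env{\phi_1}^*=\env{\phi_2}^*$, hence $\env{\phi_1}=\env{\phi_1}^{**}=\env{\phi_2}^{**}=\env{\phi_2}$ by Theorem~\ref{thm_involution} and Corollary~\ref{coro_supremum}, hence $\phi_1=\phi_2$ by Lemma~\ref{lemma_adm}. Finally, $D$ is proper iff the recession cone of $\overline D=\epi{\env{\phi}^*}$ is line-free; since the recession function of $\env{\phi}^*$ is the support function of $\dom{\env{\phi}}=\chull{\dom{\phi}}$ (Proposition~\ref{prop_domain}), this epigraph has a line in its recession cone exactly when $\chull{\dom{\phi}}$ is contained in a hyperplane, i.e.\ has empty interior.

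For Part~\eqref{item_thmgraph2}: if $u\in\S(\Omega)$ then $\D u^*(\R^n)=\dom{\D u}\subset\Omega$ is bounded (Corollary~\ref{coro_subdiff}), so $u^*\in\LC(\R^n)$ has bounded subgradients and is therefore finite and Lipschitz on all of $\R^n$; thus $\gra{u^*}=\pa\sepi{u^*}$ is a complete convex hypersurface whose supporting half-spaces are the strict epigraphs of the supporting affine functions of $u^*$, with linear parts in $\D u^*(\R^n)\subset\Omega$, hence all $C$-spacelike, so $\gra{u^*}$ is $C$-convex. Conversely, a complete $C$-convex $\Sigma=\pa W$ has $\overline W$ equal to the intersection of its closed supporting half-spaces, all of the form $\epi{a_i}$ with linear part in $\Omega$, so $\overline W=\epi{g}$ with $g=\sup_i a_i$ finite and continuous, and $u:=g^*\in\LC(\R^n)$ satisfies $u^*=g$ and $\dom{\D u}=\D g(\R^n)\subset\Omega$ (each subgradient of $g$ is the linear part of a supporting affine function, hence of a supporting --- thus $C$-spacelike --- half-space of $W$); so $u\in\S(\Omega)$ and $\Sigma=\gra{u^*}$, and injectivity is again involutivity of the Legendre transform. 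For the last clause the engine is Legendre duality for smooth strongly convex functions: if $u\in\S_0(\Omega)$ then $u$ is smooth and strictly convex on the bounded open set $U=\interior\dom{u}$ and $|\D u|\to\infty$ at $\pa U$ forces $\nabla u:U\to\R^n$ to have open and closed image, hence to be a diffeomorphism onto $\R^n$, so $\nabla u^*=(\nabla u)^{-1}$ and $u^*$ is smooth with positive-definite Hessian on all of $\R^n$; conversely, if $\gra{u^*}$ is smooth and locally strongly convex, then $u=u^{**}$ is smooth and locally strongly convex on $\dom{\D u}=\D u^*(\R^n)=\nabla u^*(\R^n)$, which is open and hence equals $U=\interior\dom{u}$, while Proposition~\ref{prop_derivative} converts the absence of subgradients on $\pa U$ into the infinite-inner-derivative condition --- so $u\in\S_0(\Omega)$.

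For Part~\eqref{item_thmgraph3}: write $\phi=u|_{\pa\Omega}\in\LC(\pa\Omega)$, so $\phi(x)=u(x)$ for $x\in\dom{\phi}$, and recall $u^{**}=u$. Put $a_x(y):=x\cdot y-\phi(x)$. For $x\in\dom{\phi}$ we have $a_x\le u^*$ by definition of the Legendre transform, so $\sepi{a_x}$ is a $C$-null half-space containing $\gra{u^*}$, and $\bigcap_{x\in\dom{\phi}}\epi{a_x}=\epi{\env{\phi}^*}$ by Lemma~\ref{lemma_legendre2}; hence the $C$-regular domain generated by $\gra{u^*}$ is contained in $\sepi{\env{\phi}^*}$. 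For the reverse inclusion, let $\epi{a}$ be any closed $C$-null half-space containing $\gra{u^*}$, with $a(y)=x\cdot y-\xi$ and $x\in\pa\Omega$; then $u^*\ge a$, so $\xi\ge u^{**}(x)=u(x)=\phi(x)$, whence $a\le a_x\le\env{\phi}^*$ and $\epi{a}\supset\epi{\env{\phi}^*}$. Thus every $C$-null half-space containing $\gra{u^*}$ contains $\overline{\sepi{\env{\phi}^*}}$, and taking the interior of the intersection yields $\sepi{\env{\phi}^*}$. I expect the genuinely delicate part of the theorem to be the smoothness equivalence in Part~\eqref{item_thmgraph2} --- especially the implication that the gradient-blowup (infinite inner derivative) condition makes $\nabla u$ surjective, hence a global diffeomorphism --- together with the bookkeeping needed to dispatch degenerate cases cleanly ($\phi\equiv+\infty$, $\dom{u}$ with empty interior, horizontal-hyperplane hypersurfaces); once Lemma~\ref{lemma_halfspace} and the Legendre dictionary are in place, Parts~\eqref{item_thmgraph1} and~\eqref{item_thmgraph3} are essentially formal.
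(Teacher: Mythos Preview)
Your proposal is correct and follows the same overall strategy as the paper: translate half-spaces to affine functions via Lemma~\ref{lemma_halfspace}, then argue with epigraphs and Legendre transforms. Two points of execution differ and are worth noting. For the properness claim in Part~\eqref{item_thmgraph1}, the paper argues that $D$ is improper iff $\env{\phi}^*\le f$ for some $f\in\mathcal{A}_1$, then applies Lemma~\ref{lemma_affineatom} (the Legendre transform swaps $\mathcal{A}_k$ with $\mathcal{A}_{n-k}$) to convert this into $\env{\phi}\ge h$ for some $h\in\mathcal{A}_{n-1}$, i.e.\ $\dom{\env{\phi}}$ lies in a hyperplane; your recession-cone argument is a clean alternative that avoids this lemma. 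For the smoothness equivalence in Part~\eqref{item_thmgraph2}, the paper first invokes Lemma~\ref{lemma_strictlyconvex} (Rockafellar's Theorem~26.3), applied symmetrically to $u$ and $u^*$, to reduce ``$u^*$ is $\C^1$ and strictly convex'' to the three bullet-point conditions on $u$, and only then bootstraps to smooth/strongly convex via the gradient maps being mutual inverses; your direct argument that $\nabla u:U\to\R^n$ is a proper local diffeomorphism (hence a global one) is equally valid but compresses the injectivity and $\C^1$ steps that the paper separates out. In the converse direction your claim that $\nabla u^*(\R^n)=\dom{\D u}$ is open and hence equals $\interior\dom{u}$ is correct, though the paper's route through Lemma~\ref{lemma_strictlyconvex} handles the absence of subgradients on $\pa U$ more systematically.
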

Let us establish some lemmas before giving the proof. We first note that by the following lemma, the Legendre transforms $\env{\phi}^*$ and $u^*$ in the theorem are $\R$-valued on the whole $\R^n$, hence the strictly epigraphs and graphs in question are \emph{entire}.
\begin{lemma}\label{lemma_entire}
Let $u\in\LC(\R^n)$ be such that $\dom{u}$ is bounded and nonempty. Then $\dom{u^*}=\R^n$.
\end{lemma}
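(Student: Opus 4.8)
The plan is to show directly from the definition of the Legendre transform that $u^*(y) < +\infty$ for every $y \in \R^n$, since membership in $\GLC(\R^n)$ already guarantees $u^* \not\equiv -\infty$ (indeed, $u^* \geq x_0 \cdot y - u(x_0)$ for any fixed $x_0 \in \dom{u}$, which is a finite affine function of $y$), so $u^*$ will then be $\R$-valued everywhere and $\dom{u^*} = \R^n$. First I would use the hypothesis that $\dom{u}$ is bounded: there is a constant $R > 0$ with $|x| \leq R$ for all $x \in \dom{u}$. Next I would recall that $u$, being a closed proper convex function, is bounded below by an affine function, i.e.\ there exist $z \in \R^n$ and $\eta \in \R$ with $u(x) \geq x \cdot z - \eta$ for all $x \in \R^n$ (this is exactly the existence of a supporting affine function, which follows from the separation results underlying Theorem \ref{thm_involution}, or simply from $\epi{u}$ being a nonempty closed convex set not containing a vertical line after projecting to $\dom{u}$).

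Then the key estimate is immediate: for any $y \in \R^n$,
\begin{align*}
u^*(y) = \sup_{x \in \dom{u}} \big(x \cdot y - u(x)\big) \leq \sup_{x \in \dom{u}} \big(x \cdot y - x \cdot z + \eta\big) \leq R\,|y - z| + \eta < +\infty,
\end{align*}
using $|x| \leq R$ on $\dom{u}$ and the Cauchy--Schwarz inequality, while the supremum over $x \notin \dom{u}$ contributes $-\infty$ and may be discarded. Combined with the lower bound noted above, this shows $u^*(y) \in \R$ for every $y$, hence $\dom{u^*} = \R^n$.

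I do not expect any serious obstacle here; the only point requiring a word of care is justifying the affine lower bound $u(x) \geq x \cdot z - \eta$, but this is standard for $u \in \LC(\R^n)$ (it is equivalent to $\epi{u^*} \neq \emptyset$, which holds since $u^* \in \GLC(\R^n)$ and $u^* \not\equiv +\infty$ because $u \not\equiv +\infty$). Everything else is the one-line boundedness computation above.
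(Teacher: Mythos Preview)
Your proof is correct and takes a genuinely different, more elementary route than the paper's. The paper argues that for each $y\in\R^n$ the lower semicontinuous function $x\mapsto u(x)-x\cdot y$, being $+\infty$ outside the bounded set $\dom{u}$, attains its minimum at some $x_0$; this exhibits $y$ as a subgradient of $u$ at $x_0$, so $\D u(\R^n)=\R^n$, and then Corollary~\ref{coro_subdiff} gives $\dom{\D u^*}=\R^n$, whence $\dom{u^*}=\R^n$. Your approach bypasses the subgradient machinery entirely with a direct upper bound on $u^*$, trading that for the need to invoke an affine minorant of $u$. One small wobble: your parenthetical justification ``$u^*\not\equiv+\infty$ because $u\not\equiv+\infty$'' is essentially the statement you want, so it reads circularly; the clean justification within the paper's framework is Corollary~\ref{coro_supremum} (since $u\in\LC(\R^n)$ equals its own convex envelope, a supremum of affine minorants, and $u$ takes values in $\Rp$, that family of minorants cannot be empty). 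The paper's route has the mild advantage of yielding the stronger conclusion $\dom{\D u^*}=\R^n$, while yours is shorter and more self-contained once the affine minorant is granted.
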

\begin{proof}
For any $y\in\R^n$, the lower semicontinuous convex function $x\mapsto u(x)-x\cdot y$ is $+\infty$ outside of the bounded set $\dom{u}$, hence attends its minimum at some $x_0\in\dom{u}$. This means $x\mapsto (x-x_0)\cdot y+u(x_0)$ is a supporting affine function of $u$ at $x_0$, hence $y\in\D u(x_0)$. Therefore, we have $\D u(\R^n)=\R^n$ and it follows from Corollary \ref{coro_subdiff} that $\dom{u^*}=\R^n$.
\end{proof}

For the assertion in Part (\ref{item_thmgraph1}) of the theorem about properness of $D$, we need: 
\begin{lemma}\label{lemma_affineatom}
For $k=0,1,\cdots, n$, let $\mathcal{A}_k$ denote the set of all $u\in\LC(\R^n)$ such that $\dom{u}$ is a $k$-dimensional affine subspace of $\R^n$ and $u$ restricts to an affine function on $\dom{u}$. Then the Legendre transformation $u\mapsto u^*$ sends $\mathcal{A}_k$ to $\mathcal{A}_{n-k}$.
\end{lemma}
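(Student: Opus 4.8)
The plan is to compute the Legendre transform directly. Fix $k$ and $u\in\mathcal{A}_k$; I would show $u^*\in\mathcal{A}_{n-k}$, and then, since $u\mapsto u^*$ is an involution on $\LC(\R^n)$ by Theorem~\ref{thm_involution}, applying this inclusion twice yields the claimed bijection $\mathcal{A}_k\to\mathcal{A}_{n-k}$.

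To set up the computation I would write $\dom{u}=x_0+V$ for a point $x_0\in\R^n$ and a $k$-dimensional linear subspace $V\subset\R^n$. Since $u$ restricts to an affine function on this affine subspace, there are a vector $y_0\in\R^n$ (determined only modulo $V^\perp$, which causes no trouble) and a constant $c\in\R$ with $u(x_0+v)=v\cdot y_0+c$ for all $v\in V$. Observe that any such $u$ is automatically convex and lower semicontinuous, since $\epi{u}$ is the intersection of the closed convex set $(x_0+V)\times\R$ with the closed half-space determined by the affine function on $\R^n$ extending $v\mapsto v\cdot y_0+c$; thus membership in $\LC(\R^n)$ imposes nothing beyond $\dom{u}\neq\emptyset$, and the same remark will take care of the lower semicontinuity of $u^*$.

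The core step is the evaluation
\[
u^*(y)=\sup_{x\in\R^n}\big(x\cdot y-u(x)\big)=x_0\cdot y-c+\sup_{v\in V}\,v\cdot(y-y_0),
\]
in which the supremum over $v\in V$ equals $0$ when $y-y_0\in V^\perp$ and $+\infty$ otherwise: in the latter case one picks $v\in V$ with $v\cdot(y-y_0)>0$ and lets $v$ run to infinity along that ray. Hence $\dom{u^*}=y_0+V^\perp$, an affine subspace of dimension $n-k$, on which $u^*(y)=x_0\cdot y-c$ is the restriction of an affine function; being finite there, $u^*$ is neither constantly $+\infty$ nor constantly $-\infty$, so $u^*\in\LC(\R^n)$. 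This is precisely the statement $u^*\in\mathcal{A}_{n-k}$, and combined with the involutivity remark above the proof is complete. I do not anticipate any genuine obstacle here: the argument reduces to the single supremum computation displayed above together with the identity $\dim V^\perp=n-\dim V$, the only points needing a moment's care being the harmless indeterminacy of $y_0$ modulo $V^\perp$ and the verification (dispatched above) that the elements of $\mathcal{A}_k$ really do lie in $\LC(\R^n)$.
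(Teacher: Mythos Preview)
Your proof is correct. The computation $u^*(y)=x_0\cdot y-c+\sup_{v\in V}v\cdot(y-y_0)$ is accurate, and the identification of $\dom{u^*}$ with $y_0+V^\perp$ follows immediately.

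Your route differs from the paper's. The paper first observes that the operations ``precompose with an affine transformation'' and ``add an affine function'' are intertwined by Legendre transformation with operations of the same type, and uses this to reduce every $u\in\mathcal{A}_k$ to the normal form $u_k$ that vanishes on $\{x^{k+1}=\cdots=x^n=0\}$ and is $+\infty$ elsewhere; it then computes $u_k^*$ explicitly. Your approach bypasses this reduction and handles the general $u$ in one shot, at the cost of carrying the data $(x_0,V,y_0,c)$ through the computation. Your version is arguably more self-contained, since the paper's invariance principle is left as something ``one can check''; the paper's version has the minor advantage that the final computation is for a function with no free parameters. Both are short and elementary, and neither requires anything beyond the definition of $u^*$.
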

Note that a $0$-dimensional affine subspace is by convention a single point.
\begin{proof}
One can check that if $\tilde{u}\in\LC(\R^n)$ is transformed from $u\in\LC(\R^n)$ by applying an affine transformation of $\R^n$ and adding an affine function (namely,
$\tilde{u}(x)=u(g(x))+a(x)$
for an affine transformation $g:\R^n\to\R^n$ and an affine function $a:\R^n\to\R$), then the Legendre transforms $\tilde{u}^*$ and $u^*$ are related to each other by a transformation of the same type. Therefore, the required assertion follows from the fact that every function in $\mathcal{A}_k$ can be transformed into 
$$
u_k(x):=
\begin{cases}
0 &\text{ if }x^{k+1}=\cdots=x^n=0\\
+\infty&\text{ otherwise }
\end{cases}
~,
$$
and that the Legendre transform $u_k^*$ belongs to $\mathcal{A}_{n-k}$ because it has the expression
$$
u_k^*(x)=
\begin{cases}
0 &\text{ if }x^1=\cdots=x^k=0\\
+\infty&\text{ otherwise }
\end{cases}
.
$$
\end{proof}

Finally, the last statement in Part (\ref{item_thmgraph2}) of the theorem relies on the following duality between continuous differentiability of a convex function and  strictly convexity of its Legendre transform: 
\begin{lemma}[{\cite[Theorem 26.3]{rockafellar}}]\label{lemma_strictlyconvex}
Let $u\in\LC(\R^n)$. Then the Legendre transform $u^*$ is strictly convex on every line segment in $\dom{\D u^*}$ if and only if $u$ satisfies:
	\begin{itemize}
		\item $\dom{u}$ has nonempty interior;
		\item $u$ is $\C^1$ in $\interior\dom{u}$;
		\item $u$ does not admit subgradients at any boundary point of $\dom{u}$.
	\end{itemize} 
\end{lemma}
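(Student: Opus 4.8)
The plan is to deduce the lemma from three facts already available: the involution $u^{**}=u$ on $\LC(\R^n)$ (Theorem \ref{thm_involution}), the resulting fact that the set-valued maps $\D u$ and $\D u^*$ are mutually inverse (Corollary \ref{coro_subdiff}), and the criterion that $\D u(x)$ is a singleton exactly when $x\in\interior\dom{u}$ and $u$ is differentiable there (Lemma \ref{lemma_subdiff}\,(2)). The combinatorial core is a sublemma I would establish first by directly manipulating subgradient inequalities: a function $v\in\LC(\R^n)$ is affine on a nondegenerate segment $[y_0,y_1]$ if and only if $v$ admits a \emph{common} subgradient $x$ at $y_0$ and at $y_1$, and in that case $x\in\D v(y_t)$ for every $y_t=(1-t)y_0+ty_1$, so that the whole segment lies in $\dom{\D v}$. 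With this in hand, the ``only if'' direction is short: if $u^*$ were affine on a nondegenerate segment contained in $\dom{\D u^*}$, the sublemma produces $x\in\D u^*(y_0)\cap\D u^*(y_1)$ with $y_0\neq y_1$, hence by Corollary \ref{coro_subdiff} $\{y_0,y_1\}\subset\D u(x)$; but $\D u(x)\neq\emptyset$ together with the third hypothesis forces $x\in\interior\dom{u}$, and differentiability of $u$ there makes $\D u(x)$ a single point by Lemma \ref{lemma_subdiff}\,(2) --- a contradiction.

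For the ``if'' direction I would first rule out the degenerate case $\interior\dom{u}=\emptyset$: if $\dom{u}$ is contained in a proper affine subspace $x_0+V$ with $x_0$ a relative interior point, then any subgradient of $u$ at $x_0$ remains a subgradient after adding an arbitrary vector of $V^{\perp}$, so $\D u(x_0)$ contains a whole line; moreover $u^*(y+tw)=u^*(y)+t\,(x_0\!\cdot\! w)$ for $w\in V^{\perp}$, so $u^*$ is affine along a segment lying in $\dom{\D u^*}=\D u(\R^n)$, contradicting essential strict convexity. Thus $\dom{u}$ has nonempty interior, whence $\interior\dom{u}=\ri\dom{u}\subset\dom{\D u}$ by Lemma \ref{lemma_subdiff}\,(1). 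Next, for any $x$ with $\D u(x)\neq\emptyset$: were $\D u(x)$ to contain distinct points $y_0,y_1$, Corollary \ref{coro_subdiff} would make $x$ a common subgradient of $u^*$ at $y_0$ and $y_1$, so the sublemma makes $u^*$ affine on $[y_0,y_1]\subset\dom{\D u^*}$, again a contradiction. Hence $\D u(x)$ is always a singleton, which by Lemma \ref{lemma_subdiff}\,(2) says $\dom{\D u}\subset\interior\dom{u}$ (the third condition) and $u$ is differentiable at every such $x$; combined with $\interior\dom{u}\subset\dom{\D u}$ this gives differentiability throughout $\interior\dom{u}$, which upgrades to $\C^1$ by the standard fact that a convex function differentiable on an open set has continuous gradient there.

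The step I expect to require the most care is precisely the last clause of the sublemma --- that the \emph{entire} segment $[y_0,y_1]$, not just its endpoints, lies in $\dom{\D v}$ once a common subgradient exists at the endpoints. This is not cosmetic: the hypothesis ``$u^*$ is strictly convex on every line segment in $\dom{\D u^*}$'' only bites on segments contained in $\dom{\D u^*}$, so one must genuinely exhibit $x$ as a subgradient at each interior point, which is where the explicit inequality chasing in the sublemma cannot be skipped. Everything else is a routine transit through the correspondences set up in Sections \ref{subsec_subdiff} and \ref{subsec_legendre1}. (The statement is of course Rockafellar's Theorem~26.3, and the above is merely a reorganization of his argument around the tools recorded here.)
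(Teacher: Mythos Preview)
The paper does not prove this lemma; it is merely quoted from \cite[Theorem 26.3]{rockafellar} and used as a black box in the proof of Theorem~\ref{thm_graph}\,(\ref{item_thmgraph2}). So there is no argument in the paper to compare against.

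Your proposal is a correct self-contained derivation from the tools of Sections~\ref{subsec_subdiff}--\ref{subsec_legendre1}. One minor point of precision: the forward direction of your sublemma (``$v$ affine on $[y_0,y_1]$ $\Rightarrow$ common subgradient at the endpoints'') is not valid in general without an extra hypothesis such as $[y_0,y_1]\subset\dom{\D v}$. But in your only use of that direction (the ``only if'' part of the lemma) this hypothesis is precisely what you assume, and the clean way to extract the common subgradient is to pick any $x\in\D u^*(y_t)$ at an interior point $y_t$ of the segment and check, via the two subgradient inequalities at $y_0$ and $y_1$ combined with affinity, that $(y_1-y_0)\cdot x$ is forced to equal $u^*(y_1)-u^*(y_0)$, whence $x\in\D u^*(y_0)\cap\D u^*(y_1)$. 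The reverse direction of the sublemma (common subgradient $\Rightarrow$ affine and whole segment in $\dom{\D v}$), which you use in the ``if'' part, is unconditionally true and is indeed the place where the ``entire segment'' clause matters, exactly as you flag. The upgrade from pointwise differentiability to $\C^1$ on an open set is the standard fact for convex functions (e.g.\ \cite[Theorem~25.5]{rockafellar}).
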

Note that since the set $\dom{\D u^*}$ is not necessarily convex (see \cite[P.253]{rockafellar}), it makes sense to talk about strict convexity of $u^*$ on line segments in $\dom{\D u^*}$ rather than on the entire set.

\begin{proof}[Proof of Theorem \ref{thm_graph}]
(\ref{item_thmgraph1})	
In view of Lemma \ref{lemma_halfspace}, the definition for a subset $D\subset\R^{n+1}$ to be a $C$-regular domain (Definition \ref{def_cregular}) is equivalent to
$$
D=\interior\Big(\bigcap_{(x,\xi)\in E}\epi{y\mapsto x\cdot y-\xi}\Big),\ E\subset\pa\Omega\times\R.
$$
Since the intersection of epigraphs of a family of functions equals the epigraph of their pointwise supremum, the above equality is equivalent to
$$
D=\interior\epi{w},\,\text{ where } w(y)=\sup_{(x,\xi)\in E}(x\cdot y-\xi),\ E\subset\pa\Omega\times\R.
$$
By Lemma \ref{lemma_legendre2}, we can write $w=u^*$ where $u\in\LC(\R^n)$ is the convex envelop of $E$.
If $u\not\equiv\pm\infty$, Lemma \ref{lemma_entire} implies that $w$ is continuous on $\R^n$, hence $D=\interior\epi{w}$ coincides with the strict epigraph $\sepi{w}$. 
In the particular cases $u=-\infty$ and $u=+\infty$ we have $\epi{w}=\sepi{w}=\emptyset$ and $\epi{w}=\sepi{w}=\R^{n+1}$, respectively, hence $\interior\epi{w}=\sepi{w}$ still holds. Therefore, we conclude that $C$-regular domains in $\R^{n+1}$ are exactly subsets of the form $\sepi{u^*}$ with $u\in\LC(\R^n)$ the convex envelope of some subset of $\pa\Omega\times\R$. But one can check from the definitions that such a $u$ coincides with the convex envelope of the restriction $u|_{\pa\Omega}$, which is contained in $\LC(\pa\Omega)$ with the only exception $u\equiv-\infty$, corresponding to $D=\emptyset$. It follows that nonempty $C$-regular domains are exactly subsets of the form $\sepi{\env{\phi}^*}$ with $\phi\in\LC(\pa\Omega)$. This proves the first statement.

To prove the second statement, we note that $D=\sepi{\env{\phi}^*}$ is improper (\ie contains a line in $\R^{n+1}$) if and only if
$$
\env{\phi}^*\leq f \,\text{ for some }f\in\mathcal{A}_1,
$$  
where $\mathcal{A}_k$ is defined in Lemma \ref{lemma_affineatom}. Using the definition and involutive property of Legendre transformation, it can be shown that $u_1\leq u_2\Leftrightarrow u_1^*\geq u_2^*$  for $u_1,u_2\in\LC(\R^n)$. Therefore, by virtue of Lemma \ref{lemma_affineatom}, the above condition is equivalent to
$$
\env{\phi}\geq h\,\text{ for some $h\in\mathcal{A}_{n-1}$}.
$$
This is in turn equivalent to the condition that $\dom{\env{\phi}}$ has empty interior, while Proposition \ref{prop_domain} asserts $\dom{\env{\phi}}=\chull{\dom{\phi}}$. The required statement follows.

(\ref{item_thmgraph2}) $\Sigma\subset\R^{n+1}$ being a complete $C$-convex hypersurface means $\Sigma=\pa G$ for a convex domain $G\subset\R^{n+1}$ whose supporting half-spaces are $C$-spacelike. Since the boundary of a $C$-spacelike half-space cannot be vertical by our coordinate assumptions, we infer that $G=\gra{w}$ for some convex function $w:\R^n\to\R$. The supporting half-spaces of $G$ are exactly the strict epigraphs of supporting affine functions of $w$. By Lemma \ref{lemma_halfspace}, all supporting half-space are $C$-spacelike if and only if all supporting affine function of $w$ has linear part contained in $\Omega$, which just means  $\D w(\R^n)\subset\Omega$ (see Definition \ref{def_subdiff}).
Therefore, complete $C$-convex hypersurfaces are exactly graphs of convex functions $w:\R^n\to\R$ with $\D w(\R^n)\subset\Omega$.  By Corollary \ref{coro_subdiff} and Lemma \ref{lemma_entire}, such $w$'s are exactly the Legendre transforms of functions in $\S(\Omega)$. The first statement follows.

For the second statement, we pick $u\in\S(\Omega)$ and need to show that the Legendre transform $u^*$ is smooth and locally strongly convex if and only if $u\in\S_0(\Omega)$. To this end, we first apply Lemma \ref{lemma_strictlyconvex} (to both $u$ and $u^*$) and conclude that $u^*$ is $\C^1$ and strictly convex if and only if 
\begin{itemize}
	\item $U:=\interior\dom{u}$ is nonempty and contained in $\Omega$;
	\item $u$ is $\C^1$ and strictly convex in $U$;
	\item $u$ does not admit subgradients at any boundary point of $U$.
\end{itemize}
When these conditions hold, $u$ is smooth and locally strongly convex in $U$ if and only if its gradient map $x\mapsto\D u(x)$ is smooth  in $U$ without critical points, and similarly for smoothness and locally strongly convexity of $u^*$ on $\R^n$. Therefore, the second statement follows from the fact that the gradient maps of $u$ and $u^*$ are inverse to each other (see Corollary \ref{coro_subdiff}).

(\ref{item_thmgraph3}) 
	Let $D=\sepi{\env{\psi}^*}$ (where $\psi\in\LC(\pa\Omega)$) be the $C$-regular domain generated by $\Sigma=\gra{u^*}$. In view of Lemma \ref{lemma_halfspace}, the condition that $D$ generates $\Sigma$ means $\env{\psi}^*$ is the pointwise supremum of all affine functions $y\mapsto x\cdot y-\xi$ majorized by $u^*$ with $(x,\xi)\in\pa\Omega\times\R$. 
	
	On the other hand, the epigraph $\epi{u}$ consists of all $(x,\xi)\in\R^{n+1}$ such that $y\mapsto x\cdot y-\xi$ is majorized by $u^*$ 	(see the proof of Theorem \ref{thm_involution}). Therefore, $\env{\psi}^*$ is actually the pointwise supremum of $y\mapsto x\cdot y-\xi$ for $(x,\xi)$ running over $\epi{u|_{\pa\Omega}}=\epi{\phi}$, hence Lemma \ref{lemma_legendre2} implies $\env{\psi}=\env{\phi}$. Therefore, we have $D=\sepi{\env{\phi}^*}$ and the proof is completed.
\end{proof}

Given a $C$-regular domain $D=\sepi{\env{\phi}^*}$, the following proposition essentially says that $D$ admits $C$-null supporting half-spaces at any boundary point $(y,\env{\phi}^*(y))$, and every supporting half-space at that point is obtained from the $C$-null ones:
\begin{proposition}\label{prop_supporting}
		Let $\Omega\subset\R^n$ be a bounded convex domain and let $\phi\in\LC(\pa\Omega)$. Then for every $y\in\R^n$ there exists a closed subset $E_y$ of $\pa\Omega$ such that the set $\D\env{\phi}^*(y)$ of subgradients of $\env{\phi}^*$ at $y$ is the convex hull of $E_y$ in $\R^n$. 
\end{proposition}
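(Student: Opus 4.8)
The plan is to recognize $\D\env{\phi}^*(y)$ as the \emph{contact set} of $\env{\phi}$ with a single affine function, and then to quote Lemma~\ref{lemma_pleated}. First I would dispose of the degenerate case: if $\dom{\phi}=\emptyset$ then $\env{\phi}\equiv+\infty$, so $\env{\phi}^*\equiv-\infty$ has no subgradient anywhere and $E_y=\emptyset$ does the job; so assume $\dom{\phi}\neq\emptyset$. Then $\dom{\env{\phi}}=\chull{\dom{\phi}}$ by Proposition~\ref{prop_domain}, which is nonempty and bounded (it lies in $\overline{\Omega}$), so Lemma~\ref{lemma_entire} gives that $w:=\env{\phi}^*$ is $\R$-valued on all of $\R^n$.

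Next, fixing $y\in\R^n$, I would introduce the affine function $a_y(z):=z\cdot y-w(y)$. Applying the Fenchel--Young relation (Proposition~\ref{prop_young}) to $u=\env{\phi}\in\LC(\R^n)$ shows that $\env{\phi}\geq a_y$ everywhere and that, for $x\in\R^n$, one has $x\in\D\env{\phi}^*(y)=\D w(y)$ if and only if $\env{\phi}(x)=a_y(x)$. Thus $\D w(y)$ is precisely the contact set $\{x\in\R^n\mid \env{\phi}(x)=a_y(x)\}$.

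To finish, since $\env{\phi}|_{\pa\Omega}=\phi$ (Lemma~\ref{lemma_adm}), the inequality $\env{\phi}\geq a_y$ restricts to $a_y|_{\pa\Omega}\leq\phi$, so Lemma~\ref{lemma_pleated} applies with $a=a_y$ and identifies the contact set with the convex hull of $E_y:=\{x\in\pa\Omega\mid \phi(x)=a_y(x)\}$; hence $\D w(y)=\chull{E_y}$. Finally $E_y$ is closed because $\phi-a_y$ is lower semicontinuous and nonnegative on the compact set $\pa\Omega$, making $E_y=(\phi-a_y)^{-1}((-\infty,0])$ a closed subset of $\pa\Omega$, hence of $\R^n$.

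I do not expect a genuine obstacle here: the statement is essentially a corollary of Lemma~\ref{lemma_pleated} once one observes that subgradients of a Legendre transform at $y$ correspond to points where the original convex function touches the affine function of slope $y$ supporting it from below. The only points requiring (mild) care are the degenerate case $\dom{\phi}=\emptyset$ and the finiteness of $w$ needed to make $a_y$ a bona fide affine function --- both handled above.
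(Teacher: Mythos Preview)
Your proof is correct and follows essentially the same route as the paper's: identify $\D\env{\phi}^*(y)$ with the contact set $\{x\mid \env{\phi}(x)=a(x)\}$ for the unique affine function $a$ of linear part $y$ supporting $\env{\phi}$ (via Proposition~\ref{prop_young}), then invoke Lemma~\ref{lemma_pleated}. You are slightly more explicit than the paper in handling the degenerate case $\dom{\phi}=\emptyset$, in citing Lemma~\ref{lemma_entire} to ensure $a_y$ is well-defined, and in verifying that $E_y$ is closed.
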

\begin{proof}
By Proposition \ref{prop_young} and the definition of subgradients, $x\in\D\env{\phi}^*(y)$ if and only if there exists an affine function $a$ on $\R^n$ with linear part $y$ which supports $\env{\phi}$ at $x$. This $a$ must be the same for all $x\in\D\env{\phi}^*(y)$, because there does not exists two different affine functions with the same linear part both supporting a given convex function. Therefore, we can write $\D\env{\phi}^*(y)=\{x\in\R^n \mid \env{\phi}(x)=a(x)\}$. By Lemma \ref{lemma_pleated}, the latter set is the convex hull of a closed subset of $\pa\Omega$.
\end{proof}
This allows us to show the following basic fact: 
\begin{corollary}\label{coro_contained}
Let $\Sigma$ be a $C$-convex hypersurface and let $D$ be the $C$-regular domain generated by $\Sigma$. Then $\Sigma$ is contained in $D$.
\end{corollary}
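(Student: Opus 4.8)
The plan is to translate the statement into the analytic language of Theorem~\ref{thm_graph} and then reduce it to the elementary fact that a convex function lies on or above every one of its supporting affine functions. First I would invoke Theorem~\ref{thm_graph}(\ref{item_thmgraph2}) to write $\Sigma=\gra{u^*}$ for a unique $u\in\S(\Omega)$, where $\Omega=-C_0^*$ is the affine section associated to $C$. Next, setting $\phi:=u|_{\pa\Omega}\in\LC(\pa\Omega)$, Theorem~\ref{thm_graph}(\ref{item_thmgraph3}) identifies the $C$-regular domain generated by $\Sigma$ as $D=\sepi{\env{\phi}^*}$. So the claim ``$\Sigma\subset D$'' becomes: the graph of $u^*$ is contained in the strict epigraph of $\env{\phi}^*$, i.e.
$$
u^*(y)>\env{\phi}^*(y)\quad\text{for every }y\in\R^n.
$$
(Note that by Lemma~\ref{lemma_entire} both $u^*$ and $\env{\phi}^*$ are $\R$-valued on all of $\R^n$, since $\dom{u}$ and $\dom\phi$ are bounded and nonempty — the latter because $\Omega$ is bounded; so the inequality makes sense everywhere.)

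The key observation is that $\phi\leq u$ on $\pa\Omega$ by definition, and moreover $\dom{u}\subset\overline\Omega$ because $u\in\S(\Omega)$ (as remarked after the definition of $\S(\Omega)$: $\dom{u}\subset\overline{\dom{\D u}}\subset\overline\Omega$). Hence for any $x\in\R^n$ we have $\env{\phi}(x)\le\phi(x)$ on $\pa\Omega$ and $\env{\phi}(x)<+\infty=u(x)$ off $\overline\Omega$... more precisely, I would argue $\env{\phi}\le u$ pointwise on $\R^n$: on $\pa\Omega$ this is Lemma~\ref{lemma_adm} together with $\phi\le u$; on $\R^n\setminus\overline\Omega$ the function $u$ is $+\infty$; and on $\Omega$ one uses that $u$ restricted to $\overline\Omega$ dominates the convex envelope of its own boundary values — indeed $u|_{\overline\Omega}\in\LC(\overline\Omega)$ majorizes the affine functions majorizing $\phi$, so $u\ge\env\phi$ there by Corollary~\ref{coro_supremum}. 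From $\env{\phi}\le u$ on $\R^n$ and the order-reversing property of Legendre transformation (used already in the proof of Theorem~\ref{thm_graph}(\ref{item_thmgraph1}): $u_1\le u_2\Rightarrow u_1^*\ge u_2^*$), we get $\env{\phi}^*\ge u^*$ — which is the \emph{wrong} direction. So this naive approach shows $\Sigma$ lies \emph{below} $\gra{\env\phi^*}$, not inside $D$; I need to be more careful about which function generates which object.

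The correct route, which I expect to be the only genuinely delicate point, is to compare generators the right way around. Here $D$ is generated by $\Sigma$, meaning $D$ is the \emph{intersection} of all closed $C$-null half-spaces containing $\Sigma$; equivalently, via Lemma~\ref{lemma_halfspace}, $\env\phi^*$ is the supremum of the affine functions $y\mapsto x\cdot y-\xi$ with $(x,\xi)\in\pa\Omega\times\R$ that are majorized by $u^*$. Since $\gra{u^*}\subset\overline{\epi{u^*}}$ and every such affine function lies below $u^*$, their supremum $\env\phi^*$ also lies below $u^*$: that is, $\env\phi^*\le u^*$ on $\R^n$. This gives $\gra{u^*}\subset\epi{\env\phi^*}$. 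To upgrade to the \emph{strict} epigraph — i.e.\ to rule out $u^*(y)=\env\phi^*(y)$ for some $y$ — I would use that $\Sigma$ is a genuine hypersurface, hence $u^*$ is finite and $\epi{u^*}$ has nonempty interior, together with the fact that $\env\phi^*=u^{**}$ restricted appropriately: if $u^*(y_0)=\env\phi^*(y_0)$ then the supporting affine function of $u^*$ at $y_0$ with linear part in $\dom u\subset\overline\Omega$ would have to touch $u^*$ there and be a $C$-null (rather than $C$-spacelike) supporting function, i.e.\ $(y_0,u^*(y_0))$ would be a boundary point of $D$ with a supporting half-space that is simultaneously $C$-spacelike (since $\Sigma$ is $C$-convex, by Definition~\ref{def_cconvex} all supporting half-spaces of $\gra{u^*}$ are $C$-spacelike, so the linear part lies in $\Omega$, not $\pa\Omega$) — contradiction. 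I would formalize this by noting: at $(y_0,u^*(y_0))\in\Sigma$, every supporting affine function of $u^*$ has linear part in $\Omega$ (the open domain), by $C$-convexity of $\Sigma$; whereas if $(y_0,u^*(y_0))\in\pa D=\gra{\env\phi^*}$, then by Proposition~\ref{prop_supporting} the set $\D\env\phi^*(y_0)$ is the convex hull of a nonempty subset $E_{y_0}\subset\pa\Omega$, so it contains a point of $\pa\Omega$; but $\D\env\phi^*(y_0)\subset\D u^*(y_0)$ whenever $u^*=\env\phi^*$ near... — here one must be slightly careful, and the clean finish is: $x\in E_{y_0}$ supports $\env\phi$ with linear part $y_0$, hence (since $\env\phi\le u$ and they agree enough at $x\in\pa\Omega$, as $\env\phi(x)=\phi(x)=u(x)$ there) $x$ also supports $u$ at $x$ with linear part $y_0$, giving $y_0\in\D u(x)$, i.e.\ by Corollary~\ref{coro_subdiff} $x\in\D u^*(y_0)\subset\overline\Omega$; but $C$-convexity of $\Sigma$ forces $\D u^*(y_0)\subset\Omega$ open, while $x\in\pa\Omega$ — contradiction. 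Therefore the strict inequality $u^*(y)>\env\phi^*(y)$ holds for all $y$, i.e.\ $\Sigma\subset D$. The main obstacle is getting the strictness argument airtight: matching up the touching sets of $u$, $u^*$, $\env\phi$, $\env\phi^*$ and exploiting that $C$-spacelike means the linear part of the supporting affine function lies in the \emph{open} domain $\Omega$ rather than merely in $\overline\Omega$.
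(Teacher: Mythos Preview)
Your core idea --- use Proposition~\ref{prop_supporting} to produce a $C$-null supporting half-space of $D$ at any point of $\Sigma\cap\pa D$, then contradict the $C$-convexity of $\Sigma$ --- is exactly the paper's argument. But the paper executes it in three lines, geometrically: $\Sigma\subset\overline D$ holds by definition of $D$; if $p\in\Sigma\cap\pa D$, then any supporting half-space of $D$ at $p$ is also a supporting half-space of $\Sigma$ at $p$; Proposition~\ref{prop_supporting} supplies a $C$-null one, while all supporting half-spaces of $\Sigma$ are $C$-spacelike. No translation through Theorem~\ref{thm_graph} is needed.

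Your analytic route has two real problems. First, you silently assume $\Sigma$ is \emph{complete} when invoking Theorem~\ref{thm_graph}(\ref{item_thmgraph2}) to write $\Sigma=\gra{u^*}$; the corollary does not assume this. Second, and more concretely, your ``clean finish'' relies on the inequality $\env\phi\le u$, which is false: Corollary~\ref{coro_supremum} gives $u\le\env\phi$ (since $u\in\LC(\R^n)$ with $u|_{\pa\Omega}=\phi$), so from $a\le\env\phi$ you cannot conclude $a\le u$. Ironically, the correct inequality $u\le\env\phi$ immediately gives $u^*\ge\env\phi^*$ by order-reversal --- the \emph{right} direction you thought you were missing in your first attempt. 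The strictness then follows exactly as you began to say before abandoning it: from $u^*\ge\env\phi^*$ with equality at $y_0$, every subgradient of $\env\phi^*$ at $y_0$ is also a subgradient of $u^*$ at $y_0$; Proposition~\ref{prop_supporting} places one such subgradient in $\pa\Omega$, while $\D u^*(y_0)\subset\Omega$ by $C$-convexity. So your approach is salvageable (in the complete case), but as written the ``clean finish'' does not go through, and the detour is caused by a sign error you made twice.
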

\begin{proof}
$\Sigma$ is contained in $\overline{D}$ by definition. If there exists $p\in\Sigma\cap\pa D$, then every supporting half-space of $D$ at $p$ is also a supporting half-space of $\Sigma$. But Proposition \ref{prop_supporting} and Lemma \ref{lemma_halfspace} implies that $D$ has a least one $C$-null supporting half-space at $p$, while supporting half-spaces of $\Sigma$ are all $C$-spacelike, a contradiction. 
\end{proof}

\begin{example}\label{example_graph}
	(1) Every $C$-spacelike hyperplane is a complete $C$-convex hypersurface. It is the graph of Legendre transform of a function which is constantly $+\infty$ except at a single point in $\Omega$. The $C$-regular domain generated by it is the whole $\R^{n+1}$, corresponding to the constant function $+\infty$ on $\pa\Omega$.

(2) The cone $C$ itself is a $C$-regular domain and corresponds to the constant function $0$ on $\pa\Omega$. Complete $C$-convex hypersurfaces generating $C$ are in one-to-one correspondence with continuous convex functions on $\overline{\Omega}$ with zero boundary values (\cf Proposition \ref{prop_boundarycontinuous}) and infinite inner derivatives at every boundary point. 
\end{example}

\subsection{Asymptoticity to the boundary}\label{subsec_asymp}
We give in this section a necessary and sufficient condition for a complete $C$-convex hypersurface $\Sigma$ to be \emph{asymptotic} to the boundary $\pa D$ of a $C$-regular domain $D$, by which we mean the distance from $x\in\Sigma$ to $\pa D$ with respect to an ambient Euclidean metric tends to $0$ as $x$ tends to infinity  in $\Sigma$. This will be deduced from the following general result:
\begin{proposition}\label{prop_asymp}
Let $u_1,u_2\in\LC(\R^n)$ be such that the effective domains $\dom{u_1}$ and $\dom{u_2}$ are bounded subsets of $\R^n$. Then the Legendre transforms $u^*_1$ and $u^*_2$ satisfy
$$
\lim_{|y|\to+\infty}\left(u_1^*(y)-u_2^*(y)\right)=0
$$
if and only if $u_1$ and $u_2$ satisfy the following conditions:
\begin{enumerate}[label=(\roman*)]
\item\label{item_asymp1}
$\dom{u_1}=\dom{u_2}$.
\item\label{item_asymp2}
$u_1=u_2$ on the boundary of $\dom{u_1}$.
\item\label{item_asymp3}
If $U:=\interior{\dom{u_1}}$ is nonempty, then $u_1(x)-u_2(x)\to0$ as $x\in U$ tends to $\pa U$.
\end{enumerate}
\end{proposition}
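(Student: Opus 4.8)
The plan is to work directly with the definition of the Legendre transform and the geometry of epigraphs. Write $\phi_i := u_i|_{\pa\dom{u_i}}$ and recall that, since $\dom{u_i}$ is bounded, Lemma \ref{lemma_entire} gives $u_i^*\in\C^0(\R^n)$. The key observation is that for large $|y|$, the supremum defining $u_i^*(y)=\sup_x(x\cdot y-u_i(x))$ is attained at a point $x$ that is forced towards $\pa\dom{u_i}$ in the direction $y$; more precisely, if $x_y\in\dom{u_i}$ achieves the maximum (it does, by lower semicontinuity and boundedness), then $x_y$ is a subgradient point of $u_i^*$ at $y$, and by Corollary \ref{coro_subdiff} we have $y\in\D u_i(x_y)$. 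As $|y|\to\infty$, $x_y$ must approach $\pa\dom{u_i}$: otherwise $y$ would be a subgradient of $u_i$ at an interior point, contradicting local boundedness of $\D u_i$ on compact subsets of $\interior\dom{u_i}$.

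First I would prove the ``if'' direction. Assume \ref{item_asymp1}, \ref{item_asymp2}, \ref{item_asymp3}. Fix $\eps>0$. For $|y|$ large, pick $x_1=x_1(y)$ achieving $u_1^*(y)$. Two cases: if $U=\interior\dom{u_1}$ is empty, then $\dom{u_1}=\dom{u_2}$ lies in a proper affine subspace and $u_1=u_2$ there by \ref{item_asymp2}, so $u_1^*\equiv u_2^*$ identically and there is nothing to prove. If $U\neq\emptyset$, then since $x_1(y)$ approaches $\pa U$ as $|y|\to\infty$ (by the subgradient argument above), condition \ref{item_asymp3} together with the continuity of $u_1,u_2$ along segments into $\pa U$ (Proposition \ref{prop_extension} and the discussion in Section \ref{subsec_convexfunction}) gives $|u_1(x_1(y))-u_2(x_1(y))|<\eps$ for $|y|$ large; one has to be slightly careful since $x_1(y)$ may be an interior point near the boundary, so I would phrase \ref{item_asymp3} as: the function $x\mapsto u_1(x)-u_2(x)$, extended by its boundary values, is continuous on $\overline{U}$ and vanishes on $\pa U$, hence is uniformly small near $\pa U$. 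Then
$$
u_1^*(y)=x_1(y)\cdot y-u_1(x_1(y))\le x_1(y)\cdot y-u_2(x_1(y))+\eps\le u_2^*(y)+\eps,
$$
and by symmetry $u_2^*(y)\le u_1^*(y)+\eps$, giving $|u_1^*(y)-u_2^*(y)|\le\eps$ for $|y|$ large. (If $\dom\phi_i$ is a single point the argument degenerates but still works.)

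For the ``only if'' direction, assume $u_1^*(y)-u_2^*(y)\to0$. I would deduce \ref{item_asymp1} by contradiction: if, say, there is $x_0\in\dom{u_1}\setminus\overline{\dom{u_2}}$, separate $x_0$ from $\overline{\dom{u_2}}$ by a hyperplane with normal $y_0$ so that $x_0\cdot y_0 > \sup_{\dom{u_2}} x\cdot y_0 =: m$; then for $y=ty_0$ with $t\to+\infty$, $u_1^*(ty_0)\ge ty_0\cdot x_0 - u_1(x_0)$ grows like $t\,(y_0\cdot x_0)$ while $u_2^*(ty_0)\le t m + \sup(-u_2) \le tm + C$ grows at rate $tm < t(y_0\cdot x_0)$, so $u_1^*-u_2^*\to+\infty$, a contradiction; the reverse inclusion is symmetric, and equality of the open convex sets $\dom{u_i}$ follows from equality of the corresponding closed convex hulls (equivalently, equality of support functions, which are exactly the restrictions $u_i^*|_{\text{infinity}}$, i.e. the recession functions). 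Then \ref{item_asymp2} and \ref{item_asymp3} would follow by running the ``if'' computation in reverse: the hypothesis forces, for each boundary direction, the boundary values of $u_1$ and $u_2$ to agree, and then forces the interior difference to vanish near $\pa U$; concretely, taking $y$ in directions normal to each supporting hyperplane of $\dom{u_1}=\dom{u_2}$ and letting $|y|\to\infty$, the maximizers $x_i(y)$ converge into the corresponding face of the boundary and the relation $u_1^*(y)-u_2^*(y)\to0$ transfers to $u_1(x_1(y))-u_2(x_2(y))\to0$, which after a comparison argument yields \ref{item_asymp2}; \ref{item_asymp3} is obtained similarly by approaching $\pa U$ along arbitrary sequences.

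The main obstacle I expect is the ``only if'' direction, specifically the bookkeeping needed to pass from the single scalar limit $u_1^*-u_2^*\to 0$ back to pointwise boundary information: the maximizers $x_i(y)$ for $u_1^*$ and $u_2^*$ at the same $y$ need not coincide, so one must argue that they converge to the same boundary face and control the cross terms $x_1(y)\cdot y - u_2(x_1(y))$ versus $u_2^*(y)$. This requires using that $u_1^*(y) = x_1(y)\cdot y - u_1(x_1(y)) \ge x_1(y)\cdot y - u_2(x_1(y))$ only after one already knows $u_1=u_2$ on the relevant boundary piece, so the logical order matters: first \ref{item_asymp1}, then \ref{item_asymp2} via the recession/support-function comparison restricted to faces, and only then \ref{item_asymp3}. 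Everywhere the boundedness of $\dom{u_i}$ is essential — it guarantees $u_i^*$ is real-valued and continuous everywhere and that maximizers exist — and the continuity-along-segments facts from Section \ref{subsec_convexfunction} are what let us treat the boundary values as genuine limits.
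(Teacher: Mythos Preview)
Your ``if'' direction is essentially the paper's argument: pick a maximizer $x_1(y)\in\D u_1^*(y)$, note that $y\in\D u_1(x_1(y))$ forces $x_1(y)$ out of every compact subset of $U$ as $|y|\to\infty$, and use \ref{item_asymp2}--\ref{item_asymp3} to conclude $u_1(x_1(y))-u_2(x_1(y))\to 0$. That part is fine.

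The ``only if'' direction has a genuine gap. Your separating-hyperplane argument correctly gives $\overline{\dom{u_1}}=\overline{\dom{u_2}}$, but you then claim ``equality of the open convex sets $\dom{u_i}$ follows from equality of the corresponding closed convex hulls.'' The sets $\dom{u_i}$ are \emph{not} open in general: for $u_i\in\LC(\R^n)$ the effective domain is a convex set that may include some, all, or none of its boundary points, and two convex sets with the same closure can differ exactly on the boundary. So \ref{item_asymp1} does not follow from equal closures. The paper's route is the reverse of yours: after proving equal closures it first establishes \ref{item_asymp2} (equality on the common boundary), and only then deduces \ref{item_asymp1}, since once $u_1=u_2$ on $\pa U$ the finiteness loci there must agree.

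Your sketches for \ref{item_asymp2} and \ref{item_asymp3} are also too loose to succeed as written. The paper's key device is Lemma~\ref{lemma_u1u2}, which rephrases $u_1^*(y)-u_2^*(y)>c$ as the existence of an affine function with linear part $y$ that is majorized by $u_2$ yet exceeds $u_1+c$ somewhere; this lets one manufacture contradictions by perturbing a fixed affine minorant of $u_2$ along a chosen direction, sidestepping the ``different maximizers'' problem you flag. For \ref{item_asymp3} there is a further subtlety you do not address: if $u_2(x_i)-u_1(x_i)\ge\eps$ along a sequence $x_i\to x_0\in\pa U$, the subgradients $y_i\in\D u_2(x_i)$ may stay bounded, so your plan of sending $|y|\to\infty$ along those subgradients fails. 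The paper handles this case by adding to the supporting affine function of $u_2$ at $x_i$ a term $\tfrac{1}{|x_0-x_i|}b$, where $b$ is a nontrivial affine function vanishing at $x_0$ and nonpositive on $\overline{U}$, which forces the linear part to infinity while preserving the required inequalities.
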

\begin{remark}\label{remark_asymp}
When $U$ is empty, we have $\dom{u_1}=\pa\dom{u_1}$, so the combination of conditions \ref{item_asymp1} and \ref{item_asymp2} means $u_1=u_2$ on the entire $\R^n$. 
On the other hand, when $U\neq\emptyset$ and  \ref{item_asymp1} holds, Condition \ref{item_asymp3} actually implies \ref{item_asymp2} because for any $x_0\in\pa U$ and $x_1\in U$ we have $u_i(x_0)=\lim_{t\to0^+}u_i(x_0+t(x_1-x_0))$ ($i=1,2$) (see Section \ref{subsec_convexfunction}). 
\end{remark}

Our main tool for the proof of Proposition \ref{prop_asymp} is the following lemma:
\begin{lemma}\label{lemma_u1u2}
Given $u_1,u_2\in\LC(\R^n)$, $y\in\R^n$ and $c\in\R$, we have
\begin{equation}\label{eqn_u1u21}
u_1^*(y)-u_2^*(y)> c
\end{equation}
if and only if there exists an affine function $a$ on $\R^n$ with linear part $y$ (\ie $a(x)=x\cdot y-\eta$ for some $\eta\in\R$) such that
\begin{equation}\label{eqn_u1u22}
\sup_{x\in\R^n}\left(a(x)-u_1(x)\right)> c,\quad a\leq u_2\,\text{ on }\R^n.
\end{equation}
\end{lemma}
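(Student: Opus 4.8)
The plan is to unwind both sides of the claimed equivalence through the definition of the Legendre transform and the characterization of epigraphs of conjugates established in the proof of Theorem \ref{thm_involution}. Recall that for $u\in\LC(\R^n)$ the value $u^*(y)$ is, by \eqref{eqn_defleg}, the supremum over $x$ of $x\cdot y-u(x)$; equivalently, for any affine function $a(x)=x\cdot y-\eta$ with linear part $y$, one has $\sup_{x}(a(x)-u(x))=u^*(y)-\eta$. So the first displayed inequality \eqref{eqn_u1u21} should be rephrased using an auxiliary $\eta$: choosing $\eta$ appropriately will let me compare $u_1^*(y)$ and $u_2^*(y)$ through a single affine function $a$.

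First I would prove the ``if'' direction. Suppose there is an affine function $a(x)=x\cdot y-\eta$ with $\sup_x(a(x)-u_1(x))>c$ and $a\le u_2$ on $\R^n$. From $a\le u_2$ and the interpretation of $\epi{u_2^*}$ in the proof of Theorem \ref{thm_involution} (namely $(y,\eta)\in\epi{u_2^*}$ iff $x\mapsto x\cdot y-\eta$ is majorized by $u_2$), we get $\eta\ge u_2^*(y)$. On the other hand $\sup_x(a(x)-u_1(x))=u_1^*(y)-\eta>c$. Combining, $u_1^*(y)-u_2^*(y)\ge u_1^*(y)-\eta>c$, which is \eqref{eqn_u1u21}.

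For the ``only if'' direction, assume \eqref{eqn_u1u21}, i.e. $u_1^*(y)-u_2^*(y)>c$. Here a small subtlety arises: one must be sure $u_2^*(y)$ is finite (if $u_2^*(y)=+\infty$ then $u_1^*(y)=+\infty$ too and the strict inequality $u_1^*(y)-u_2^*(y)>c$ is meaningless/false under the usual conventions, while if $u_2^*(y)=-\infty$ then $u_2\equiv+\infty$ and no affine $a$ is majorized by $u_2$; but in the intended application $u_2$ has bounded nonempty effective domain so Lemma \ref{lemma_entire} gives $u_2^*$ real-valued, and the statement is only used in that regime — I would simply assume $u_2^*(y)\in\R$, or note that both sides are $+\infty$ otherwise). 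Set $\eta:=u_2^*(y)$ and $a(x):=x\cdot y-\eta$. Then $(y,\eta)\in\epi{u_2^*}$, so by the same characterization $a\le u_2$ on $\R^n$, giving the second condition in \eqref{eqn_u1u22}. Moreover $\sup_x(a(x)-u_1(x))=u_1^*(y)-\eta=u_1^*(y)-u_2^*(y)>c$, which is the first condition in \eqref{eqn_u1u22}. This produces the required $a$ and completes the proof.

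I do not expect any serious obstacle here: the whole argument is a bookkeeping exercise built on the identity $\sup_x(x\cdot y-\eta-u(x))=u^*(y)-\eta$ and the epigraph description $(y,\eta)\in\epi{u^*}\iff (x\mapsto x\cdot y-\eta)\le u$. The only point requiring a word of care is the degenerate case where one of the conjugates takes an infinite value at $y$, which is why the lemma is most cleanly stated and applied when both effective domains are bounded and nonempty so that Lemma \ref{lemma_entire} makes $u_1^*,u_2^*$ finite everywhere; I would dispatch the infinite cases with a one-line remark rather than belabor them.
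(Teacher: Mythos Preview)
Your proof is correct and essentially identical to the paper's: both directions use the identity $\sup_x(a(x)-u(x))=u^*(y)-\eta$, and for the ``only if'' direction you choose exactly the same affine function $a(x)=x\cdot y-u_2^*(y)$ as the paper does. The paper handles the finiteness issue with a one-line remark before the proof (reading \eqref{eqn_u1u21} as $u_1^*(y)>u_2^*(y)+c$, which forces $u_2^*(y)<+\infty$), matching your own treatment.
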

Note that in order to rule out the undefined difference $+\infty-(+\infty)$, one should understand (\ref{eqn_u1u21}) as $u^*_1(y)>u^*_2(y)+c$, which implies $u_2^*(y)<+\infty$.
\begin{proof}
We may assume $u_2^*(y)<+\infty$ because it is implied by both conditions. If (\ref{eqn_u1u22}) holds, one can deduce  (\ref{eqn_u1u21}) through
\begin{align*}
u^*_1(y)-u^*_2(y)&=\sup_{x\in\mathbb{R}^n}(x\cdot y-u_1(x))-\sup_{x\in\mathbb{R}^n}(x\cdot y-u_2(x))\\
&=\sup_{x\in\mathbb{R}^n}(a(x)-u_1(x))-\sup_{x\in\mathbb{R}^n}(a(x)-u_2(x))> c
\end{align*}
Conversely, if (\ref{eqn_u1u21}) holds, one easily checks (\ref{eqn_u1u22}) with $a(x):=x\cdot y-u_2^*(y)$.
\end{proof}

The following property of subgradients is also used in the proof:
\begin{lemma}[{\cite[Theorem 24.7]{rockafellar}}]\label{lemma_subdiffproper}
	Given $u\in\LC(\R^n)$ and a nonempty compact subset $E$ of $U:=\interior\dom{u}$, the set of subgradients $\D u(E):=\bigcup_{x\in E}\D u(x)$ is also nonempty and compact.
\end{lemma}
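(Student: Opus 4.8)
The plan is to prove the three assertions --- that $\D u(E)$ is nonempty, bounded, and closed --- separately, using only the definition of subgradient together with the compactness of $E$. Nonemptiness is immediate: since $E$ is nonempty and contained in $U=\interior\dom{u}$, the interior $U$ is nonempty and hence coincides with $\ri\dom{u}$, so the first statement of Lemma \ref{lemma_subdiff} gives $E\subset\ri\dom{u}\subset\dom{\D u}$; thus $\D u(x)\neq\emptyset$ for every $x\in E$, and in particular $\D u(E)\neq\emptyset$.

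For boundedness I would exploit that $E$ sits inside the open set $U$ on which $u$ is finite and continuous (recall that any $\R$-valued convex function on an open set is continuous, indeed locally Lipschitz). Pick $r>0$ small enough that the closed neighborhood $K:=\{x\in\R^n\mid\dist(x,E)\leq r\}$ is contained in $U$; being a compact subset of $U$, $K$ carries finite bounds $m\leq u\leq M$. Now for $x\in E$ and a nonzero $y\in\D u(x)$, apply the subgradient inequality $u(x')\geq u(x)+(x'-x)\cdot y$ at the point $x':=x+r\,y/|y|\in K$: this yields $M\geq u(x')\geq u(x)+r|y|\geq m+r|y|$, hence $|y|\leq (M-m)/r$. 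Therefore $\D u(E)$ is contained in a ball of radius $(M-m)/r$.

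For closedness, take a sequence $y_k\in\D u(x_k)$ with $x_k\in E$ and $y_k\to y$. By compactness of $E$, after passing to a subsequence one has $x_k\to x_\infty\in E$, and since $u$ is continuous on $U\supset E$ we get $u(x_k)\to u(x_\infty)$. Passing to the limit as $k\to\infty$ in the inequality $u(x')\geq u(x_k)+(x'-x_k)\cdot y_k$, which holds for every $x'\in\R^n$, gives $u(x')\geq u(x_\infty)+(x'-x_\infty)\cdot y$ for all $x'$, that is, $y\in\D u(x_\infty)\subset\D u(E)$. Being closed and bounded, $\D u(E)$ is compact. I do not anticipate any genuine obstacle here: the only point requiring a little care is that $u$ must be finite (hence bounded and continuous) on a full neighborhood of $E$, not merely on $E$ itself, which is exactly why one passes to the slightly enlarged compact set $K\subset U$ before running the bound; everything else is a direct manipulation of the defining inequality of subgradients.
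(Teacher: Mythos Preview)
Your proof is correct. The paper does not actually prove this lemma: it simply cites \cite[Theorem 24.7]{rockafellar} and moves on. Your argument is therefore a self-contained replacement for that citation, and each of the three steps (nonemptiness via Lemma~\ref{lemma_subdiff}, boundedness via the subgradient inequality on a compact enlargement $K\subset U$, closedness by passing to the limit in the defining inequality) is sound.
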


\begin{proof}[Proof of Proposition \ref{prop_asymp}]
To prove the ``if'' part, we pick a sequence  $(y_i)$ in $\R^n$ tending to $\infty$ and need to show that $u^*_1(y_i)-u^*_2(y_i)\to 0$ under the assumptions \ref{item_asymp1}, \ref{item_asymp2} and \ref{item_asymp3}. We may assume $U$ is nonempty, otherwise $u_1$ and $u_2$ are identical by  \ref{item_asymp1} and \ref{item_asymp2} (see Remark \ref{remark_asymp}) and the required conclusion is trivial.

Take a subgradient $x_i\in\D u^*_1(y_i)$ for each $i$. We have $y_i\in\D u_1(x_i)$ by Corollary \ref{coro_subdiff}. It follows that the sequence $(x_i)\subset\dom{\D u_1}\subset\dom{u_1}$ must leave any compact subset of $U$, because otherwise a subsequence of $(y_i)$ would be bounded by Lemma \ref{lemma_subdiffproper}. Therefore, from Conditions \ref{item_asymp2} and \ref{item_asymp3} we get $u_1(x_i)-u_2(x_i)\to0$. Since $u^*_1(y_i)=x_i\cdot y_i-u_1(x_i)$ (see Proposition \ref{prop_young}), we have
\begin{align*}
u^*_1(y_i)-u^*_2(y_i)&=x_i\cdot y_i-u_1(x_i)-\sup_{x\in\mathbb{R}^n}(x\cdot y_i-u_2(x))\\
&\leq \left(x_i\cdot y_i-u_1(x_i)\right)-\left(x_i\cdot y_i-u_2(x_i)\right)=u_2(x_i)-u_1(x_i).
\end{align*}
It follows that $\limsup(u^*_1(y_i)-u^*_2(y_i))\leq\lim(u_2(x_i)-u_1(x_i))=0$. Switching the roles of $u_1$ and $u_2$, we get the required limit  $\lim(u^*_1(y_i)-u^*_2(y_i))=0$.

For the ``only if'' part, we assume $u^*_1(y)-u^*_2(y)\to0$ as $y\to\infty$ in $\R^n$ and first show that $\dom{u_1}$ and $\dom{u_2}$ have the same closure. Suppose by contradiction that it is not the case. Switching the roles of $u_1$ and $u_2$ if necessary, we can find a point $x_0$ in $\dom{u_1}$ but not in the closure $\overline{\dom{u_2}}$. Since $\overline{\dom{u_2}}$ is convex, there is an affine function $b:\R^n\to\R$ such that $b(x_0)>0$ and $b\leq 0$ on $\overline{\dom{u_2}}$. Fix any affine function $a:\R^n\to\R$ with $a\leq u_2$ on $\R^n$. Then for any $t>0$ the affine function $a_t:=a+tb$ still satisfies $a_t\leq u_2$. Moreover, given $\eps>0$, there is $t_1>0$ such that $a_t(x_0)-u_1(x_0)>\eps$ for all $t\geq t_1$. Letting $y_t\in\R^n$ denote the linear part of $a_t$, we deduce from Lemma \ref{lemma_u1u2} that $u^*_1(y_t)-u^*_2(y_t)\geq\eps$ for all $t\geq t_1$. This contradicts the assumption because $y_t\to\infty$ as $t\to+\infty$, hence proves  $\overline{\dom{u_1}}=\overline{\dom{u_2}}$.

Property \ref{item_asymp2} can be proved by contradiction in a similar way: Suppose for example $u_1(x_0)<u_2(x_0)$ at same boundary point $x_0$ of $\overline{\dom{u_1}}=\overline{\dom{u_2}}$ and pick $c\in\R$ such that $u_1(x_0)<c<u_2(x_0)$. By Corollary \ref{coro_supremum}, there exists an affine function $a\leq u_2$ on $\R^n$ with $a(x_0)\geq c$. On the other hand, there is a nontrivial affine function $b$ such that $b(x_0)=0$ and $b\leq 0$ on $\overline{\dom{u_2}}$. For any $t>0$, the affine function $a_t:=a+tb$ satisfies $a_t\leq u_2$ and $a_t(x_0)-u_1(x_0)\geq c-u_1(x_0)=:\eps$. Letting $y_t$ be the linear part of $a_t$, we deduce from Lemma \ref{lemma_u1u2} that $u^*_1(y_t)-u^*_2(y_t)>\eps$, a contradiction.

Since we have shown that $\overline{\dom{u_1}}=\overline{\dom{u_2}}$, Property \ref{item_asymp1} now follows from  \ref{item_asymp2}. Thus, it remains to show \ref{item_asymp3}. Since the roles of $u_1$ and $u_2$ are exchangeable, we only need to show $\limsup(u_2(x_i)-u_1(x_i))\leq 0$ for every sequence $(x_i)$ in $U$ approaching $\pa U$.  Suppose by contradiction that it is not the case. Then there exist $\eps>0$ and a sequence $(x_i)$ in $U$ converging to some $x_0\in\pa U$ such that $u_2(x_i)-u_1(x_i)>\eps$ for every $i$. 
Pick a subgradient $y_i\in\D u_2(x_i)$ for every $i$. By definition of subgradients, the affine function 
$$
a_i(x):=u_2(x_i)+(x-x_i)\cdot y_i
$$ 
is majorized by $u_2$. Since $a_i(x_i)-u_1(x_i)=u_2(x_i)-u_1(x_i)>\eps$, by Lemma \ref{lemma_u1u2} again, we have $u^*_1(y_i)-u^*_2(y_i)>\eps$. 

This contradicts the assumption if the sequence $(y_i)$ has a subsequence tending to $\infty$. Otherwise, $(y_i)$ is bounded and we deduce a contradiction in the following way instead: Pick a nontrivial affine function $b$ vanishing at $x_0$ such that $b\leq 0$ on $\overline{U}$ and $b(x)\geq -\frac{\eps}{2}|x_0-x|$ for all $x\in\R^n$. Then the affine function
$$
\tilde{a}_i:=a_i+\tfrac{1}{|x_0-x_i|}b
$$ 
satisfies $\tilde{a}_i\leq u_2$ on $\R^n$ and $\tilde{a}_i(x_i)-u_1(x_i)=u_2(x_i)-u_1(x_i)+\frac{1}{|x_0-x_i|}b(x_i)\geq \frac{\eps}{2}$. Letting $\tilde{y}_i$ be the linear part of $\tilde{a}_i$ and using Lemma \ref{lemma_u1u2} again, we obtain 
$$
u^*_1(\tilde{y}_i)-u^*_2(\tilde{y}_i)\geq\tfrac{\eps}{2}.
$$ 
But we have $\tilde{y}_i\to\infty$ because the linear part $y_i$ of $a_i$ is bounded while that of $\tfrac{1}{|x_0-x_i|}b$ goes to $\infty$. Therefore, the above inequality contradicts the assumption and concludes the proof.
\end{proof}

As a consequence of Proposition \ref{prop_asymp}, we obtain:
\begin{theorem}\label{thm_asymp}
Let $C$ and $\Omega$ be as in Section \ref{subsec_analytic}. Pick $u\in\S(\Omega)$ and $\phi\in\LC(\pa\Omega)$, so that $\Sigma:=\gra{u^*}$ is a complete $C$-convex hypersurface and $D:=\sepi{\env{\phi}^*}$ a $C$-regular domain. Then $\Sigma$ is asymptotic to $\pa D$ if and only if $u$ and $\phi$ satisfy the following conditions:
\begin{itemize}
	\item  the effective domains $\dom{u}$ and $\dom{\env{\phi}}$ coincide and have nonempty interior $U$;
	\item  $\env{\phi}(x)-u(x)\to 0$ as $x\in U$ tends to $\pa U$.
\end{itemize}
Moreover, these conditions imply that $u=\env{\phi}$ on $\pa U$ and that $u|_{\pa\Omega}=\phi$. In particular, $D$ is the $C$-regular domain generated by $\Sigma$ (see Theorem \ref{thm_graph} (\ref{item_thmgraph3})).

\end{theorem}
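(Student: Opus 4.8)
The plan is to apply Proposition~\ref{prop_asymp} with $u_1=u$ and $u_2=\env{\phi}$, after translating the geometric asymptoticity of $\Sigma$ to $\pa D$ into the analytic statement $u^*(y)-\env{\phi}^*(y)\to 0$ as $|y|\to+\infty$. One may assume $\phi\not\equiv+\infty$, since otherwise $D=\R^{n+1}$, $\pa D=\emptyset$, and both the asymptoticity and the two displayed conditions fail. Then $\dom{u}\subset\overline{\Omega}$ (by Lemma~\ref{lemma_subdiff} and $u\in\S(\Omega)$, as observed in Section~\ref{subsec_analytic}) and $\dom{\env{\phi}}=\chull{\dom{\phi}}\subset\overline{\Omega}$ (Proposition~\ref{prop_domain}) are bounded and nonempty, so Lemma~\ref{lemma_entire} shows $u^*$ and $\env{\phi}^*$ are $\R$-valued on all of $\R^n$; moreover $\D u^*(\R^n)=\dom{\D u}\subset\Omega$ and $\D\env{\phi}^*(\R^n)=\dom{\D\env{\phi}}\subset\overline{\Omega}$ (Corollary~\ref{coro_subdiff} and Lemma~\ref{lemma_subdiff}), so $u^*$ and $\env{\phi}^*$ are globally $L$-Lipschitz with $L:=\max_{x\in\overline{\Omega}}|x|<+\infty$. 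In particular $\pa D$ is the entire graph $\gra{\env{\phi}^*}$, and since $u^*$ is Lipschitz the norm of a point $p=(y,u^*(y))\in\Sigma$ is comparable to $|y|$, so $p$ tends to infinity in $\Sigma$ precisely when $y\to\infty$ in $\R^n$.

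The next step is to compare vertical and Euclidean distance to $\pa D$. For $p=(y,u^*(y))\in\Sigma$ the point $(y,\env{\phi}^*(y))\in\pa D$ gives $\dist(p,\pa D)\le|u^*(y)-\env{\phi}^*(y)|$; conversely, choosing $(z,\env{\phi}^*(z))\in\pa D$ within $\dist(p,\pa D)+\eps$ of $p$, the triangle inequality and the $L$-Lipschitz bound give $|u^*(y)-\env{\phi}^*(y)|\le(1+L)(\dist(p,\pa D)+\eps)$, hence (letting $\eps\to0$) $\dist(p,\pa D)\ge(1+L)^{-1}|u^*(y)-\env{\phi}^*(y)|$. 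Therefore $\Sigma$ is asymptotic to $\pa D$ if and only if $u^*(y)-\env{\phi}^*(y)\to0$ as $|y|\to+\infty$, and Proposition~\ref{prop_asymp} (applicable as the effective domains are bounded) rephrases this as: $\dom{u}=\dom{\env{\phi}}$, $u=\env{\phi}$ on $\pa\dom{u}$, and, when $U:=\interior\dom{u}\neq\emptyset$, $\env{\phi}(x)-u(x)\to0$ as $x\in U$ tends to $\pa U$. By Remark~\ref{remark_asymp} the last condition subsumes the second when $U\neq\emptyset$, and the case $U=\emptyset$ (where Proposition~\ref{prop_asymp} forces $u=\env{\phi}$ everywhere, so $\Sigma=\pa D$) is the degenerate situation excluded by requiring $U$ nonempty. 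This is the asserted equivalence.

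For the final assertions, assume the two displayed conditions. Then $u=\env{\phi}$ on $\pa U=\pa\dom{u}$ is condition~(ii). For $u|_{\pa\Omega}=\phi$: since $\dom{u}=\dom{\env{\phi}}=\chull{\dom{\phi}}\subset\overline{\Omega}$, the set $U=\interior\dom{u}\subset\Omega$ is disjoint from $\pa\Omega$, so $\pa\Omega\cap\dom{u}\subset\overline{U}\setminus U=\pa U$, where $u=\env{\phi}$; off $\dom{u}$ both functions equal $+\infty$; hence $u=\env{\phi}$ on all of $\pa\Omega$. As $\phi\in\LC(\pa\Omega)$, Lemma~\ref{lemma_adm} gives $\env{\phi}|_{\pa\Omega}=\phi$, whence $u|_{\pa\Omega}=\phi$. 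Finally, Theorem~\ref{thm_graph}~(\ref{item_thmgraph3}) applied to $u$ and $\phi=u|_{\pa\Omega}$ identifies $D=\sepi{\env{\phi}^*}$ as the $C$-regular domain generated by $\Sigma=\gra{u^*}$. The only step carrying real content is the uniform Lipschitz bound of the first paragraph, which makes Euclidean and vertical asymptoticity interchangeable and is exactly where $u\in\S(\Omega)$ and the boundedness of $\Omega$ enter; everything else is Proposition~\ref{prop_asymp} together with bookkeeping of effective domains via Section~\ref{sec_legendre}.
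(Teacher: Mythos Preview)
Your proof follows the same route as the paper's: reduce to Proposition~\ref{prop_asymp} via the analytic condition $u^*(y)-\env{\phi}^*(y)\to0$, then bookkeep effective domains. You actually supply a detail the paper leaves implicit, namely the Lipschitz comparison showing that Euclidean distance from $\Sigma$ to $\pa D$ is comparable to the vertical gap $|u^*-\env{\phi}^*|$; the paper simply asserts this equivalence.

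There is one loose end. In the case $U=\emptyset$ you correctly deduce $u=\env{\phi}$ and hence $\Sigma=\pa D$, but then say this case is ``excluded by requiring $U$ nonempty.'' That phrasing is circular: if $\Sigma=\pa D$ were actually possible, the distance from every $p\in\Sigma$ to $\pa D$ would be identically zero, so $\Sigma$ \emph{would} be asymptotic to $\pa D$ while the two bullet conditions fail, breaking the ``only if'' direction. You need a reason why $u=\env{\phi}$ cannot occur for $u\in\S(\Omega)$. The paper invokes Corollary~\ref{coro_contained} ($\Sigma\subset D$, hence $\Sigma\cap\pa D=\emptyset$). Equivalently, one can argue directly that $\env{\phi}\notin\S(\Omega)$: by Proposition~\ref{prop_supporting}, for each $y\in\R^n$ the set $\D\env{\phi}^*(y)$ meets $\pa\Omega$, so by Corollary~\ref{coro_subdiff} $\env{\phi}$ admits a subgradient at some point of $\pa\Omega$, i.e.\ $\dom{\D\env{\phi}}\not\subset\Omega$. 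Either argument rules out the empty-interior case and completes the equivalence.
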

\begin{proof}
The condition that $\Sigma$ is asymptotic to $\pa D$ is equivalent to $u^*(y)-\env{\phi}^*(y)\to0$  ($y\to\infty$), which is in turn equivalent to Conditions \ref{item_asymp1} -- \ref{item_asymp3} from Proposition \ref{prop_asymp} for $u_1=u$ and $u_2=\env{\phi}$. 
As mentioned in Remark \ref{remark_asymp}, if $U$ is empty then \ref{item_asymp1} and \ref{item_asymp2} implies that $u$ and $\env{\phi}$ are identical, which is impossible (by Corollary \ref{coro_contained} for example). So \ref{item_asymp1} -- \ref{item_asymp3} altogether are equivalent to the two bullet points. This proves the first statement. The assertion $u=\env{\phi}$ on $\pa U$ follows from \ref{item_asymp1} and \ref{item_asymp2} as said, hence we have $u=\env{\phi}$ on $\R^n\setminus U$ and in particular, $u|_{\pa\Omega}=\phi$.
\end{proof}
Let us point out two remarkable consequences of Theorems \ref{thm_graph} and \ref{thm_asymp}. First, if a $C$-regular domain $D$ is given by a continuous function $\phi\in\LC(\pa\Omega)\cap\C^0(\pa\Omega)$, so that $\env{\phi}$ and $u$ are continuous functions on $\overline{\Omega}$ with the same boundary values (see Proposition \ref{prop_boundarycontinuous}), then every complete $C$-convex hypersurface generating $D$ is asymptotic to $\pa D$. A particular instance is when $D$ is the cone $C$ itself, as discussed in Example \ref{example_graph}. Second, a necessary condition for a $C$-regular $D$ to admit a complete $C$-convex hypersurface asymptotic to its boundary is that $D$ is proper. We give below an example of improper $C$-regular domain with complete $C$-convex hypersurfaces generating it, and examine the corresponding $\phi$ and $u$.

\begin{example}
Given a proper convex cone $C\subset\R^3$, the ``trough domain'' formed by intersecting two $C$-null half-spaces is an improper $C$-regular domain. For the sake of clarity, we consider the cone $C_0=\{(x,y,z)\in\R^3\mid z^2>x^2+y^2, z>0\}$, whose corresponding convex domain $\Omega$ under the setting of Section \ref{subsec_analytic} is the unit disk $\mathbb{D}\subset\R^2$. Let $\phi:\pa\mathbb{D}\to\Rp$ be the function vanishing at $(\pm1,0)$ and taking the value $+\infty$ everywhere else, so that $D=\sepi{\env{\phi}^*}$ is the intersection of the two $C_0$-null half-spaces whose boundaries meet along the $y$-axis.

The hyperboloid $\mathbb{H}:=\{z^2-x^2-y^2=1,z>0\}$ is of course a complete $C_0$-convex surface asymptotic to $\pa C_0$. But for any $\lambda>1$, the $\lambda$-dilation of $\mathbb{H}$ along the $y$-axis, denoted by $\mathbb{H}_\lambda:=\{z^2-x^2-(y/\lambda)^2=1,z>0\}$, generates $D$ instead. See Figure \ref{figure_trough}. At the limit, we obtain the trough surface $\mathbb{H}_\infty=\{z^2-x^2=1, z>0\}$ from the second picture of Figure \ref{figure_trough}, which generates $D$ as well.
 \begin{figure}[h]
		\includegraphics[width=5.1in]{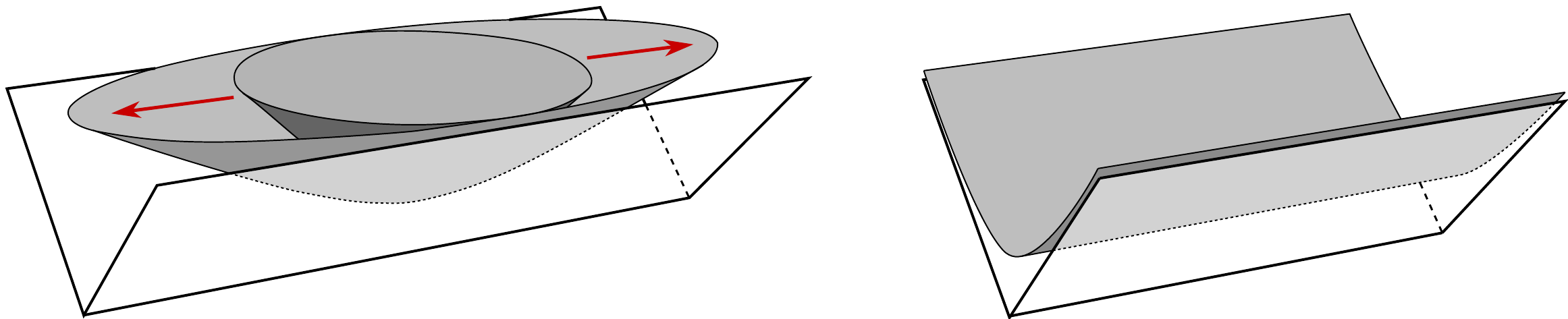}
		\caption{Trough domain and $C$-convex hypersurfaces generating it.}
		\label{figure_trough}
\end{figure}

One can check that $\mathbb{H}$ is the graph of the Legendre transform of
$$
u(x,y):=
\begin{cases}
-\sqrt{1-(x^2+y^2)}&\text{ if }(x,y)\in\overline{\mathbb{D}},\\
+\infty &\text{ otherwise},
\end{cases}
$$
while $\mathbb{H}_\lambda$ and $\mathbb{H}_\infty$ are the graphs of Legendre transforms of the functions $u_\lambda, u_\infty\in\S(\mathbb{D})$ defined by
$$
u_\lambda(x,y):=u(x,\lambda y)\,\text{ for }\lambda\in(1,+\infty),\quad u_\infty(x,y):=
\begin{cases}
u(x,y)&\text{ if }y=0,\\
+\infty &\text{ if }y\neq0.\\
\end{cases}
$$
Thus, $u_\lambda$ and $u_\infty$ restrict to $\env{\phi}$ on $\pa\mathbb{D}$, but they violate the conditions in Theorem \ref{thm_asymp}. Also note that $u_\lambda$ belongs to $\S_0(\mathbb{D})$ but $u_\infty$ does not, which reflects the fact that $\mathbb{H}_\lambda$ is strictly convex and $\mathbb{H}_\infty$ is not (see Theorem \ref{thm_graph} (\ref{item_thmgraph2})).
\end{example}

\subsection{Convex foliations by $C$-convex hypersurfaces}
Given a family of complete $C$-convex hypersurfaces $(\Sigma_t)_{t\in\R}$ in $\R^{n+1}$ generating the same $C$-regular domain $D$, we give in the next theorem a necessary and sufficient condition for $(\Sigma_t)$ to be a \emph{convex} foliation of $D$ in the sense that there is convex function $D\to\R$ for which $\Sigma_t$ is the level hypersurface with value $t$.
\begin{theorem}\label{thm_foliation}
	Let $\phi\in\LC(\pa\Omega)$ and let $(u_t)_{t\in\R}$ be a one-parameter family in $\S(\Omega)$ with $u_t|_{\pa\Omega}=\phi$, so that every $\Sigma_t:=\gra{u_t^*}$ is a complete $C$-convex hypersurface generating the $C$-regular domain $D:=\sepi{\env{\phi}^*}$. Suppose $u_t$ is pointwise nondecreasing in $t$. Then there is a convex function $K:D\to\R$ such that $\Sigma_t=K^{-1}(t)$ for every $t\in\R$  if and only if $(u_t)$ further satisfies the following conditions:
	\begin{enumerate}[label=(\roman*)]
		\item\label{item_foliation1}  for any $x\in\R^n$ and $t_1<t_2$, if $u_{t_1}$ admits a subgradient at $x$, then the strict inequality $u_{t_1}(x)<u_{t_2}(x)$ holds;
		\item\label{item_foliation2} $u_t$ is not uniformly bounded from below and  $\lim_{t\rightarrow+\infty}u_t=\env{\phi}$ pointwise;
		\item\label{item_foliation3} $u_t(x)$ is a concave function in $t$ for every fixed $x\in\R^n$.
	\end{enumerate}
\end{theorem}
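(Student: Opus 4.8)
The plan is to translate the geometric statement into properties of the one-parameter family of Legendre transforms $u_t^*$. Throughout I use that $\dom{u_t}\subseteq\overline\Omega$ is bounded, so by Lemma \ref{lemma_entire} each $u_t^*$ is finite and continuous on $\R^n$ and $\D u_t(\R^n)=\R^n$; that Legendre transformation reverses order, so $t\mapsto u_t^*(y)$ is nonincreasing; and that, since each $\Sigma_t$ generates $D=\sepi{\env\phi^*}$, Corollary \ref{coro_contained} gives $\Sigma_t\subset D$, i.e.\ $u_t^*(y)>\env\phi^*(y)$ for all $t,y$, and dually $u_t\le\env\phi$. Set $v:=\sup_t u_t$; being an increasing pointwise limit of functions in $\LC(\R^n)$ bounded above by $\env\phi$, it lies in $\LC(\R^n)$.

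First I would handle the set-theoretic content. I claim the $\Sigma_t$ are pairwise disjoint iff \ref{item_foliation1} holds: if $(y,\eta)\in\Sigma_{t_1}\cap\Sigma_{t_2}$ with $t_1<t_2$, choose $x\in\D u_{t_2}^*(y)$ (possible since $\dom{\D u_{t_2}^*}=\R^n$); Proposition \ref{prop_young} and Corollary \ref{coro_subdiff} give $u_{t_2}(x)=x\cdot y-\eta$ and $u_{t_1}(x)\ge x\cdot y-\eta$, whence $u_{t_1}(x)=u_{t_2}(x)$ with $y\in\D u_{t_1}(x)$, contradicting \ref{item_foliation1}; conversely an $x\in\dom{\D u_{t_1}}$ with $u_{t_1}(x)=u_{t_2}(x)$ and $y\in\D u_{t_1}(x)$ gives $u_{t_1}^*(y)=u_{t_2}^*(y)$, so $\Sigma_{t_1}$ and $\Sigma_{t_2}$ meet. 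Next, choosing maximizers for $u_t^*(y)$ inside the compact set $\overline\Omega$, I would prove two equivalences: ``$u_t$ not uniformly bounded below'' $\iff$ ``$\sup_t u_t^*(y)=+\infty$ for all $y$'' (a compactness argument, plus the Lipschitz bound $u_t^*(y)\ge u_t^*(y_0)-R|y-y_0|$ with $R=\sup_{\overline\Omega}|x|$ to pass from one $y$ to all), and ``$\lim_{t\to+\infty}u_t=\env\phi$ pointwise'' $\iff$ ``$\inf_t u_t^*(y)=\env\phi^*(y)$ for all $y$''. The latter rests on the identity $\inf_t u_t^*(y)=v^*(y)$: ``$\ge$'' is $u_t\le v$; ``$\le$'' follows by taking maximizers $x_j\in\overline\Omega$ for $u_{t_j}^*(y)$ as $t_j\to+\infty$, passing to a convergent subsequence $x_j\to x_\infty$, and using lower semicontinuity together with monotonicity to get $\liminf_j u_{t_j}(x_j)\ge v(x_\infty)$; then $v^*=\env\phi^*\iff v=\env\phi$ by involutivity of Legendre transformation on $\LC(\R^n)$. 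Combined with \ref{item_foliation3} (which, as explained below, forces $t\mapsto u_t^*(y)$ to be convex hence continuous), conditions \ref{item_foliation1}--\ref{item_foliation3} make $t\mapsto u_t^*(y)$ a continuous strictly decreasing bijection $\R\to(\env\phi^*(y),+\infty)$ for each $y$, so $K(y,\eta):=$ the unique $t$ with $u_t^*(y)=\eta$ is well-defined on $D$ with $K^{-1}(t)=\gra{u_t^*}=\Sigma_t$. Conversely, if a $K$ as in the statement exists, the $\Sigma_t$ partition $D$; this is disjointness (giving \ref{item_foliation1}), and restricting to each vertical line $\{y\}\times\R$ it forces the range of $t\mapsto u_t^*(y)$ to be exactly $(\env\phi^*(y),+\infty)$, giving \ref{item_foliation2}.

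The convexity of $K$, condition \ref{item_foliation3}, is then pure Legendre duality. Once $K$ is well-defined and $t\mapsto u_t^*(y)$ is strictly decreasing, $\eta\mapsto K(y,\eta)$ is strictly decreasing on each vertical line, so $K(y,\eta)\le s\iff\eta\ge u_s^*(y)$ and therefore $\epi K=\{(y,\eta,s):\eta\ge u_s^*(y)\}$, which up to reordering coordinates is the epigraph of $(y,s)\mapsto u_s^*(y)$. Hence $K$ is convex iff $(s,y)\mapsto u_s^*(y)$ is jointly convex, and it remains to show this is equivalent to $s\mapsto u_s(x)$ being concave for each $x$. The implication ``$\Leftarrow$'' holds because for fixed $x$ the map $(s,y)\mapsto x\cdot y-u_s(x)$ is affine in $y$ plus convex in $s$, hence jointly convex, and $u_s^*(y)=\sup_x(x\cdot y-u_s(x))$ is a supremum of such functions. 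For ``$\Rightarrow$'', joint convexity of $u_s^*$ gives, for all $y_1,y_2$ and $\lambda\in[0,1]$,
\[
u_{\lambda s_1+(1-\lambda)s_2}(x)\ \ge\ x\cdot(\lambda y_1+(1-\lambda)y_2)-u_{\lambda s_1+(1-\lambda)s_2}^*(\lambda y_1+(1-\lambda)y_2)\ \ge\ \lambda\big(x\cdot y_1-u_{s_1}^*(y_1)\big)+(1-\lambda)\big(x\cdot y_2-u_{s_2}^*(y_2)\big),
\]
and taking suprema over $y_1$ and $y_2$ separately, using $u_s^{**}=u_s$, yields concavity of $s\mapsto u_s(x)$. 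In particular joint convexity of $u_s^*$ makes $t\mapsto u_t^*(y)$ convex, supplying the continuity used in the previous step.

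I expect the main difficulty to be the middle paragraph: passing between the ``uniform'' hypotheses on $(u_t)$ and the endpoint behaviour of $t\mapsto u_t^*(y)$, which genuinely uses compactness of $\overline\Omega$ and the lower semicontinuity and monotonicity of the family, and verifying that the sweep-out of $D$ is exact -- this is where all three of \ref{item_foliation1}, \ref{item_foliation2}, \ref{item_foliation3} must be used simultaneously (for injectivity, endpoints, and continuity in $t$, respectively). Once the reformulation $\epi K=\{\eta\ge u_s^*(y)\}$ is in place, the convexity part is a short and clean duality argument.
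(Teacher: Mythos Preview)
Your proposal is correct and follows essentially the same strategy as the paper: reformulate \ref{item_foliation1}--\ref{item_foliation3} as properties of the dual family $u_t^*$ (strict monotonicity, endpoint limits $+\infty$ and $\env\phi^*$, joint convexity of $(y,t)\mapsto u_t^*(y)$), and then use the observation that $\epi{K}$ and the epigraph of $(y,t)\mapsto u_t^*(y)$ differ only by swapping the last two coordinates. The paper states these dual reformulations explicitly as conditions (i*)--(iii*) and proves each equivalence separately, just as you do.

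The only substantive differences are in how two of the individual equivalences are argued. For \ref{item_foliation2}, the paper invokes Lemma~\ref{lemma_family} (Legendre transform of an infimum equals supremum of transforms) to identify $\inf_t u_t^*$ with $v^*$, whereas you obtain the same identity by a direct maximizer--compactness argument on $\overline\Omega$; both are valid, and your argument is a hands-on unpacking of the lemma in this bounded-domain situation. For the implication ``joint convexity of $u_s^*\Rightarrow$ concavity of $s\mapsto u_s(x)$'', the paper cites the general fact that infimal projection of a convex function is convex (\cite[Theorem~5.7]{rockafellar}) applied to $(y,t)\mapsto u_t^*(y)-x\cdot y$, while you derive it by a direct Fenchel--Young manipulation; again these are two presentations of the same idea. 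Neither variant buys anything the other does not.
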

In the proof, we make use of the following general result on the Legendre transform of the pointwise infimum of a family of functions:
\begin{lemma}\label{lemma_family}
	Let $F$ be a subset of $\LC(\R^n)$ and $f\in\GLC(\R^n)$ be the convex envelope of the function $x\mapsto\inf_{u\in F}u(x)$. Then the Legendre transform of $f$ is given by
	$$
	f^*(y)=\sup_{u\in F} u^*(y).
	$$
\end{lemma}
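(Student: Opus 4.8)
The plan is to reduce the identity to Lemma~\ref{lemma_legendre2} by recognizing $f$ as the convex envelope of a single explicit subset of $\R^n\times\R$. Set $g(x):=\inf_{u\in F}u(x)$ and $E:=\bigcup_{u\in F}\epi{u}$. The first step is the observation that an affine function $a\colon\R^n\to\R$ is majorized by $g$ if and only if $a\leq u$ for every $u\in F$, equivalently $\epi{a}\supseteq\epi{u}$ for every $u\in F$, equivalently $\epi{a}\supseteq E$. By Definition~\ref{def_env}, the convex envelope of $g$ is by definition the convex envelope of $\epi{g}$, i.e.\ the pointwise supremum of all affine functions majorized by $g$; by the equivalence just noted this is the pointwise supremum of all affine functions whose epigraph contains $E$, which is precisely the convex envelope of the set $E$. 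Hence $f$ equals the convex envelope of $E$.

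The second step is then immediate: applying Lemma~\ref{lemma_legendre2} to $E$ gives $f^*(y)=\sup_{(x,\xi)\in E}(x\cdot y-\xi)$, and writing the supremum over the union $E=\bigcup_{u\in F}\epi{u}$ as an iterated supremum yields $f^*(y)=\sup_{u\in F}\bigl(\sup_{(x,\xi)\in\epi{u}}(x\cdot y-\xi)\bigr)$. By formula~\eqref{eqn_defleg} from the proof of Theorem~\ref{thm_involution}, the inner supremum equals $u^*(y)$, so $f^*(y)=\sup_{u\in F}u^*(y)$, as claimed.

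The argument is a matter of bookkeeping with epigraphs, so there is no genuine obstacle; the one point requiring a moment's care is the first step, and in particular its behaviour in the degenerate situations (for instance $F=\emptyset$, where $f\equiv+\infty$ and both sides equal $-\infty$, or where $g$ takes the value $-\infty$ somewhere and $f\equiv-\infty$). In each of these one falls back on the conventions that the supremum over the empty family is $-\infty$ and that $\epi{g}$, respectively $E$, then contains a vertical line; since Lemma~\ref{lemma_legendre2} is stated for an arbitrary subset of $\R^n\times\R$ with no properness hypothesis, the chain of equalities above goes through verbatim and no separate case analysis is needed.
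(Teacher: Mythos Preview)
Your proof is correct. The approach is a valid alternative to the paper's, though it takes a somewhat more roundabout path.

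The paper's proof is a direct one-line computation: it first observes (via Theorem~\ref{thm_involution}) that a function and its convex envelope have the same Legendre transform, so $f^*=g^*$ where $g(x)=\inf_{u\in F}u(x)$; it then computes
\[
g^*(y)=\sup_{x}\bigl(x\cdot y-\inf_{u\in F}u(x)\bigr)=\sup_{x}\sup_{u\in F}(x\cdot y-u(x))=\sup_{u\in F}u^*(y)
\]
by swapping the two suprema. Your route instead passes through the set $E=\bigcup_{u\in F}\epi{u}$, identifies $f$ as the convex envelope of $E$, and then invokes Lemma~\ref{lemma_legendre2}. This is logically equivalent (indeed Lemma~\ref{lemma_legendre2} is itself proved by essentially the same sup-swap), but it factors the argument through the epigraph picture rather than computing directly. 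The paper's version is shorter; yours makes the role of the epigraph union $E$ explicit, which is a nice structural observation even if not strictly needed here.
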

\begin{proof}
	Theorem \ref{thm_involution} implies the general fact that any function $u:\R^n\to \Rpm$ and its convex envelop $\env{u}\in\LC(\R^n)$ have the same Legendre transform. Therefore, 
	\begin{align*}
	f^*(y)&=\sup_{x\in\R^n}(x\cdot y-f(x))=\sup_{x\in\R^n}\Big(x\cdot y-\inf_{u\in F}u(x)\Big)=\sup_{x\in\R^n}\sup_{u\in F}(x\cdot y-u(x))\\
	&=\sup_{u\in F}\sup_{x\in\R^n}(x\cdot y-u(x))=\sup_{u\in F}u^*(y).
	\end{align*}
\end{proof}


\begin{proof}[Proof of Theorem \ref{thm_foliation}]
	Given $t_1<t_2$, since $u_{t_1}\leq u_{t_2}$ on $\R^n$, we have $u^*_{t_1}\geq u^*_{t_2}$ by definition of Legendre transformation. Put
	$$
	h:\R^n\times\R\to\R,\quad h(y,t):=u^*_t(y)=\sup_{x\in\R^n}\left(x\cdot y-u_t(x)\right).
	$$
	We shall first show that Conditions \ref{item_foliation1}, \ref{item_foliation2} and \ref{item_foliation3} are equivalent to the following conditions, respectively:
	\begin{enumerate}[label=(\roman**)]
		\item\label{item_foliation1'} $u_t^*(y)$ is strictly decreasing in $t$ for every $y\in\R^n$;
		\item\label{item_foliation2'} $\lim_{t\to-\infty}u_t^*=+\infty$ and $\lim_{t\to+\infty}u_t^*=\env{\phi}^*$ pointwise;
		\item\label{item_foliation3'} $h(y,t)$ is a convex function on $\R^n\times\R$.
	\end{enumerate}
	
	``\ref{item_foliation1} $\Leftrightarrow$ \ref{item_foliation1'}'': Suppose \ref{item_foliation1} fails, namely $u_{t_1}(x)=u_{t_2}(x)=\xi\in\R$ for some $t_1<t_2$ and $x\in\R^n$ such that $u_{t_1}$ admits a subgradient $y$ at $x$. Then $y$ is also a subgradient of $u_{t_2}$ at $x$ and Proposition \ref{prop_young} gives $u_{t_1}^*(y)=u_{t_2}^*(y)=x\cdot y-\xi$, so \ref{item_foliation1'} fails as well.
	The converse is proved in the same way.
	
	``\ref{item_foliation2} $\Leftrightarrow$ \ref{item_foliation2'}'':
	Define $u_\pm\in\GLC(\R^n)$ as follows: first put
	$$
	\tilde{u}_-(x):=\inf_{t\in\R} u_t(x)=\lim_{t\rightarrow-\infty}u_t(x),\quad u_+(x):=\sup_{t\in\R} u_t(x)=\lim_{t\rightarrow+\infty}u_t(x)
	$$
	(so $u_+\in\LC(\R^n)$ but $\tilde{u}_-$ is not necessarily lower semicontinuous), then let $u_-$ be the convex envelope of $\tilde{u}_-$. We claim that 
	the Legendre transforms of $u_\pm$ are 
	$$
	u^*_-(y)=\sup_{t\in\R}u^*_t(y)=\lim_{t\rightarrow-\infty}u^*_t(y),\quad u^*_+(y)=\inf_{t\in\R}u^*_t(y)=\lim_{t\rightarrow+\infty}u^*_t(y).
	$$
	The former follows immediately from Lemma \ref{lemma_family}. For the latter, we note that since $u_t^*:\R^n\to\R$ is decreasing in $t$ and minorized by $\env{\phi}^*$, the pointwise limit $y\mapsto\inf_{t\in\R}u^*_t(y)$ is an $\R$-valued convex function on the entire $\R^n$, hence its convex envelope is itself and Lemma \ref{lemma_family} implies that its Legendre transform is $u_+$. Therefore, using the involutive property of Legendre transformation, we establish the claim. 
	
	With these definitions and expressions of $u_\pm$ and $u_\pm^*$, Conditions \ref{item_foliation2} and  \ref{item_foliation2'} can be reformulated as $u_-=-\infty$, $u_+=\env{\phi}$ and $u_-^*=+\infty$, $u_+^*=\env{\phi}^*$, respectively, hence they are equivalent to each other. 
	
	``\ref{item_foliation3} $\Leftrightarrow$ \ref{item_foliation3'}'': If $u_t(x)$ is concave in $t$ then 
	$(y,t)\mapsto x\cdot y-u_t(x)$ is a convex function on $\R^n\times\R$ for every $x\in\R^n$, hence the pointwise supremum $h(y,t)$
	is convex. Conversely, suppose $h$ is convex.
	It is a general fact (see \cite[Theorem 5.7]{rockafellar}) that if $A:\R^m\to\R^l$ is a linear map and $f:\R^m\to\Rp$ is a convex function, then $y\mapsto \inf_{x\in A^{-1}(y)}f(x)$ is a convex function on $\R^l$. Fixing $x\in\R^n$ and applying this to the projection $A:\R^n\times\R\to\R$ and the function $f(y,t)=h(y,t)-x\cdot y$, we conclude that $$
	\inf_{y\in\R^n}(h(y,t)-x\cdot y)=-\sup_{y\in\R^n}(x\cdot y-h(y,t))=-u^{**}_t(x)=-u_t(x)
	$$ 
	is convex in $t$. Thus, $u_t(x)$ is concave in $t$. This finishes the proof of the equivalences between the conditions \ref{item_foliation1}, \ref{item_foliation2}, \ref{item_foliation3} and \ref{item_foliation1'}, \ref{item_foliation2'}, \ref{item_foliation3'}.
	
	To prove the theorem, it is sufficient to show that \ref{item_foliation1'}, \ref{item_foliation2'} and \ref{item_foliation3'} hold altogether if and only if there is a convex function $K$ on $D:=\sepi{\env{\phi}^*}$ with $K^{-1}(t)=\Sigma_t:=\gra{u_t^*}$ for every $t\in\R$.
	
	If such $K$ exists, then $(\Sigma_t)$ is a foliation of $D$, hence Conditions \ref{item_foliation1'} and \ref{item_foliation2'} hold because $u_t^*$ is non-increasing in $t$ from the beginning. To show  \ref{item_foliation3'}, we note that the defining property $K^{-1}(t)=\gra{u^*_t}$ of $K$ can be rewritten as $h(y,K(y,s))=s$ for all $(y,s)\in\R^{n+1}$. It follows that the epigraph $\epi{K}\subset\R^n\times\R\times\R$ is obtained from the epigraph of $h$ by switching the last two $\R$-components: in fact,
	$$
	(y,s,t)\in\epi{K} \Leftrightarrow t\geq K(y,s)\Leftrightarrow h(y,t)\leq h(y,K(y,s))=s\Leftrightarrow (y,t,s)\in\epi{h},
	$$
	where the second equivalence is because $h(y,t)=u_t^*(y)$ is strictly decreasing in $t$ (Condition \ref{item_foliation2'}). Therefore, the convexity of $K$ implies Condition \ref{item_foliation3'}, namely convexity of $h$. This proves the ``if'' part.
	
	Finally, suppose \ref{item_foliation1'}, \ref{item_foliation2'} and \ref{item_foliation3'} hold. Then $u_t^*(y)$ strictly decreases from $+\infty$ to $\env{\phi}^*(y)$ as $t$ goes from $-\infty$ to $+\infty$ and is convex in $t$. Since convex functions $\R\to\R$ are continuous, $t\mapsto u_t^*(y)$ is a bijection from $\R$ to $(\env{\phi}^*(y),+\infty)$. It follows that $(\Sigma_t)$ is a foliation of $D$, hence we can define $K:D\to\R$ by $K^{-1}(t)=\Sigma_t$. The above relation between $\epi{K}$ and $\epi{h}$ still holds and implies that $K$ is convex as $h$ is. This proves the ``only if'' part and completes the proof of the theorem.
\end{proof}

We conclude this section by some remarks and examples about Theorem \ref{thm_foliation}. Assuming $(u_t)$ satisfies the conditions in the theorem, we first note that $\dom{u_t}$ is actually the same for all $t\in\R$ and contains $\dom{\env{\phi}}$, because $u_t(x)$ is a nondecreasing concave function in $t$ majorized by $\env{\phi}(x)$, and every $\Rp$-valued concave function on $\R$ is either $\R$-valued or constantly $+\infty$. However,  $\dom{u_t}$ does not necessarily coincide with $\dom{\env{\phi}}$. For example, when $\dom{u_t}$ is a single point in $\Omega$, we have $\dom{\env{\phi}}=\emptyset$ (see Example \ref{example_graph} (1)) and the corresponding foliation is that of $D=\R^{n+1}$ by $C$-spacelike hyperplanes parallel to each other. 

On the other hand, if every $\Sigma_t$ is \emph{asymptotic} to $\pa D$ (see Section \ref{subsec_asymp}), we have $\dom{u_t}=\dom{\env{\phi}}$ by Theorem \ref{thm_asymp}. In case, $u_t$ can be viewed as a family of $\R$-valued convex functions on the convex domain $U=\interior\dom{\env{\phi}}$ with the same boundary values (see Section \ref{subsec_convexfunction}). As a simple example, for the cone $C$ itself as a $C$-regular domain, given any $C$-convex hypersurface $\Sigma=\gra{u^*}$ generating it (see Example \ref{example_graph} (2)), all the scalings $\Sigma_t:=e^{-t}\Sigma$ ($t\in\R$) form a convex foliation of $C$ with the corresponding $(u_t)$ given by $u_t=e^{-t}u$, which has zero boundary value. A particular case is when $\Sigma=\Sigma_C$ is the Cheng-Yau affine sphere and $u=w_\Omega$ is the Cheng-Yau support function (see Theorems \ref{thm_chengyau1} and \ref{thm_chengyau2}). The main results of this paper can be viewed as generalizing this case to arbitrary $\phi\in\LC(\pa\Omega)$.

\section{Preliminaries on Monge-Amp\`ere equations}\label{subsec_preliminaries}
We review in this section some basic notions and results from the theory of real Monge-Amp\`ere equations used in the next sections.

Let $\Omega\subset\R^n$ be a convex domain and $u:\Omega\to\R$ be a function. A foundational fact of the theory is that the determinant of hessian $\det\D^2u$ can be defined not only when $u$ is $\C^2$ but also in the generalized sense when $u$ is merely convex (hence Lipschitz but not necessary $\C^1$). In the latter situation, $\det\D^2u:\Omega\to[0,+\infty]$ is by definition the density of the \emph{Monge-Amp\`ere measure} $\MA{u}$ of $u$, defined through subgradients (see Section \ref{subsec_subdiff}) by
	$$
	\int_{E}\det\D^2u\,\dif\Leb:=\MA{u}(E):=\Leb(\D u(E)) \text{ for any Borel subset $E\subset\Omega$}.
	$$
where $\Leb$ denotes the Lebesgue measure. Therefore, if this density equals a prescribed measurable function $g$ on $\Omega$, we call $u$ a \emph{generalized solution} to $\det\D^2u=g$. 

The Monge-Amp\`ere measure has the following stability property:
\begin{lemma}[{\cite[Lemma 2.2]{trudwang}}]\label{convergence of solutions}
	Let $\Omega\subset\R^n$ be a convex domain and  $u_i:\Omega\to\R$ be a sequence of convex functions converging on compact sets to $u_\infty$. Then the Monge-Amp\`ere measures $\MA{u_i}$ converges weakly to the Monge-Amp\`ere measure $\MA{u_\infty}$. 
\end{lemma}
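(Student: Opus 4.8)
The plan is to reduce weak convergence of the Monge--Amp\`ere measures to two one-sided semicontinuity estimates for the images of the subgradient maps, and then conclude by a Cavalieri-principle argument. First I would dispose of the preliminaries. Since convex functions that converge pointwise on a convex open set converge uniformly on compact subsets, I may assume the convergence $u_i\to u_\infty$ is uniform on compacta. For any ball $B=B(x_0,2r)$ with $\overline{B}\subset\Omega$, the quantity $\sup_i\|u_i\|_{L^\infty(B)}$ is finite, so the $u_i$ are uniformly Lipschitz on $B(x_0,r)$; consequently, for every compact $K\subset\Omega$ there is a single compact set $Q\subset\R^n$ with $\D u_i(K)\subseteq Q$ for all $i$ and $\D u_\infty(K)\subseteq Q$. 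In particular the masses $\MA{u_i}(K)=\Leb(\D u_i(K))$ are bounded uniformly in $i$ on each compact $K$, so by the Riesz representation theorem every subsequence of $(\MA{u_i})$ has a vaguely convergent sub-subsequence, and it suffices to identify every such limit with $\MA{u_\infty}$.

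The core of the argument is the pair of estimates: \emph{(i)} $\limsup_i\MA{u_i}(K)\leq\MA{u_\infty}(K)$ for every compact $K\subset\Omega$; and \emph{(ii)} $\MA{u_\infty}(V)\leq\liminf_i\MA{u_i}(V)$ for every open $V$ with $\overline{V}$ a compact subset of $\Omega$. For \emph{(i)} I would observe that any limit point $p$ of a sequence $p_{i_j}\in\D u_{i_j}(x_{i_j})$ with $x_{i_j}\in K$ and $x_{i_j}\to x$ satisfies $p\in\D u_\infty(x)\subseteq\D u_\infty(K)$, by passing to the limit in the subgradient inequality $u_{i_j}(y)\geq u_{i_j}(x_{i_j})+p_{i_j}\cdot(y-x_{i_j})$ and using uniform convergence; hence the compact sets $E_m:=\overline{\bigcup_{i\geq m}\D u_i(K)}$ decrease to a subset of the compact set $\D u_\infty(K)$, and continuity of Lebesgue measure from above gives \emph{(i)}. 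For \emph{(ii)} I would use the classical fact (see \cite{gutierrez,rockafellar}) that the set $Z$ of slopes $p$ whose supporting hyperplane touches the graph of $u_\infty$ at more than one point is Lebesgue-null, equivalently that the Legendre conjugate of $u_\infty$ is differentiable almost everywhere on its effective domain. For $p\in\D u_\infty(V)\setminus Z$ the unique touching point $x$ lies in $V$ (here Corollary \ref{coro_subdiff} identifies $\D u_\infty(V)$ with the set of such slopes); then $u_i(\cdot)-p\cdot(\cdot)\to u_\infty(\cdot)-p\cdot(\cdot)$ uniformly on $\overline{V}$, so the minimizers over $\overline{V}$ converge to $x\in V$ and hence eventually lie in the open set $V$, which forces $p\in\D u_i(V)$ for $i$ large. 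Fatou's lemma applied to the indicators $\mathbf{1}_{\D u_i(V)}$ then yields \emph{(ii)}.

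Finally I would deduce weak convergence from \emph{(i)} and \emph{(ii)}. For $f\in C_c(\Omega)$ with $f\geq0$, Cavalieri's formula gives $\int f\,\dif\MA{u_i}=\int_0^\infty\MA{u_i}(\{f>t\})\,\dif t$. Fatou's lemma together with \emph{(ii)}, applied to the open sets $\{f>t\}$, gives $\liminf_i\int f\,\dif\MA{u_i}\geq\int f\,\dif\MA{u_\infty}$; the reverse Fatou lemma --- legitimate because the integrands are uniformly bounded and supported in $t\in[0,\|f\|_\infty]$ by the uniform mass bound of the first paragraph --- together with \emph{(i)}, applied to the compact sets $\{f\geq t\}$, gives the matching $\limsup$ inequality, using that $\MA{u_\infty}(\{f\geq t\})=\MA{u_\infty}(\{f>t\})$ for all but countably many $t$. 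Writing a general $f\in C_c(\Omega)$ as $f^+-f^-$ completes the proof. I expect the only genuinely delicate point to be estimate \emph{(ii)}: one must select the ``good'' slopes $p$, namely those whose supporting plane touches $u_\infty$ at a single point of $V$, and argue that they exhaust $\D u_\infty(V)$ up to a Lebesgue-null set; everything else is soft.
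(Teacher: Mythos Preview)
The paper does not supply its own proof of this lemma; it is merely quoted from \cite{trudwang} (and is equally standard in \cite{gutierrez}), so there is nothing in the paper to compare your argument against. Your proof is correct and is essentially the classical one: the two semicontinuity estimates \emph{(i)} and \emph{(ii)} together with the layer-cake representation are exactly how this result is proved in the references. The one step worth a word of care is in \emph{(ii)}: when you say that a minimizer $y$ of $u_i(\cdot)-p\cdot(\cdot)$ over $\overline{V}$ eventually lies in the open set $V$ and hence $p\in\D u_i(y)$, you are implicitly using that a local minimum of a convex function is global, so that the subgradient inequality holds on all of $\Omega$ and not just on $\overline{V}$; you might make that explicit. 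Otherwise the argument is complete.
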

The following super-additive property will also be important:
\begin{lemma}[{\cite[Lemma 2.9]{figalli}}] \label{lemma ma measure of sum}
	Let $\Omega\subset\R^n$ be a convex domain and $u_1,u_2:\Omega\to\R$ be convex functions. Then 
	$$
	\det\D^2(u_1+u_2)\geq \det\D^2 u_1+\det\D^2u_2.
	$$
\end{lemma}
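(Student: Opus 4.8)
The plan is to reduce to smooth strictly convex functions by approximation and then invoke the elementary determinant inequality for positive definite matrices, using the stability of Monge--Amp\`ere measures from Lemma~\ref{convergence of solutions} to pass to the limit.

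\emph{Step 1 (smooth strictly convex case).} Suppose first $u_1,u_2$ are $\C^\infty$ and strictly convex, so that $\D^2u_1$ and $\D^2u_2$ are everywhere positive definite. For such functions the gradient map $\D u_i$ is a $\C^\infty$ diffeomorphism onto its (open) image, so the change of variables formula gives $\MA{u_i}(E)=\Leb(\D u_i(E))=\int_E\det\D^2u_i\,\dif\Leb$ for every Borel $E$; in other words $\MA{u_i}=\det\D^2u_i\,\Leb$ in the classical sense, and likewise for $u_1+u_2$. It therefore suffices to check the pointwise inequality $\det(\D^2u_1+\D^2u_2)\geq\det\D^2u_1+\det\D^2u_2$, i.e.\ that $\det(A+B)\geq\det A+\det B$ for symmetric positive definite matrices $A,B$. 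This follows by writing $A+B=A^{1/2}(I+C)A^{1/2}$ with $C:=A^{-1/2}BA^{-1/2}$ positive definite and using $\det(I+C)=\prod_i(1+\lambda_i)\geq 1+\prod_i\lambda_i=1+\det C$ for the eigenvalues $\lambda_i>0$ of $C$, together with $\det A\cdot\det C=\det B$; alternatively one quotes Minkowski's determinant inequality. Hence $\MA{u_1+u_2}\geq\MA{u_1}+\MA{u_2}$ as measures in this case.

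\emph{Step 2 (approximation and passage to the limit).} For general convex $u_1,u_2$ fix a subdomain $\Omega'\Subset\Omega$ and approximate each $u_i$ on $\Omega'$ by $\C^\infty$ strictly convex functions $u_i^{(k)}$ converging uniformly to $u_i$ on $\Omega'$ (mollify and add $\tfrac1k|x|^2$); then $u_1^{(k)}+u_2^{(k)}$ is again $\C^\infty$ strictly convex and converges uniformly to $u_1+u_2$ on $\Omega'$. By Lemma~\ref{convergence of solutions}, $\MA{u_i^{(k)}}\rightharpoonup\MA{u_i}$ and $\MA{u_1^{(k)}+u_2^{(k)}}\rightharpoonup\MA{u_1+u_2}$ weakly as Radon measures on $\Omega'$. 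For any nonnegative $f\in\C_c^\infty(\Omega')$, Step~1 applied to $u_i^{(k)}$ gives $\int f\,\dif\MA{u_1^{(k)}+u_2^{(k)}}\geq\int f\,\dif\MA{u_1^{(k)}}+\int f\,\dif\MA{u_2^{(k)}}$, and letting $k\to\infty$ yields $\int f\,\dif\MA{u_1+u_2}\geq\int f\,\dif\MA{u_1}+\int f\,\dif\MA{u_2}$. Since a Radon measure is determined by its integrals against nonnegative compactly supported continuous functions, and since $\Omega'\Subset\Omega$ was arbitrary, this gives $\MA{u_1+u_2}\geq\MA{u_1}+\MA{u_2}$ on $\Omega$; comparing Radon--Nikodym densities (away from a common Lebesgue-null set carrying the singular parts) this is precisely $\det\D^2(u_1+u_2)\geq\det\D^2u_1+\det\D^2u_2$.

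\emph{Main difficulty.} There is no essential obstacle here; the two points needing care are the local-to-global reduction in Step~2 (mollifications live only on slightly shrunk domains, so one works on an exhausting sequence $\Omega'\Subset\Omega$) and the correct way to pass an inequality of measures to a weak limit — testing against nonnegative elements of $\C_c^\infty(\Omega')$ works cleanly, whereas arguing directly with the values on open or compact sets via the portmanteau theorem would give inequalities in the wrong direction.
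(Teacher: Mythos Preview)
The paper does not prove this lemma but merely cites \cite[Lemma 2.9]{figalli}, adding only the remark that in the $\C^2$ case the inequality follows from $\det(A+B)\geq\det A+\det B$ for positive definite matrices. Your proof is correct and is precisely the standard mollification-plus-weak-convergence argument one would expect (and which the cited reference carries out): reduce to the smooth strictly convex case via the pointwise matrix inequality, then pass to the limit using Lemma~\ref{convergence of solutions}. One small point of phrasing: the paper's convention (stated just after the lemma) is that the inequality is to be read as an inequality of Borel measures, $\MA{u_1+u_2}(E)\geq\MA{u_1}(E)+\MA{u_2}(E)$, so your final sentence about Radon--Nikodym densities and singular parts is unnecessary --- once you have the inequality for integrals against nonnegative $f\in\C_c^\infty$, you have it for all Borel sets by inner regularity, and that is exactly the statement.
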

Here and below, inequalities between Borel measure densities such as the above one are understood via evaluation on Borel sets. Note that if $u_1$ and $u_2$ are $\C^2$ then the above inequality follows from the fact that $\det(A+B)\geq\det(A)+\det(B)$ for all positive definite symmetric matrices $A$ and $B$.

Our main tool for estimating solutions of Monge-Amp\`ere equations is: 
\begin{lemma}[Maximum Principle]\label{lemma_maximum}
		Let $U\subset\R^n$ be a bounded convex domain and $u_+, u_-:U\rightarrow\R$ be  convex functions such that $\det \D^2u_+\leq \det \D^2 u_-$ in $U$. Then
		$$
		\inf_{U}(u_+-u_-)=\liminf_{x\rightarrow\pa U}\left(u_+(x)-u_-(x)\right)
		$$
\end{lemma}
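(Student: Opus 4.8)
The plan is to prove the comparison principle by a standard contradiction argument based on the \emph{Alexandrov maximum principle} applied to a suitable perturbation of $u_+-u_-$. Suppose the conclusion fails. Since $u_+$ and $u_-$ are convex and finite on $U$, they are continuous, so $u_+-u_-$ is continuous on $U$; denote $m:=\liminf_{x\to\pa U}(u_+(x)-u_-(x))$. If $\inf_U(u_+-u_-)<m$, then in particular there is an interior point where $u_+-u_-$ is strictly below $m$, and $u_+-u_-$ attains a minimum at some $x_0\in U$ with $(u_+-u_-)(x_0)<m$. The idea is to deform $u_+$ slightly to make the minimum strict and interior, then derive a contradiction with $\det\D^2u_+\le\det\D^2u_-$ via the inclusion of subgradient images.

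First I would fix $\eps>0$ small and consider $v:=u_++\eps(|x-x_0|^2-R^2)$ where $R$ is chosen so that $U$ is contained in the ball $B_R(x_0)$; then $v$ is convex, $v\le u_+$ on $U$, and $v=u_+$ outside no point in particular, but crucially $\det\D^2v\ge\det\D^2u_+$ by Lemma \ref{lemma ma measure of sum} (since adding the convex quadratic $\eps|x-x_0|^2$ can only increase the Monge-Amp\`ere measure). Moreover, choosing $\eps$ small enough, the function $v-u_-$ still has $\liminf_{x\to\pa U}(v-u_-)>\inf_U(v-u_-)$, because near $\pa U$ we have $v-u_-\ge u_+-u_--\eps R^2\ge m-\eps R^2$, while at $x_0$ we have $(v-u_-)(x_0)=(u_+-u_-)(x_0)<m$; so for $\eps$ small the strict inequality $(v-u_-)(x_0)<m-\eps R^2$ is retained. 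Replacing $v$ by $v$ plus a suitable affine function we may further arrange that $v-u_-$ attains its (strict, by the extra quadratic term) minimum at $x_0$, which is an interior point.

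Now I would invoke the Alexandrov maximum principle / comparison estimate for Monge-Amp\`ere: if $w_1,w_2$ are convex on the bounded convex domain $U$ with $w_1\le w_2$ on $\pa U$ (in the $\liminf$ sense) but $w_1(x_0)<w_2(x_0)$ at some interior $x_0$, then the open set $G:=\{x\in U: w_1(x)<w_2(x)+[w_1(x_0)-w_2(x_0)]/2\}$ is nonempty, bounded, with $\overline{G}\subset U$, and on $\pa G$ one has $w_1-w_2\equiv \text{const}$; comparing subgradient images over $G$ yields $\D w_2(G)\subset\D w_1(G)$ up to a set of measure zero, hence $\mathcal{M}w_2(G)\le\mathcal{M}w_1(G)$, i.e. $\int_G\det\D^2w_2\le\int_G\det\D^2w_1$. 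Applying this with $w_1=v$ (so $w_1$ plays the role of the ``smaller'' Monge-Amp\`ere density) and $w_2=u_-$ — wait, we must orient it so that the hypothesis $\det\D^2u_+\le\det\D^2u_-$ is contradicted: with $w_1=v$, $w_2=u_-$ we get $\int_G\det\D^2u_-\le\int_G\det\D^2 v$, but $\det\D^2v\ge\det\D^2u_+$ from the quadratic perturbation, so this gives no contradiction directly. Instead I would take $w_1:=u_-$ and $w_2:=v$, noting that $v$ lies \emph{above} $u_-$ near $\pa U$ and \emph{below} it at $x_0$; then the analogous argument on the contact set produces $\int_G\det\D^2 v\le\int_G\det\D^2 u_-$, while on the other hand, because of the strict quadratic term, $\det\D^2 v\ge\det\D^2 u_++ (2\eps)^n\Leb\ge\det\D^2 u_-+(2\eps)^n\Leb$ on $G$, so $\int_G\det\D^2 v>\int_G\det\D^2 u_-$, a contradiction. (The strict gain $(2\eps)^n\Leb$ is exactly what the perturbation buys us; this is the only place strictness is used.)

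The main obstacle is the boundary behavior: $U$ is only assumed to be a bounded convex domain, and $u_\pm$ are only required to be convex and real-valued on the \emph{open} set $U$, with no continuity up to $\pa U$; hence the hypotheses and conclusion are phrased with $\liminf_{x\to\pa U}$. One must therefore be careful that the level set $G$ used in the Alexandrov argument has closure compactly contained in $U$ — this is where the condition $\inf_U(u_+-u_-)<\liminf_{x\to\pa U}(u_+-u_-)$ is essential, guaranteeing a positive gap between the interior infimum and the boundary liminf, so that $G$ stays away from $\pa U$. Once this is set up, the rest is the classical comparison argument (see \cite[Theorem 1.4.6]{gutierrez} or the maximum principle in \cite{figalli}), and I would cite it rather than reprove the subgradient inclusion from scratch.
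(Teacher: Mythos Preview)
Your overall plan---contradiction, quadratic perturbation, then the subgradient inclusion on a compactly contained contact set---is exactly the paper's, but you perturb the \emph{wrong function}, and this makes the final chain of inequalities false. In your last paragraph you write
\[
\det\D^2 v \;\ge\; \det\D^2 u_+ + (2\eps)^n \;\ge\; \det\D^2 u_- + (2\eps)^n,
\]
but the hypothesis is $\det\D^2 u_+ \le \det\D^2 u_-$, so the second ``$\ge$'' is unjustified (indeed typically false). You also have the subgradient inclusion backwards: if $v \le u_-$ on $G$ with equality on $\pa G$, then $\D u_-(G)\subset \D v(G)$, giving $\int_G \det\D^2 u_- \le \int_G\det\D^2 v$, not the reverse. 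With both directions corrected, your argument yields only $\int_G\det\D^2 u_-\le\int_G\det\D^2 v$ together with $\det\D^2 v\ge\det\D^2 u_+ + (2\eps)^n$, which is no contradiction at all.

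The fix is simple and is precisely what the paper does: perturb $u_-$, not $u_+$. Set $w:=u_-+\delta|x-x_0|^2+c'$ with $c'$ slightly above $a=\inf_U(u_+-u_-)$ and $\delta$ small. Then the set $G=\{u_+\le w\}$ is nonempty and compactly contained in $U$ (this uses $a<b$ exactly as you describe), and on $G$ one has $u_+\le w$ with equality on $\pa G$, hence $\D w(G)\subset\D u_+(G)$ and $\int_G\det\D^2 w\le\int_G\det\D^2 u_+$. On the other hand, super-additivity (Lemma~\ref{lemma ma measure of sum}) gives $\det\D^2 w\ge\det\D^2 u_-+(2\delta)^n\ge\det\D^2 u_++(2\delta)^n$, and now the inequalities do clash. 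The point is that the quadratic must be added to the function whose Monge--Amp\`ere density is already the larger one, so that the strict gain $(2\delta)^n$ pushes it strictly above the other.
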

Here, $\liminf_{x\rightarrow\pa U}f(x)$ is defined as the supremum of $\inf_{U\setminus E}f$ as $E$ runs over all compact subsets of $U$. A more standard version of the lemma (see \eg \cite[Theorem 1.4.6]{gutierrez}) deals with continuous convex functions $u_\pm\in\C^0(\overline{U})$, for which the conclusion amounts to $\min_{\overline{U}}(u_+-u_-)=\min_{\pa U}(u_+-u_-)$. We need the above simple generalization to cover the case where $u_\pm\rightarrow+\infty$ near $\pa U$ in Section \ref{subsec_existence}. 
\begin{proof}
Put $a:=\inf_U(u_+-u_-)$ and $b:=\liminf_{x\rightarrow\pa U}\left(u_+(x)-u_-(x)\right)$. Assume by contradiction that $a<b$ and fix $c\in (a,b)$. Then there is a compact set $E\subset U$ such that $u_+-u_-\geq c$ on $U\setminus E$. As a consequence, there is $x_0\in E$ such that
$$
a=\min_E(u_+-u_-)=u_+(x_0)-u_-(x_0).
$$
The rest of the proof is standard (see \cite[Theorem 1.4.6]{gutierrez}): fix $c'\in (a,c)$ and pick $\delta>0$ small enough such that
$w:=u_-+\delta|x-x_0|^2+c'$ satisfies 
$$
u_+-w=u_+-u_--\delta|x-x_0|^2-c'\geq c-c'-\delta|x-x_0|^2>0\,\text{ on }U\setminus E.
$$
Then $G:=\{x\in U\mid u_+(x)\leq w(x)\}$ is a compact subset of $E$ and is nonempty because $u_+(x_0)-w(x_0)=a-c'<0$. Since $u_+=w$ on $\pa G$ by continuity, we have $\D w(G)\subset \D u_+(G)$ and hence
$$
\int_G\det\D^2w\,\dif\Leb=\Leb(\D w(G))\leq \Leb(\D u_+(G))=\int_G\det\D^2u_+\,\dif\Leb.
$$
But on the other hand, by Lemma \ref{lemma ma measure of sum}, we have
$$
\det\D^2w\geq \det\D^2u_-+\det\D^2(\delta|x-x_0|^2)\geq\det\D^2u_++(2\delta)^n,
$$
a contradiction.
\end{proof}

Lemma \ref{lemma_maximum} implies the fundamental Comparison Principle for Monge-Amp\`ere equations: if $u_\pm\in\C^0(\overline{U})$ are convex functions with $\det\D^2u_+\leq\det\D^2u_-$ in $U$ and $u_+\geq u_-$ on $\pa U$, then $u_+\geq u_-$ throughout $\overline{U}$. The following generalization, essentially proved in \cite[Proposition 3.11]{bon_smillie_seppi}, allows us to get the same conclusion without comparing $u_+$ and $u_-$ at certain boundary points:

\begin{lemma}[Generalized Comparison Principle]\label{lemma_comparison}
	Let $U\subset\R^n$ be a bounded convex domain,
	$u_+:\overline{U}\rightarrow\Rp$ be a lower semicontinuous convex function taking finite values in $U$ and $u_-:\overline{U}\rightarrow\R$ be a continuous convex function such that
	\begin{itemize}
		\item $\det \D^2u_+\leq \det \D^2 u_-$ in $U$;
		\item $u_+(x)\geq u_-(x)$ for every $x\in\pa U$ where $u_+$ has finite inner derivative and every $x\in\pa U$ where $u_-$ has infinite inner derivative.
	\end{itemize}
Then we have $u_+\geq u_-$ throughout $\overline{U}$.
\end{lemma}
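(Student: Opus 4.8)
The plan is to reduce the statement to an interior inequality via the Maximum Principle and then to pin down the behaviour at a single ``bad'' boundary point.

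First I would apply Lemma \ref{lemma_maximum} to the real-valued convex functions $u_+|_U$ and $u_-|_U$, which satisfy $\det\D^2u_+\le\det\D^2u_-$, to get $\inf_U(u_+-u_-)=\liminf_{x\to\pa U}(u_+-u_-)=:m$. It then suffices to show $m\ge 0$: once $u_+\ge u_-$ on $U$, for $z\in\pa U$ and any $z_1\in U$ the segment-continuity of $u_+$ (Section \ref{subsec_convexfunction}) and the continuity of $u_-$ give $u_+(z)=\lim_{t\to0^+}u_+((1-t)z+tz_1)\ge\lim_{t\to0^+}u_-((1-t)z+tz_1)=u_-(z)$, so $u_+\ge u_-$ on $\overline{U}$.

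Assume for contradiction $m<0$. Since $m$ is simultaneously the global infimum of $u_+-u_-$ on $U$ and its boundary $\liminf$, a diagonal argument along a compact exhaustion of $U$ produces a point $x_0\in\pa U$ and a sequence $x_i\in U$ with $x_i\to x_0$ and $u_+(x_i)-u_-(x_i)\to m$. Using $u_+-u_-\ge m$ on $U$ together with segment-continuity and lower semicontinuity of $u_+$, one checks that $u_+\ge u_-+m$ on all of $\overline{U}$, and that $u_+(x_0)\le\liminf_{x\to x_0}u_+(x)\le\lim_i u_+(x_i)=m+u_-(x_0)$; hence $u_+(x_0)=u_-(x_0)+m$, so in particular $u_+(x_0)\in\R$ and $u_+(x_0)<u_-(x_0)$. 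Now I invoke the boundary hypothesis contrapositively: as $u_+(x_0)<u_-(x_0)$, the point $x_0$ is neither a point where $u_+$ has finite inner derivative nor one where $u_-$ has infinite inner derivative, so $u_+$ has infinite inner derivatives at $x_0$ while $u_-$ has finite inner derivatives at $x_0$. By Proposition \ref{prop_derivative} (\ref{item_innerderiv1}$\Leftrightarrow$\ref{item_innerderiv3}) the latter gives a subgradient $p$ of $u_-$ at $x_0$, i.e. an affine minorant $a(z)=u_-(x_0)+(z-x_0)\cdot p\le u_-(z)$. Then for every $z$ one has $u_+(z)\ge u_-(z)+m\ge a(z)+m=u_+(x_0)+(z-x_0)\cdot p$, so $p$ is a subgradient of $u_+$ at $x_0$; applying Proposition \ref{prop_derivative} once more (now to $u_+$, which is finite at $x_0$) shows $u_+$ has \emph{finite} inner derivatives at $x_0$ — a contradiction. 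Therefore $m\ge 0$, and the proof is complete.

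I expect the crux to be the argument in the last paragraph: realizing that the two-sided boundary hypothesis is exactly calibrated so that, at a boundary point where the comparison could fail, $u_-$ supplies an affine minorant touching at $x_0$ which, transplanted onto $u_+$, forces $u_+$ to admit a subgradient there, in direct conflict with the infinite-inner-derivative condition. The remaining ingredients — the Maximum Principle reduction, the bookkeeping that turns the boundary $\liminf$ into the equality $u_+(x_0)=u_-(x_0)+m$, and the passage from $U$ to $\overline{U}$ — are routine; the only care needed is to quote Proposition \ref{prop_derivative} and the segment-continuity facts with their correct hypotheses, in particular checking that $u_+(x_0)$ is finite.
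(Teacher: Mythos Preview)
Your proof is correct and follows essentially the same strategy as the paper: reduce to the boundary via Lemma~\ref{lemma_maximum}, locate a bad boundary point $x_0$ where $u_+(x_0)<u_-(x_0)$, and use the boundary hypothesis to force $u_+$ to have infinite inner derivatives while $u_-$ has finite ones there. The one notable variation is in how you close the contradiction: the paper works directly with the difference quotients along a segment from $x_1$ to show that $(u_+-u_-)$ drops strictly below its boundary minimum $b$, yielding $a<b$ in contradiction with Lemma~\ref{lemma_maximum}; you instead transfer the subgradient of $u_-$ at $x_0$ to $u_+$ via the global inequality $u_+\ge u_-+m$ and invoke Proposition~\ref{prop_derivative} to conclude $u_+$ has \emph{finite} inner derivatives at $x_0$, a direct self-contradiction. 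Your route is slightly cleaner in that it packages the inner-derivative computation into the equivalence \ref{item_innerderiv1}$\Leftrightarrow$\ref{item_innerderiv3} rather than unwinding it by hand, but the underlying idea is the same.
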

We refer to Definition \ref{def_inner} and Proposition \ref{prop_derivative} for the notion of finiteness of inner derivatives.
\begin{proof}
	Since  $\overline{U}$ and  $\pa U$  are compact and $u_+-u_-$ is lower semicontinuous, there are  $x_0\in\overline{U}$ and $x_1\in\pa U$ and such that 
	$$
	a:=\inf_{U}(u_+-u_-)=\min_{x\in\overline{U}} \left(u_+(x)-u_-(x)\right)=u_+(x_0)-u_-(x_0).
	$$
$$
b:=\liminf_{x\rightarrow\pa U}\left(u_+(x)-u_-(x)\right)=\min_{x\in\pa U} \left(u_+(x)-u_-(x)\right)=u_+(x_1)-u_-(x_1).
$$
Suppose by contradiction that $a<0$. Then we can show $a<b$ in the following two possible cases respectively:
\begin{itemize}
	\item If either $u_+$ has finite inner derivatives or $u_-$ has infinite inner derivatives at $x_1$, then $b=u_+(x_1)-u_-(x_1)\geq 0>a$ by assumption;
	\item Otherwise, $u_+$ has infinite inner derivatives and $u_-$ has finite inner derivatives at $x_1$. This means that given $x\in U$, we have
	$$
	\lim_{t\to0^+}\frac{u_+(x_1+t(x-x_1))-u_+(x_1)}{t}=-\infty,\ \lim_{t\to0^+}\frac{u_-(x_1+t(x-x_1))-u_-(x_1)}{t}\in\R.
	$$
	As a consequence, when $t>0$ is small enough, we have
	$$
	\left[u_+(x_1+t(x-x_1))-u_+(x_1)\right]-\left[u_-(x_1+t(x-x_1))-u_-(x_1)\right]<0,
	$$  
	hence $b>u_+(x_1+t(x-x_1))-u_-(x_1+t(x-x_1))\geq a$.
\end{itemize}
This contradicts Lemma \ref{lemma_maximum} and finishes the proof.
\end{proof}

We proceed to review results on \emph{regularity} of Monge-Amp\`ere equations. The following theorem follows from the Evans-Krylov estimate for more general nonlinear elliptic PDEs:
\begin{theorem}[{\cite[Theorem 3.1]{trudwang}}] \label{solution smooth}
	Let $\Omega\subset\R^n$ be a bounded convex domain, $u:\Omega\to\R$ be a convex function and $g$ be a positive smooth function on $\Omega$. If $u$ is a generalized solution of $\det\D^2 u=g$ and is strictly convex, then $u$ is smooth.
\end{theorem}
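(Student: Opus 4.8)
The plan is to reduce the statement, which is local, to interior regularity theory for the Monge-Amp\`ere equation together with a routine Schauder bootstrap. It suffices to prove smoothness on an arbitrary relatively compact subdomain $\Omega'\Subset\Omega$; there $g$ is smooth and pinched between two positive constants $c_1\le g\le c_2$, in particular H\"older continuous. I would first upgrade the generalized solution $u$ to a $\C^{2,\alpha}$ solution on $\Omega'$, and then bootstrap to $\C^\infty$. The strict convexity hypothesis enters only in the first step, and it is essential: Pogorelov's classical example shows that without it there exist generalized solutions with smooth positive right-hand side that are not even $\C^2$.

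First I would use that strict convexity of $u$ guarantees that for every $x_0\in\Omega'$ and all small $h>0$ the section $\{x : u(x)<\ell_{x_0}(x)+h\}$, with $\ell_{x_0}$ a supporting affine function of $u$ at $x_0$, is bounded and, for $h$ small, compactly contained in $\Omega$ --- this is precisely the geometric input that makes Caffarelli's machinery applicable. By the interior regularity theory for the Monge-Amp\`ere equation (see \cite{gutierrez}), a strictly convex generalized solution of $\det\D^2u=g$ with $g$ H\"older continuous and bounded between positive constants is $\C^{1,\alpha}_{\mathrm{loc}}$, and in fact $\C^{2,\alpha}_{\mathrm{loc}}(\Omega)$ for some $\alpha\in(0,1)$; equivalently, after an affine renormalization of the sections that turns the linearized operator into a uniformly elliptic one, this last estimate is the Monge-Amp\`ere instance of the Evans-Krylov theorem \cite{trudwang}. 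So $u\in\C^{2,\alpha}_{\mathrm{loc}}(\Omega)$.

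Once $u\in\C^{2,\alpha}_{\mathrm{loc}}$, on any $\Omega''\Subset\Omega$ the Hessian $\D^2u$ is continuous and positive definite: its largest eigenvalue is bounded above by the $\C^2$ norm, and since $\det\D^2u=g\ge c_1>0$ its smallest eigenvalue is then bounded below, so $\lambda'\Id\le\D^2u\le\Lambda'\Id$ on $\Omega''$ for some $0<\lambda'\le\Lambda'<\infty$. Writing the equation as $F(\D^2u)=f$ with $F(M):=\log\det M$, which is smooth on positive definite symmetric matrices, and $f:=\log g\in\C^\infty$, it is uniformly elliptic on $\Omega''$; differentiating in a coordinate direction $e_k$ yields the linear equation $F^{ij}(\D^2u)\,\partial_{ij}(\partial_ku)=\partial_kf$ whose coefficients $a^{ij}:=F^{ij}(\D^2u)$ are uniformly elliptic and lie in $\C^\alpha_{\mathrm{loc}}$, with smooth right-hand side. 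Interior Schauder estimates then give $\partial_ku\in\C^{2,\alpha}_{\mathrm{loc}}$, i.e. $u\in\C^{3,\alpha}_{\mathrm{loc}}$; iterating, each step improving the regularity of the $a^{ij}$ by one order, yields $u\in\C^{m,\alpha}_{\mathrm{loc}}(\Omega)$ for every $m$, hence $u\in\C^\infty(\Omega)$.

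The hard part is the first step: extracting interior $\C^{2,\alpha}$, or even merely $\C^{1,1}$, control out of a function that is only known to be a generalized solution. Everything afterwards is routine linear elliptic bootstrapping, but this step carries the full weight of the theory --- it is false without strict convexity, and it requires either Caffarelli's strict-convexity regularity theorem or, equivalently, a renormalization of the sections of $u$ to bounded eccentricity so that the Evans-Krylov estimate can be brought to bear.
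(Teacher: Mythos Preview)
The paper does not prove this theorem at all: it is stated with a citation to \cite[Theorem 3.1]{trudwang} and used as a black box, so there is no ``paper's own proof'' to compare against. Your outline is a correct sketch of the standard argument behind the cited result --- strict convexity gives compactly contained sections, Caffarelli's interior regularity (or the Evans--Krylov estimate after renormalization) yields $\C^{2,\alpha}_{\mathrm{loc}}$, and then linear Schauder bootstrapping on the differentiated equation gives $\C^\infty$ --- and you correctly flag both where the strict convexity hypothesis is used and why it is indispensable (Pogorelov's example).
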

There is a simple criterion for strictly convexity  in dimension $2$ exclusively:
\begin{theorem}[Aleksandrov-Heinz, see {\cite[Remark 3.2]{trudwang}}]\label{solution strictly convex dimension 2}
	Let $\Omega\subset\R^2$ be a bounded convex domain and $u:\Omega\to\R$ be a convex function. If $u$ is a generalized solution of $\det\D^2 u=g$ with $g\geq c$ for a constant $c>0$, then $u$ is strictly convex.
\end{theorem}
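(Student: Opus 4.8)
The plan is to argue by contradiction, comparing the Monge--Amp\`ere mass with the Lebesgue measure of shrinking sublevel sets. Suppose $u$ is not strictly convex: by definition there is an affine function $\ell:\R^2\to\R$ and two distinct points $A,B\in\Omega$ with $u=\ell$ at $A$ and $B$, and by convexity $u=\ell$ on the whole segment $[A,B]$. Replacing $u$ by $u-\ell$, we may assume $\ell\equiv0$, so that $u\ge0$ on $\Omega$ and $u=0$ on the nondegenerate segment $I:=[A,B]\subset\Omega$. Since $I$ is compact and $\Omega$ is open, fix $\delta_0>0$ so that the closed $\delta_0$-neighborhood of $I$ lies in $\Omega$, and set $M:=\sup\{u(x):\dist(x,I)\le\delta_0\}<\infty$; note also that $\{u=0\}$ has empty interior, since otherwise $u$ would be affine on an open set and $\MA u$ would vanish there, contradicting $\MA u\ge c\,\Leb$.

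For small $t>0$ consider the bounded convex open set $\Omega_t:=\{x\in\Omega:u(x)<t\}$. First I would bound $\Leb(\Omega_t)$ from below: interpolating the convex function $s\mapsto u(x_0+sv)$ (for $x_0$ the midpoint of $I$ and $v$ a unit vector transverse to $I$) between its value $0$ at $s=0$ and its value $\le M$ at $s=\pm\delta_0$ shows that $\Omega_t$ contains, together with $I$, a transverse segment of length $\gtrsim t/M$ through $x_0$; by convexity $\Omega_t$ then contains a quadrilateral of area $\gtrsim t$, so $\Leb(\Omega_t)\gtrsim t$ and hence $\MA u(\Omega_t)=\Leb(\D u(\Omega_t))\ge c\,\Leb(\Omega_t)\gtrsim t$. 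On the other hand, since $u$ oscillates by at most $t$ on $\Omega_t$ and $\Omega_t$ collapses onto the one-dimensional segment $I$ as $t\to0$, one wants an upper bound of the form $\Leb(\D u(\Omega_t))=o(t)$, which would produce the contradiction.

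The main obstacle is precisely this upper bound. Enclosing $\D u(\Omega_t)$ in a box adapted to the ``length'' of $I$ and the ``width'' $w_t\to0$ of $\Omega_t$ (on the bulk of $\Omega_t$ the gradient has size $\lesssim t$ along $I$ and $\lesssim t/w_t$ transversally) only yields $\Leb(\D u(\Omega_t))\lesssim t^2/w_t$, and combined with $w_t\gtrsim t$ from the lower bound just described this is merely $\lesssim t$ --- the borderline order, not yet a contradiction. What is really needed is a sharper, genuinely two-dimensional control on the shape of the sections $\Omega_t$ as $t\to0$, i.e.\ a quantitative form of Aleksandrov's maximum principle (the localization estimate for sections of Monge--Amp\`ere solutions), exploiting that a convex region in the plane collapsing onto a segment does so in essentially one direction. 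This is where the hypothesis $n=2$ is essential: in dimension $\ge3$ there exist convex solutions of $\det\D^2u=g$ with $g>0$ that are affine along a segment (Pogorelov-type examples), so the statement fails. The required estimate is the content of the classical results of Aleksandrov and Heinz; we refer to \cite{trudwang} and the references therein for the details.
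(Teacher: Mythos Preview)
The paper does not prove this theorem: it is stated as a classical result and simply attributed to Aleksandrov and Heinz with a citation to \cite{trudwang}, with no argument given. So there is no ``paper's own proof'' to compare your attempt against.

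As for your sketch on its own merits: the setup is sound and you have correctly isolated the crux. The lower bound $\Leb(\Omega_t)\gtrsim t$ via convex interpolation along a transverse direction is fine, and you are honest that the naive upper bound $\Leb(\D u(\Omega_t))\lesssim t^2/w_t$ combined with $w_t\gtrsim t$ only gives the borderline order $\lesssim t$, not $o(t)$. But this means your argument is not a proof: you explicitly acknowledge the gap and then defer to ``the classical results of Aleksandrov and Heinz'' and to \cite{trudwang} for the missing estimate --- which is exactly what the paper does in one line. In other words, your proposal is an expanded motivation for why the theorem should hold and why $n=2$ is essential (the Pogorelov remark is a nice touch), but it does not supply the decisive step. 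If you want to turn this into a self-contained proof, you would need to actually carry out the sharper section estimate --- for instance via the John ellipse normalization of $\Omega_t$ and the Aleksandrov maximum principle in the form $|\min u|^2 \lesssim \MA u(\Omega_t)\cdot\Leb(\Omega_t)$ applied after renormalizing, which in dimension $2$ forces the contradiction.
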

Theorems \ref{solution strictly convex dimension 2} and \ref{solution smooth} provide the following regularity result in dimension $2$:
\begin{theorem} \label{solution smooth in dim 2}
	Let $\Omega\subset\R^2$ be a bounded convex domain and let $u:\Omega\to\R$ be a convex function. If $u$ is a generalized solution of $\det\D^2 u=g$ with $g$ positive and smooth, then $u$ is smooth.
\end{theorem}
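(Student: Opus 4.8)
The plan is to deduce the statement from the two preceding regularity results, Theorems \ref{solution strictly convex dimension 2} and \ref{solution smooth}, by a localization argument. The only reason these cannot be quoted verbatim is that smoothness is asserted on all of $\Omega$, whereas Theorem \ref{solution strictly convex dimension 2} requires a \emph{uniform} positive lower bound on $g$, which may fail globally if $g$ decays to $0$ near $\pa\Omega$; since smoothness is a local property, this is easily circumvented.

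Concretely, I would fix an arbitrary $x_0\in\Omega$ and choose an open ball $B\subset\R^2$ with $x_0\in B$ and $\overline{B}\subset\Omega$. The first point to verify is that $u|_B$ is still a generalized solution of $\det\D^2 u=g$ on $B$. This follows from the fact that for an interior point $x\in B$ the subgradient set $\D(u|_B)(x)$ of the convex function $u|_B$ at $x$ coincides with $\D u(x)$: the inclusion $\D u(x)\subset\D(u|_B)(x)$ is trivial, and conversely, if some $p$ fails the supporting inequality $u(y)\ge u(x)+p\cdot(y-x)$ at a point $y\in\Omega$, then by convexity the same strict failure occurs at every point of the segment $[x,y]$, in particular at points arbitrarily close to $x$ and hence inside $B$, so $p\notin\D(u|_B)(x)$. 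Consequently $\D(u|_B)(E)=\D u(E)$ for every Borel set $E\subset B$, the Monge-Amp\`ere measures of $u$ and $u|_B$ agree on Borel subsets of $B$, and $u|_B$ is a generalized solution of $\det\D^2 u=g|_B$ on $B$.

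Next, since $g$ is continuous and strictly positive on the compact set $\overline{B}$, one has $g\ge c$ on $B$ for some constant $c>0$. Theorem \ref{solution strictly convex dimension 2} (available precisely because we are in dimension two) then shows that $u|_B$ is strictly convex on $B$. Finally, Theorem \ref{solution smooth} applied to $u|_B$ on the bounded convex domain $B$, with the smooth positive right-hand side $g|_B$, gives that $u|_B$ is smooth; in particular $u$ is smooth in a neighborhood of $x_0$. Since $x_0\in\Omega$ was arbitrary, $u$ is smooth on $\Omega$.

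I do not expect any serious difficulty here: the argument is essentially a bookkeeping reduction, and the one mildly delicate ingredient — the locality of the generalized-solution property under restriction to $B$ — is taken care of by the elementary subgradient computation indicated above.
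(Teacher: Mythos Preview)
Your proof is correct and follows the same approach as the paper, which simply presents the statement as an immediate consequence of Theorems \ref{solution strictly convex dimension 2} and \ref{solution smooth} without further argument. Your localization step and the verification that the Monge--Amp\`ere measure restricts correctly to subdomains make explicit the one point the paper leaves implicit, namely that the uniform lower bound on $g$ required by Theorem \ref{solution strictly convex dimension 2} can be arranged locally even if it fails on all of $\Omega$.
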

 
 Finally, we collect some simple \emph{existence} results of generalized solutions to the Monge-Amp\`ere equation $\det\D^2u=g$. For $g=0$, it basically follows from Lemma \ref{lemma_pleated} that the convex envelop $\env{\phi}$ of any $\phi\in\LC(\pa\Omega)$ gives a solution in the interior of effective domain:
 \begin{theorem}[{\cite[Theorem 1.5.2]{gutierrez}}] \label{lemma convex envelope det=0}
 	Let $\Omega$ be a bounded convex domain in $\R^n$ and let $\varphi\in\LC(\pa\Omega)$. Then $\det\D^2\env{\phi}=0$ in the interior of $\dom{\env{\phi}}$.
 \end{theorem}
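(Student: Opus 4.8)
The plan is to reduce the claim to a statement about the Legendre transform of $\env{\phi}$. Writing $w:=\env{\phi}$ and $U:=\interior\dom{w}$, the assertion $\det\D^2 w=0$ in $U$ means that the Monge-Amp\`ere measure $\MA{w}$ vanishes on every Borel subset of $U$; since $\MA{w}(E)=\Leb(\D w(E))$, it is enough to show $\Leb\big(\D w(U)\big)=0$. I would prove this by establishing the inclusion $\D w(U)\subset\Sigma^*$, where $\Sigma^*$ denotes the set of points of $\R^n$ at which $w^*$ fails to be differentiable. Granting this, one finishes immediately: $\dom{w}=\chull{\dom{\phi}}$ (Proposition \ref{prop_domain}) is nonempty and bounded (as $\Omega$ is bounded), so Lemma \ref{lemma_entire} gives $\dom{w^*}=\R^n$; then $w^*$ is a finite convex, hence locally Lipschitz, function and is therefore differentiable almost everywhere, so $\Leb(\Sigma^*)=0$ and $\Leb\big(\D w(U)\big)\le\Leb(\Sigma^*)=0$.

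The core step is the inclusion $\D w(U)\subset\Sigma^*$, and here is how I would argue it. Let $y\in\D w(x)$ with $x\in U$. By Corollary \ref{coro_subdiff}, $x\in\D w^*(y)$. By Proposition \ref{prop_supporting}, $\D w^*(y)=\D\env{\phi}^*(y)$ is the convex hull in $\R^n$ of a closed subset $E_y$ of $\pa\Omega$. Now $U\subset\Omega$: indeed $\dom{w}=\chull{\dom{\phi}}\subset\overline{\Omega}$ and $\Omega$ is open and convex, so $U=\interior\dom{w}\subset\interior\overline{\Omega}=\Omega$. Hence $x\notin\pa\Omega$, so the point $x\in\chull{E_y}$ does not belong to $E_y$; this forces $E_y$ to contain at least two points, so that $\D w^*(y)=\chull{E_y}$ is not a singleton. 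A convex function whose subgradient set at a point contains more than one element is not differentiable at that point (Lemma \ref{lemma_subdiff}\,(\ref{item_lemmasubdiff2})), hence $w^*$ is not differentiable at $y$, i.e.\ $y\in\Sigma^*$, as wanted.

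The main obstacle --- and the place where something genuinely specific to convex envelopes of boundary data is used --- is precisely this inclusion: it relies on the ``pleated'' structure encoded in Proposition \ref{prop_supporting} (ultimately Lemma \ref{lemma_pleated}), namely that each subgradient fibre $\D w^*(y)$ is the convex hull of a set of boundary points, so that the moment such a fibre meets the open region $U$ it must be at least one-dimensional. This is what fails for a general convex function on $U$, as the example of $x\mapsto|x|$ on an interval shows. An alternative route, closer to the one in \cite{gutierrez}, would be to approximate $\phi$ from below by piecewise-linear lower semicontinuous data $\phi_j$, for which $\MA{\env{\phi_j}}$ is a finite sum of atoms located at points of $\dom{\phi_j}\subset\pa\Omega$, hence carries no mass in $U$, and then pass to the limit using the weak stability of Monge-Amp\`ere measures (Lemma \ref{convergence of solutions}); the Legendre-transform argument above, however, is shorter and stays entirely within the machinery already set up in this section.
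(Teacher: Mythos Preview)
The paper does not supply its own proof of this statement; it simply cites \cite[Theorem 1.5.2]{gutierrez}. Your argument is correct and is in fact a self-contained proof using only machinery already established earlier in the paper (Proposition~\ref{prop_supporting}, resting on Lemma~\ref{lemma_pleated}; Corollary~\ref{coro_subdiff}; Lemma~\ref{lemma_entire}; Lemma~\ref{lemma_subdiff}\,(\ref{item_lemmasubdiff2}); Proposition~\ref{prop_domain}). There is no circularity, since none of those results depend on the theorem you are proving. The key step---that for $x\in U$ and $y\in\D w(x)$ the set $\D w^*(y)=\chull{E_y}$ with $E_y\subset\pa\Omega$ must contain a point other than $x$, hence at least two points of $E_y$, forcing non-differentiability of $w^*$ at $y$---is sound. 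One cosmetic point: before invoking Lemma~\ref{lemma_entire} you should remark that the case $U=\emptyset$ is vacuous, so one may assume $\dom{w}\neq\emptyset$; and that $\env{\phi}\not\equiv-\infty$ follows from lower semicontinuity of $\phi$ on the compact set $\pa\Omega$, so $w\in\LC(\R^n)$ as required by that lemma.

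Compared with the standard route in \cite{gutierrez} (or the approximation argument you sketch at the end), your Legendre-transform approach is shorter here precisely because Proposition~\ref{prop_supporting} has already done the structural work of showing that every contact set of $\env{\phi}$ with a supporting affine function is spanned by boundary points.
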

 For the Dirichlet problem with continuous boundary value, we have: 
 \begin{theorem}[{\cite[Theorem 1.6.2]{gutierrez}, \cite[Theorem 2.14]{figalli}}] \label{thm existence classical monge ampere}
 	Let $\Omega\subset\R^n$ be a bounded strictly convex domain and consider $\phi\in \C^0(\partial\Omega)$ and $g$ a nonnegative Lebesgue integrable function on $\Omega$. Then there exists a unique generalized solution $u\in \C^0(\overline\Omega)$ to the Dirichlet problem
 	$$
 	\begin{cases}
 	\det \D^2u=g\,\text{ in }\Omega, \\
 	u|_{\pa\Omega}=\phi.
 	\end{cases}
 	$$
 \end{theorem}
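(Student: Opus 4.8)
The statement is classical (Perron's method for Monge--Amp\`ere), so the proposal is to reproduce that argument, leaning on the tools already assembled above. \emph{Uniqueness} is immediate from the Maximum Principle: if $u_1,u_2\in\C^0(\overline\Omega)$ both solve the problem, then $\det\D^2u_1=g=\det\D^2u_2$ in $\Omega$, so Lemma~\ref{lemma_maximum} (applied with $u_+=u_1$, $u_-=u_2$ and then with the roles swapped) gives $\inf_\Omega(u_1-u_2)=\liminf_{x\to\pa\Omega}(u_1-u_2)=0$ and likewise for $u_2-u_1$, whence $u_1=u_2$. For \emph{existence}, the plan is to introduce the family of subsolutions
$$
\mathcal F:=\{v:\overline\Omega\to\R\mid v\text{ convex},\ v|_{\pa\Omega}\leq\phi,\ \det\D^2v\geq g\text{ in }\Omega\},
$$
and to set $u:=\sup_{v\in\mathcal F}v$. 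Any convex $v$ on $\overline\Omega$ with $v|_{\pa\Omega}\le\phi$ is dominated, via Carath\'eodory, by the convex envelope $\env\phi$ of $\phi$ (extended by $+\infty$); since $\phi$ is bounded, $\env\phi$ is finite on $\overline\Omega$, so $u\leq\env\phi<+\infty$ and $u$ is a well-defined convex function on $\overline\Omega$, continuous in the interior. Nonemptiness of $\mathcal F$ is first checked when $g$ is bounded, where a quadratic $v_0(x)=c|x-x_0|^2-C$ with $(2c)^n\geq\|g\|_\infty$ and $C$ large enough that $v_0\leq\min_{\pa\Omega}\phi$ on $\pa\Omega$ belongs to $\mathcal F$; the general case $g\in L^1(\Omega)$ will be recovered at the end.

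\emph{Boundary values.} The next step is to show $u$ extends continuously to $\overline\Omega$ with $u|_{\pa\Omega}=\phi$, and this is where strict convexity of $\Omega$ is essential. Because $\Omega$ is strictly convex, $\env\phi$ has continuous boundary values equal to $\phi$, hence (Proposition~\ref{prop_boundarycontinuous}) $\env\phi\in\C^0(\overline\Omega)$ with $\env\phi|_{\pa\Omega}=\phi$; combined with $u\leq\env\phi$ this yields $\limsup_{x\to\xi_0}u(x)\leq\phi(\xi_0)$ for every $\xi_0\in\pa\Omega$. For the matching lower bound one produces, for each $\xi_0$ and each $\eps>0$, a member of $\mathcal F$ whose value at $\xi_0$ is at least $\phi(\xi_0)-\eps$: using a supporting hyperplane of $\Omega$ at $\xi_0$ (which meets $\overline\Omega$ only at $\xi_0$ by strict convexity) together with the quadratic subsolution, one builds a convex $v_{\xi_0}\in\mathcal F$ with $v_{\xi_0}\le\phi$ on $\pa\Omega$ and $v_{\xi_0}(\xi_0)\ge\phi(\xi_0)-\eps$, so that $u\geq v_{\xi_0}$ forces $\liminf_{x\to\xi_0}u(x)\geq\phi(\xi_0)-\eps$. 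Letting $\eps\to0$ and using compactness of $\pa\Omega$ gives $u\in\C^0(\overline\Omega)$, $u|_{\pa\Omega}=\phi$.

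\emph{The equation.} Writing $u$ as an increasing limit of functions in $\mathcal F$ (replace a maximizing sequence by its running maxima, which remain in $\mathcal F$ by super-additivity, Lemma~\ref{lemma ma measure of sum}), the stability Lemma~\ref{convergence of solutions} gives $\MA u\geq g\,\dif\Leb$. The reverse inequality is the heart of the argument: if $\MA u(B)>\int_B g$ for some ball $B\subset\subset\Omega$, one solves the Dirichlet problem on $B$ with boundary data $u|_{\pa B}$ and right-hand side $g$ (the base case of balls, settled first by the same Perron scheme using explicit quadratic barriers), obtaining a convex $w$ on $\overline B$ with $\det\D^2w=g$ and, by the Comparison Principle, $w\ge u$ on $B$; gluing $w$ into $u$ (replacing a convex function on a convex subdomain by a larger one with identical boundary values preserves convexity) produces a function in $\mathcal F$ strictly above $u$ somewhere, contradicting maximality. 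Hence $\MA u=g\,\dif\Leb$ in $\Omega$. Finally, for general $g\in L^1(\Omega)$ one applies the above to the truncations $g_m:=\min(g,m)$, obtaining solutions $u_m$ that are nonincreasing (Comparison Principle) and uniformly bounded by Aleksandrov's maximum principle since $\int_\Omega g_m\le\int_\Omega g<\infty$; the limit $u=\lim u_m$ is convex, the barriers above pass to the limit to give $u|_{\pa\Omega}=\phi$, and Lemma~\ref{convergence of solutions} together with $g_m\to g$ in $L^1$ gives $\MA u=g\,\dif\Leb$. I expect the two genuinely technical points to be the lower boundary barrier (where strict convexity of $\Omega$ must be used quantitatively, and which requires the base case on balls) and the lifting argument establishing $\MA u\leq g\,\dif\Leb$; everything else is bookkeeping with the maximum/comparison principles and the stability of Monge--Amp\`ere measures already recorded above.
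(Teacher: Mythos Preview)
The paper does not prove this theorem: it is quoted as a background result with citations to \cite{gutierrez} and \cite{figalli}, and the only comment the paper adds is the sentence ``Note that the uniqueness assertion follows from Comparison Principle.'' So there is nothing to compare your argument against in the paper itself.

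That said, your sketch is essentially the Perron argument carried out in the cited references, and is broadly correct. One small inaccuracy: when you pass from a maximizing sequence to its running maxima and claim these stay in $\mathcal F$, you invoke Lemma~\ref{lemma ma measure of sum}, but that lemma concerns $\det\D^2(u_1+u_2)$, not $\det\D^2(\max(u_1,u_2))$. The fact that the pointwise maximum of two convex functions with $\MA v_i\geq\mu$ again satisfies $\MA(\max(v_1,v_2))\geq\mu$ is a separate (standard) lemma about Monge--Amp\`ere measures and is not recorded in this paper; you would need to supply or cite it. Apart from this misattribution the outline is sound.
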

Note that the uniqueness assertion follows from Comparison Principle.

\section{Monge-Amp\`ere equation for affine $(C,k)$-hypersurfaces}\label{sec_cagc}
In this section, we first compute the affine normals and intrinsic data (see Sections \ref{subsec_intrinsic} and \ref{subsec_affinenormal}) of the Legendre map of a smooth function (Definition \ref{def_legendremap}), viewed as a hypersurface immersion, then we use the computation to express the condition for a complete $C$-convex hypersurface $\Sigma=\gra{u^*}$ (with $u\in\S(\Omega)$, see Theorem \ref{thm_graph}) to be an affine $(C,k)$-hypersurface as a Monge-Amp\`ere equation on $u$.

\subsection{Computations of intrinsic data}\label{subsec_charamonge}
Recall from Section \ref{subsec_legendre1} that given an open set $U\subset\R^n$ and a function $u\in\C^1(U)$, we define the \emph{Legendre map} of $u$ as
	$$
f:U\rightarrow\mathbb{R}^{n+1},\ f(x)= 
\begin{pmatrix}
\D u(x)\\[3pt]
x\cdot \D u(x)-u(x)
\end{pmatrix}.
$$
Here and below, we write points in $\R^n$ and $\R^{n+1}$ in coordinates as column vectors. In particular, the gradient is written in column as $\D u=\transp(\pa_1u,\cdots \pa_n u)$. Also view the hessian $\D^2u=(\pa_{ij}u)$
 as an $n\times n$ matrix of functions. 
 
Consider $\R^{n+1}$ as an affine space equipped with the volume form given by the standard determinant of $(n+1)\times(n+1)$ matrices.
 It turns out that $f$ is a non-degenerate hypersurface immersion (see Section \ref{subsec_intrinsic}) if and only if $\D^2u$ is non-degenerate. The following proposition computes some of the intrinsic data of $f$:
\begin{proposition}\label{prop_computations}
Let $U\subset\mathbb{R}^n$ be an open set, $u:U\rightarrow\mathbb{R}$ be a smooth function with $\det\D^2u>0$ and $f: U\rightarrow\mathbb{R}^{n+1}$
be the Legendre map of $u$, viewed as a non-degenerate hypersurface immersion. Then 
the Legendre map $N:U\to\R^{n+1}$ of the function 
$$
w:=-(\det\D^2u)^{-\frac{1}{n+2}}
$$
is an affine normal mapping of $f$. The resulting affine metric and shape operator are
$$
h=-\frac{1}{w}\D^2 u,\quad S=(\D^2u)^{-1}\D^2w.
$$
Moreover, the affine conormal mapping dual to $N$ is 
$$
N^*:U\to\R^{*(n+1)}\cong\mathbb{R}^{n+1},\quad N^*(x)=\frac{1}{w}\begin{pmatrix}
x\\
-1
\end{pmatrix}.
$$
\end{proposition}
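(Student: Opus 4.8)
The plan is to work entirely in coordinates, using the framework of equations \eqref{eqn_inducedvolumeform} and \eqref{eqn_matrix} to read off intrinsic data. First I would record the partial derivatives of the Legendre map $f$. A direct computation gives $\pa_i f = \transp(\pa_{i1}u,\dots,\pa_{in}u,\ \sum_j x^j\pa_{ij}u)$, so that the Jacobian columns of $f$ assemble into the $(n+1)\times n$ matrix whose top $n\times n$ block is $\D^2u$ and whose bottom row is $\transp{x}\,\D^2u$. In particular $f$ is an immersion precisely when $\D^2u$ is invertible, and the tangent space $\T_{f(x)}f$ is spanned by these columns. This is the routine part and I would not belabor it.

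Next I would verify that the proposed $N$, namely the Legendre map of $w=-(\det\D^2u)^{-1/(n+2)}$, is transversal to $f$ and equi-affine. Transversality: $N$ has the same structural form as $f$ but built from $w$ instead of $u$, so $\pa_i N$ has top block $\D^2 w$ and bottom row $\transp{x}\,\D^2 w$; checking that $(\pa_1f,\dots,\pa_nf,N)$ is an invertible $(n+1)\times(n+1)$ matrix reduces to a determinant computation. Equi-affineness and the shape operator: from the explicit form of $\pa_i N$ one should be able to write $\pa_i N = (\pa_1 f,\dots,\pa_n f)\cdot S(\pa_i)$ with $S=(\D^2u)^{-1}\D^2 w$ and with \emph{no} $N$-component, which by Lemma~\ref{lemma_equiaffine} gives both $\tau=0$ and the stated $S$; the key algebraic identity here is that the bottom row $\transp{x}\,\D^2 w$ equals $\transp{x}\,\D^2u\cdot(\D^2u)^{-1}\D^2w$, which is immediate. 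Reading off $h$ from \eqref{eqn_matrix} then requires expanding $\pa_{ij}f$ (second derivatives of $f$) and extracting the $N$-coefficient; this should produce $h = -\tfrac1w \D^2u$ after using $-w = (\det\D^2u)^{-1/(n+2)}$.

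Then comes the point that makes $N$ the \emph{affine normal} and not merely some equi-affine field: I must check the normalization $\nu = \dif\vol_h$. Using \eqref{eqn_inducedvolumeform}, the induced volume form is $\nu = \det(\pa_1f,\dots,\pa_nf,N)\,\dx^1\wedge\cdots\wedge\dx^n$; the determinant expands (again by a block computation, subtracting multiples of the first $n$ columns from the last) to $\det(\D^2u)\cdot(\text{bottom entry of }N - \transp{x}\cdot(\text{top block of }N))$, and the Legendre-map structure of $N$ makes that scalar factor equal to $-w$. So $\nu = -w\det(\D^2u)\,\dx = (\det\D^2u)^{n/(n+2)}\,\dx$. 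On the other hand $\dif\vol_h = |\det h|^{1/2}\,\dx = (-w)^{-n/2}(\det\D^2u)^{1/2}\,\dx$, and substituting $-w=(\det\D^2u)^{-1/(n+2)}$ shows these agree. Finally, the conormal formula: $N^*(x)$ is characterized by $\pair{N^*}{N}=1$ and $\pair{N^*}{\T f}=0$; I would simply verify that $\tfrac1w\transp(x,-1)$ pairs to $0$ against each column $\pa_i f$ (using $\transp(x,-1)\cdot \pa_i f = \transp{x}\cdot(\text{top block}) - (\text{bottom entry}) = \transp{x}\D^2u_{\cdot i} - \transp{x}\D^2u_{\cdot i} = 0$) and pairs to $1$ against $N$ (which reduces to $\tfrac1w\cdot(\transp{x}\cdot \D w - (\transp{x}\D w - w)) = \tfrac1w\cdot(-w)\cdot(-1)$... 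I would recompute the sign carefully here).

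The main obstacle I anticipate is purely bookkeeping: keeping the signs straight in the $(n+1)\times(n+1)$ determinant expansions and in the second-derivative computation of $h$, together with consistently substituting $-w=(\det\D^2u)^{-1/(n+2)}$ versus $w^{n+2}=-(\det\D^2u)^{-1}$ at the right moments. There is no conceptual difficulty — everything is forced by Lemmas~\ref{lemma_equiaffine}, \ref{lemma_scale} and the definitions in Section~\ref{subsec_intrinsic} — but the exponent $-\tfrac1{n+2}$ is chosen precisely so that the volume normalization works, so that is the step to get exactly right.
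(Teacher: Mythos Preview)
Your plan is correct and follows essentially the same coordinate computation as the paper, via the framework of \eqref{eqn_inducedvolumeform}--\eqref{eqn_matrix}. The only organizational difference is that the paper starts with a \emph{general} transversal $N$, defines $w:=x\cdot N_0-N^{n+1}$, computes $\tau=\tfrac1w(\transp{x}\,\dif N_0-\dif N^{n+1})$ and $h=-\tfrac1w\D^2u$ from the inverse of $(\pa_1f,\dots,\pa_nf,N)$, and then \emph{derives} that $N$ is affine normal iff $w=\pm(\det\D^2u)^{-1/(n+2)}$ and $N$ is the Legendre map of $w$; you instead fix the specific $N$ from the outset and verify each property directly. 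Both are fine. One arithmetic slip to watch in your volume check: $-w\det\D^2u=(\det\D^2u)^{(n+1)/(n+2)}$, not $(\det\D^2u)^{n/(n+2)}$; the matching exponent on the $\dif\vol_h$ side is likewise $(n+1)/(n+2)$, so the normalization still goes through.
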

Here, as in Section \ref{subsec_analytic}, we identify the dual vector space $\R^{*(n+1)}$ with $\R^{n+1}$ using the standard inner product.
\begin{proof}
Given any map $N:U\to\R^{n+1}$, we let $N_0:=\transp{(N^1,\cdots, N^n)}:U\to\R^n$ denote its first $n$ entries and put $w(x):=x\cdot N_0(x)-N^{n+1}(x)$. We shall prove the first statement of the proposition by showing that $N$ is an affine normal mapping of $f$ if and only if $w$ has the required expression up to sign and $N$ is the Legendre map of $w$.

Let $I$ denote the $n\times n$ identity matrix. A computation gives
 \begin{equation}\label{eqn_proofcomputations}
(\pa_1f,\cdots,\pa_nf, N)=
\begin{pmatrix}
I&N_0\\
\transp{x}&N^{n+1}
\end{pmatrix}
\begin{pmatrix}
\D^2 u&\\
&1
\end{pmatrix}.
\end{equation}
The determinant of the above matrix is $-w\det\D^2u$. So $N$ gives a transversal vector field of  $f$ if and only if $w\neq0$. In this case, the induced volume form is 
$$
\nu=\det(\pa_1f,\cdots,\pa_nf, N)\,\dx^1\wedge\cdots\wedge\dx^n=-w\det\D^2u\,\dx^1\wedge\cdots\wedge\dx^n,
$$
while the other intrinsic data are determined by Eq.(\ref{eqn_matrix}) in
Section \ref{subsec_centroaffine}. In order to obtain explicit expressions, we first compute the inverse matrix to (\ref{eqn_proofcomputations}) and get
$$
(\pa_1f,\cdots,\pa_nf,N)^{-1}=\frac{1}{w}
\begin{pmatrix}
(\D^2u)^{-1}&\\
&1
\end{pmatrix}
\begin{pmatrix}
w I-N_0\transp{x}&N_0\\
\transp{x}&-1
\end{pmatrix}.
$$
Multiplying it to both sides of Eq.(\ref{eqn_matrix}) and computing, we get
$$
\tau=\frac{1}{w}\left(\transp{x}\,\dif N_0-\dif N^{n+1}\right),\quad h=-\frac{1}{w}\D^2u.
$$
We also get the following expression of $S$ when $\tau=0$ particularly:
$$
S=(\D^2u)^{-1}(\pa_1N_0,\cdots,\pa_nN_0).
$$

By definition, $N$ is an affine normal field of $f$ if $\tau=0$ and the volume form 
$$
\dif\vol_h=|w|^{-\frac{n}{2}}|\det\D^2u|^\frac{1}{2}\dx^1\wedge\cdots\wedge \dx^n
$$ 
coincides with $\nu$. By the expression of $\nu$ obtained earlier, the latter condition is equivalent to the equality $w=\pm(\det\D^2u)^{-\frac{1}{n+2}}$. On the other hand, one can check from the definitions of $w$ and the expression of $\tau$ that $\tau=0$ if and only if $N$ is the Legendre map of $w$. This proves the first statement.  

While the required expression of $h$ is already obtained above, using the condition that $N$ is the Legendre map of $w$, one checks that the above expressions of $S$ coincides with the required one, and that the $N^*$ given by the expression in the statement of the proposition satisfies 
$\transp{N}^*(\pa_1f,\cdots,\pa_n f,N)=(0,\cdots,0,1)$, which means $N^*$ is the affine conormal mapping dual to $N$. So the proof is completed.
\end{proof}

\subsection{Equation of affine $(C,k)$-hypersurfaces}\label{subsec_equations}
Recall from Section \ref{subsec_analytic} that given a bounded convex domain $\Omega\subset\R^n$, $\S_0(\Omega)$ is the space of lower semicontinuous convex functions $u:\R^n\to\R$ 
such that $u$ is smooth, locally strongly convex and has the gradient blowup property $\lim_{x\to\pa U}|\D u(x)|=+\infty$ in some convex subdomain $U\subset\Omega$ with $u=+\infty$ outside of $\overline{U}$. 

We showed in Theorem \ref{thm_graph} that the complete  
$C$-convex hypersurfaces (see Definition \ref{def_cconvex}) in $\R^{n+1}$ which are smooth and locally strongly convex are exactly the entire graphs $\gra{u^*}$, $u\in\S_0(\Omega)$, where $u^*$ is the Legendre transforms $u$ and $\Omega$ is an affine section of the opposite dual cone $-C^*$.
The goal of this section is to deduce from Proposition \ref{prop_computations} conditions on $u\in\S_0(\Omega)$ for graph $\gra{u^*}$ to be an affine $(C,k)$-hypersurface. 

For the particular case of affine spheres, Proposition \ref{prop_computations} implies:
\begin{corollary}\label{coro_affinesphereequation}
Let $C$ and $\Omega$ be as in Section \ref{subsec_analytic}. 
Then $\Sigma\subset\R^{n+1}$ is a complete hyperbolic affine sphere generating $C$ with affine shape operator the identity if and only if $\Sigma=\gra{w^*}$ for $w\in\S_0(\Omega)$ satisfying
\begin{equation}\label{eqn_chengyau}
\begin{cases}
\det \D^2w=(-w)^{-n-2}\ \text{  in }\Omega,\\
w|_{\pa\Omega}=0.
\end{cases}
\end{equation}
In this case, the affine sphere $\Sigma^*$ dual to $\Sigma$ (see Section \ref{subsec_conormal}) is given by
$$
\Sigma^*=\left\{\frac{1}{w(x)}\begin{pmatrix}
x\\
-1
\end{pmatrix}\,\Big|\, x\in\Omega\right\}\subset\mathbb{R}^{*(n+1)}\cong\R^{n+1}.
$$
\end{corollary}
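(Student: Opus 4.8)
The plan is to derive the whole statement from Proposition~\ref{prop_computations}, applied with the function called ``$u$'' there taken to be our $w$; with this choice the function called ``$w$'' in Proposition~\ref{prop_computations} is $-(\det\D^2 w)^{-\frac1{n+2}}$, and the first line of \eqref{eqn_chengyau} says precisely that it equals $w$. So I would first set up the dictionary between $\Sigma$ and $w$. By Theorem~\ref{thm_graph}(\ref{item_thmgraph2}) the complete $C$-convex hypersurfaces that are smooth and locally strongly convex are exactly the graphs $\gra{w^*}$ with $w\in\S_0(\Omega)$; and when $w$ is as in \eqref{eqn_chengyau} it is finite, smooth and locally strongly convex on all of $\Omega$, so $U:=\interior\dom w=\Omega$, the gradient map $\D w$ is onto $\R^n$, and (by the discussion in Section~\ref{subsec_legendre1}) the Legendre map $f$ of $w$ --- the map of Definition~\ref{def_legendremap} --- is a global parametrization of $\Sigma=\gra{w^*}$ by $\Omega$.

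The core step is this. By Proposition~\ref{prop_computations} the affine normal mapping of $f$ is the Legendre map of $-(\det\D^2 w)^{-\frac1{n+2}}$; and since the assignment $g\mapsto\big(x\mapsto(\D g(x),\,x\cdot\D g(x)-g(x))\big)$ is injective (from $\D g_1=\D g_2$ and $x\cdot\D g_1-g_1=x\cdot\D g_2-g_2$ one gets $g_1=g_2$), the identity $-(\det\D^2 w)^{-\frac1{n+2}}=w$ --- equivalently $\det\D^2 w=(-w)^{-n-2}$, using $w<0$ --- holds if and only if this affine normal mapping coincides with $f$ itself, i.e.\ the affine normal at each $p=f(x)$ is the position vector $\overrightarrow{0p}$; by Definition~\ref{define_affinesphere} and the convention that proper affine spheres in $\V$ are centered at $0$, that is exactly the condition that $\Sigma$ be a proper affine sphere with $\lambda=1$. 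The negativity of $w$ on $\Omega$ is used here: under \eqref{eqn_chengyau} it holds because a strictly convex function vanishing on $\pa\Omega$ is negative inside, and conversely it is forced by $w=-(\det\D^2 w)^{-\frac1{n+2}}<0$. Proposition~\ref{prop_computations} also furnishes, at no extra cost, the affine metric $h=-\tfrac1w\D^2 w$, which is positive definite, so the relevant affine normal points to the convex side (Section~\ref{subsec_intrinsic}) and $\Sigma$ is a \emph{hyperbolic} affine sphere; and it gives $S=(\D^2 w)^{-1}\D^2 w=\id$, so the affine shape operator is the identity.

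It then remains to match ``$\Sigma$ generates $C$'' with the boundary condition $w|_{\pa\Omega}=0$. For the ``if'' direction: if $w\in\S_0(\Omega)$ satisfies \eqref{eqn_chengyau}, then $\Sigma=\gra{w^*}$ is complete and $C$-convex by Theorem~\ref{thm_graph}(\ref{item_thmgraph2}), and Theorem~\ref{thm_graph}(\ref{item_thmgraph3}) identifies the $C$-regular domain it generates with $\sepi{\env\phi^*}$ for $\phi:=w|_{\pa\Omega}\equiv 0$, which is the cone $C$ by Example~\ref{example_graph}(2). For the ``only if'' direction I would instead start from $\Sigma$: a complete hyperbolic affine sphere generating $C$ with affine shape operator the identity is, by the classification recalled after Theorem~\ref{thm_chengyau1} together with Proposition~\ref{prop_scaling}, the Cheng-Yau affine sphere $\Sigma_C$; in particular it is a complete $C$-convex hypersurface, smooth and locally strongly convex, so $\Sigma=\gra{w^*}$ for some $w\in\S_0(\Omega)$, the core step above yields $\det\D^2 w=(-w)^{-n-2}$ on $U:=\interior\dom w$, and Theorem~\ref{thm_graph}(\ref{item_thmgraph3}) combined with the bijection of Theorem~\ref{thm_graph}(\ref{item_thmgraph1}) and Example~\ref{example_graph}(2) forces $w|_{\pa\Omega}\equiv 0$, whence $\dom w=\overline\Omega$, $U=\Omega$, and \eqref{eqn_chengyau} holds. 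Finally, in both directions the affine normal mapping of $\Sigma$ is $f$, so $N(\Sigma)=\Sigma$ and its dual affine sphere $\Sigma^*$ is the image of the affine conormal mapping dual to $N$ (Section~\ref{subsec_conormal}); by the last formula in Proposition~\ref{prop_computations} this image is $\big\{\tfrac1{w(x)}\begin{pmatrix}x\\-1\end{pmatrix}:x\in\Omega\big\}$, which is the asserted description of $\Sigma^*$.

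The step I expect to demand the most care is not any single computation --- those are essentially forced by Proposition~\ref{prop_computations} and the involutivity of the Legendre transform --- but the bookkeeping around ``$\Sigma$ generates $C$'': extracting the boundary value $\phi\equiv 0$ cleanly from Theorem~\ref{thm_graph}, and keeping the sign conventions aligned so that the affine normal produced by Proposition~\ref{prop_computations} is the convex-side one and $w$ is negative.
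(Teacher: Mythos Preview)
Your proposal is correct and follows the route the paper intends: the corollary is stated immediately after Proposition~\ref{prop_computations} with only the words ``Proposition~\ref{prop_computations} implies'', and your argument is precisely the unpacking of that implication---taking $u=w$ in the proposition so that the auxiliary function $-(\det\D^2 w)^{-1/(n+2)}$ coincides with $w$ exactly when \eqref{eqn_chengyau} holds, hence the affine normal map equals the immersion $f$ itself, giving the affine-sphere condition with $\lambda=1$, $S=\id$, and the conormal formula for $\Sigma^*$. Your additional bookkeeping via Theorem~\ref{thm_graph} and Example~\ref{example_graph}(2) to match ``generates $C$'' with $w|_{\pa\Omega}=0$ is the natural way to complete the argument and is consistent with how the paper uses those results elsewhere; the only mild redundancy is that in the ``only if'' direction you invoke the Cheng--Yau classification to identify $\Sigma=\Sigma_C$ (hence $C$-convex) before re-deriving the equation, whereas one could bypass this by arguing $C$-convexity directly from the tangent hyperplanes of a hyperbolic affine sphere asymptotic to $\pa C$---but your route is valid given that Theorem~\ref{thm_chengyau1} is already available in the paper.
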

Therefore, the Cheng-Yau theorem on unique existence of affine spheres (Theorem \ref{thm_chengyau1}) is a consequence of the following unique solvability result on \eqref{eqn_chengyau}:
\begin{theorem}[Cheng-Yau \cite{chengyau1}, analytic version of Theorem \ref{thm_chengyau1}]\label{thm_chengyau2}
For every bounded convex domain $\Omega\subset\mathbb{R}^n$, there exists a unique convex function $w_\Omega\in\CC{\Omega}$ satisfying Eq.(\ref{eqn_chengyau}).
Moreover, $w_\Omega$ satisfies $\lim_{x\rightarrow x_0}|\nabla w_\Omega(x)|\rightarrow+\infty$ for all $x_0\in\pa\Omega$.
\end{theorem}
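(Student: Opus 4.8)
The plan is to reduce the statement to the PDE-theoretic core of Cheng and Yau's work and to the regularity and comparison machinery already recorded in Section \ref{subsec_preliminaries}. The existence and uniqueness of a convex solution $w_\Omega$ is precisely \cite{chengyau1} (see also \cite{MR2743442}); one first exhausts $\Omega$ by smooth strictly convex domains $\Omega_j\nearrow\Omega$ and solves on each $\Omega_j$ the Dirichlet problem $\det\D^2 w_j=(-w_j)^{-n-2}$ with $w_j|_{\pa\Omega_j}=0$. Here the right-hand side depends on the unknown, but since $w\mapsto(-w)^{-n-2}$ is monotone one gets existence via the standard continuity method / sub- and super-solution argument of Cheng-Yau, and Theorem \ref{thm existence classical monge ampere} supplies solvability of the linearized Dirichlet problems at each step; interior smoothness of $w_j$ is Theorem \ref{solution smooth} once one knows strict convexity, which on a bounded domain with positive right-hand side bounded below on compacta follows from Aleksandrov–Heinz-type arguments (Theorem \ref{solution strictly convex dimension 2} in dimension $2$, and the general-dimension interior strict convexity estimate of Cheng-Yau in higher dimensions). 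Uniqueness among convex solutions is immediate from the Comparison Principle (Lemma \ref{lemma_comparison}, or its classical form): if $w_1,w_2$ both solve \eqref{eqn_chengyau}, then wherever $w_1<w_2$ one has $(-w_1)^{-n-2}>(-w_2)^{-n-2}$, i.e. $\det\D^2 w_1>\det\D^2 w_2$ on the open set $\{w_1<w_2\}$, contradicting Lemma \ref{lemma_maximum} applied on that set with boundary values $0$ (note $w_1=w_2=0$ on $\pa\Omega$ and the set is compactly contained once one checks it cannot touch $\pa\Omega$); symmetrically $w_1\ge w_2$, so $w_1=w_2$.

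Next I would establish $w_\Omega\in\C^0(\overline\Omega)$ with $w_\Omega|_{\pa\Omega}=0$. Continuity up to the boundary and vanishing there is obtained by a barrier argument: for each $x_0\in\pa\Omega$ a supporting hyperplane of $\Omega$ at $x_0$ gives a linear upper barrier vanishing at $x_0$, forcing $\limsup_{x\to x_0}w_\Omega(x)\le 0$; for the lower barrier one uses that $\Omega$ is bounded and builds a convex function $\psi$ with $\det\D^2\psi$ as large as one likes (a large multiple of $|x-c|^2$ for a center $c$, suitably translated so that $\psi\le 0$ on $\pa\Omega$ and $\psi=0$ at $x_0$), with $\det\D^2\psi\ge (-\psi)^{-n-2}$ once $|\psi|$ is controlled; then the Comparison Principle gives $w_\Omega\ge\psi$, hence $\liminf_{x\to x_0}w_\Omega(x)\ge 0$. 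Together these give $w_\Omega(x)\to 0$ as $x\to\pa\Omega$, so the convex function extends continuously to $\overline\Omega$ with zero boundary values, and combined with interior smoothness we get $w_\Omega\in\CC{\Omega}$.

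The substantive new point, and the one I expect to be the main obstacle, is the gradient blowup $|\nabla w_\Omega(x)|\to+\infty$ as $x\to x_0$ for every $x_0\in\pa\Omega$. The clean way to see this uses the dual/affine-sphere picture already set up: by Corollary \ref{coro_affinesphereequation} the graph $\Sigma=\gra{w_\Omega^*}$ would be the complete hyperbolic affine sphere $\Sigma_C$ generating the cone $C$ over $\Omega$, and the dual affine sphere $\Sigma^*$ is parametrized by $x\mapsto \frac{1}{w_\Omega(x)}(x,-1)$ on $\Omega$; completeness and asymptoticity of $\Sigma_C$ to $\pa C$ (Theorem \ref{thm_chengyau1}) translate, via Theorem \ref{thm_graph}(\ref{item_thmgraph2}) and the characterization of $\S_0(\Omega)$, exactly into the statement that $w_\Omega$ has infinite inner derivatives at every boundary point, i.e. (Proposition \ref{prop_derivative}) the gradient blowup. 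However, since Theorem \ref{thm_chengyau2} is meant to be the \emph{input} to Theorem \ref{thm_chengyau1}, one should argue directly: fix $x_0\in\pa\Omega$, suppose for contradiction that $w_\Omega$ admits a subgradient $y_0$ at $x_0$, i.e. there is a linear function $\ell$ with $\ell\le w_\Omega$ on $\Omega$ and $\ell(x_0)=w_\Omega(x_0)=0$; replacing $w_\Omega$ by $w_\Omega-\ell$ one may assume $w_\Omega\ge 0$ near $x_0$ with $w_\Omega(x_0)=0$ and $\nabla w_\Omega$ bounded along some sequence approaching $x_0$. Then near $x_0$ one has $-w_\Omega$ comparable to the distance to the supporting hyperplane, so the right-hand side $(-w_\Omega)^{-n-2}$ grows like $(\mathrm{dist})^{-n-2}$, which is not integrable against $\Leb$ in the normal direction for $n\ge 1$; comparing with an explicit one-variable sub-solution depending only on the distance to that hyperplane (an ODE $\,(-v')^{?}\sim (-v)^{-n-2}$, integrated) shows $|\nabla w_\Omega|$ must in fact blow up, contradicting boundedness. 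Making the comparison region genuinely compact in $\Omega$ and handling the dependence of the right-hand side on the solution are the delicate bookkeeping points; everything else is a routine application of the maximum principle Lemma \ref{lemma_maximum}. This is essentially the argument of \cite{chengyau1}, and the paper may simply cite it, but the above is how I would lay out a self-contained proof.
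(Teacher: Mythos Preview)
The paper does not give its own proof of Theorem \ref{thm_chengyau2}: it is stated as a citation of Cheng--Yau \cite{chengyau1} (see also the reference \cite{MR2743442}) and is used throughout as an input, not reproved. So there is no ``paper's own proof'' to compare against; your proposal is an attempt to reconstruct the Cheng--Yau argument, whereas the paper simply invokes it.

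That said, a couple of points in your sketch deserve care. In the gradient-blowup step you write ``replacing $w_\Omega$ by $w_\Omega-\ell$ one may assume $w_\Omega\ge 0$ near $x_0$'': subtracting an affine function leaves $\det\D^2$ unchanged but changes the right-hand side, since the equation reads $\det\D^2 w=(-w)^{-n-2}$ and not $\det\D^2 w=f(x)$; you should keep track of this and argue directly with the original $w_\Omega$. The cleaner version is: if $|\nabla w_\Omega|$ stayed bounded along a sequence $x_j\to x_0$, then (Proposition \ref{prop_derivative}) $w_\Omega$ would admit a subgradient at $x_0$, hence $-w_\Omega(x)\le C|x-x_0|$ near $x_0$, so $\det\D^2 w_\Omega(x)\ge c\,|x-x_0|^{-n-2}$; but the Monge--Amp\`ere measure of a neighborhood of $x_0$ in $\Omega$ equals the Lebesgue measure of its image under $\D w_\Omega$, which would be finite if the gradient were bounded there, contradicting the non-integrability of $|x-x_0|^{-n-2}$ over an $n$-dimensional half-ball. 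Also, your lower barrier for boundary continuity (``a large multiple of $|x-c|^2$ \dots with $\det\D^2\psi\ge(-\psi)^{-n-2}$ once $|\psi|$ is controlled'') is circular as written, since you need $|\psi|$ small near $x_0$ while making $\det\D^2\psi$ large; one should instead compare with the Cheng--Yau solution on a larger domain or with the explicit simplex/ball solutions (\cf Example \ref{example_support} and Lemma \ref{lemma_potentialcomparison}).
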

In the sequel, we refer to the function $w_\Omega$ as the \emph{Cheng-Yau support function} of $\Omega$. 
\begin{remark}\label{remark_dualscaling}
By Corollary \ref{coro_affinesphereequation}, the relation between $w_\Omega$ and affine spheres is twofold: on one hand, if we extend $w_\Omega$ to $\R^n$ by setting $w_\Omega=+\infty$ outside of $\overline{\Omega}$, then the graph of its Legendre transform $w_\Omega^*$ is the affine sphere $\Sigma_C$ from Theorem \ref{thm_chengyau1}; on the other hand, while the dual cone $C^*$ consists of all negative scalings of points $(x,-1)\in\R^{n+1}$ with $x\in\Omega$, the locus of the $\frac{1}{w_\Omega(x)}$-scaling is the dual affine sphere $\Sigma_{C^*}$ in $C^*$. The latter property can be used to determine $w_\Omega$ when $\Sigma_{C^*}$ is known.
\end{remark}

Complete affine $(C,k)$-hypersurfaces can now be characterized through Monge-Amp\`ere equation as follows: 
\begin{corollary}\label{coro_ckmongeampere}
Let $C$ and $\Omega$ be as in Section \ref{subsec_analytic} and $w_\Omega$ be the Cheng-Yau support function of $\Omega$. Then $\Sigma\subset\R^{n+1}$ is a complete affine $(C,k)$-hypersurface with $k>0$ if and only if $\Sigma$ is the graph of the Legendre transform of some $u\in\S_0(\Omega)$ satisfying
\begin{equation}\label{eqn_ck}
\det\D^2u=\frac{k^{-\frac{n+2}{2(n+1)}}}{(-w_\Omega)^{n+2}}\,\text{ in }U:=\interior\dom{u}.
\end{equation}
In this case, the image of the projectivized affine conormal mapping $\mathbb{P}\circ N^*:\Sigma\to \mathbb{P}(C^*)\cong\Omega$ is exactly $U$.
\end{corollary}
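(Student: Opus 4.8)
The plan is to combine the explicit formulas for the affine normal and affine conormal mappings of a Legendre-map hypersurface (Proposition~\ref{prop_computations}) with the analytic description of the dual Cheng--Yau affine sphere $\Sigma_{C^*}$ (Corollary~\ref{coro_affinesphereequation} together with Remark~\ref{remark_dualscaling}), so as to reduce the whole equivalence to one algebraic identity, and then to keep track of scaling exponents using Propositions~\ref{prop_scaling} and~\ref{prop_cagc}.

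First I would set up the graph picture. A complete affine $(C,k)$-hypersurface is smooth, locally strongly convex and $C$-convex (the discussion after Definition~\ref{def_ck}), so by Theorem~\ref{thm_graph}(\ref{item_thmgraph2}) it equals $\gra{u^*}$ for a unique $u\in\S_0(\Omega)$, and conversely every such $\gra{u^*}$ is of this kind. Since $u\in\S_0(\Omega)$ has infinite inner derivatives on $\pa U$, where $U:=\interior\dom u$, we have $\D u(U)=\R^n$ and $u^*$ is smooth, so the Legendre map $f$ of $u$ is a diffeomorphism from $U$ onto $\gra{u^*}=\Sigma$. Applying Proposition~\ref{prop_computations} with $w:=-(\det\D^2u)^{-\frac{1}{n+2}}$ (which is negative since $\det\D^2u>0$), the affine normal mapping of $\Sigma$ pulled back along $f$ is the Legendre map of $w$, the affine conormal mapping is $x\mapsto\frac{1}{w(x)}(x,-1)$, and the affine metric $-\frac{1}{w}\D^2u$ is positive definite; hence this $N$ points towards the convex side, which is the normalization under which CAGC is defined.

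The core step is the equivalence between equation~\eqref{eqn_ck} and the identity $w=k^{\frac{1}{2(n+1)}}w_\Omega$ on $U$. By Corollary~\ref{coro_affinesphereequation} and Remark~\ref{remark_dualscaling}, $\Sigma_{C^*}=\{\frac{1}{w_\Omega(x)}(x,-1):x\in\Omega\}$, so $k^{-\frac{1}{2(n+1)}}\Sigma_{C^*}$ consists of the points $\frac{k^{-1/(2(n+1))}}{w_\Omega(x)}(x,-1)$, $x\in\Omega$. Comparing this with $N^*(\Sigma)=\{\frac{1}{w(x)}(x,-1):x\in U\}$---the last coordinate determines the scalar, after which the remaining $n$ coordinates determine the base point---shows that $N^*(\Sigma)\subset k^{-\frac{1}{2(n+1)}}\Sigma_{C^*}$ if and only if $w=k^{\frac{1}{2(n+1)}}w_\Omega$ on $U$; raising $(\det\D^2u)^{-\frac{1}{n+2}}=k^{\frac{1}{2(n+1)}}(-w_\Omega)$ to the power $-(n+2)$ turns the latter into exactly~\eqref{eqn_ck}. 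With this in hand the forward direction is immediate: a complete affine $(C,k)$-hypersurface satisfies $N^*(\Sigma)\subset k^{-\frac{1}{2(n+1)}}\Sigma_{C^*}$ (established after Definition~\ref{def_ck}), hence~\eqref{eqn_ck}. For the converse, \eqref{eqn_ck} yields $w=k^{\frac{1}{2(n+1)}}w_\Omega$ on $U$; since the Legendre map is equivariant under scaling and $\gra{w_\Omega^*}=\Sigma_C$ (Corollary~\ref{coro_affinesphereequation}, Theorem~\ref{thm_chengyau2}), the affine normal mapping of $\Sigma$ has image in $k^{\frac{1}{2(n+1)}}\Sigma_C$, a complete hyperbolic affine sphere generating $C$. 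Then Proposition~\ref{prop_cagc}(\ref{item_cagc1}) makes $\det S$ constant, and matching the affine normal $|\det S|^{-\frac{1}{n+2}}$ (position vector) of $N(\Sigma)$ with the affine shape operator $k^{-\frac{1}{n+2}}\id$ of $k^{\frac{1}{2(n+1)}}\Sigma_C$ from Proposition~\ref{prop_scaling} forces $|\det S|=k$, while Proposition~\ref{prop_cagc}(\ref{item_cagc2}) forces the eigenvalues of $S$ to be positive; hence $\det S=k$ and $\Sigma$ is an affine $(C,k)$-hypersurface. Finally, $\mathbb{P}\circ N^*\circ f(x)=\mathbb{P}(x,-1)$ is the point of $\mathbb{P}(C^*)\cong\Omega$ corresponding to $x$, so the image of $\mathbb{P}\circ N^*$ on $\Sigma$ is exactly $U$.

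I expect the only genuine difficulty to be bookkeeping: keeping the several scaling exponents consistent as one passes between the scale factor $k^{\frac{1}{2(n+1)}}$ of the affine sphere, the affine shape operator $k^{-\frac{1}{n+2}}\id$ (Proposition~\ref{prop_scaling}), and the affine Gaussian curvature $\det S=k$ (Proposition~\ref{prop_cagc}), and making sure the sign ambiguity of the affine normal is consistently resolved by the convex-side convention. There is no new analytic content; everything rests on Proposition~\ref{prop_computations} and Corollary~\ref{coro_affinesphereequation}.
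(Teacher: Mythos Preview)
Your proof is correct and uses essentially the same ingredients as the paper: Theorem~\ref{thm_graph}(\ref{item_thmgraph2}) to set up the graph picture, Proposition~\ref{prop_computations} for the affine (co)normal formulas, and Corollary~\ref{coro_affinesphereequation} for the Cheng--Yau sphere. The one genuine difference is in how the constant $k^{-\frac{n+2}{2(n+1)}}$ is pinned down. The paper works on the normal side: it observes that $N(\Sigma)$ lies in a scaling of $\Sigma_C$ iff $w$ is a constant multiple $c^{-\frac{1}{n+2}}w_\Omega$, then simply computes $\det S = \det(\D^2u)^{-1}\det\D^2w$ from the explicit formula $S=(\D^2u)^{-1}\D^2w$ in Proposition~\ref{prop_computations}, which collapses in one line to $c^{-\frac{2(n+1)}{n+2}}$ after substituting $\det\D^2w_\Omega=(-w_\Omega)^{-n-2}$. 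You instead route through the conormal side and match scaling exponents via Propositions~\ref{prop_scaling} and~\ref{prop_cagc}. Both are fine; the paper's computation is a shade more direct, while yours makes the role of the dual sphere $\Sigma_{C^*}$ more visible and gives the last statement about $\mathbb{P}\circ N^*$ essentially for free.
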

This result is essentially contained in \cite{lisimoncrelle} albeit in a local sense.
\begin{proof}
Affine $(C,k)$-hypersurfaces are smooth, locally strongly convex and $C$-convex (see Section \ref{subsec_cagc}). Therefore, by Theorem \ref{thm_graph}, a complete affine $(C,k)$-hypersurface is the graph of the Legendre transform $u^*$ of some $u\in\S_0(\Omega)$. We shall determine the condition on $u$ for the graph to be affine $(C,k)$. Since $u^*$ is a smooth convex function on the entire $\R^n$ (see Lemma \ref{lemma_entire} and Theorem \ref{thm_graph} (\ref{item_thmgraph2})), we have $\D u(U)=\D u(\R^n)=\dom{\D u^*}=\R^n$ (the first equality follows from the definition of $\S_0(\Omega)$ and the second from Corollary \ref{coro_subdiff}), hence $\Sigma$ is parametrized by the Legendre map of $u|_U\in\C^\infty(U)$ (see Section \ref{subsec_legendre1}). Proposition \ref{prop_computations} and Corollary \ref{coro_affinesphereequation} then imply that the affine normal mapping of $\Sigma$ has image in a scaling of the Cheng-Yau affine sphere $\Sigma_C$ if and only if $w:=-(\det\D^2u)^{-\frac{1}{n+2}}$ is a constant multiple of the Cheng-Yau support function $w_\Omega$, \ie 
$$
\det\D^2u=(-w)^{-n-2}=c(-w_\Omega)^{-n-2}\, \text{ in } U
$$
for some constant $c>0$. In this case, we have $w=c^{-\frac{1}{n+2}}w_\Omega$ and the affine Gaussian curvature of $\Sigma$ is 
$$
k=\det(S)=\det(\D^2u)^{-1}\det\D^2w=c^{-1}(-w_\Omega)^{n+2}c^{-\frac{n}{n+2}}\det\D^2w_\Omega=c^{-\frac{2(n+1)}{n+2}},
$$
which yields $c=k^{-\frac{n+2}{2(n+1)}}$. Therefore, $\Sigma=\gra{u^*}$ is an affine $(C,k)$-hypersurface if and only if $u$ satisfies (\ref{eqn_ck}). This proves the first statement. The second statement follows from the expression of affine conormal given in Proposition \ref{prop_computations}.
\end{proof}

\subsection{Properties of the Cheng-Yau support function}
In this section, we collection some results on the Cheng-Yau support function $w_\Omega$ from Theorem \ref{thm_chengyau2}.

We first give explicit expression of $w_\Omega$ when $\Omega$ is a ball or a simplex, which correspond to hyperboloids and \c{T}i\c{t}eica affine spheres from Example \ref{example_hyperboloid} and \ref{example_titeica}:
\begin{example}[\textbf{Balls and simplices}]\label{example_support}
Let $\mathbb{B}:=\{x\in\R^n\mid |x|<1\}$ be unit ball. Then
$$
w_\mathbb{B}(x)=-\sqrt{1-|x|^2}.
$$
For a general ball $B=B_R(x_1):=\{x\in\R^n\mid |x-x_1|<R\}$, we have
$$
w_B=-R^{-\frac{1}{n+1}}\sqrt{R^2-|x-x_1|^2}.
$$

On the other hand, for the simplex $\Delta\subset\R^n$ with vertices $x_0,\cdots,x_n\in\R^n$, given by
$\Delta:=\{t_0x_0+\cdots +t_nx_n\mid t_i>0,\, t_0+\cdots +t_n=1\}$, we have
$$
w_\Delta(x)=-\left(\frac{\vol(\Delta)}{\Lambda }\,t_0(x)\cdots t_n(x)\right)^\frac{1}{n+1},
$$
where $\vol(\Delta)$ is the volume of $\Delta$, $\Lambda=\Lambda_n$ is the constant mentioned in Example \ref{example_titeica} and the functions $t_i:\Delta\to(0,1)$ are determined by 
$$
t_0(x)x_0+\cdots+t_n(x)x_n=x,\ t_0(x)+\cdots t_n(x)=1\,\text{ for all }x\in\Delta.
$$ 

Let $P_i\subset\R^n$ denote the hyperplane spanned by the vertices of $\Delta$ other than $x_i$. It is elementary to check that after choosing a Euclidean metric on $\R^n$, we can write
$$
t_i(x)=\frac{\dist(x,P_i)}{\dist(x_i,P_i)}~,
$$
where ``$\dist$'' stands for the distance induced by the metric.
\end{example}
These expressions are obtained using Remark \ref{remark_dualscaling} and the expressions of affine spheres given in Example \ref{example_titeica}. The dimensional exponents appearing in the expressions can be justified from the fact that 
 $w_{g(\Omega)}(x)=\mathsf{Jac}(g)^{\frac{1}{n+1}}w_\Omega(g^{-1}(x))$
 for any affine transformation $g:\R^n\to\R^n$, where $\mathsf{Jac}(g)$ denotes the Jacobian of $g$.

Our main technique to deal with a general $w_\Omega$ is to compare it with the above special ones using the following lemma:
\begin{lemma}\label{lemma_potentialcomparison}
	Let $\Omega_1$ and $\Omega_2$ be bounded convex domains in $\mathbb{R}^n$ such that $\Omega_1\subset\Omega_2$. Then  we have $w_{\Omega_1}\geq w_{\Omega_2}$ on $\Omega_1$.
\end{lemma}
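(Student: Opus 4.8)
The plan is to use the Comparison Principle for Monge-Amp\`ere equations, exploiting the fact that $w_{\Omega_1}$ and $w_{\Omega_2}$ satisfy the \emph{same} Monge-Amp\`ere equation $\det\D^2 w = (-w)^{-n-2}$ on $\Omega_1$, together with the boundary behaviour supplied by Theorem \ref{thm_chengyau2}. First I would observe that, since $\Omega_1\subset\Omega_2$, the restriction $w_{\Omega_2}|_{\Omega_1}$ is a convex function on $\Omega_1$ solving $\det\D^2 w_{\Omega_2}=(-w_{\Omega_2})^{-n-2}$ there, and it extends continuously to $\overline{\Omega_1}$ with $w_{\Omega_2}<0$ on all of $\overline{\Omega_1}$ (because $\overline{\Omega_1}\subset\overline{\Omega_2}$ and $w_{\Omega_2}$ vanishes only on $\pa\Omega_2$). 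Meanwhile $w_{\Omega_1}\in\CC{\Omega_1}$ vanishes on $\pa\Omega_1$, so on the boundary we have $w_{\Omega_1}=0\geq w_{\Omega_2}$.

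The subtlety is that the right-hand side $(-w)^{-n-2}$ depends on the unknown, so one cannot directly invoke the Comparison Principle; instead I would argue by contradiction using Lemma \ref{lemma_maximum} (Maximum Principle). Suppose $\inf_{\Omega_1}(w_{\Omega_1}-w_{\Omega_2})<0$. Since both functions are continuous up to $\overline{\Omega_1}$ and agree-or-exceed on $\pa\Omega_1$ (namely $w_{\Omega_1}-w_{\Omega_2}\geq 0$ there), the infimum is attained at an interior point $x_0\in\Omega_1$, where $w_{\Omega_1}(x_0)<w_{\Omega_2}(x_0)<0$, hence $(-w_{\Omega_1}(x_0))^{-n-2}>(-w_{\Omega_2}(x_0))^{-n-2}$. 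The idea is that near such a minimum point the inequality $w_{\Omega_1}<w_{\Omega_2}$ forces $\det\D^2 w_{\Omega_1}>\det\D^2 w_{\Omega_2}$, which should be incompatible with $w_{\Omega_1}-w_{\Omega_2}$ having an interior minimum — but to make this rigorous one needs the comparison to hold on a whole neighbourhood, not just at one point. The clean way is: consider the open set $V:=\{x\in\Omega_1\mid w_{\Omega_1}(x)<w_{\Omega_2}(x)\}$, which is nonempty by assumption, has $\overline{V}\subset\Omega_1$ (since $w_{\Omega_1}-w_{\Omega_2}\geq0$ on $\pa\Omega_1$), and on which $(-w_{\Omega_1})^{-n-2}\geq(-w_{\Omega_2})^{-n-2}$ pointwise, i.e. $\det\D^2 w_{\Omega_1}\geq\det\D^2 w_{\Omega_2}$ on $V$; applying Lemma \ref{lemma_maximum} on $V$ with $u_+=w_{\Omega_2}$, $u_-=w_{\Omega_1}$ gives $\inf_V(w_{\Omega_2}-w_{\Omega_1})=\liminf_{x\to\pa V}(w_{\Omega_2}-w_{\Omega_1})=0$, contradicting that $w_{\Omega_2}-w_{\Omega_1}>0$ strictly on $V$.

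The main obstacle is precisely this last point — verifying that Lemma \ref{lemma_maximum} applies on the (a priori irregular, possibly disconnected) open set $V$ and that the boundary $\liminf$ genuinely equals $0$. For this I would note that $\pa V\subset\Omega_1$ and on $\pa V$ one has $w_{\Omega_1}=w_{\Omega_2}$ by continuity, so $\liminf_{x\to\pa V}(w_{\Omega_2}-w_{\Omega_1})=0$; and $V$ being bounded and convexity of the two functions ensuring the hypotheses of Lemma \ref{lemma_maximum} are met on each connected component (which is itself a bounded convex domain, as a sublevel set of the convex function $w_{\Omega_1}-w_{\Omega_2}$). Everything else is routine. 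One should also remark that the conclusion $w_{\Omega_1}\geq w_{\Omega_2}$ then extends to $\pa\Omega_1$ trivially since $w_{\Omega_1}=0\geq w_{\Omega_2}$ there, giving the inequality on all of $\overline{\Omega_1}$, though the stated lemma only claims it on $\Omega_1$.
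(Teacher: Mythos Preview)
Your argument has a sign error that propagates through. On the set $V=\{w_{\Omega_1}<w_{\Omega_2}\}$ you have $-w_{\Omega_1}>-w_{\Omega_2}>0$, and since $t\mapsto t^{-(n+2)}$ is \emph{decreasing} for $t>0$, the correct conclusion is $(-w_{\Omega_1})^{-n-2}<(-w_{\Omega_2})^{-n-2}$, i.e.\ $\det\D^2 w_{\Omega_1}<\det\D^2 w_{\Omega_2}$ on $V$ --- the opposite of what you write (and of the earlier pointwise claim at $x_0$). Consequently your assignment $u_+=w_{\Omega_2}$, $u_-=w_{\Omega_1}$ does \emph{not} satisfy the hypothesis $\det\D^2 u_+\leq\det\D^2 u_-$ of Lemma~\ref{lemma_maximum}. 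Moreover, even accepting your inequality, the conclusion $\inf_V(w_{\Omega_2}-w_{\Omega_1})=0$ is perfectly compatible with $w_{\Omega_2}-w_{\Omega_1}>0$ on the open set $V$ (the infimum of a positive function can be zero), so no contradiction results. The fix is to swap roles: with $u_+=w_{\Omega_1}$, $u_-=w_{\Omega_2}$ the correct inequality gives the hypothesis of the lemma, and now the conclusion $\inf_V(w_{\Omega_1}-w_{\Omega_2})=0$ genuinely contradicts the fact that this continuous function is strictly negative on $V$ and attains its minimum on the compact set $\overline{V}\subset\overline{\Omega_1}$. A secondary slip: $w_{\Omega_1}-w_{\Omega_2}$ is a \emph{difference} of convex functions and is not itself convex in general, so the components of $V$ need not be convex as you assert; Lemma~\ref{lemma_maximum} is stated for convex domains, though its proof does not actually use convexity of the domain.

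For comparison, the paper's proof bypasses all of this by exploiting smoothness directly: at an interior minimum $x_0$ of $w_{\Omega_1}-w_{\Omega_2}$ the Hessian $\D^2(w_{\Omega_1}-w_{\Omega_2})(x_0)$ is positive semidefinite, hence $\det\D^2 w_{\Omega_1}(x_0)\geq\det\D^2 w_{\Omega_2}(x_0)$ by monotonicity of the determinant, which immediately contradicts the (correct) strict inequality $(-w_{\Omega_1}(x_0))^{-n-2}<(-w_{\Omega_2}(x_0))^{-n-2}$.
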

\begin{proof}
Assume by contradiction that the required inequality does not hold, so that the function $w_{\Omega_1}-w_{\Omega_2}$ on $\overline{\Omega}_1$, which takes nonnegative values on $\pa\Omega_1$,  achieves its negative minimum at some $x_0\in\Omega_1$. It follows that the hessian $\D^2(w_{\Omega_1}-w_{\Omega_2})(x_0)$ is positive semidefinite, hence 
$$
(-w_{\Omega_1})^{-n-2}=\det \D^2 w_{\Omega_1}\geq \det \D^2 w_{\Omega_2}=(-w_{\Omega_2})^{-n-2}
$$
at $x_0$. But this contradicts the fact that $w_{\Omega_1}(x_0)-w_{\Omega_2}(x_0)<0$.
\end{proof}

We proceed to show that as a sequence of convex domains approaches $\Omega$ from outside, the resulting sequence of Cheng-Yau support functions converges uniformly to $w_\Omega$ on $\Omega$:
\begin{proposition}\label{prop_epsilon}
	Given a bounded convex domain $\Omega\subset\R^n$ and $\eps>0$, there exists $\delta>0$ such that for any convex domain $\Omega'$ containing $\Omega$ and contained in a $\delta$-neighborhood of $\Omega$, we have
	$w_\Omega-\eps\leq w_{\Omega'}\leq w_\Omega$ in $\Omega$.
\end{proposition}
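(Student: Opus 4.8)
The plan is to reduce everything to \emph{dilations} of $\Omega$, for which the Cheng--Yau support function transforms by the explicit covariance formula $w_{g(\Omega)}(x)=\mathsf{Jac}(g)^{\frac{1}{n+1}}w_\Omega(g^{-1}(x))$ recalled above, and then to trap $w_{\Omega'}$ between two such dilations using the monotonicity Lemma~\ref{lemma_potentialcomparison}. The upper bound $w_{\Omega'}\le w_\Omega$ on $\Omega$ is in fact free: it is exactly Lemma~\ref{lemma_potentialcomparison} applied to $\Omega\subset\Omega'$. So the whole content is the lower bound.

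Since both the hypothesis and the conclusion are invariant under translating $\Omega$ (and $w_{\Omega+v}(x)=w_\Omega(x-v)$ by the covariance formula with $g$ a translation), I would first normalize so that $0\in\Omega$ and fix $r>0$ with $B_r(0)\subset\Omega$. For $\lambda>1$ let $\lambda\Omega$ denote the dilation of $\Omega$ by $\lambda$ centered at $0$. A short convexity computation shows that $\lambda\Omega$ contains the $(\lambda-1)r$-neighborhood of $\overline\Omega$: for $x\in\overline\Omega$ and $v\in B_r(0)$, the point $\tfrac{1}{\lambda}x+\tfrac{\lambda-1}{\lambda}v$ is a strict convex combination of a point of $\overline\Omega$ and an interior point of $\Omega$, hence lies in $\Omega$, so $B_{(\lambda-1)r/\lambda}(x/\lambda)\subset\Omega$ and multiplying by $\lambda$ gives $B_{(\lambda-1)r}(x)\subset\lambda\Omega$. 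Consequently, if $\delta\le(\lambda-1)r$, then any convex domain $\Omega'$ with $\Omega\subset\Omega'$ contained in the $\delta$-neighborhood of $\Omega$ satisfies $\Omega\subset\Omega'\subset\lambda\Omega$.

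It then remains to show that $w_{\lambda\Omega}\to w_\Omega$ uniformly on $\Omega$ as $\lambda\to 1^+$. Applying the covariance formula to $g(x)=\lambda x$ gives, for $x\in\Omega$ (so that $x/\lambda\in\Omega$ as well, since $0\in\Omega$ and $\Omega$ is convex), the identity $w_{\lambda\Omega}(x)=\lambda^{n/(n+1)}w_\Omega(x/\lambda)$, whence
\begin{equation*}
\bigl|w_{\lambda\Omega}(x)-w_\Omega(x)\bigr|\le\bigl|\lambda^{n/(n+1)}-1\bigr|\,\|w_\Omega\|_{\C^0(\overline\Omega)}+\bigl|w_\Omega(x/\lambda)-w_\Omega(x)\bigr|.
\end{equation*}
By Theorem~\ref{thm_chengyau2} the function $w_\Omega$ lies in $\C^0(\overline\Omega)$, hence is bounded and uniformly continuous on the compact set $\overline\Omega$; since $|x/\lambda-x|\le(1-\tfrac{1}{\lambda})\operatorname{diam}\Omega\to 0$, the right-hand side tends to $0$ uniformly in $x\in\Omega$ as $\lambda\to1^+$. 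Given $\eps>0$, I would therefore pick $\lambda>1$ with $w_{\lambda\Omega}\ge w_\Omega-\eps$ on $\Omega$ and then set $\delta:=(\lambda-1)r$; for any admissible $\Omega'$ the inclusion $\Omega'\subset\lambda\Omega$ together with Lemma~\ref{lemma_potentialcomparison} yields $w_{\Omega'}\ge w_{\lambda\Omega}\ge w_\Omega-\eps$ on $\Omega$, completing the argument. The only place requiring care is the very first move---replacing the arbitrary nearby domain $\Omega'$ by the controlled dilation $\lambda\Omega$, which sidesteps the need for any continuous dependence of $w_{(\cdot)}$ on the domain---after which no PDE estimate is invoked, just the covariance identity and uniform continuity of $w_\Omega$; I expect this last part to be entirely routine.
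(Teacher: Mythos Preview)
Your argument is correct and follows a genuinely different route from the paper. The paper works directly with the $\delta$-neighborhood $\Omega_\delta$: it first proves the auxiliary limit $\min_{\pa\Omega} w_{\Omega_\delta}\to 0$ as $\delta\to 0$ by, for each boundary point, constructing a circumscribed simplex with a face $\delta$-close to that point and invoking the explicit formula for $w_\Delta$ from Example~\ref{example_support}; it then feeds this boundary estimate into the Maximum Principle (comparing $w_\Omega-\eps$ with $w_{\Omega'}$ on $\Omega$) to obtain the lower bound. Your approach sidesteps both the simplex construction and the final PDE comparison: by trapping $\Omega'$ inside a dilation $\lambda\Omega$ and using the affine covariance formula $w_{\lambda\Omega}(x)=\lambda^{n/(n+1)}w_\Omega(x/\lambda)$, the question reduces to uniform continuity of the single function $w_\Omega\in\C^0(\overline\Omega)$. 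This is more elementary and shorter; the paper's route has the mild side benefit of yielding an explicit modulus via the simplex bound, and of establishing the boundary limit \eqref{eqn_proofepsilon} as an independent fact, but neither is needed for the proposition as stated.
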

In the statement of the proposition and the proof below, we fix an auxiliary Euclidean metric on $\R^n$, under which the $\delta$-neighborhood is defined.
\begin{proof}
The second inequality is given by Lemma \ref{lemma_potentialcomparison}. To prove the first one, we let $\Omega_\delta$ denote the $\delta$-neighborhood of $\Omega$, which is itself a bounded convex domain. Let $w_\delta:=w_{\Omega_\delta}\in\C^0(\overline{\Omega}_\delta)$ denote the Cheng-Yau support function of $\Omega_\delta$. Key to the proof is to show that
\begin{equation}\label{eqn_proofepsilon}
\lim_{\delta\to 0}\min_{\pa\Omega}w_\delta=0.
\end{equation}

To this end, we claim that there is a constant $C$ only depending on $n$ such that for every bounded convex domain $U\subset\R^n$ and every $y\in\pa U$ there exists a simplex $\Delta\subset\R^n$ containing $U$ with $\vol(\Delta)\leq C\,\mathsf{diam}(U)^n$ and $y\in\pa\Delta$ (where ``$\mathsf{diam}$'' stands for diameter). To show this, we fix a simplex $\Delta_0$ containing the unit half-ball $B:=\{x\in\R^n\mid |x|<1,\, x^1>0\}$ such that the boundary $\pa\Delta_0$ contains the origin $0$. Given $U$ and $y$, there is an isometry $g:\R^n\rightarrow\R^n$ with $g(0)=y$ such that $U$ is contained in the half-ball $g\left(\mathsf{diam}(U)B\right)$. The simplex $\Delta=g(\mathsf{diam}(U)\Delta_0)$ then satisfies the requirements of the claim with $C=\vol(\Delta_0)$.

To prove (\ref{eqn_proofepsilon}), we fix $\delta>0$, a point $x_0\in\pa\Omega$ and a supporting hyperplane $H$ of $\Omega$ at $x_0$. Let $y\in\R^n$ be the point such that the vector $\overrightarrow{x_0y}$ is orthogonal to $H$, has length $\delta$ and points towards the exterior of $\Omega$. Then $y$ is on the boundary of $\Omega_\delta$ and the above claim yields a simplex $\Delta$ containing $\Omega_\delta$ such that $y\in\pa\Delta$ and 
$$
\vol(\Delta)\leq C\,\mathsf{diam}(\Omega_\delta)^n\leq C(\mathsf{diam}(\Omega)+2\delta)^n.
$$
Let $v_0,\cdots, v_n\in\pa\Delta$ be the vertices of $\Delta$ such that $y$ is contained in the face of $\Delta$ spanned by $v_1,\cdots v_n$. Let $P_0\subset\R^n$ be the hyperplane containing that face. Since $\Delta$ contains $\Omega$, we have
$$
\dist(v_0,P_0)\geq h_\Omega:=\sup\{h\mid \text{ $\Omega$ contains a ball with diameter $h$ }\}.
$$ 
Example \ref{example_support} gives 
$
w_\Delta(x_0)=-\left(\Lambda ^{-1}\vol(\Delta)t_0\cdots t_n\right)^\frac{1}{n+1}
$
with $t_0,\cdots t_n\in(0,1)$ and 
$t_0=\frac{\dist(x_0,P_0)}{\dist(v_0,P_0)}$. The above estimates then yield
\begin{align*}
|w_\Delta(x_0)|&\leq\left(\frac{\dist(x_0,P_0)}{\dist(v_0,P_0)}\cdot\frac{\vol(\Delta)}{\Lambda }\right)^\frac{1}{n+1}\leq \left(\frac{\delta}{\Lambda h_\Omega}\vol(\Delta)\right)^\frac{1}{n+1}\\
&\leq \left(\frac{C\delta}{\Lambda h_\Omega}(\mathsf{diam}(\Omega)+2\delta)^n\right)^\frac{1}{n+1}.
\end{align*} 
The last bound is independent of $x_0$ and goes to $0$ as $\delta\to0$, while Lemma \ref{lemma_potentialcomparison} implies $w_\Delta(x_0)\leq w_\delta(x_0)\leq 0$. The required limit (\ref{eqn_proofepsilon}) follows.

The required inequality $w_\Omega-\eps\leq w_{\Omega'}$ can be deduced from (\ref{eqn_proofepsilon}) using the classical Maximum Principle for Monge-Amp\`ere equations (see Section \ref{subsec_preliminaries}): Given $\eps>0$, on one hand, (\ref{eqn_proofepsilon}) yields $\delta>0$ such that 
$$
w_\Omega-\eps=-\eps\leq w_\delta\leq w_{\Omega'} \text{ on }\pa\Omega
$$ 
for any convex domain $\Omega'$ with $\Omega\subset\Omega'\subset\Omega_\delta$, where the last equality is provided by Lemma \ref{lemma_potentialcomparison}; on the other hand, the inequality $w_{\Omega'
}\leq w_\Omega\leq 0$ implies
$$
\det\D^2(w_\Omega-\eps)=(-w_\Omega)^{-n-2}\geq (-w_{\Omega'})^{-n-2}=\det\D^2w_{\Omega'}\,\text{ in }\Omega.
$$
Thus, we can apply Maximum Principle and obtain the required inequality in $\Omega$.
\end{proof}

Finally, 
the growth of $w_\Omega(x)$ near a boundary point $x_0\in\Omega$ can be estimated in terms of the distance function from $x_0$. We only give below the results in dimension $2$, which is needed in the next section. In this case, $\Omega\subset\R^2$ is said to satisfy the \emph{exterior circle condition} at a boundary point $x_0\in\pa\Omega$  if there is a disk $B\subset\R^2$ containing $\Omega$ such that $x_0\in\pa B$.
\begin{proposition}\label{prop_womega}
Let $\Omega$ be a bounded convex domain in $\R^2$.
\begin{enumerate}
	\item\label{item_womega1} If $\Omega$ satisfies the exterior circle condition at $x_0\in\pa\Omega$, then there is a constant $c>0$ such that
$$
w_\Omega(x)\geq -c\,|x-x_0|^\frac{1}{2}\,\text{ for all }x\in\Omega.
$$
 \item\label{item_womega2} Let $L\subset\R^2$ be a straight line such that $L\cap\pa\Omega$ is a segment and let $x_0$ be an interior point of that segment. Then there is a constant $c>0$ such that
$$
w_\Omega(x)\leq -c\,\dist(x,L)^\frac{1}{3}
$$
for all $x\in\Omega$ in some neighborhood of $x_0$.
\end{enumerate}
\end{proposition}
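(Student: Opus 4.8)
The plan is to derive both estimates from the comparison principle for Cheng--Yau support functions (Lemma \ref{lemma_potentialcomparison}) combined with the explicit formulas for balls and simplices from Example \ref{example_support}.

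\emph{Part (1).} The exterior circle condition at $x_0$ provides a disk $B=B_R(x_1)\subset\R^2$ with $\Omega\subset B$ and $x_0\in\pa B$. By Lemma \ref{lemma_potentialcomparison} we have $w_\Omega\geq w_B$ on $\Omega$, while Example \ref{example_support} gives $w_B(x)=-R^{-1/3}\sqrt{R^2-|x-x_1|^2}$. Since $|x_0-x_1|=R$ and $|x-x_1|\leq R$, the triangle inequality yields
\[
R^2-|x-x_1|^2=(R-|x-x_1|)(R+|x-x_1|)\leq 2R\,(R-|x-x_1|)\leq 2R\,|x-x_0|,
\]
hence $w_\Omega(x)\geq w_B(x)\geq -\sqrt{2}\,R^{1/6}\,|x-x_0|^{1/2}$ for every $x\in\Omega$, which is Part (1).

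\emph{Part (2).} Since $\Omega$ is bounded and convex while $L$ meets $\pa\Omega$ in a segment (hence in more than two points), $L$ is a supporting line of $\Omega$, so $\Omega$ lies in one of the open half-planes bounded by $L$. Choose a closed subsegment $[a',b']\subset L\cap\pa\Omega$ with $x_0$ in its relative interior and a point $v\in\Omega$, and let $\Delta$ be the open triangle with vertices $a'$, $b'$, $v$. As $\overline\Omega$ is convex and contains $a',b',v$ with $v\in\Omega$, one checks $\Delta\subset\Omega$, so Lemma \ref{lemma_potentialcomparison} gives $w_\Omega\leq w_\Delta$ on $\Delta$. Writing the vertices as $v_0=a'$, $v_1=b'$, $v_2=v$ and applying Example \ref{example_support},
\[
w_\Delta(x)=-\left(\frac{\vol(\Delta)}{\Lambda}\,t_0(x)\,t_1(x)\,t_2(x)\right)^{1/3},
\]
where $t_2(x)=\dist(x,L)/\dist(v,L)$ because the edge opposite to $v$ lies on $L$, and $t_0,t_1$ are affine functions whose zero sets are the lines through $\{b',v\}$ and $\{a',v\}$; neither contains $x_0$, since $x_0\in(a',b')\subset L$ while $v\notin L$, so $t_0,t_1\geq c_0>0$ on a neighborhood of $x_0$. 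Moreover, for $x\in\Omega$ close enough to $x_0$ we have $x\in\Delta$: such $x$ automatically lies on the $v$-side of $L$, and it lies on the correct side of each of the other two edges because $x_0$ does and these are open conditions. Combining,
\[
w_\Omega(x)\leq w_\Delta(x)\leq-\left(\frac{\vol(\Delta)\,c_0^{\,2}}{\Lambda\,\dist(v,L)}\right)^{1/3}\dist(x,L)^{1/3}
\]
for all $x\in\Omega$ in a neighborhood of $x_0$, which is Part (2).

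\emph{Main difficulty.} The only genuinely non-formal point is the convex-geometry bookkeeping in Part (2): checking $\Delta\subset\Omega$ and that a full $\Omega$-neighborhood of $x_0$ is contained in $\Delta$, i.e.\ pinning down on which side of each edge of $\Delta$ the points of $\Omega$ near $x_0$ lie. Once this is settled, both parts reduce to a single application of Lemma \ref{lemma_potentialcomparison} and the formulas of Example \ref{example_support}, with Part (1) being immediate.
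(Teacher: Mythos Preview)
Your proof is correct and follows essentially the same approach as the paper: in both parts you apply Lemma \ref{lemma_potentialcomparison} together with the explicit Cheng--Yau support functions of a ball and a simplex from Example \ref{example_support}. If anything, your treatment of Part (2) is more careful than the paper's, which simply writes ``The required inequality follows'' without spelling out that $x_0$ must lie between the two vertices on $L$ and that an $\Omega$-neighborhood of $x_0$ is contained in $\Delta$.
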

\begin{proof}
(\ref{item_womega1}) Let $B=\{x\in\R^2\mid |x-x_1|<R\}$ ($R=|x_0-x_1|$) be a disk such that $\Omega\subset B$ and $x_0\in\pa B$. By Lemma \ref{lemma_potentialcomparison}, we have $w_B(x)\leq w_\Omega(x)\leq 0$ for all $x\in \Omega$, where $w_B(x)$ is given in Example \ref{example_support} as
$$
w_B(x)=-R^{-\frac{1}{3}}\sqrt{R^2-|x-x_1|^2}.
$$
The required inequality then follows from the estimates
\begin{align*}
0&\leq R^2-|x-x_1|^2=(R+|x-x_1|)(R-|x-x_1|)\\
&\leq 2R(|x_0-x_1|-|x-x_1|)\leq 2R|x-x_0|.
\end{align*}

(\ref{item_womega2}) Let $\Delta\subset\Omega$ be a triangle with vertices $v_0$, $v_1$ and $v_2$ such that $v_1,v_2\in L$. By Lemma \ref{lemma_potentialcomparison}, we have 
$$
w_\Omega(x)\leq w_\Delta(x)=-c'\big(t_0(x)t_1(x)t_2(x)\big)^\frac{1}{3} \text{ for all }x\in\Delta,
$$ 
where the equality is from Example \ref{example_support}, with $c'>0$ a constant, $t_0,t_1,t_2$ function on $\Delta$ with values in $(0,1)$ and in particular 
$t_0(x)=\frac{\dist(x,L)}{\dist(v_0,L)}$. The required inequality follows.

\end{proof}

\section{Analysis of the Monge-Amp\`ere equation}\label{sec_analysis}
In this section, we solve the Monge-Amp\`ere problem mentioned in the introduction and deduce the results on affine $(C,k)$-surfaces. The main tool used to deal with infinite boundary values is the generalized maximum principle reviewed in Section \ref{subsec_preliminaries}.
 We also examine in Section \ref{subsec_triangularcone} triangular cones in $\R^3$ as $C$-regular domains, which provide a situation where the exterior circle condition is not fulfilled and the solution does not have the gradient blowup property.

\subsection{Statement of the main results}\label{subsec_statement}
Recall from Section \ref{sec_convex_prel} the following notations and facts:
\begin{itemize}
\item
Given a bounded convex domain $\Omega\subset \R^n$, $\LC(\pa\Omega)$ denotes the space of lower semicontinuous functions $\phi:\pa\Omega\rightarrow\Rp$ which are convex on every line segment in $\pa\Omega$, and $\env{\phi}$ denotes the convex envelope of $\phi$.
\item
The notation ``$\mathsf{dom}$'' stands for the \emph{effective domain} of a $\Rp$-valued function, namely the set of points in the domain where the function has values in $\R$. For $\phi\in\LC(\pa\Omega)$, the convex hull $\chull{\dom{\phi}}$ of $\dom{\phi}\subset\pa\Omega$ in $\R^n$ coincides with $\dom{\env{\phi}}$ (Proposition \ref{prop_domain}).
\item
$\LC(\R^n)$ denotes the space of lower semicontinuous convex functions $u:\R^n\rightarrow\Rp$ that is not constantly $+\infty$.  Such a $u$ is determined by its restriction to the interior of effective domain $U:=\interior \dom{u}$ if $U$ is nonempty (Proposition \ref{prop_extension}), and is said to have \emph{infinite inner derivatives} at $x_0\in\pa U$ if either $u(x_0)=+\infty$ or the graph of $u$ over any line segment from $x_0$ to a point in $U$ has infinite slope at $x_0$ (Definition \ref{def_inner}). This is equivalent to $\lim_{x\rightarrow x_0}|\D u(x)|\rightarrow+\infty$ if $u$ is differentiable in $U$ (Proposition \ref{prop_derivative}).
\end{itemize}
We can now state our main results on Monge-Amp\`ere equations as the following theorem, covering Theorem \ref{thm_intro1} from the introduction:
\begin{manualtheorem}{\ref*{thm_intro1}'}\label{thm_main}
	Let $\Omega\subset\R^2$ be a bounded convex domain, $c>0$ be a constant and $\phi\in\LC(\pa\Omega)$ be such that $\chull{\dom{\phi}}$ has nonempty interior. Suppose $u\in\LC(\R^2)$ satisfies 
\begin{equation}\label{eqn_main}\tag{$\star$}
\begin{cases}
U:=\interior\dom{u} \text{ is nonempty and contained in } \Omega,\\
\det\D^2 u=c\,w_\Omega^{-4}\ \text{ in }U,\\
u|_{\pa\Omega}=\phi,
\end{cases} 
\end{equation}
where $w_\Omega\in\CC{\Omega}$ is the Cheng-Yau support function of $\Omega$, namely the unique convex solution to (see Theorem \ref{thm_chengyau2})
$$
\begin{cases}
\det \D^2w=(-w)^{-n-2}\ \text{  in }\Omega,\\
w|_{\pa\Omega}=0.
\end{cases}
$$
Then the following statements hold.
\begin{enumerate}
  \item\label{item_thmmain1}  Given $x_0\in\pa U\cap\pa\Omega$, if $\Omega$ satisfies the exterior circle condition at $x_0$, then $u$ has infinite inner derivatives at $x_0$.
 \item\label{item_thmmain2}  The following conditions are equivalent to each other:
\begin{enumerate}[label=(\roman*)]
	\item\label{item_thmmain21}  $u$ has infinite inner derivatives at every point of $\pa U\cap\Omega$;
	\item\label{item_thmmain22}  $\env{\phi}(x)-u(x)\to0$ as $x\in U$ tends to $\pa U$.
\end{enumerate} 
The second condition implies $\dom{u}=\chull{\dom{\phi}}$ and $u=\env{\phi}$ on $\pa U$.
    \item\label{item_thmmain3}  There exists a unique $u$ satisfying (\ref{eqn_main}) and the conditions in Part (\ref{item_thmmain2}). 
    \item\label{item_thmmain4}  Given $t\in\R$, let $u_t$ denote the $u$ produced by Part (\ref{item_thmmain3}) with parameter $c=e^{-t}$ in (\ref{eqn_main}). Then $t\mapsto u_t(x)$ is a strictly increasing concave function with
 $$
 \lim_{t\rightarrow-\infty}u_t(x)=-\infty,\quad \lim_{t\rightarrow+\infty}u_t(x)=\env{\phi}(x)
 $$
 for every $x\in U=\interior\chull{\dom{\phi}}$.
\end{enumerate}
\end{manualtheorem}
Note that Part (\ref{item_thmmain1}) of the theorem is independent of other parts.
\begin{remark}[\textbf{Regularity}]
	\label{remark_smooth}
	
		A priori, $u$ is merely continuous in $U$ and the Monge-Amp\`ere equation in \eqref{eqn_main} is understood in the generalized sense (see Section \ref{sec_cagc}).
	But classical regularity results on Monge-Amp\`ere equations in dimension $2$ (see Theorem \ref{solution smooth in dim 2} in Section \ref{subsec_preliminaries}) ensures that $u$ is actually smooth and strictly convex in $U$, hence a classical solution.
	
For a higher-dimensional bounded convex domain $\Omega\subset\R^n$ ($n\geq 3$), there exists continuous $\phi:\pa\Omega\to\R$ such that \eqref{eqn_main} has a unique generalized solution $u\in\C^0(\overline{\Omega})$ which is not smooth and restricts to affine functions on some line segments in $\Omega$ joining boundary points (hence $u$ has finite inner derivatives at these points), even when $\Omega$ is the unit ball. In fact, Bonsante and Fillastre \cite{bonsante-fillastre} constructed examples that correspond to regular domains $D\subset\R^{n,1}$ ($n\geq 3$) such that $D$ is invariant under an affine deformation of a uniform lattice in $\SO(n,1)$ but there is no smooth hypersurface of constant Gauss-Kronecker curvature generating $D$. On the other hand,
if $\pa\Omega$ and $\phi$ are both smooth, it is showed in \cite{lisimoncrelle} that \eqref{eqn_main} has a unique solution $u\in\CC{\Omega}$ and it satisfies $\lim_{x\rightarrow\pa\Omega}|\D u(x)|\to+\infty$.
\end{remark}

\begin{remark}[\textbf{$u$ not satisfying the conditions in Part (\ref{item_thmmain2})}]\label{remark_conditions}
If $\dom{\phi}=\pa\Omega$, then $U=\Omega$ and Condition \ref{item_thmmain21}  in Part (\ref{item_thmmain2}) is trivial. Otherwise, there exists $u$ satisfying (\ref{eqn_main}) without fulfilling \ref{item_thmmain21}  or \ref{item_thmmain22} . For instance, if $\Omega$ is strictly convex and $A\subsetneq\pa\Omega$ is a nonempty closed subset, then the characteristic function $\phi=\chi_A$ (defined by $\chi_A=0$ on $A$ and $\chi_A=+\infty$ outside of $A$) belongs to $\LC(\pa\Omega)$ and its convex envelope $\env{\phi}$ is the characteristic function of $\chull{A}$ on $\R^2$. In this case, the convex function 
$$
u(x):=
\begin{cases}
w_\Omega(x)& \text{ if }x\in \chull{A}\\
+\infty&\text{ if }x\notin \chull{A}
\end{cases}
$$
satisfies (\ref{eqn_main}) (with $c=1$) but not \ref{item_thmmain21}  or \ref{item_thmmain22} .
\end{remark}

%

We prove Parts (\ref{item_thmmain1}) -- (\ref{item_thmmain4}) of Theorem \ref{thm_main} through Sections \ref{subsec_infiniteness} -- \ref{subsec_dependence} and discuss in Section \ref{subsec_triangularcone} the case where $\Omega$ is not strictly convex, hence fails the exterior circle condition.  In the rest of of this section, we deduce from Theorem \ref{thm_main} the results on affine $(C,k)$-surfaces mentioned in the introduction.



\begin{proof}[Proof of Theorems \ref{thm_intromain}, \ref{thm_introfoliation} and Corollary \ref{coro_intro} in Introduction]
Given the cone $C$ as in the assumption of Theorem \ref{thm_intromain},  in the way explained in Section \ref{subsec_analytic}, we choose coordinates of $\R^3$ and get a convex domain $\Omega\subset\R^2$ as an affine section of $-C^*$, which can be identified projectively with $\mathbb{P}(C^*)$ and satisfies the exterior circle condition. In particular, $\Omega$ is strictly convex, hence $\LC(\pa\Omega)$ consists exactly of lower semicontinuous functions $\phi:\pa\Omega\rightarrow\Rp$, and $\chull{\dom{\phi}}$ has nonempty interior exactly when $\dom{\phi}$ has at least three points.

By Theorem \ref{thm_graph} (\ref{item_thmgraph1}), a proper $C$-regular domain $D$ can be written as $D=\sepi{\env{\phi}^*}$ for some $\phi\in\LC(\pa\Omega)$ with $\interior\chull{\dom{\phi}}\neq\emptyset$, while Corollary \ref{coro_ckmongeampere} says that a complete affine $(C,k)$-surface generating $D$ is exactly the graph of the Legendre transform of some $u\in\S_0(\Omega)$ satisfying Eq.(\ref{eqn_main}) in Theorem \ref{thm_main}, with  $c=k^{-\frac{2}{3}}$ (see Section \ref{subsec_analytic} or \ref{subsec_equations} for the definition of $\S_0(\Omega)$). By Parts (\ref{item_thmmain1}) and (\ref{item_thmmain3}) of Theorem \ref{thm_main} and classical regularity results (see Remark \ref{remark_smooth}), there exists a unique such $u$, which proves Theorem \ref{thm_intromain}.  Theorem  \ref{thm_introfoliation} then follows from Theorem \ref{thm_foliation} and Theorem \ref{thm_main} (\ref{item_thmmain4}). The construction also provides the required correspondences  ``\ref{item_corointro3}$\leftrightarrow$\ref{item_corointro1}$\leftrightarrow$\ref{item_corointro2}'' in Corollary \ref{coro_intro} as
$$
	\phi\,\longleftrightarrow\, D=\sepi{\env{\phi}^*}\,\longleftrightarrow\,\Sigma=\gra{u^*},
$$
where $u\in\S_0(\Omega)$ is related to $\phi$ through (\ref{eqn_main}) with $c=k^{-\frac{2}{3}}$. 
Finally, the identification between the image of affine conormal map of the above $\Sigma=\gra{u}$ and $\interior\chull{\dom{\phi}}$ is contained in Corollary \ref{coro_ckmongeampere}.
\end{proof}

\subsection{Infiniteness of inner derivatives}\label{subsec_infiniteness}
In this section, we collect some results on (in)finiteness of inner derivatives (see Definition \ref{def_inner} and Proposition \ref{prop_derivative}) that will be used later on and deduce Part (\ref{item_thmmain1}) of Theorem \ref{thm_main}. Note that although Definition \ref{def_inner} is concerned with convex functions $\R^n\to\Rp$, it also makes sense for $\R$-valued convex functions on convex domains through Proposition \ref{prop_extension}.
\begin{lemma}\label{lemma_edge}
Let $U\subset \R^2$ be a convex domain such that $\pa U$ contains an open line segment $I$, and $u\in\C^0(\overline{U})$ be a convex function such that 
$\det\D^2 u\geq c$ in $U$ for a constant $c>0$ and that $u$ restricts to an affine function on $I$.
Then $u$ has infinite inner derivatives at every point of $I$.
\end{lemma}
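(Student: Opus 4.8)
The plan is to argue by contradiction. If $u$ fails to have infinite inner derivatives at some $x_0\in I$, then by Proposition \ref{prop_derivative} it admits a subgradient there, i.e.\ a supporting affine function $a$ with $a\le u$ on $\R^2$ and $a(x_0)=u(x_0)$. First I would normalize: the line carrying $I$ is a supporting line of $U$, so choosing affine coordinates I may assume this line is $\{x^2=0\}$, that $x_0$ is the origin, and that $U$ lies locally in $\{x^2>0\}$. Since $x_0$ is interior to the boundary segment $I$, the standard fact that a convex set coincides with a supporting half-space near the relative interior of a facet gives $\delta,h>0$ with $R:=(-2\delta,2\delta)\times(0,2h)\subset U$ and $(-2\delta,2\delta)\times\{0\}\subset I$. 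Replacing $u$ by $v:=u-a$ I get a convex $v\in\C^0(\overline R)$ with $\det\D^2v\ge c$ in $R$, with $0\le v\le M:=\sup_{\overline R}v<+\infty$, and --- because $v$ is affine and nonnegative on $I$ and vanishes at the interior point $x_0$ --- with $v\equiv 0$ on $(-2\delta,2\delta)\times\{0\}$. By Theorem \ref{solution strictly convex dimension 2} the function $v$ is strictly convex in $R$, and by Theorem \ref{solution smooth in dim 2} it is smooth there whenever $\det\D^2v$ is a positive smooth function, which is the case in all applications (where $v$ solves the Monge--Amp\`ere equation of Theorem \ref{thm_main}); in general one argues with the a.e.\ Hessian and second-derivative measures, so I will write the estimate as if $v\in\C^\infty(R)$.

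The crux is that, vanishing on the whole base, $v$ is \emph{linearly} small near it: for fixed $x^1$ the quotient $y\mapsto v(x^1,y)/y$ is nondecreasing (convexity and $v(x^1,0)=0$), whence $v(x^1,y)\le (M/h)\,y$ for $0<y\le h$, $|x^1|\le 2\delta$. Set $\mu(y):=\int_{-\delta}^{\delta}\partial_{11}v(x^1,y)\,dx^1=\partial_1v(\delta,y)-\partial_1v(-\delta,y)$; by convexity in $x^1$ and the bound just obtained, $\mu(y)\le Cy$ with $C:=2M/(\delta h)$. On the other hand $\partial_{11}v\,\partial_{22}v\ge\det\D^2v\ge c$, so Cauchy--Schwarz on the slice $\{x^2=y\}$ gives
$$
\int_{-\delta}^{\delta}\partial_{22}v(x^1,y)\,dx^1\ \ge\ c\int_{-\delta}^{\delta}\frac{dx^1}{\partial_{11}v(x^1,y)}\ \ge\ \frac{(2\delta)^2\,c}{\mu(y)}\ \ge\ \frac{4c\delta^2}{C}\cdot\frac1y .
$$
Integrating over $y\in(0,h)$, the logarithmic divergence of $\int_0^h dy/y$ forces $\int_0^h\!\int_{-\delta}^{\delta}\partial_{22}v\,dx^1\,dy=+\infty$.

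This is the contradiction, because the same double integral is finite. By Fubini it equals $\int_{-\delta}^{\delta}\big(\partial_2v(x^1,h)-\partial_2v(x^1,0^+)\big)\,dx^1$; here $\partial_2v(x^1,0^+)\ge 0$ since $v\ge 0=v(x^1,0)$ has its minimum on the base, while the convexity slope inequality gives $\partial_2v(x^1,h)\le 2v(x^1,3h/2)/h\le 2M/h$, so the integral is at most $4\delta M/h<+\infty$. Hence $u$ admits no subgradient at $x_0$, i.e.\ it has infinite inner derivatives there; as $x_0\in I$ was arbitrary, the lemma follows.

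The step I expect to require the most care is passing from the smooth case to a general convex $u$: one must justify the pointwise inequality $\partial_{11}v\,\partial_{22}v\ge\det\D^2v$ and the Fubini computation for possibly non-$\C^2$ functions. This is handled either by approximating $v$ by smooth convex functions with control of the Monge--Amp\`ere measures (Lemma \ref{convergence of solutions}), or, more directly, by working with the a.e.\ Hessian $A$ of $v$ --- for which $\MA{v}\ge c\,\Leb$ forces $\det A\ge c$ a.e.\ --- together with the disintegration of the second-derivative measures $\partial_{11}v$ and $\partial_{22}v$ over horizontal and vertical slices. A secondary point is the coordinate normalization, namely that a genuine rectangle sits above $x_0$ inside $U$, which rests precisely on $x_0$ lying in the relative interior of the segment $I$.
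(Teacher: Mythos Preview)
Your argument is correct (for smooth $v$, which is the only case needed in the paper) and genuinely different from the paper's proof. The paper never touches second derivatives of $u$: instead it first studies the model problem $\det\D^2 u_\Delta=1$, $u_\Delta|_{\pa\Delta}=0$ on a triangle $\Delta$, uses affine invariance to get $u_\Delta(x_\Delta)=-C\,\mathsf{Area}(\Delta)$ at the barycenter, and then a self-similar comparison argument with shrinking sub-triangles to show $u_\Delta$ has infinite inner derivatives at edge midpoints; the general statement follows by one application of the Comparison Principle with the barrier $a+\sqrt{c}\,u_\Delta$. Your route is an integral estimate: from a supporting affine function you produce $v\ge0$ vanishing on the base, bound $\int\pa_{11}v$ linearly in the height by convexity, use $\pa_{11}v\,\pa_{22}v\ge c$ and Cauchy--Schwarz on slices to force $\int\pa_{22}v\gtrsim 1/y$, and integrate to a logarithmic divergence that contradicts the trivial upper bound. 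The trade-off is clear: the paper's barrier method works verbatim for generalized solutions (no regularity needed beyond the Comparison Principle), while your estimate is shorter and avoids the auxiliary $u_\Delta$ but relies on $v\in\C^2$, which in full generality requires the approximation step you flagged.
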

\begin{proof}
For any open triangle $\Delta\subset\R^2$, let $u_\Delta\in\C^0(\overline{\Delta})$ be the generalized solution to
$$
\begin{cases}
\det\D^2u=1\,\text{ in }\Delta\\
u|_{\pa\Delta}=0
\end{cases}
$$
given by Theorem \ref{thm existence classical monge ampere}. Letting $x_\Delta\in\Delta$ denote the barycenter of $\Delta$, we note that there is a constant $C>0$ independent of $\Delta$ such that 
\begin{equation}\label{eqn_proofedge}
u_\Delta(x_\Delta)=-C\,\mathsf{Area}(\Delta) \text{ for any triangle }\Delta\subset\R^2.
\end{equation}
To see this, fix a triangle $\Delta_0$ with area $1$ and take an affine transformation $g(x)=a(x)+b$ (where $a$ is a linear transformation of $\R^2$ and $b\in\R^2$) such that $g(\Delta_0)=\Delta$. The convex function  $\tilde{u}(x):=u_{\Delta_0}(g^{-1}x)$ on $\overline{\Delta}$ satisfies
$$
\det\D^2\tilde{u}(x)=\det(a)^{-2}\det\D^2u_{\Delta_0}(g^{-1}x)=\mathsf{Area}(\Delta)^{-2}.
$$ 
It follows that $u_\Delta=\mathsf{Area}(\Delta)\tilde{u}$, which implies (\ref{eqn_proofedge}) with $C=-\tilde{u}(x_{\Delta})=-u_{\Delta_0}(x_{\Delta_0})$.

We claim that $u_\Delta$ has infinite inner derivatives at the midpoints of its edges.
To prove this, by applying a volume-preserving affine transformation (which does not change the Monge-Amp\`ere measure), we can suppose without loss of generality that $x_0=(0,0)$ and the vertices of $\Delta$ are $(0,\pm1)$ and $(t_0,0)$ with $t_0>0$.
Assume by contradiction that $u_\Delta$ has finite inner derivatives at $(0,0)$, so that $\pa_1u_\Delta(0,0):=\lim_{t\rightarrow0^+}\frac{u_\Delta(t,0)}{t}$ is a negative real number and 
$$
v(x):=u_\Delta(x)-\pa_1u_\Delta(0)x^1
$$ 
(where $x^i$ denotes the $i^\mathrm{th}$ coordinate of $x\in\R^2$) satisfies $\pa_1v(0,0)=0$. In particular, $v(t,0)\geq0$ for $0\leq t\leq t_0$.

Fix $0<t<t_0$ and consider the triangle $\Delta_t$ with vertices $(0,\pm1)$ and $(t,0)$. Since $v$ is convex in $\Delta_t$ and vanishes on the vertical edge, we have $v(x)\leq v(t,0)$ for all $x\in\overline{\Delta}_t$, hence
$$
v(x)\leq u_{\Delta_t}(x)+v(t,0)
$$
for $x$ on the boundary of $\Delta_t$. By Comparison Principle, the inequality also holds for all $x\in\Delta_t$.
Taking $x$ to be the barycenter $x_{\Delta_t}=(\frac{t}{3},0)$ and applying (\ref{eqn_proofedge}), we get
$$
v(\tfrac{t}{3},0)\leq  -C\mathsf{Area}(\Delta_t)+v(t,0)=-Ct+v(t,0).
$$ 
But the right-hand side is negative when $t$ is small enough because $\pa_1v(0,0):=\lim_{t\rightarrow0^+}\frac{v(t,0)}{t}=0$. This contradicts the fact that $v(t,0)\geq0$ for $0\leq t\leq 1$ and completes the proof of the claim.

Now, under the assumptions of the lemma, for every $x_0$ in the interior of the line segment $I\subset\pa\Omega$, we can take a triangle $\Delta\subset U$ such that an edge $I'$ of $\Delta$ is a sub-segment of $I$ and has $x_0$ as its midpoint, and that the opposing vertex $x_1$ is in $U$. Let $a:\R^2\to \R$ be the affine function with $a=u$ on $I'\cup\{x_1\}$. Then we have $a+\sqrt{c}\,u_\Delta=u$ on $I'\cup\{x_1\}$ and hence $a+\sqrt{c}\,u_\Delta\geq u$ on $\pa\Delta$ by convexity, while 
$$
\det\D^2(a+\sqrt{c}\,u_\Delta)=c\det \D^2u_\Delta=c\leq \det\D^2 u
$$
holds in $\Delta$. By Comparison Principle, we have $a+\sqrt{c}\,u_\Delta\geq u$ throughout $\overline{\Delta}$. Since $a+\sqrt{c}\,u_\Delta$ equals $u$ at $x_0\in\pa\Delta$ and has infinite inner derivatives at $x_0$ as shown above, we conclude that $u$ has infinite inner derivatives at $x_0$, as required.
\end{proof}

\begin{lemma}\label{lemma_angle}
Let $\Delta\subset\R^2$ be an open triangle, $x_0\in\pa\Delta$ be a vertex and $u\in\C^0(\overline{\Delta})$ be a convex function.
\begin{enumerate}
\item\label{item_lemmaangle1}
If there are constants $c>0$ and $\alpha>-2$ such that
$$
\begin{cases}
\det\D^2 u(x)\leq c\,|x-x_0|^\alpha \text{ for all }x\in \Delta,\\
u|_{\pa\Delta}=0,
\end{cases}
$$
then $u$ has finite inner derivatives at $x_0$.
\item\label{item_lemmaangle2}
If there is a constant $c>0$ such that
$$
\det\D^2 u(x)\geq c\,|x-x_0|^{-2}\ \text{ for all }x\in \Delta,
$$
then $u$ has infinite inner derivatives at $x_0$.
\end{enumerate}
\end{lemma}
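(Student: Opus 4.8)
The strategy in both parts is to compare $u$ with an explicit barrier obtained by rescaling a fixed reference solution, exploiting the scaling behaviour of the Monge--Amp\`ere operator under affine maps. Throughout I assume, after a volume-preserving affine change of coordinates, that $x_0$ is the origin and that $\Delta$ has a convenient normalized shape; this does not change the Monge--Amp\`ere measure nor the notion of finite/infinite inner derivatives.

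\smallskip

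\emph{Part \eqref{item_lemmaangle2} (infinite inner derivatives).} Here the lower bound $\det\D^2 u(x)\ge c|x-x_0|^{-2}$ should be compared with the Cheng--Yau support function of a thin triangle with a vertex at $x_0$, or directly with a radial barrier. The cleanest route is to build, on a small sub-triangle $\Delta'\subset\Delta$ with vertex $x_0$, a convex function $v$ vanishing on $\pa\Delta'$ with $\det\D^2 v\le c|x-x_0|^{-2}$ but with $v$ having infinite inner derivatives at $x_0$; then the Comparison Principle (via Lemma \ref{lemma_maximum}) gives $u\le a+v$ for a suitable affine $a$ matching $u$ on the boundary pieces through $x_0$, forcing $u$ to have infinite inner derivatives at $x_0$ as well. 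A natural candidate for $v$ in a cone with vertex at $0$ is $v(r,\theta)=-g(\theta)\,r^{?}$-type function, but I expect it is easier to invoke Proposition \ref{prop_womega}\eqref{item_womega2}: the bound $-c\,\dist(x,L)^{1/3}$ on $w_\Omega$ near a segment translates, under the $w\mapsto(-w)^{-n-2}$ relation, into exactly the $|x-x_0|^{-2}$-type growth of the Monge--Amp\`ere density, and the gradient blowup of $w_\Omega$ at such interior points is the prototype. So I would take $v=\sqrt{c}\,w_{\Delta'}$ (up to constant), whose inner derivatives at $x_0$ — an interior point of an edge of a larger triangle — are infinite by the argument already used in the proof of Lemma \ref{lemma_edge}, and conclude by Comparison Principle.

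\smallskip

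\emph{Part \eqref{item_lemmaangle1} (finite inner derivatives).} This is the direction that requires a genuine construction. Given the upper bound $\det\D^2 u(x)\le c|x-x_0|^\alpha$ with $\alpha>-2$ and $u|_{\pa\Delta}=0$, I want a convex barrier $w\ge u$ on $\overline\Delta$ with $w(x_0)=0$ and $w$ having \emph{finite} inner derivative at $x_0$; then $\liminf_{t\to0^+}\big(u(x_0+tv)-u(x_0)\big)/t\ge \pa_v w(x_0)>-\infty$. The barrier should be of the form $w(x)=-A\,x^1+B\,|x|^{\beta}$ (in coordinates where the edge of $\Delta$ opposite to $x_0$ lies roughly in the half-plane $x^1>0$ and the two edges through $x_0$ are controlled), with $\beta=\tfrac{\alpha}{2}+1\in(0,\,\text{something})$ chosen so that $\det\D^2 w\ge c|x|^\alpha$ near $x_0$; one checks that a radially-homogeneous-of-degree-$\beta$ convex function has Hessian determinant $\sim |x|^{2\beta-2}=|x|^\alpha$, which is exactly the matching exponent, and the linear term does not affect the Hessian. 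The condition $\alpha>-2$ ensures $\beta>0$, so $w$ is continuous up to $x_0$ with $w(x_0)=0$ and finite directional derivatives there (degree $\beta>0$ homogeneous functions are Lipschitz near $0$ when $\beta\ge1$, and when $0<\beta<1$ one still has $\pa_v w(x_0)$ finite — actually $=0$ — along rays into $\Delta$). After adjusting $w$ by an affine function so that $w\ge u=0$ on $\pa\Delta$ (possible since $w$ is bounded below on the compact boundary and we have freedom in $A,B$), Comparison Principle gives $w\ge u$ on $\overline\Delta$, hence $u$ has finite inner derivatives at $x_0$.

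\smallskip

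\emph{Main obstacle.} The delicate point is Part \eqref{item_lemmaangle1}: verifying that the homogeneous barrier $B|x|^\beta$ (possibly in a sheared coordinate system adapted to the two edges of $\Delta$ meeting at $x_0$) is genuinely convex \emph{and} has Hessian determinant bounded below by $c|x|^\alpha$ on all of $\Delta$, not just along the bisecting ray, while simultaneously dominating $u$ on $\pa\Delta$. One must track how the two edges through $x_0$ constrain the angular sector and ensure the barrier's Hessian stays positive definite there; the cases $\beta\ge 1$ and $0<\beta<1$ may need slightly different bookkeeping (in the latter the function is not $C^1$ at $0$ but the one-sided inner derivative along rays into $\Delta$ is still finite, which is all that Definition \ref{def_inner} requires). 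I would handle this by first reducing to a reference triangle and a reference exponent via affine scaling — as in the proof of Lemma \ref{lemma_edge} — so that only finitely many explicit checks remain.
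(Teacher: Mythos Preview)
Your plan for Part \eqref{item_lemmaangle1} is essentially the paper's argument, but you have miscomputed the homogeneity. For a function homogeneous of degree $\beta$ in $\R^2$, the Hessian entries are homogeneous of degree $\beta-2$, so $\det\D^2(|x|^\beta)$ is homogeneous of degree $2\beta-4$, not $2\beta-2$; explicitly $\det\D^2|x|^\beta=\beta^2(\beta-1)|x|^{2\beta-4}$. Matching $2\beta-4=\alpha$ gives $\beta=\tfrac{\alpha+4}{2}>1$ whenever $\alpha>-2$. With the correct exponent the barrier $v(x)=|x|^\beta-l(x)$ (where $l$ is an affine function with $l\ge|x|^\beta$ on the edge opposite $x_0$, hence on all of $\overline\Delta$) is smooth, strictly convex, has $v(0)=0$, $v|_{\pa\Delta}\le0$, and finite inner derivatives at $0$. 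The ``main obstacle'' you anticipate---the case $0<\beta<1$---simply does not arise, and no delicate angular analysis is needed.

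Your plan for Part \eqref{item_lemmaangle2} has a genuine gap. To bound $u$ from above by a barrier $v$ via the Comparison Principle you need $\det\D^2 v\le c|x-x_0|^{-2}$. But for the Cheng--Yau support function $w_{\Delta'}$ of a triangle $\Delta'$ with a \emph{vertex} at $x_0$, one has $(-w_{\Delta'})^{-4}\sim (t_1t_2)^{-4/3}\sim|x-x_0|^{-8/3}$ near $x_0$ along a generic ray (here $t_1,t_2$ are the barycentric coordinates vanishing on the two edges through $x_0$), which blows up \emph{faster} than $|x-x_0|^{-2}$; the comparison goes the wrong way. Your reference to Proposition \ref{prop_womega}\eqref{item_womega2} and Lemma \ref{lemma_edge} does not help either: those concern interior points of a boundary segment, whereas here $x_0$ is a vertex---there is no ``larger triangle'' inside $\Delta$ for which $x_0$ becomes an interior point of an edge. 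The exponent $-2$ is critical: no purely power-law barrier works, and the paper instead constructs by hand the function
\[
w(x)=x^1(-\log x^1)^{1/2}\Big(\big(\tfrac{x^2}{x^1}\big)^2-1\Big)
\]
on a suitable subdomain, checks that it is convex with $\det\D^2 w\le c'|x|^{-2}$ there, and that the logarithmic factor forces infinite inner derivatives at the origin. This explicit logarithmic correction is the missing idea in your plan.
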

\begin{proof}
(\ref{item_lemmaangle1}) Assuming $x_0=0$ without loss of generality, we only need to find a convex function $v\in\C^0(\overline{\Delta})$ with finite inner derivatives at $0$ such that
\begin{equation}\label{eqn_proofangle1}
\begin{cases}
\det\D^2 v(x)= c'\,|x|^\alpha\, \text{ for some $c'>0$ and all $x\in \Delta$},\\
v|_{\pa\Delta}\leq0,\ v(0)=0
\end{cases}
\end{equation}
Comparison Principle would then imply $u\geq (c/c')^\frac{1}{2}v$, hence $u$ has finite inner derivatives at $0$ as well.

To find $v$, we set $\beta:=\frac{\alpha+4}{2}>1$. The function $x\mapsto |x|^\beta$ is convex and we have
$$
\det \D^2|x|^\beta=\beta^2(\beta-1)|x|^{\alpha}.
$$
Let $l:\R^2\rightarrow\R$ be a linear form such that 
$l(x)\geq |x|^\beta$ for all $x$ on the edge of $\Delta$ opposing the vertex $0$.
By convexity, $l(x)\geq |x|^\beta$ holds for all $x\in\overline{\Delta}$. Therefore,
$$
v(x):=|x|^\beta-l(x)
$$
satisfies (\ref{eqn_proofangle1}) and proves the required statement.

(\ref{item_lemmaangle2}) By applying an affine transformation to $\Delta$ and subtracting from $u$ the affine function whose values coincide with $u$ at the  vertices of $\Delta$, we may assume without loss of generality that
\begin{itemize}
\item
$x_0=(0,0)$ and the other two vertices of $\Delta$ are $(1,\pm1)$;
\item
$u=0$ at the vertices of $\Delta$. This implies $u\leq 0$ on $\overline{\Delta}$.
\end{itemize}

Consider the smooth function $w$ on the band $B:=\{x\in\R^2\mid 0<x^1<1\}$ (where $x^i$ is the $i^\mathrm{th}$ coordinate of $x$) defined by
$$
w(x):=x^1(-\log x^1)^\frac{1}{2}\left(\left(\frac{x^2}{x^1}\right)^2-1\right).
$$
By some computations, one checks that
$$
\det\D^2w(x)=\frac{1}{(x^1)^2}\left[1+\frac{1}{2}(-\log x^1)^{-1}-\left(\frac{x^2}{x^1}\right)^2\left(1+\frac{3}{2}(-\log x^1)^{-1}\right)\right]
$$
and that the hessian $\D^2w$ is positive definite on the convex domain (see Figure \ref{figure_domainw})
$$
W:=\left\{x\in\R^2\,\Big|\, 0<x^1<1,\,\left(\frac{x^2}{x^1}\right)^2\leq \frac{\frac{1}{2}-\log x^1}{\frac{3}{2}-\log x^1}\right\}=\{x\in B\mid \det\D^2 w(x)>0 \}\subset\Delta.
$$
\begin{figure}[h]%
	\centering
\includegraphics[width=.43\linewidth]{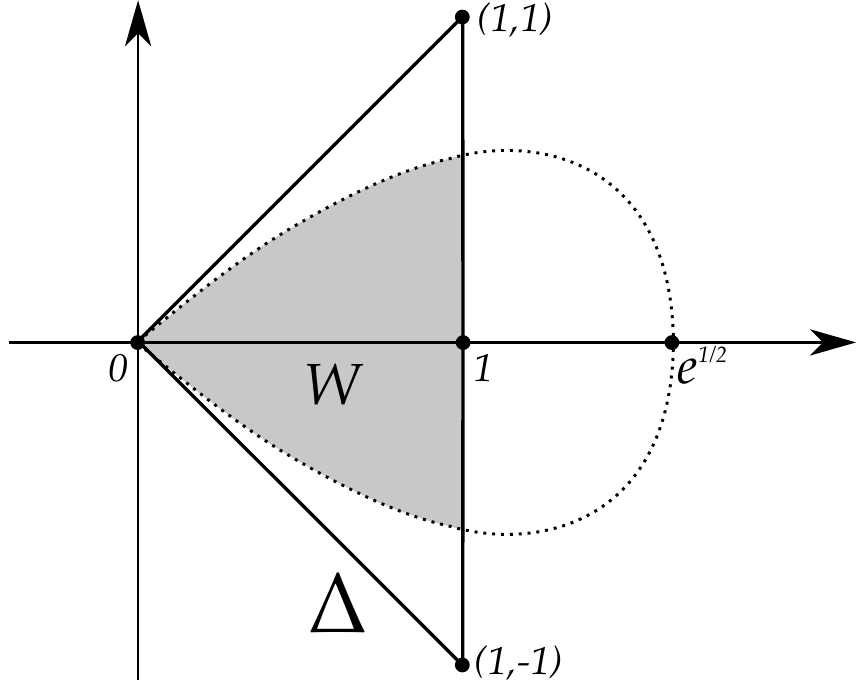}
\caption{The domain $W$. The dashed curve is $\big(\frac{x^2}{x^1}\big)^2= \frac{\frac{1}{2}-\log x^1}{\frac{3}{2}-\log x^1}$, which is tangent to both edges of the triangle $\Delta$ at the origin.}
	\label{figure_domainw}
\end{figure}

Fix $\lambda\in(0,1)$ and denote $V:=\{x\in W\mid x^1< \lambda\}$. Setting $w(0,0):=0$, we can view $w$ as a continuous convex function on $\overline{V}$ and verify the following properties:
\begin{itemize}
\item  by the above expression of $\det\D^2w$, there is a constant $c'>0$ such that
$$
 \det\D^2 w(x)\leq c'|x|^{-2}\,\text{ for all }x\in V;
$$ 
\item $w$ has infinite inner derivatives at $(0,0)\in\pa W$;
\item $\mu:=\inf_{x\in\pa V}w(x)/x^1$
$$
=\min\left\{-(-\log\lambda)^\frac{1}{2},\inf_{0<x^1\leq \lambda}(-\log x^1)^\frac{1}{2}\left(\frac{\frac{1}{2}-\log x^1}{\frac{3}{2}-\log x^1}-1\right)\right\}>-\infty.
 $$ 
 \end{itemize}
As a consequence, $v(x):=w(x)-\mu x^1$ is a continuous convex function on $\overline{V}$ with infinite inner derivatives $(0,0)$ and satisfies 
$$
\begin{cases}
\det\D^2 v(x)\leq c'|x|^{-2}\, \text{ for all $x\in V$},\\
v|_{\pa V}\geq 0.
\end{cases}
$$
We can thus apply Comparison Principle to get $u\geq(c/c')^\frac{1}{2}v$ on $\overline{V}$. Since $u(0,0)=v(0,0)=0$, we conclude that $u$ has infinite inner derivatives at $(0,0)$, as required.
\end{proof}

\begin{proof}[Proof of Theorem \ref{thm_main} (\ref{item_thmmain1})]  
Given $u$ and $x_0$ under the assumptions, we have
$$
\det\D^2u(x)=c\,w_\Omega(x)^{-4}\geq c'|x-x_0|^{-2} \text{ for all }x\in U,
$$
where the inequality follows from Proposition \ref{prop_womega}.
If $u(x_0)=+\infty$ then $u$ has infinite inner derivatives at $x_0$ by definition. 
Otherwise, take a triangle $\Delta$ with a vertex at $x_0$ and the other two vertices  in $U$. The restriction of $u$ to $\overline{\Delta}$ is continuous (see Section \ref{subsec_convexfunction}), so we can apply Lemma \ref{lemma_angle} (\ref{item_lemmaangle2}) to it and conclude that $u$ has infinite inner derivatives at $x_0$.
\end{proof}

\subsection{Equivalence between Conditions \ref{item_thmmain21}  and \ref{item_thmmain22} }\label{subsec_equivalence}

\begin{lemma}\label{lemma_uestimate}
If $u\in\LC(\R^2)$ satisfies (\ref{eqn_main}) and Condition \ref{item_thmmain21}  in Theorem \ref{thm_main}, then 
$$
\env{\phi}+\sqrt{c}\,w_U\leq u\leq \env{\phi}\ \text{ on }\overline{U},
$$
where $w_U$ is the Cheng-Yau support function of $U$.
\end{lemma}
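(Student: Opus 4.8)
Since $u\in\LC(\R^2)\subset\GLC(\R^2)$ and $u|_{\pa\Omega}=\phi$, Corollary~\ref{coro_supremum} (the convex envelope is the largest element of $\GLC(\R^2)$ majorized by $\phi$ on $\pa\Omega$) gives $u\le\env{\phi}$ on all of $\R^2$, in particular on $\overline U$. From $u\le\env\phi$ we get $\dom{u}\subset\dom{\env\phi}=\chull{\dom\phi}$ (Proposition~\ref{prop_domain}), while $\dom\phi=\dom{u}\cap\pa\Omega\subset\overline U$ forces $\chull{\dom\phi}\subset\overline U$; hence $\overline U=\overline{\chull{\dom\phi}}$, so $U=\interior\chull{\dom\phi}=\interior\dom{\env\phi}$ and $\pa U=\pa\dom{\env\phi}$. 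In particular $\env\phi$ is continuous on $U$ and, by Theorem~\ref{lemma convex envelope det=0}, $\det\D^2\env\phi=0$ in $U$.

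\textbf{Reduction of the lower bound to a boundary identity.}
Put $v:=\env\phi+\sqrt c\,w_U$. By super-additivity of the Monge--Amp\`ere measure (Lemma~\ref{lemma ma measure of sum}) together with $\det\D^2\env\phi=0$ and the equation $\det\D^2 w_U=(-w_U)^{-4}$ in $U$, we have $\det\D^2 v\ge c\,(-w_U)^{-4}$ in $U$; since $U\subset\Omega$, Lemma~\ref{lemma_potentialcomparison} gives $w_U\ge w_\Omega$ (both negative), whence $(-w_U)^{-4}\ge(-w_\Omega)^{-4}$ and therefore
\[
\det\D^2 v\ \ge\ c\,w_\Omega^{-4}\ =\ \det\D^2 u\qquad\text{in }U.
\]
Because $w_U$ extends continuously to $\overline U$ with $w_U|_{\pa U}=0$, once we know $u=\env\phi$ on $\pa U$ the functions $u$ and $v$ agree on $\pa U$, and applying the (Generalized) Comparison Principle, Lemma~\ref{lemma_comparison}, with $u_+=u$ and $u_-=v$ — equivalently Lemma~\ref{lemma_maximum} on $U$, noting $w_U\to0$ near $\pa U$ so that $\liminf_{x\to\pa U}(u-v)=\liminf_{x\to\pa U}(u-\env\phi)$ and using continuity of $u,\env\phi$ along segments into $U$ — yields $u\ge v$ on $\overline U$, which is the desired lower bound. (Note that, since $w_U$ has infinite inner derivatives at \emph{every} point of $\pa U$ by Theorem~\ref{thm_chengyau2}, so does $v$, so the only boundary inequality Lemma~\ref{lemma_comparison} asks us to check is precisely $u\ge\env\phi$ on $\pa U$; the routine continuity/finiteness technicalities on $\pa U\cap\pa\Omega$, where $\phi$ may be discontinuous or infinite, are handled either by restricting to an exhaustion $\{w_U<-\eps\}$ of $U$ or by the segmentwise-continuity of convex functions recalled in Section~\ref{subsec_convexfunction}.) It therefore suffices to prove $u=\env\phi$ on $\pa U$, and since $u\le\env\phi$ we only need $u\ge\env\phi$ there.

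\textbf{The boundary identity via Condition~\ref{item_thmmain21} — the main step.}
On $\pa U\cap\pa\Omega$ this is immediate: there $u=\phi=\env\phi$, the last equality being Lemma~\ref{lemma_adm}. The substance of the proof concerns $\pa U\cap\Omega$, and this is exactly where hypothesis~\ref{item_thmmain21} (infinite inner derivatives of $u$ at every point of $\pa U\cap\Omega$) is used. Fix $x_0\in\pa U\cap\Omega$. Since $\pa U=\pa\chull{\dom\phi}$ and $x_0$ is not a vertex of $\chull{\dom\phi}$ (the vertices lie in $\overline{\dom\phi}\subset\pa\Omega$), the boundary $\pa U$ coincides near $x_0$ with an open segment $I\subset\Omega$ contained in an edge of $\chull{\dom\phi}$, and by Lemma~\ref{lemma_pleated} $\env\phi$ restricts to a single affine function $a_I$ on that whole edge; in particular $\env\phi$ is finite on $I$, hence so is $u$, and because $\pa U$ is flat near $x_0$ the restriction $u|_{\pa U}$ is continuous there, so $u$ is continuous on $\overline U$ at each point of $I$ (Proposition~\ref{prop_boundarycontinuous}). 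Now $u|_I$ is convex, $u|_I\le a_I|_I$, and equals $a_I$ at the two endpoints of the edge since those lie on $\pa\Omega$ (Lemma~\ref{lemma_adm}). If $u\not\equiv a_I$ on $I$, then $u<a_I$ on a sub-segment, so for small $\eta>0$ the convex ``lens'' $G:=\{u-a_I\le-\eta\}$ is nonempty, touches $I$ along a segment, and has compact closure $\overline G\subset\Omega$; there $0<c'\le\det\D^2 u\le c''$ for constants depending on $\overline G$ (as $w_\Omega$ is bounded and bounded away from $0$ on $\overline G$). Comparing $u-a_I$ on $G$ with Cheng--Yau--type barriers on small triangles having one side on $I$ — exactly the kind of construction carried out in the proofs of Lemmas~\ref{lemma_edge} and~\ref{lemma_angle} — one shows that $u$ then has \emph{finite} inner derivatives at some point of the relatively interior segment $G\cap I\subset\pa U\cap\Omega$, contradicting~\ref{item_thmmain21}. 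Hence $u\equiv a_I=\env\phi$ on $I$; as $x_0$ was arbitrary, $u=\env\phi$ on $\pa U\cap\Omega$, and together with the $\pa\Omega$ case this gives $u=\env\phi$ on $\pa U$, completing the proof.

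\textbf{Main obstacle.}
The only non-routine part is the last paragraph: converting the qualitative hypothesis ``$u$ has infinite inner derivatives along $\pa U\cap\Omega$'' into the quantitative statement that $u$ coincides with the affine function $\env\phi|_I$ on each boundary edge $I$. Everything else — the upper bound, the Monge--Amp\`ere density inequality $\det\D^2 v\ge\det\D^2 u$, and the final comparison argument — is standard once the results of Sections~\ref{subsec_convexfunction}--\ref{subsec_preliminaries} are in hand; the delicate estimate is controlling the geometry of the lens $G$ and of the barriers near $I$ so that the conclusion of Lemma~\ref{lemma_edge}-type reasoning can be invoked, which is why the explicit barrier computations of Section~\ref{subsec_infiniteness} are needed.
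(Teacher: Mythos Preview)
Your approach differs substantially from the paper's, and the ``main step'' you flag is indeed a real gap rather than a technicality.

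\textbf{The paper's approach.} The paper never tries to show $u=\env\phi$ on $\pa U$ before proving the lemma; that identity is deduced \emph{from} the lemma in the proof of Theorem~\ref{thm_main}~(\ref{item_thmmain2}). Instead, the lower bound is obtained by comparing $u$ on $U$ with
\[
u_-:=a+\sqrt{c}\,w_{U_\delta},
\]
where $a$ is an arbitrary affine function with $a|_{\pa\Omega}\le\phi$ and $U_\delta:=\{x\in\Omega:\dist(x,U)<\delta\}$. Since $U_\delta\supset\pa U\cap\Omega$, this $u_-$ is smooth (hence has \emph{finite} inner derivatives) at every point of $\pa U\cap\Omega$, while Condition~\ref{item_thmmain21} says $u$ has \emph{infinite} inner derivatives there. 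Thus the hypothesis of Lemma~\ref{lemma_comparison} requires checking $u\ge u_-$ only on $\pa U\cap\pa\Omega$, where it is automatic ($u=\phi\ge a$ and $w_{U_\delta}=0$). Taking the supremum over $a$ recovers $\env\phi$, and letting $\delta\to0$ via Proposition~\ref{prop_epsilon} recovers $w_U$. The whole point of replacing $\env\phi$ by $a$ and $w_U$ by $w_{U_\delta}$ is precisely to \emph{avoid} having to know anything about $u$ on $\pa U\cap\Omega$ beyond Condition~\ref{item_thmmain21}.

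\textbf{Why your route has a gap.} You take $u_-=\env\phi+\sqrt{c}\,w_U$ directly. Because $w_U$ has infinite inner derivatives at \emph{every} point of $\pa U$, Lemma~\ref{lemma_comparison} now forces you to verify $u\ge\env\phi$ on all of $\pa U$, and you try to prove $u=\env\phi$ on each edge $I\subset\pa U\cap\Omega$ by arguing that $u<a_I$ somewhere on $I$ would produce a point of \emph{finite} inner derivative. But Lemmas~\ref{lemma_edge} and~\ref{lemma_angle} do not give this: Lemma~\ref{lemma_edge} goes in the opposite direction (affine on $I$ $\Rightarrow$ infinite inner derivatives), and Lemma~\ref{lemma_angle} concerns vertices, not edges. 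A bounded $\det\D^2u$ on your lens $G$ does not by itself preclude infinite normal derivatives along $G\cap I$, and you have not supplied a barrier that would force them to be finite. This step is therefore unsubstantiated, and it is exactly the statement the paper obtains \emph{after} the lemma (in the proof of ``\ref{item_thmmain22}$\Rightarrow$\ref{item_thmmain21}''), so your argument is circular in spirit.

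\textbf{A smaller error.} In your ``normalization'' you write that $u\le\env\phi$ gives $\dom{u}\subset\dom{\env\phi}$; the inclusion goes the other way ($\dom{\env\phi}\subset\dom{u}$). Consequently your identification $U=\interior\chull{\dom\phi}$ is not justified a priori---it is in fact another consequence of the lemma, obtained by combining the lower bound with $u<+\infty$ on $U$.
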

\begin{proof}
The second inequality follows from the condition $u|_{\pa\Omega}=\phi$ by Corollary \ref{coro_supremum}. 

To prove the first inequality, we consider, for every $\delta>0$, the $\delta$-neighborhood of $U$ in $\Omega$:
$$
U_\delta:=\{x\in\Omega\mid \dist(x,U)<\delta\},
$$
where ``$\dist$'' stands for the distance defined with respect to a Euclidean metric on $\R^2$. Let $w_\delta:=w_{U_\delta}\in\C^0(\overline{U}_\delta)\cap\C^\infty(U_\delta)$ be the Cheng-Yau support function of $U_\delta$.

Recall that $\env{\phi}$ is the pointwise supremum of affine functions $a:\R^2\rightarrow\R$ with $a|_{\pa\Omega}\leq \phi$ (see Definition \ref{def_env}). For any such $a$, we shall show that
\begin{equation}\label{eqn_proofequivalence1}
u\geq a+\sqrt{c}\,w_\delta\,\text{ in } U
\end{equation}
using Generalized Comparison Principle (Lemma \ref{lemma_comparison}). By Lemma \ref{lemma_potentialcomparison}, we have $w_\Omega\leq w_\delta\leq 0$ in $U_\delta$, hence the required comparison of Monge-Amp\`ere measures
$$
\det\D^2u=c\,w_\Omega^{-4}\leq c\,w_\delta^{-4}=\det\D^2(a+\sqrt{c}\,w_\delta)~.
$$
On the other hand, we have
$$
u(x)=\env{\phi}(x)\geq a(x)=a(x)+\sqrt{c}\,w_\delta(x)\, \text{ for all } x\in\pa U\cap\pa\Omega.
$$
This gives the required comparison of boundary values in Lemma \ref{lemma_comparison}, because by Condition \ref{item_thmmain21}  and the fact that $a+\sqrt{c}\,w_\delta$ is smooth in $U_\delta$, any boundary point $x$ of $U$ such that either $u$ has finite inner derivatives at $x$ or $a+\sqrt{c}\,w_\delta$ has infinite inner derivatives at $x$ must be on the boundary of $\Omega$. Thus, Lemma \ref{lemma_comparison} implies (\ref{eqn_proofequivalence1}).

In view of Proposition \ref{prop_epsilon}, taking the pointwise supremum of the right-hand side of (\ref{eqn_proofequivalence1}) for all affine functions $a$ with $a|_{\pa\Omega}\leq \phi$ and all $\delta>0$, we obtain the first inequality and completes the proof of the lemma.
\end{proof}

\begin{proof}[Proof of Theorem \ref{thm_main} (\ref{item_thmmain2})] 
Let us first prove the second statement, namely, Condition \ref{item_thmmain22} implies $\dom{u}=\chull{\dom{\phi}}$ and $u=\env{\phi}$ on $\pa U$. Note that $\chull{\dom{\phi}}=\dom{\env{\phi}}$ by Proposition \ref{prop_domain}. Assuming \ref{item_thmmain22}, we only need to show that the convex sets $\dom{u}$ and $\dom{\env{\phi}}$ have the same closure and $u=\env{\phi}$ on their common boundary. By Corollary \ref{coro_supremum}, we have $u\leq\env{\phi}$ on $\R^2$, hence $\dom{u}$ contains $\dom{\env{\phi}}$. If the closures are not the same, then $U\setminus\dom{\env{\phi}}$ is a nonempty set touching the boundary of $U$. But we have $\env{\phi}-u=+\infty$ on this set, contradicting Condition \ref{item_thmmain22}. This proves $\overline{\dom{u}}=\overline{\dom{\env{\phi}}}$. For any $x_0\in\pa U=\pa\dom{u}=\pa\dom{\env{\phi}}$ and $x_1\in U$, we have $\lim_{t\to0^+}u((1-t)x_0+tx_1)=u(x_0)$ (see Section \ref{subsec_convexfunction}) and the same for $\env{\phi}$, hence Condition \ref{item_thmmain22} implies $u=\env{\phi}$ at $x_0$, as required.

The implication ``\ref{item_thmmain21} $\Rightarrow$ \ref{item_thmmain22}'' follows from Lemma \ref{lemma_uestimate}. Conversely, assuming \ref{item_thmmain22}, in order to show that $u$ has infinite inner derivatives at $x\in\pa U\cap\Omega$, we let $I$ be the connected component of $\pa U\cap\Omega$ containing $x$. Since $\overline{U}$ is the convex envelop of the subset $\dom{\phi}$ of $\pa\Omega$, $I$ is an open line segment with endpoints $x_1,x_2\in\pa\Omega$. If $\phi$ takes finite values on both $x_1$ and $x_2$, then $u|_I=\env{\phi}|_I$ is the affine function interpolating $\phi(x_1)$ and $\phi(x_2)$, and $u$ has infinite inner derivatives at every $x\in I$ by Lemma \ref{lemma_edge}. Otherwise, we have $u=\env{\phi}=+\infty$ on $I$ and $u$ has infinite inner derivatives by convention. This establishes ``\ref{item_thmmain22} $\Rightarrow$ \ref{item_thmmain21}'' and completes the proof.
\end{proof}

\subsection{Existence and uniqueness}\label{subsec_existence}
\begin{proof}[Proof of the existence part of Theorem \ref{thm_main} (\ref{item_thmmain3})]
 We shall construct a convex function $u:V:=\interior\chull{\dom{\phi}}\rightarrow\R$ such that 
 \begin{equation}\label{eqn_proofexistence}
 \det\D^2u=c\,w_\Omega^{-4},\quad \env{\phi}+\sqrt{c}\,w_V\leq u \leq\env{\phi}\,\text{ in }V,
 \end{equation}
 where $w_V$ is the Cheng-Yau support function of $V$. The extension of $u$ to $\R^2$ given by Proposition \ref{prop_extension} then satisfies (\ref{eqn_main}) and Condition \ref{item_thmmain22}, as required. The construction is standard and goes though the following steps.
 
 \begin{steps}
		\item Let $V_1\subset V_2\subset\cdots \subset V$ be an exhaustion of $V$ by strictly convex domains.
		Using Theorem \ref{thm existence classical monge ampere}, we find a convex function $u_i\in\C^0(\overline{V}_i)$ satisfying
		$$
		\begin{cases}
		\det\D^2 u_i=c\,w_\Omega^{-4}&\text{in }V_i, \\
		u_i=\env{\phi}&\text{on }\pa V_i.
		\end{cases}
		$$
		Each $u_i$ is strictly convex and smooth by Theorems \ref{solution strictly convex dimension 2} and \ref{solution smooth in dim 2}. 
		\item We show that 
		\begin{equation} \label{eq: two-sided bound}
		\env{\phi}+\sqrt{c}\,w_V\leq u_i\leq \env{\phi} \text{ in } V_i.
		\end{equation}
        Since $w_V<0$ in $V$, these inequalities hold on $\pa V_i$ by construction. On the other hand, we have $\det\D^2\env{\phi}=0$ by Lemma \ref{lemma convex envelope det=0}. Using Lemmas \ref{lemma ma measure of sum} and \ref{lemma_potentialcomparison}, we obtain
        $$
        \det\D^2\left(\env{\phi}+\sqrt{c}\,w_V\right)\geq \det\D^2\env{\phi}+c\,\det\D^2w_V=c\,w_V^{-4}
        \geq c\,w_\Omega^{-4}
        =\det\D^2 u_i.
        $$
        Therefore, the respective Monge-Amp\`ere measure densities of the three functions in (\ref{eq: two-sided bound}) satisfy the reversed inequalities. Inequality (\ref{eq: two-sided bound}) then follows from the classical Comparison Principle (see Section \ref{subsec_preliminaries}).

		\item By (\ref{eq: two-sided bound}), the convex functions $u_i$ are uniformly bounded. On each $V_k$, applying Arzela-Ascoli to the the sequence of functions $(u_i)_{i\geq k}$, one can extract a convergent subsequence. Moreover, we can assume that the subsequence for $V_{k+1}$ is a subsequence of the subsequence for $V_k$. Let $u:V\to\R$ be the limit obtained by this diagonal argument. It follows from Lemma \ref{convergence of solutions} and Inequality \eqref{eq: two-sided bound} that $u$ is a convex function fulfilling the requirement (\ref{eqn_proofexistence}).
	\end{steps}
\end{proof}

\begin{proof}[Proof of the uniqueness part of Theorem \ref{thm_main} (\ref{item_thmmain3})] 
Suppose $u_1$ and $u_2$ both satisfy (\ref{eqn_main}) and Condition \ref{item_thmmain21} .  Lemma \ref{lemma_uestimate} implies 
$|u_1-u_2|\leq -\sqrt{c}\,w_U$
in $U=\interior\dom{u_1}=\interior\dom{u_2}=\chull{\dom{\phi}}$, hence
$$
\lim_{x\rightarrow\pa U}\left(u_1(x)-u_2(x)\right)=0
$$
because $w_U\in\C^0(\overline{U})$ vanishes on $\pa U$. Maximum Principle (Lemma \ref{lemma_maximum}) then gives $\inf_U(u_1-u_2)=\inf_U(u_2-u_1)=0$, \ie $u_1=u_2$.
\end{proof}

\subsection{Dependence on the parameter}\label{subsec_dependence}
\begin{proof}[Proof of Theorem \ref{thm_main} (\ref{item_thmmain4})]
We first show that $t\mapsto u_t(x)$ is nondecreasing with an argument similar to the above proof of uniqueness. Given $t_1<t_2$, we have 
	$$
	\det \D^2u_{t_1}=e^{-t_1}w_\Omega^{-4}>e^{-t_2}w_\Omega^{-4}=\det \D^2u_{t_2}
	$$
in $U=\interior\dom{u_t}=\interior\chull{\dom{\phi}}$. On the other hand, Lemma \ref{lemma_uestimate} gives
\begin{equation}\label{eqn_proofdepence}
\env{\phi}+e^{-\frac{t}{2}}w_U\leq u_{t}\leq \env{\phi}\, \text{ in }\overline{U} \text{ for all }t\in\R
\end{equation}
Since $w_U\in\C^0(\overline{U})$ vanishes on $\pa U$, it follows that $\lim_{x\to\pa U}\left(u_{t_2}(x)-u_{t_1}(x)\right)=0$. Using Maximum Principle (Lemma \ref{lemma_maximum}), we get $\inf_V(u_{t_2}-u_{t_1})=0$, or  $u_{t_1}\leq u_{t_2}$.
	
Next, let us establish the concavity of $t\mapsto u_t(x)$. Using the fact that $\log \det$ is a concave function on the space of positive definite matrices, we get
	\begin{align*}
	&\log\det \D^2\left(\frac{u_{t_1}+u_{t_2}}{2}\right)=
	\log\det\left(\frac{\D^2 u_{t_1}+\D^2u_{t_2}}{2}\right)\\
	&\geq \frac{\log\det\D^2u_{t_1}+\log\det\D^2u_{t_2}}{2}=-\frac{t_1+t_2}{2}+\log w_\Omega^{-4}=\log\det\D^2u_{\frac{t_1+t_2}{2}}
	\end{align*}
in $U$.	Using Maximum Principle in the same way as above, we obtain
	$\frac{u_{t_1}+u_{t_2}}{2}\leq u_{\tfrac{t_1+t_2}{2}}$ in $U$, which means $t\mapsto u_t(x)$ is concave.
	
	The required limit $\lim_{t\rightarrow+\infty}u_t(x_0)=\env{\phi}(x_0)$ ($x_0\in U$) follows from (\ref{eqn_proofdepence}). The other limit $\lim_{t\to-\infty}u_t(x_0)=-\infty$ can be proved by taking a disk $D=D(x_0,\eps)$ with closure contained in $U$ and using Comparison Principle to control $u_t$ from above on the disk by the function
	$$
	v_t(x)=a\,e^{-\frac{t}{2}}\left(|x-x_0|^2-\eps^2\right)+b,
	$$
where the constants $a,b>0$ are chosen to ensure that 
$$
\det\D^2 v_t=4a^2e^{-t}\leq e^{-t}(-w_\Omega)^{-4}=\det\D^2u_t
$$ 
in $D$ and $v_t\geq \env{\phi}\geq u_t$ on $\pa D$.
	
	Finally, given $x_0\in U$, in order to show that $u_t(x_0)$ is \emph{strictly} increasing in $t$, we first establish the strict inequality $u_t(x_0)<\env{\phi}(x_0)$. Assume by contradiction that $u_t(x_0)=\env{\phi}(x_0)$ for some $t$ and consider the supporting affine function $a:\R^n\to\R$ of $u_{t}$ at $x_0$, namely
	$$
	a(x)=(x-x_0)\cdot \D u_{t}(x_0)+u_{t}(x_0).
	$$
The locus of $a=u_t$ is the single point $x_0$ because $u_t$ is strictly convex in $U$ (see Remark \ref{remark_smooth}). Since $u_t\leq\env{\phi}$ with equality at $x_0$, we have $a\leq \env{\phi}$ with equality only at $x_0$. This contradicts Lemma \ref{lemma_pleated} (the locus of $a=\env{\phi}$ is the convex hull of a subset of $\pa\Omega$), hence proves $u_t(x_0)<\env{\phi}(x_0)$.
Now, since $t\mapsto u_t(x_0)$ is concave, nondecreasing and tends to $\env{\phi}(x_0)$ as $t\to+\infty$, if $u_{t_1}(x_0)=u_{t_2}(x_0)$ for $t_1<t_2$ then $u_{t}(x_0)=\env{\phi}(x_0)$ for all $t\geq t_2$, contradicting what we just proved. This shows that $u_t(x)$ is strictly increasing in $t$ and completes the proof.	
\end{proof}

\subsection{Triangular cone as $C$-regular domain}\label{subsec_triangularcone}
Given a proper convex cone $C\subset\R^3$, we consider in this section a triangular cone $T\subset\R^3$ circumscribed to $C$ as shown in Figure \ref{figure_tc} \subref{figure_tc3} from the introduction, which is a $C$-regular domain. 

We show in Corollary \ref{coro_trianglecone} that if the projections of $C$ and $T$ on $\mathbb{RP}^2$ look like Figure \ref{figure_tc} \subref{figure_tc1}, then $T$ is uniquely foliated by affine $(C,k)$-surfaces generating $T$; whereas in case of  Figure \ref{figure_tc} \subref{figure_tc2}, $T$ is not generated by any affine $(C,k)$-surface.  A particular instance of the former case is when $C=C_0$ is the future light cone in the Minkowski space, which is computed explicitly in \cite[Section 5]{bon_smillie_seppi}.

The analytic counterpart is Proposition \ref{prop_introimproper} from the introduction, which deals with the Monge-Amp\`ere problem \eqref{eqn_main} studied in the previous part of this section in the particular case where $\phi\in\LC(\pa\Omega)$ vanishes at three points and take value $+\infty$ everywhere else. We give a more precise statement of the proposition as follows:

\begin{manualprop}{\ref*{prop_introimproper}'}\label{prop_imrpoper}
	Let $\Omega\subset \R^2$ be a bounded convex domain, $\Delta\subset\Omega$ be an open triangle with vertices on $\pa\Omega$ and $\phi$ be the function on $\pa\Omega$ vanishing at the vertices of $\Delta$ with $\phi=+\infty$ everywhere else.
\begin{enumerate}[label=(\alph*)]
	\item\label{item_improper1} If $\Omega$ satisfies the exterior circle condition at every vertex of $\Delta$ (see Figure \ref{figure_improper} \subref{figure_improper1}), then there exists a unique $u\in\S_0(\Omega)$ (see Section \ref{subsec_analytic}) satisfying \eqref{eqn_main}. Moreover, we have $\dom{u}=\overline{\Delta}$, and $u$ is continuous on $\overline{\Delta}$ with vanishing boundary value.
	\item\label{item_improper2} If $\pa\Omega$ contains an open line segment meeting $\pa\Delta$ exactly at a vertex (see Figure \ref{figure_improper} \subref{figure_improper2}), then there does not exist $u\in\S_0(\Omega)$ satisfying \eqref{eqn_main}. 
\end{enumerate}
\end{manualprop}
\begin{proof}
	Part (\ref{item_thmmain3}) of Theorem \ref{thm_main} yields a unique $u\in\LC(\R^2)$ satisfying \eqref{eqn_main} and the following conditions, which are equivalent to each other by Part (\ref{item_thmmain2}): 
	\begin{itemize}
		\item $u$ has infinite inner derivatives at every boundary point of $\dom{u}$ that is not on $\pa\Omega$;
		\item $\dom{u}=\overline{\Delta}$, and $u$ is continuous on $\overline{\Delta}$ with vanishing boundary value.
	\end{itemize}
Under the assumption of Statement \ref{item_improper1}, Part (\ref{item_thmmain2}) of Theorem \ref{thm_main} implies that $u$ has infinite inner derivatives everywhere on the boundary of $\Delta$. In view of the classical regularity results (see Remark \ref{remark_smooth}), we conclude that $u\in\S_0(\Omega)$, proving \ref{item_improper1}.
	
	To prove Part \ref{item_improper2}, it is sufficient to show that the unique $u$ given above has finite inner derivatives at the vertices of $\Delta$. To this end, we fix a Euclidean distance ``$\dist$'' on $\R^2$ and let $L\subset\R^2$ be the line containing $I$. Since both edges of $\Delta$ issuing from $x_0$ only touch $L$ at $x_0$, there is a constant $c>1$ such that
	$$
	\dist(x,x_0)\leq c\,\dist(x,L)\,\text{ for all }x\in \Delta.
	$$
	Therefore, it follows from Proposition \ref{prop_womega} (\ref{item_womega2}) that $(-w_\Omega)^{-4}$ is bounded from above on $\Delta$ by a positive constant multiple of the function 
	$x\mapsto\dist(x,x_0)^{-\frac{4}{3}}$. But $u$ satisfies $\det\D^2u=(-w_\Omega)^{-4}$ on $\Delta$ with $u=0$ on $\pa\Delta$, hence has finite inner derivatives at $x_0$ by Lemma \ref{lemma_angle} (\ref{item_lemmaangle1}).
\end{proof}

\begin{corollary}\label{coro_trianglecone}
Let $T\subset\R^3$ be a convex cone such that the projectivization $\mathbb{P}(T)\subset\mathbb{RP}^2$ is a triangle with edges denoted by $I_0$, $I_1$ and $I_2$. Let $C$ be a convex cone contained in $T$ such that  the boundary of its projectivization $\mathbb{P}(C)$ meets the interior of every $I_i$. Then $T$ is a $C$-regular domain and the following assertions hold:
\begin{enumerate}[label=(\alph*)]
	\item\label{item_trianglecone1} If for each $i$ there is an open ellipse $E_i\subset \mathbb{P}(C)$ such that  $\pa E_i\cap I_i\neq\emptyset$ (see Figure \ref{figure_tc} \subref{figure_tc1}), then $T$ is uniquely foliated by complete affine $(C,k)$-surfaces as in the conclusions of Theorems \ref{thm_intromain} and \ref{thm_introfoliation} in Introduction.
	\item\label{item_trianglecone2} If $\pa\mathbb{P}(C)$ meets the interior of some edge $I_i$ at a single point and $I_i$ is not tangent to $\pa\mathbb{P}(C)$ at that point (see Figure \ref{figure_tc} \subref{figure_tc2}), then there does not exist complete affine $(C,k)$-surface generating $T$.
\end{enumerate}
\end{corollary}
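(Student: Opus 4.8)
The plan is to reduce the statement to Proposition~\ref{prop_imrpoper} through the dictionary of Section~\ref{subsec_analytic} and Theorem~\ref{thm_graph}, and then read off the conclusions from the analysis of this section together with Corollary~\ref{coro_ckmongeampere} and Theorem~\ref{thm_foliation}. First I would check that $T$ is a $C$-regular domain: each of the three two-dimensional faces of the triangular cone $T$ lies in a plane through the apex $0\in\R^3$ which, by the hypothesis that $\pa\mathbb{P}(C)$ meets the interior of every edge $I_i$, supports $C$ along a boundary ray of $C$; hence it bounds a $C$-null half-space $H_i$ ($i=0,1,2$), and $T=\interior(\overline{H_0}\cap\overline{H_1}\cap\overline{H_2})$ is a $C$-regular domain (Definition~\ref{def_cregular}). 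Choosing coordinates on $\R^3$ as in Section~\ref{subsec_analytic} we obtain the bounded convex domain $\Omega=-C_0^*\cong\mathbb{P}(C^*)$, and Theorem~\ref{thm_graph}~(\ref{item_thmgraph1}) gives the unique $\phi\in\LC(\pa\Omega)$ with $T=\sepi{\env{\phi}^*}$. Since $T$ is cut out by precisely the three $C$-null half-spaces $H_i$, Lemma~\ref{lemma_halfspace} identifies $\dom{\phi}$ with the three-point set $\{x_0,x_1,x_2\}$, where $x_i\in\pa\Omega$ is dual to the plane $\pa H_i$; because those planes pass through $0$ the corresponding affine functions are linear, so $\phi\equiv 0$ on $\dom{\phi}$; and because a triangular cone is proper, Theorem~\ref{thm_graph}~(\ref{item_thmgraph1}) makes $\Delta:=\chull{\dom{\phi}}=\dom{\env{\phi}}$ a non-degenerate triangle with vertices $x_0,x_1,x_2$ on $\pa\Omega$. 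Thus $(\Omega,\Delta,\phi)$ is precisely the data appearing in Proposition~\ref{prop_imrpoper}.

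Next I would set up the projective duality relating the hypotheses. In the correspondence above, the point $x_i\in\pa\Omega\cong\pa\mathbb{P}(C^*)$ is dual to the supporting line of $\mathbb{P}(C)$ carrying $I_i$. In case~(\ref{item_trianglecone1}), an open ellipse $E_i\subset\mathbb{P}(C)$ with $\partial E_i\cap I_i\neq\emptyset$ has $I_i$ as a tangent line, so its dual conic $E_i^*$ is an ellipse containing $\mathbb{P}(C^*)\cong\Omega$ and passing through $x_i$; read off in the affine chart, this says that $\Omega$ satisfies the \emph{exterior ellipse} condition at each vertex of $\Delta$. In case~(\ref{item_trianglecone2}), $\pa\mathbb{P}(C)$ having a single, non-tangential contact with $I_i$ means $\pa\mathbb{P}(C)$ has a corner there with $I_i$ a supporting line strictly between the two extreme ones; dually, $\pa\Omega$ contains an open straight segment with $x_i$ in its interior, meeting $\pa\Delta$ only at $x_i$ --- exactly the hypothesis of Proposition~\ref{prop_imrpoper}~(\ref{item_improper2}).

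With the dictionary in hand, for~(\ref{item_trianglecone1}) I would run the proof of Theorems~\ref{thm_intromain} and~\ref{thm_introfoliation}: for each $k>0$, Corollary~\ref{coro_ckmongeampere} and Theorem~\ref{thm_main}~(\ref{item_thmmain3}) (with $c=k^{-2/3}$) produce a solution $u$ of~\eqref{eqn_main} with infinite inner derivatives at every point of $\pa U\cap\Omega$, and at the vertices $x_i\in\pa U\cap\pa\Omega$ these are infinite too by Theorem~\ref{thm_main}~(\ref{item_thmmain1}) --- which holds with ``ellipse'' in place of ``circle'', since $w_{g(\Omega)}(x)=\mathsf{Jac}(g)^{1/3}w_\Omega(g^{-1}x)$ renders Proposition~\ref{prop_womega}~(\ref{item_womega1}) invariant under affine changes of coordinates --- so that $u\in\S_0(\Omega)$ and $\Sigma_k=\gra{u^*}$ is the unique complete affine $(C,k)$-surface generating $T$; Theorem~\ref{thm_foliation} and Theorem~\ref{thm_main}~(\ref{item_thmmain4}) then give that the $\Sigma_k$ foliate $T$ with convex time function $K|_{\Sigma_k}=\log k$, as in Theorem~\ref{thm_introfoliation}. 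For~(\ref{item_trianglecone2}), by Corollary~\ref{coro_ckmongeampere} and Theorem~\ref{thm_graph} a complete affine $(C,k)$-surface generating $T$ would be the graph of the Legendre transform of the unique $u$ given by Theorem~\ref{thm_main}~(\ref{item_thmmain3}) with $c=k^{-2/3}$; but the argument proving Proposition~\ref{prop_imrpoper}~(\ref{item_improper2}) --- Proposition~\ref{prop_womega}~(\ref{item_womega2}) combined with Lemma~\ref{lemma_angle}~(\ref{item_lemmaangle1}), using that the two edges of $\Delta$ at $x_i$ leave the segment transversally --- shows that this $u$ has finite inner derivatives at $x_i$, so $u\notin\S_0(\Omega)$ and no such surface exists.

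The step I expect to be the main obstacle is making the projective duality of the second paragraph rigorous and matching it exactly with Proposition~\ref{prop_imrpoper}: that the three faces of $T$ correspond to exactly three points of $\pa\Omega$ with $\Delta$ non-degenerate, that an inscribed ellipse touching $I_i$ really dualizes to a bounded circumscribed ellipse through $x_i$ tangent to $I_i$, and that ``$I_i$ not tangent to $\pa\mathbb{P}(C)$'' dualizes to a genuine straight segment of $\pa\Omega$ through $x_i$. The remaining points --- the affine invariance upgrading Proposition~\ref{prop_womega}~(\ref{item_womega1}) from circles to ellipses, and the observation that the proofs of Theorems~\ref{thm_intromain} and~\ref{thm_introfoliation} use the exterior circle condition only at the finitely many boundary points of $U$ lying on $\pa\Omega$, here the vertices of $\Delta$ --- are routine once the dictionary is in place.
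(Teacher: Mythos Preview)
Your proposal is correct and follows essentially the same route as the paper's proof: both set up the duality $\Omega\cong\mathbb{P}(C^*)$, identify $\phi$ as supported on the three dual points with $\Delta$ the resulting triangle, translate the two hypotheses into the setting of Proposition~\ref{prop_imrpoper}, and then feed the outcome into the proofs of Theorems~\ref{thm_intromain} and~\ref{thm_introfoliation} (via Theorem~\ref{thm_main} and Theorem~\ref{thm_foliation}). The only point of divergence is cosmetic: the paper asserts without comment that the inscribed-ellipse hypothesis on $\mathbb{P}(C)$ dualises to the exterior \emph{circle} condition on $\Omega$ at each $x_i$ (which is fine, since for a bounded $\Omega$ an exterior ellipse at a boundary point implies an exterior circle there --- just take a sufficiently large disk tangent at $x_i$), whereas you dualise to an exterior ellipse and recover Proposition~\ref{prop_womega}~(\ref{item_womega1}) via the affine covariance of $w_\Omega$; this is a slightly more explicit but equivalent justification.
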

Note that under the assumption of \ref{item_trianglecone2}, the convex curve $\pa\mathbb{P}(C)$ is not $\C^1$ at the point where it meets $I_i$ and both of its tangent directions there point towards the interior of the triangle $\mathbb{P}(T)$, as the figure illustrates.

\begin{proof}
Let $H_i$ denote the open half-space of $\R^3$ containing $T$ such that the boundary plane $\pa H_i$ projects to the line in $\mathbb{RP}^2$ containing $I_i$. Then $T=H_1\cap H_2\cap H_3$. Under the assumption on $C$, each $H_i$ is a $C$-null half-space, hence $T$ is a $C$-regular domain.
	
Let  $\Omega\subset\R^2$ be the bounded convex domain determined from $C$ through the procedure in Section \ref{subsec_analytic}, which can be identified projectively with $\mathbb{P}(C^*)$. The projectivization $\Delta:=\mathbb{P}(T^*)$ of the triangular cone $T^*$ dual to $T$ is a triangle contained in $\mathbb{P}(C^*)\cong\Omega$ with vertices on $\pa\Omega$. Let $x_0$, $x_1$ and $x_2$ denote these vertices and $\phi:\pa\Omega\to\Rp$ be the function with $\phi(x_i)=0$ ($i=1,2,3$) and $\phi=+\infty$ on $\pa\Omega\setminus\{x_1,x_2,x_3\}$. Then under the bijection in Theorem \ref{thm_graph} (\ref{item_thmgraph1}), we have $T=\sepi{\env{\phi}^*}$.
The additional assumptions on $\mathbb{P}(C)$ and $\mathbb{P}(T)$ in statements \ref{item_trianglecone1}
and \ref{item_trianglecone2} are equivalent to the following conditions on $\Omega$ and $\Delta$, respectively:
\begin{itemize}
	\item 
	The assumption of \ref{item_trianglecone1} $\Leftrightarrow$ $\Omega$ satisfies the exterior circle condition at each $x_i$.
    \item 
    The assumption of \ref{item_trianglecone2} $\Leftrightarrow$ $x_i$ is in the interior of a line segment in $\pa\Omega$.
\end{itemize}

The proofs of Theorems \ref{thm_intromain} and \ref{thm_introfoliation} given in Section \ref{subsec_statement} actually shows that the conclusions of these theorems hold for the $C$-regular domain $D$ 
corresponding to a given $\phi\in\LC(\pa\Omega)$ whenever the function $u$ provided by Theorem \ref{thm_main} (\ref{item_thmmain3}) for this $\phi$ and arbitrary $c>0$ is in $\S_0(\Omega)$. Therefore, by Proposition \ref{prop_imrpoper} \ref{item_improper1}, they hold in particular under the assumption of \ref{item_trianglecone1}. 
On the other hand, under the assumption of \ref{item_trianglecone2}, Proposition \ref{prop_imrpoper} \ref{item_improper2} asserts that there is no $u\in\S_0(\Omega)$ satisfying (\ref{eqn_main}), hence no complete affine $(C,k)$-surface generating $T$.	
\end{proof}

\bibliographystyle{amsalpha} 
\bibliography{ksurface}
\end{document}